\documentclass{amsart}

\usepackage{amsfonts,amsmath,amsxtra,amsthm,tikz}
\usepackage{mathrsfs}
\usepackage{times}
\usepackage{anysize}
\usepackage{graphicx}
\usepackage{youngtab}
\usepackage[latin1]{inputenc}
\usepackage{color}
\usepackage{subfigure}
\usepackage{stmaryrd, MnSymbol}

\keywords{Parking functions, affine permutations}

\marginsize{1in}{1in}{1in}{1in}

\newtheorem{lemma}{Lemma}[section]
\newtheorem{theorem}[lemma]{Theorem}
\newtheorem{conjecture}[lemma]{Conjecture}
\newtheorem{corollary}[lemma]{Corollary}
\newtheorem{proposition}[lemma]{Proposition}
\theoremstyle{definition}   
\newtheorem{example}[lemma]{Example}
\newtheorem{remark}[lemma]{Remark}
\newtheorem{definition}[lemma]{Definition}
 
\DeclareMathOperator{\Hom}{H}
\newcommand{\BZ}{\mathbb{Z}}
\newcommand{\Hh}{\mathbf{H}}
\newcommand{\CO}{\mathcal{O}}
\newcommand{\CG}{\mathcal{G}}
\newcommand{\CF}{\mathcal{F}}

\DeclareMathOperator{\Fl}{Fl}

\DeclareMathOperator{\Ord}{Ord}
\DeclareMathOperator{\ish}{ish}
\DeclareMathOperator{\inv}{inv}

\DeclareMathOperator{\bounce}{bounce}
\DeclareMathOperator{\dinv}{dinv}
\DeclareMathOperator{\area}{area}
\DeclareMathOperator{\spann}{span}
\DeclareMathOperator{\modd}{mod}
\DeclareMathOperator{\An}{\mathcal{A}} 
\DeclareMathOperator{\PS}{\mathcal{PS}} 
\DeclareMathOperator{\SP}{\mathcal{SP}} 
\DeclareMathOperator{\PF}{\mathcal{PF}} 


\newcommand{\ZZ}{\mathbb{Z}_{\ge 0}}

\newcommand{\Smn}{S_n^m} 
\newcommand{\Sn}{S_n}
\newcommand{\Snn}[1]{S_{#1}}
\newcommand{\aSmn}{\widetilde{S}_n^m} 
\newcommand{\maSn}{{}^m\widetilde{S}_n} 
\newcommand{\aSmnn}[2]{\widetilde{S}_{#1}^{#2}} 
\newcommand{\maSnn}[2]{{}^{#2}\widetilde{S}_{#1}} 
\newcommand{\Smnn}[2]{{S}_{#1}^{#2}} 
\newcommand{\aSn}{\widetilde{S}_n}
\newcommand{\Pmn}{\PF_{m/n}} 
\newcommand{\minl}{\Omega} 
\newcommand{\minlnm}{\minl_n^m} 
\newcommand{\Ymn}{Y_{m,n}} 
\newcommand{\lYmn}{\widehat{Y}_{m,n}} 
\newcommand{\aBrn}{\widetilde{B}_n} 
\DeclareMathOperator{\Sh}{Sh} 
\newcommand{\Shikn}{\Sh^k_n} 
\DeclareMathOperator{\Reg}{Reg} 
\newcommand{\Regkn}{\Reg^k_n} 

\newcommand{\aSnn}[1]{\widetilde{S}_{#1}}
  
\DeclareMathOperator{\Inv}{{Inv}}
\DeclareMathOperator{\Invall}{{\overline{Inv}}}
\DeclareMathOperator{\invsub}{{Mod_{m,n}}}
\DeclareMathOperator{\invsubn}{{Mod_{n+1,n}}}


\def\l{{\ell}}
\newcommand{\w}{{\omega}}
\newcommand{\wi}{{\w^{-1}}}
\newcommand{\Afund}{{\rm A}_0}  
\newcommand{\A}{{\rm A}} 
\DeclareMathOperator{\alc}{alc} 
\newcommand{\balc}{\widehat \alc} 
\newcommand{\C}{{\mathbb C}}
\newcommand{\R}{{\mathbb R}}
\newcommand{\N}{{\mathbb N}}
\newcommand{\Z}{{\mathbb Z}}

\newcommand{\x}{{\bf\overline{x}}}
\def\shuffle{\,\raise 1pt\hbox{$\scriptscriptstyle\cup{\mskip -4mu}\cup$}\,}
\newcommand{\pl}{\llparenthesis} 
\newcommand{\pr}{\rrparenthesis} 
\newcommand{\pw}[1]{\pl #1 \pr} 
\newcommand{\Dw}{\Delta_{\w}}  
\newcommand{\id}{\mathrm{id}}
\newcommand{\wm}{\w_m} 

\newcommand{\omitt}[1]{}  

\definecolor{Red}{rgb}{0.6,0,0.1}





\author{Eugene Gorsky} 
\address{Department of Mathematics, Columbia University.
2990 Broadway, New York, NY 10027 }
\email{egorsky@math.columbia.edu}

\author{Mikhail Mazin} 
\address{Mathematics Department, Kansas State University.
Cardwell Hall, Manhattan, KS 66506}
\email{mmazin@math.ksu.edu}

\author{Monica Vazirani}
\address{Department of Mathematics , UC Davis .
One Shields Ave,
Davis, CA 95616-8633}
\email{vazirani@math.ucdavis.edu}

\title{Affine permutations and rational slope parking functions}
\date{\today}

\begin{document}
\begin{abstract}
We introduce a new approach to the enumeration of rational slope parking functions with respect to the $\area$ and a generalized $\dinv$ statistics, and relate
the combinatorics of parking functions to that of affine permutations.
We relate our construction to two previously known combinatorial constructions: Haglund's bijection $\zeta$ exchanging the
pairs of statistics $(\area,\dinv)$ and $(\bounce, \area)$ on Dyck paths, and the
Pak-Stanley labeling of the regions of $k$-Shi hyperplane arrangements by $k$-parking functions. Essentially, our approach can be viewed as a generalization and a unification of these two constructions. We also relate our combinatorial constructions to representation theory. We derive new formulas for the  Poincar\' e polynomials of certain affine Springer fibers and describe a connection to the theory of finite dimensional representations of DAHA and nonsymmetric Macdonald polynomials. 
\end{abstract}

\maketitle

\section{Introduction}

Parking functions
 are ubiquitous in the modern combinatorics.
 There is a natural action of the symmetric group on parking functions,
and the orbits are labeled by the non-decreasing parking functions which correspond
naturally to the Dyck paths.
This provides a link between parking functions and various combinatorial objects counted
by Catalan numbers.
In a series of papers  Garsia,  Haglund,  Haiman, et al. \cite{Hd08,HHLRU},
 related the combinatorics of Catalan numbers and parking functions to
the space of diagonal harmonics.
There are also deep connections to the geometry of the Hilbert scheme.

Since the works of Pak and Stanley \cite{St96}, Athanasiadis and Linusson \cite{AL99} , it became clear that parking functions are tightly related to the combinatorics of the affine symmetric group. In particular, they provided two different bijections between the parking functions and the regions of Shi hyperplane arrangement. It has been remarked in \cite{Ar11,FV1,So} that the inverses of the affine permutations labeling the minimal alcoves in Shi regions belong to a certain simplex $D_{n}^{n+1}$, which is isometric to the $(n+1)$-dilated fundamental alcove. As a result, the alcoves in $D_{n}^{n+1}$ can be labeled by parking functions in two different ways.

In this paper we develop a ``rational slope'' generalization of this correspondence.
A function $f:\{1,\ldots, n\}\to \BZ_{\ge 0}$ is called an $m/n$-parking function if the Young diagram with row lengths equal to $f(1),\dots, f(n)$ put in the decreasing order, fits under the diagonal in an $n\times m$ rectangle.  
 
Recall that a bijection $\w:\BZ\to\BZ$ is called an affine permutation if $\w(x+n)=\w(x)+n$ for all $x$ and $\sum\limits_{i=1}^n \w(i)=\frac{n(n+1)}{2}$. Given a positive integer $m$, we call an affine permutation {\em $m$-stable}, if the inequality $\w(x+m)> \w(x)$
holds
 for all $x$. All constructions in the present paper are based on the following basic observation (see Section \ref{Subsection: Sommers region} for details).

\begin{proposition}
\label{intro sommers}
If $m$ and $n$ are coprime then $m$-stable affine permutations label the alcoves in a certain simplex $D_n^m$ which is isometric to the $m$-dilated fundamental alcove. In particular, the number of $m$-stable affine permutations equals $m^{n-1}$.
\end{proposition}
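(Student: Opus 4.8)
The plan is to reinterpret $m$-stability as a finite system of linear inequalities, to recognize the region these inequalities carve out as $\sommer$, and to read off $m^{n-1}$ from a volume computation. The setup I would use is the standard bijection $\w\mapsto A_\w$ between $\aSn$ and the alcoves of the affine Coxeter arrangement in $V=\{x\in\R^n:\sum_i x_i=0\}$ (as set up in Section~\ref{Subsection: Sommers region}; changing the convention only exchanges $\w$ and $\wi$ below), with fundamental alcove $\Afund=\{x\in V:x_1>x_2>\cdots>x_n>x_1-1\}$. The one input needed is the explicit description of $A_\w$ in terms of the window $[\w(1),\dots,\w(n)]$: for each root $e_i-e_j$ the integer $\lfloor x_i-x_j\rfloor$ is constant on $A_\w$ and is given by an explicit formula in the numbers $\lfloor(\w(k)-1)/n\rfloor$. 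Consequently, any region that is a union of alcoves is cut out by prescribing an interval for each of these integer parts, and one can read off exactly which affine permutations index its alcoves.

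Next I would translate the $m$-stability condition. Write $m=qn+r$ with $0\le r<n$; since $\gcd(m,n)=1$ and $n>1$ we have $1\le r\le n-1$, and $\sigma\colon k\mapsto(k+r\bmod n)$ is an $n$-cycle on $\Z/n\Z$. Using $\w(x+n)=\w(x)+n$, the condition ``$\w(x+m)>\w(x)$ for all $x$'' reduces to the finite system $\w(k)-\w(\sigma(k))<nc_k$, $k=1,\dots,n$, where $c_k=(k+m-\sigma(k))/n\in\Z$ depends only on $m,n,k$. Passing through the description of $A_\w$ above, one verifies that this system is equivalent to the alcove of $\w$ being contained in $\sommer$; since $\w\mapsto A_\w$ is a bijection on $\aSn$, we obtain the desired bijection between the $m$-stable affine permutations and the alcoves contained in $\sommer$.

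For the count, note that $\sommer$ is congruent to the dilated alcove $m\Afund=\{x\in V:x_1>\cdots>x_n>x_1-m\}$ (indeed it is the image of $m\Afund$ under a single element of $\aSn$, the walls of $\sommer$ being the images of the $n$ walls $x_i=x_{i+1}$, $i<n$, and $x_1-x_n=m$ of $m\Afund$). The boundary of $m\Afund$ lies on reflection hyperplanes, so $m\Afund$ is a union of closed alcoves; all alcoves are congruent and tile $V$, so their number equals the volume ratio $\operatorname{vol}(m\Afund)/\operatorname{vol}(\Afund)=m^{\dim V}=m^{n-1}$. Combined with the bijection, this shows that the number of $m$-stable affine permutations equals $m^{n-1}$.

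The main difficulty will be the matching in the second step, together with the claim that $\sommer$ is a genuine $(n-1)$-simplex; this is the only point at which coprimality enters. In window coordinates the inequalities $\w(k)-\w(\sigma(k))<nc_k$ have inner normals $e_{\sigma(k)}-e_k$, which sum to zero, and any $n-1$ of them are linearly independent precisely because $\sigma$ is an $n$-cycle (the matrix with rows $e_k-e_{\sigma(k)}$ equals $I-P_\sigma$, of rank $n-\#\{\text{cycles of }\sigma\}$); these ``$n$ half-spaces in general position with normals summing to zero'' are exactly what forces the region to be a bounded simplex rather than an unbounded chamber. When $\gcd(m,n)>1$ the normals span a proper subspace, the region is unbounded, and indeed there are infinitely many $m$-stable permutations --- for instance, every element of $\aSn$ is $n$-stable --- so the hypothesis $\gcd(m,n)=1$ is genuinely needed.
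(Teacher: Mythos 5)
Your overall strategy mirrors the paper's (translate $m$-stability into a finite system of inequalities in $\x$-coordinates, recognize the cut-out region as $\sommer$, show $\sommer$ is congruent to $m\Afund$, then count alcoves by volume). The first two steps are fine, and the closing observation about why coprimality is needed (the matrix $I-P_\sigma$ has rank $n - \#\text{cycles}$, so the normals span $V$ iff $\sigma$ is an $n$-cycle) is a nice way to see that $\sommer$ is bounded and a simplex.

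However, there is a gap at the crucial congruence step. You write, in parentheses, that $\sommer$ ``is the image of $m\Afund$ under a single element of $\aSn$, the walls of $\sommer$ being the images of the $n$ walls of $m\Afund$,'' but you never justify this. Your linear-algebra argument only establishes that $\sommer$ is a \emph{bounded simplex}; it does not by itself say that $\sommer$ is congruent to $m\Afund$ (as opposed to $m'\Afund$ for some other $m'$, say), nor that the isometry can be realized inside $\aSn$. Two things are missing. (i) You must identify the dilation factor: after conjugating by a $\tau\in\Sn$ taking $\sigma$ to the standard $n$-cycle (which exists precisely because $\sigma$ is an $n$-cycle), the inequalities become $y_i - y_{i+1} \ge d_i$ (indices mod $n$), and one must verify that $\sum_k c_k = m$ to conclude the region is a translate of $m\Afund$ rather than some other dilate. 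You never compute this sum, even though it is the numerical heart of ``$m$-dilated.'' (ii) Even granting (i), translating that region to $m\Afund$ is a Euclidean isometry — which suffices for your volume-ratio count — but your stronger parenthetical claim that it is a single element of $\aSn$ requires showing the translation vector lies in the root lattice, which is an extra (true, but unaddressed) coprimality argument. By contrast, the paper sidesteps both issues at once by explicitly writing down $\wm = [m-c, 2m-c, \ldots, nm-c]$ and directly verifying $\wm(m\Afund) = \sommer$ via the action $\w(H_{i,j}^k) = H_{\w(i),\w(j)}^k$ on walls; this also yields the concrete base point of $\sommer$, which the paper reuses later (e.g.\ in Lemma~\ref{isometry}'s corollary and the DAHA section). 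So: your approach can be completed, but as written the congruence with $m\Afund$ — the step that actually produces the number $m^{n-1}$ — is asserted rather than proved.
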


The simplex $D_n^{m}$  (first defined in \cite{fan,So}) plays the central role in our study. We show that the alcoves in it naturally label various algebraic and geometric objects such as cells in certain affine Springer fibres and nonsymmetric Macdonald polynomials at $q^m=t^n$.
We provide a clear combinatorial dictionary that allows one to pass from one description to another.

We define two maps $\An, \PS$ between the $m$-stable affine permutations and $m/n$-parking functions and prove the following results about them.

\begin{theorem} Maps $\An$ and $\PS$ satisfy the following properties:
\begin{enumerate}
\item The map $\An $ is a bijection for all $m$ and $n$. 
\item The map $\PS$ is a bijection for $m=kn\pm 1$. For $m=kn+1$, it is equivalent to the Pak-Stanley labeling of Shi regions.
\item The map $\PS\circ \An^{-1}$ generalizes the bijection $\zeta$ constructed by Haglund in \cite{Hd08}. More concretely, if one takes $m=n+1$ and restricts the maps $\An$ and $\PS$ to minimal length right coset
representatives of $\Sn\backslash\aSn,$ then $\PS\circ \An^{-1}$ specializes to Haglund's $\zeta.$ 
\end{enumerate}
\end{theorem}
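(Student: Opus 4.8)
The plan is to establish the three parts in increasing order of difficulty, using Proposition~\ref{intro sommers} as the geometric backbone throughout. For part~(1), I would construct an explicit inverse to $\An$. The map $\An$ sends an $m$-stable affine permutation $\w$ to an $m/n$-parking function; since by Proposition~\ref{intro sommers} the $m$-stable permutations are in bijection with alcoves of $D_n^m$, and since $D_n^m$ is isometric to the $m$-dilated fundamental alcove, it suffices to decode from the combinatorial data of $\w$ (its values on $\{1,\dots,n\}$, or equivalently its set of affine inversions $\Inv(\w)$) both the underlying Young diagram fitting under the diagonal of the $n\times m$ rectangle and the labeling of its rows. The natural candidate records, for each $i$, a ``height'' extracted from $\wi$ and a ``horizontal position'' extracted from the residue of $\w(i)$ modulo $n$; one checks that $m$-stability is exactly the condition guaranteeing the resulting diagram fits under the diagonal, and that distinct $\w$ give distinct labeled diagrams. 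Injectivity plus the equality of cardinalities ($m^{n-1} = |\Pmn|$, a known count of rational parking functions) then forces bijectivity, so the main content here is verifying that the decoding is well-defined and reversible — essentially a bookkeeping lemma relating affine inversions to the cells of the diagram.

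For part~(2), the case $m = kn+1$ should follow by identifying $D_n^{kn+1}$ with the dominant chamber's $k$-Shi arrangement: the $m$-stable affine permutations with $m=kn+1$ are precisely the minimal alcove representatives of the $k$-Shi regions, and one must check that $\PS$, as defined in the body, reproduces on these representatives the classical Pak--Stanley label. This is a matter of matching hyperplane-crossing data (which hyperplanes of the $k$-Shi arrangement separate a given alcove from the fundamental one) with the defining recursion or formula for $\PS$; the Pak--Stanley map is known to be a bijection onto $k$-parking functions, so bijectivity of $\PS$ for $m=kn+1$ transfers for free. The case $m = kn-1$ I would handle by a symmetry/duality argument: there is an involution on affine permutations (e.g. $\w \mapsto w_0 \w w_0$ composed with an appropriate shift, or conjugation by the longest element) carrying $m$-stable permutations for $m=kn-1$ to a dual family, under which $\PS$ intertwines with the $m=kn+1$ case; alternatively one argues directly that $\PS$ is injective by the same inversion-counting as in part~(1), then invokes cardinality. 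I expect the $m=kn-1$ case to need a genuinely separate (if short) argument, since $\PS$ is \emph{not} bijective for general $m$, so one cannot appeal to a uniform principle.

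For part~(3), I would take $m = n+1$ and restrict both maps to the minimal-length right coset representatives of $\Sn \backslash \aSn$. Under $\An$ these representatives map to the non-decreasing $(n+1)/n$-parking functions, i.e.\ to Dyck paths in the $n \times (n+1)$ rectangle (equivalently ordinary Dyck paths of length $n$), and the statistic carried along is $\area$; under $\PS$ the same representatives map to Dyck paths but now the relevant statistic is $\bounce$ (or $\dinv$, depending on normalization). The claim is that $\PS \circ \An^{-1}$ is exactly Haglund's $\zeta$. My strategy is to show both maps are governed by the same recursive ``peeling'' of the Dyck path: Haglund's $\zeta$ is defined by reading the path's vertical runs and reassembling them according to the bounce decomposition, and I would prove that translating this through the alcove picture — where moving between the $\An$-labeling and the $\PS$-labeling corresponds to a fixed reflection-group element relating the two coordinate systems on $D_n^{n+1}$ — reproduces precisely the run-reshuffling in $\zeta$. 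The cleanest route is to verify that $\PS \circ \An^{-1}$ matches $\zeta$ on a generating family or via an inductive step on path length, checking the base case and the single ``add a bounce block'' move.

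\medskip

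\noindent\textbf{Main obstacle.} The hardest step is part~(3): translating the purely combinatorial, run-based definition of $\zeta$ into the language of alcoves and affine permutations, and in particular pinning down \emph{exactly} which element of the affine Weyl group implements the passage from the $\An$-coordinates to the $\PS$-coordinates on $D_n^{n+1}$, so that its action visibly reshuffles vertical runs in the way $\zeta$ prescribes. Parts~(1) and~(2) are, by contrast, careful bookkeeping once the right decoding of affine inversions is in hand.
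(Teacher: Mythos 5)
Your plan for part~(1) is essentially sound and close to the paper's: the paper shows injectivity by noting that the labeled diagram $(D_\w,\tau_\w)$ uniquely determines $\Delta_\w$ and then $\w$, constructs a left inverse $\phi$, and proves surjectivity by showing $\phi(f)$ is always $m$-stable (via a contradiction argument on inversions of height $m$). Your alternative — injectivity plus the cardinality identity $|\aSmn| = m^{n-1} = |\Pmn|$, the first count coming from Proposition~\ref{intro sommers} — also closes the argument, and is a slightly cheaper finish. No gap there.

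For part~(2), your $m=kn+1$ route matches the paper: invoke \cite{FV1} to identify alcoves in $D_n^{kn+1}$ with minimal alcoves of $k$-Shi regions, then verify $\lambda\circ\alc=\PS$ by matching hyperplane-crossing data with the definition of $\PS$. But your two proposals for $m=kn-1$ both miss the mark. The ``conjugation by the longest element'' duality cannot work as stated: $D_n^{kn-1}$ contains $(kn-1)^{n-1}$ alcoves and $D_n^{kn+1}$ contains $(kn+1)^{n-1}$, so there is no involution interchanging them, and stability with respect to $kn-1$ is not conjugate to stability with respect to $kn+1$. What actually happens (Theorem~\ref{thm-max}, from \cite{FV2}) is that the alcoves in $D_n^{kn-1}$ are the \emph{maximal} alcoves of the \emph{bounded} $k$-Shi regions, a strict subset of all $k$-Shi regions, and one shows the Pak--Stanley labeling restricted to these has image exactly $\PF_{(kn-1)/n}\subset\PF_{(kn+1)/n}$; the inversion-counting argument is then nearly identical to the $kn+1$ case. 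Your fallback — ``argue directly that $\PS$ is injective by inversion-counting, then invoke cardinality'' — is in effect a proposal to prove Conjecture~\ref{Conjecture: bijectivity} directly, which is open for general coprime $m,n$; without the Shi-arrangement input there is no known elementary injectivity argument for $\PS$ in the affine case, so this is a genuine gap.

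For part~(3), your plan of verifying $\PS\circ\An^{-1}=\zeta$ by matching the bounce/run recursion is a legitimate (if laborious) alternative, but it is not what the paper does. The paper instead identifies the restricted maps with the diagram maps $D$ and $G$ of \cite{GM11,GM12}: after passing through the $m,n$-invariant-subset model $\invsub$ via $\hat R$, one proves $P\circ\An\circ\hat R^{-1}=D$ and $P\circ\PS\circ\star\circ\hat R^{-1}=G$, then cites the earlier result that $G\circ D^{-1}$ is Haglund's $\zeta$ when $m=n+1$. Two things you did not anticipate: first, the need for the involution $\star$, $\w^\star(x)=1-\w(1-x)$, which is \emph{not} an element of the affine Weyl group (it is an anti-automorphism), so your framing of ``which element of the affine Weyl group implements the coordinate change'' is conceptually off — $\An$ and $\PS$ are two labelings of the same alcove and are not related by a group element; second, the reduction to the $\invsub$/semimodule picture is what makes the computation tractable. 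A direct inductive proof along bounce blocks could in principle work, but you would end up re-deriving the $G\circ D^{-1}=\zeta$ result rather than merely specializing the construction, and you would still have to discover the $\star$ correction (or redefine $\PS$, as the paper notes one could) to make the identity exact.
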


\begin{remark}
For $m=n+1$ the bijection $\An$ is similar to the Athanasiadis-Linusson \cite{AL99}
labeling of Shi regions, but actually differs from it.
\end{remark}

\begin{conjecture}
The map $\PS$ is bijective for all relatively prime $m$ and $n$.
\end{conjecture}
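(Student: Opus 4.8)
We outline a possible approach to this conjecture. Since $\An$ is a bijection for all $m,n$ by part~(1) of the Theorem above, we have $|\maSn| = |\Pmn|$, and by Proposition~\ref{intro sommers} this common value equals $m^{n-1}$; thus $\PS$ is bijective as soon as it is surjective, equivalently injective. Factoring $\PS = (\PS\circ\An^{-1})\circ\An$ reduces the problem further: since $\An$ is a bijection, it suffices to show that the self-map $\PS\circ\An^{-1}$ of the finite set $\Pmn$ is injective, and by part~(3) of the Theorem this is precisely the statement that the rational-slope analogue of Haglund's $\zeta$ is injective.

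The main plan is to produce an explicit candidate map $\Psi\colon\Pmn\to\maSn$ and to prove that $\PS\circ\Psi = \id_{\Pmn}$, which yields surjectivity of $\PS$ and hence bijectivity. In the classical case $m = n+1$ the inverse of $\zeta$ is recovered from the \emph{bounce path} of a Dyck path, and one expects a rational ``inverse sweep'' to play the same role here: from an $m/n$-parking function, extract the partition fitting under the diagonal of the $n\times m$ rectangle, run the rational bounce algorithm to determine the order in which the columns (residues) of the rectangle are visited, and combine this order with the car labels to build the window $[\w(1),\dots,\w(n)]$ of an affine permutation. I would organize the verification in three steps: (i) define $\Psi$ on \emph{non-decreasing} $m/n$-parking functions from the bounce construction and check, using the $m$-stability criterion $\w(x+m) > \w(x)$ and the explicit description of $\sommer$ from Section~\ref{Subsection: Sommers region}, that the output is genuinely $m$-stable; (ii) extend $\Psi$ to all of $\Pmn$ compatibly with the way $\PS$ and the relabeling action treat car labels, so that $\PS\circ\Psi = \id$ need only be verified on non-decreasing inputs; (iii) establish that identity by tracking, step by step, that the sweep performed inside $\PS$ is reversed exactly by the bounce performed by $\Psi$.

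A complementary strategy is induction on $m$. There is an ``add a period'' inclusion $\maSn\hookrightarrow\maSnn{n}{m+n}$ of affine permutations (the condition $\w(x+m) > \w(x)$ forces $\w(x+m+n) > \w(x)$) together with a matching operation $\Pmn\to\PF_{(m+n)/n}$ on parking functions; if one can show that $\PS$ intertwines these, up to the induced maps, then bijectivity for arbitrary coprime $m$ would reduce to the base cases $m = kn\pm 1$ already established in part~(2) of the Theorem. The delicate point is that those cases rely on the facet-adjacency combinatorics of a genuine Shi-type hyperplane arrangement governing the Pak--Stanley labeling, whereas for general $m$ no such arrangement is available, and the local moves between adjacent alcoves of $\sommer$ need not descend to elementary single-box moves on $m/n$-parking functions.

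The crux, I expect, is step~(iii) above (or, in the inductive approach, the intertwining claim): proving $\PS\circ\Psi = \id$ amounts to showing that the rational $\bounce$/sweep transformation, which trades an $\area$-type statistic for a $\dinv$-type statistic, admits a canonical inverse. This is already delicate for Dyck paths and, in rational generality, lies in the same circle of problems as the sweep map bijectivity conjecture. A clean proof will most likely require either a finer invariant than the pair $(\area,\dinv)$ that is manifestly preserved by $\PS$ and yet rigid enough to separate all $m^{n-1}$ candidate preimages, or a direct recursive certificate that each sweep step can be undone; producing such a tool is the heart of the matter.
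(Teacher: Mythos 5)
The statement you are attempting is labeled a \emph{conjecture} in the paper; the authors do not prove it, but establish only the special cases $m = kn \pm 1$ (via the Shi arrangement and the Pak--Stanley labeling, Theorems \ref{Theorem: PS bijectivity kn+1} and \ref{Theorem: PS bijectivity kn-1}) and the finite analogue (Theorem \ref{Theorem: bijectivity finite}). So there is no proof in the paper to compare against, and your submission correctly treats this as an open problem rather than a routine exercise.

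Your reductions are sound. Since $\An$ is a bijection and $|\Pmn| = m^{n-1}$, injectivity and surjectivity of $\PS$ are equivalent; and factoring $\PS = (\PS \circ \An^{-1}) \circ \An$ transfers the problem to the self-map $\PS \circ \An^{-1}$ of $\Pmn$, i.e. to the rational zeta map. Your identification of the problem with rational bounce/sweep bijectivity is also accurate, and your remark about the ``add a period'' inclusion is correct: if $\w$ is $m$-stable then $\w(x+m+n) = \w(x+m) + n > \w(x)$, so $\maSn \subset \maSnn{n}{m+n}$.

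What you have, however, is a plan and not a proof. In your first strategy, steps (ii) and (iii) both hide real difficulties and you supply no argument for either. Step (ii) --- extending a candidate $\Psi$ from sorted to all parking functions ``compatibly with the relabeling action'' --- is obstructed by the fact that, unlike $\An$ (which satisfies the clean equivariance $\An_{v^{-1}\w} = \An_\w \circ v$ of Proposition \ref{prop-anderson-symmetry}), the map $\SP$ transforms under $s_i$ in a height-dependent way: when $0 < \w(i)-\w(i+1) < m$ one value shifts by $1$, while when $\w(i)-\w(i+1) > m$ it does not. There is no simple $\Sn$-equivariance to pull back along, so the reduction to sorted inputs is not free. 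Step (iii) --- that bounce inverts sweep in the rational setting --- is exactly the open crux, as you say. In your inductive alternative, the intertwining claim is where the proof would have to live, and the mismatch between counting inversions of height $< m$ versus $< m+n$ against the \emph{same} permutation does not obviously correspond to a tractable operation on parking functions; you flag this but do not resolve it. Until you produce an explicit inverse together with a verification that it is an inverse, or prove the intertwining, the conjecture remains open.
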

 
 The map $\PS$ has an important geometric meaning. In \cite{LS91} Lusztig and Smelt considered a certain  Springer fibre  $\CF_{m/n}$ in the affine flag variety and proved that it can be paved by $m^{n-1}$ affine cells. In \cite{GM11,GM12} a related subvariety of the affine Grassmannian 
 has been studied under the name of Jacobi factor, and a bijection between its cells and the Dyck paths in $m\times n$ rectangle has been constructed. In \cite{H12} Hikita generalized this combinatorial analysis and constructed a bijection between the cells in the affine Springer fiber and $m/n$-parking functions (in slightly different terminology). He gave a quite involved combinatorial formula for the dimension of a cell. We reformulate his result in terms of the map $\PS$.
 
\begin{theorem}
The affine Springer fiber $\CF_{m/n}$ admits a paving by affine cells $\Sigma_{\omega}$ naturally labeled by the $m$-stable affine permutations $\omega$. The dimension of such a cell equals 
$$\dim \Sigma_{\omega}=\sum_{i=1}^{n} \PS_{\omega}(i).$$
\end{theorem}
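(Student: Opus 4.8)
Our plan is to deduce the theorem from Hikita's paving of $\CF_{m/n}$ \cite{H12}, itself a refinement of the one constructed by Lusztig and Smelt \cite{LS91}, by translating it into the combinatorial language developed above. Recall that Lusztig and Smelt prove that $\CF_{m/n}\subset\Fl$ admits a paving by $m^{n-1}$ affine cells; these cells are indexed by the $I$-fixed points of $\Fl$ lying on $\CF_{m/n}$, and (up to the standard comparison of pavings) the cell $\Sigma_\omega$ is the attracting set, for a suitable $\C^*$-action on $\CF_{m/n}$, of the fixed point $\omega$ --- equivalently, the intersection of $\CF_{m/n}$ with the Iwahori orbit through $\omega$. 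The first step is to pin down this set of fixed points: one checks that the affine permutations occurring are exactly those whose alcoves lie in the Sommers region $D_n^m$, so that by Proposition \ref{intro sommers} they are naturally labelled by the $m$-stable affine permutations. This yields the paving $\CF_{m/n}=\bigsqcup_\omega\Sigma_\omega$ indexed as asserted, and it remains to identify $\dim\Sigma_\omega$ with $\sum_{i=1}^n\PS_\omega(i)$.

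For the dimension I would compare two expressions, each a sum of local contributions. On the geometric side, $\dim\Sigma_\omega$ is computed by the usual weight count on the tangent space $T_\omega\CF_{m/n}$: it equals the number of positive $\C^*$-weights there, which are indexed by an $\omega$-dependent set of affine roots, so that $\dim\Sigma_\omega$ is a count of certain ``$m/n$-restricted inversions'' of $\omega$, of exactly the kind measured by the set $\invsub$ that already enters the construction of $\PS$. On the combinatorial side, $\sum_{i=1}^n\PS_\omega(i)$ is by definition the number of boxes of the Young diagram that the map $\PS$ attaches to $\omega$, a diagram fitting under the diagonal of the $n\times m$ rectangle; this too is a sum of one contribution per box. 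The theorem is the assertion that these two sums agree, and I would prove it by matching them term by term.

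The main obstacle is the bookkeeping in this comparison, aggravated by the need to pass through Hikita's conventions, which do not quite coincide with ours. Concretely I would: (a) translate Hikita's labelling of the cells --- parking functions in his ``slightly different terminology'', cf.\ also the analysis of \cite{GM11,GM12} --- into the normalization used here, keeping careful track of the competing choices (row versus column lengths, the reading order of the parking-function values, $\gcd$ normalizations, and the exact placement of the diagonal); (b) rewrite Hikita's dimension statistic as a sum over the boxes of the staircase Young diagram in the $n\times m$ rectangle; and (c) check, using the explicit description of $\PS_\omega$ from Section \ref{Subsection: Sommers region}, that the per-box contributions coincide. Steps (a)--(c) are routine but notation-heavy, and essentially the entire content of the proof lies in getting this dictionary exactly right; granting it, $\dim\Sigma_\omega=\sum_{i=1}^n\PS_\omega(i)$ follows at once. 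Alternatively one could avoid Hikita's formula altogether and compute the tangent-space weights of $\CF_{m/n}$ directly from the definition of the affine Springer fibre, which would re-prove the Lusztig--Smelt count as a byproduct; this route is conceptually cleaner but requires a fair amount of setup in explicit loop-group coordinates, which we prefer to avoid.
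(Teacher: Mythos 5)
Your proposal takes a genuinely different route from the paper's proof, and moreover leaves the decisive step unexecuted.

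The paper does \emph{not} deduce the dimension formula from Hikita's paper. Instead it gives a self-contained direct argument: identify $V=\C^n((\varepsilon))$ with $\C((z))$ via $z=\varepsilon^{1/n}$, so that points of $\CF_{m/n}$ become flags of $\C[[z^n,z^m]]$-submodules; read off an affine permutation $\w$ from the order sets $\Ord(M_i)$, verify that $\w\in\maSn$; write down explicit affine coordinates $\lambda^\alpha_\beta$ on each stratum, indexed by inversions of $\wi$; and then show that the $\C[[z^n,z^m]]$-module condition imposes one (triangular) relation for each inversion of height $>m$. Subtracting, the free parameters are exactly the inversions of height $<m$, which is $\sum_i\SP_\w(i)=\sum_i\PS_{\wi}(i)$. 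This is constructive elbow grease in the spirit of the finite case (Theorem \ref{Theorem: cells of finite Springer fiber}), not a weight count on tangent spaces, and it yields the Lusztig--Smelt cell count and the identification of the indexing set with $\maSn$ simultaneously. The paper only cites Hikita and \cite{GKM} as an \emph{alternative} ``more abstract proof''.

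Your proposal instead plans to import Hikita's paving and dimension statistic and then reconcile the two parking-function normalizations. That would also work in principle, and it is the route suggested by the introduction's phrase ``we reformulate his result''. But as written it has a real gap: the entire mathematical content is pushed into items (a)--(c), which you yourself describe as ``routine but notation-heavy'' and do not carry out. These are precisely the steps where one must prove that Hikita's per-cell statistic coincides with $\sum_i\PS_\w(i)$ under the (nontrivial) dictionary between his labeling conventions and ours, and that equality is the theorem. Declaring it routine does not discharge it; the comparison involves matching an $\omega$-dependent set of affine roots against the inversion count entering $\PS$, and without the dictionary the claim is unverified. I would either (i) carry out the translation explicitly, keeping careful track of the inversion-height bookkeeping, or (ii) follow the paper's direct route: the computation with $\C[[z^n,z^m]]$-modules is only a couple of pages and avoids any external dictionary entirely. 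If you adopt (ii), note also that the paper is careful about a further subtlety you would need to address under (i): a priori the linear parts of the height-$>m$ relations could be dependent; the paper rules this out using coprimality of $m$ and $n$.

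One smaller inaccuracy: you describe $\dim\Sigma_\omega$ as ``the number of positive $\C^*$-weights'' on the tangent space. The paper's proof does not invoke a torus action at all; it builds the cell as an explicit affine space and counts coordinates after imposing relations. The two viewpoints agree (cf.\ \cite{GKM}), but they are not the same argument, and conflating them obscures which one you are actually using.
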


\begin{corollary}
If the map $\PS$ is a bijection (in particular, if $m=kn\pm 1$), then the Poincar\'e polynomial of $\CF_{m/n}$ is given by the following simple formula:
$$
\sum_{k=0}^{\infty}t^{k}\dim \Hom^{k}\left(\CF_{m/n}\right)=\sum_{f\in \PF_{m/n}}t^{2\sum_{i}f(i)}.
$$
\end{corollary}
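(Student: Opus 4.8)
The plan is to read the Poincar\'e polynomial directly off the affine paving provided by the preceding theorem, and then to re-index the resulting generating function by means of the map $\PS$.

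The first ingredient I would invoke is the standard fact that an affine paving of a finite-dimensional complex variety $X$ --- a filtration $\emptyset=X_{-1}\subset X_0\subset\cdots\subset X_N=X$ by closed subvarieties with each $X_i\setminus X_{i-1}$ a disjoint union of affine spaces --- determines its cohomology completely: $\Hom^{*}(X)$ is free, concentrated in even degrees, and $\dim\Hom^{2k}(X)$ equals the number of cells of complex dimension $k$, so that the Poincar\'e polynomial of $X$ is $\sum_{C}t^{2\dim_{\C}C}$, the sum running over the cells $C$. (This comes down to the long exact sequences attached to the filtration having vanishing connecting maps for parity reasons; it is cleanest to argue in Borel--Moore homology, where a cell $\cong\C^{d}$ contributes a fundamental class in degree $2d$, and then to dualize.) The paving of $\CF_{m/n}$ furnished by the preceding theorem is, by construction, of exactly this type, so the fact applies.

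Next I would apply this with $X=\CF_{m/n}$: since the cells $\Sigma_\omega$ are indexed by the $m$-stable affine permutations $\omega$ and have complex dimension $\sum_{i=1}^n\PS_\omega(i)$, it follows at once that
$$
\sum_{k=0}^{\infty}t^{k}\dim \Hom^{k}\left(\CF_{m/n}\right)=\sum_{\omega}t^{2\sum_{i=1}^{n}\PS_\omega(i)},
$$
with $\omega$ ranging over the $m$-stable affine permutations. Finally, invoking the hypothesis that $\PS$ restricts to a bijection from the set of $m$-stable affine permutations onto $\PF_{m/n}$ --- which, by the earlier theorem, is the case whenever $m=kn\pm1$ --- the substitution $f=\PS_\omega$ turns the right-hand side into $\sum_{f\in\PF_{m/n}}t^{2\sum_i f(i)}$, which is the desired formula. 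The argument is otherwise entirely formal; the one place that rewards care, and hence the ``hard part'', is the bookkeeping of cohomological degrees, namely making sure that a cell of complex dimension $d$ contributes in degree $2d$ and not $2(\dim_{\C}\CF_{m/n}-d)$. Phrasing the first step through Borel--Moore homology, where no Poincar\'e duality (and hence no passage to codimension) intervenes, makes this transparent, and it is consistent with the preceding theorem, in which $\sum_i\PS_\omega(i)$ is exhibited as the dimension, not the codimension, of $\Sigma_\omega$.
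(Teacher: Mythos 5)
Your proposal is correct and follows the paper's own argument: read the Poincar\'e polynomial from the even-dimensional affine paving of Theorem~4.3 (so $\dim\Hom^{2k}$ counts cells of complex dimension $k$), plug in $\dim\Sigma_\omega=\sum_i\PS_\omega(i)$, and re-index by the bijection $\PS$. The only difference is that you spell out the Borel--Moore/filtration justification for the paving $\Rightarrow$ Betti-numbers step, which the paper takes for granted.
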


It had been proven by Varagnolo, Vasserot and Yun  \cite{vv,yun} that the cohomology of affine Springer fibers $\CF_{m/n}$ carry the action of double affine Hecke algebra (DAHA). In fact, all finite-dimensional DAHA representations can be constructed this way. 
On the other hand, Cherednik, the third named author and Suzuki \cite{chered-fourier,SuVa}
gave a combinatorial description  of DAHA representations in terms of periodic standard Young tableaux and nonsymmetric Macdonald polynomials.

\begin{theorem}
There is a basis (of nonsymmetric Macdonald polynomials) in the finite-dimensional DAHA representation naturally labeled by the alcoves of the $m$-dilated fundamental simplex. By Proposition \ref{intro sommers}, these alcoves can be identified with the $m$-stable permutations $\omega$. The weight of such a nonsymmetric Macdonald polynomial can be explicitly computed in terms of the parking function $\An_\omega$. 
\end{theorem}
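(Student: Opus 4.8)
The plan is to realize the relevant finite-dimensional DAHA module as a subquotient of the polynomial representation, to identify its distinguished basis of nonsymmetric Macdonald polynomials with the $m$-stable affine permutations, and then to read off the weight through the map $\An$. \emph{Step 1 (the polynomial representation).} I would begin from the DAHA $\mathbb{H}$ of type $GL_n$ acting on $\C[x_1^{\pm1},\dots,x_n^{\pm1}]$ by the Demazure--Lusztig operators $T_i$ and the rotation $\pi$, together with the commuting Cherednik operators $Y_1,\dots,Y_n$. For generic parameters $(q,t)$ the nonsymmetric Macdonald polynomials $\{E_\mu : \mu\in\Z^n\}$ form the joint $Y$-eigenbasis, with $Y_i E_\mu = y_i(\mu)\,E_\mu$ and $y_i(\mu)$ an explicit monomial of the form $q^{\mu_i}t^{-c_i(\mu)}$, the integers $c_i(\mu)$ being recorded by the shortest permutation that sorts $\mu$ into a dominant weight, equivalently by the minimal alcove $\A_\mu$ of $\mu$. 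Since the eigenvalue tuples $(y_1(\mu),\dots,y_n(\mu))$ are pairwise distinct for generic $(q,t)$, the ``weight'' of $E_\mu$ determines $\mu$; I would also record the (standard) fact that this distinctness persists after the specialization below precisely because $\gcd(m,n)=1$.

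\emph{Step 2 (the finite-dimensional module and its index set).} Next I would specialize to the resonant value $q^m=t^n$ of Proposition~\ref{intro sommers} and pass to the distinguished finite-dimensional module $L_{m/n}$, which by the work of Cherednik and of Suzuki--Vazirani is cut out of the polynomial representation and has dimension $m^{n-1}$. Their combinatorial model presents a basis indexed by periodic standard Young tableaux of the relevant $(m,n)$-shape, equivalently by those $\mu\in\Z^n$ whose minimal alcoves $\A_\mu$ lie in a dilated fundamental alcove. The heart of the proof is to match this index set with the $m$-stable affine permutations: I claim that the $E_\mu$ indexing a basis of $L_{m/n}$ are exactly those for which the affine permutation $\w_\mu$ canonically attached to $\mu$ is $m$-stable, i.e. $\wi_\mu\in D_n^m$. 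Proposition~\ref{intro sommers} then simultaneously confirms the count $m^{n-1}$ and furnishes the identification of this basis with the alcoves of the $m$-dilated fundamental simplex.

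\emph{Step 3 (the weight via $\An$).} With the basis $\{E_{\mu(\w)} : \w\ \text{$m$-stable}\}$ in hand, it remains to rewrite the eigenvalue monomial $y_i(\mu(\w))=q^{\mu_i}t^{-c_i}$ in terms of the parking function $\An_\w$. By the construction of $\An$ from $\w$ given earlier, the values $\An_\w(i)$ encode precisely the data $(\mu_i,c_i)$ entering $y_i$, so that imposing $t^n=q^m$ collapses the two-parameter monomial to a single explicit expression in the $\An_\w(i)$; this is the asserted formula for the weight. This step is a direct, if bookkeeping-heavy, computation once the dictionary of Step 2 is available.

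\emph{Main obstacle.} The real work lies in Step 2: pinning down, with all normalizations consistent, the dictionary between three combinatorial worlds --- the surviving Macdonald indices $\mu$, the Cherednik--Suzuki--Vazirani periodic tableaux, and the $m$-stable affine permutations / alcoves of $D_n^m$ --- and in particular getting right the conventions for left versus right cosets, for $\w$ versus $\wi$, and for how the stability parameter $m$ is linked to the resonance $q^m=t^n$. One must also verify that the $Y$-eigenvalues remain pairwise distinct on $L_{m/n}$, which is exactly where coprimality of $m$ and $n$ is used; Steps 1 and 3 are essentially standard once this is settled.
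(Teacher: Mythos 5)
Your proposal follows the same route the paper takes: realize $L_{m/n}$ as the finite-dimensional quotient of the polynomial representation at $q^m=t^n$, identify the surviving nonsymmetric Macdonald polynomials $E_\sigma$ with the alcoves $\sigma\Afund$ inside the $m$-dilated fundamental alcove, transport to the Sommers region $D_n^m$ via Proposition \ref{intro sommers}, and then express the $Y$-weight through $\An_\omega$. However, the gap is exactly where you flag the ``main obstacle,'' and that obstacle is the content of the proof rather than bookkeeping around it. Concretely, the paper closes Step 2 by exhibiting the explicit isometry $\wm=[m-c,2m-c,\ldots,nm-c]$ of Lemma \ref{isometry} with $\wm(m\Afund)=D_n^m$, and then proves, by checking the base case at $E_{\id}$ and propagating along the wall-crossing rules \eqref{reflections for weights}, that after writing $a(\sigma)=(q^{b_1(\sigma)/n},\ldots,q^{b_n(\sigma)/n})$ under $t=q^{m/n}$ one has $(b_1(\sigma),\ldots,b_n(\sigma))=(\w(1),\ldots,\w(n))$ for $\w=\wm\sigma\in\maSn$; in particular the relevant window element living in $D_n^m$ is $\wm\sigma$, not $\sigma$ or $\sigma^{-1}$ directly, so ``$\wi_\mu\in D_n^m$'' as written needs this twist by $\wm$ made explicit. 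Step 3 is likewise not a straight evaluation of $q^{\mu_i}t^{-c_i}$: the paper passes to the periodic filling $T_a:\ZZ^2\to\ZZ$ built from the weight, shows the weight survives in $L_{m/n}$ exactly when $T_a$ is column-standard (equivalently $m$-stability), and then identifies the resulting $(m,n)$-invariant subset with the Anderson diagram, which is what produces the asserted formula in terms of $\An_\omega$. So your decomposition into steps is correct, but the proposal as written asserts rather than supplies the two concrete ingredients (the permutation $\wm$ with its base-case/wall-crossing verification, and the periodic tableau dictionary) that constitute the paper's proof.
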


The maps $\An$ and $\PS$ can be used to define two statistics on $m$-stable permutations (or, equivalently, on $m/n$-parking functions):
$$
\area(\omega):=\frac{(m-1)(n-1)}{2}-\sum \An_{\omega}(i),\ \dinv(\omega):=\frac{(m-1)(n-1)}{2}-\sum \PS_{\omega}(i).
$$
For the case $m=n+1$ Armstrong showed in \cite{Ar11} (in slightly different terms) that $\area$ and $\dinv$ statistic agrees with the statistics defined in \cite{HHLRU} as a part of ``Shuffle Conjecture''. 

\begin{conjecture}
The combinatorial Hilbert series 
$$H_{m/n}(q,t):=\sum_{\omega}q^{\area(\omega)}t^{\dinv(\omega)}$$ 
is symmetric in $q$ and $t$ for all $m$ and $n$: 
$$
H_{m/n}(q,t)=H_{m/n}(t,q).
$$
\end{conjecture}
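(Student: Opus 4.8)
Since even the classical case $m=n+1$ resists a direct combinatorial involution exchanging $\area$ and $\dinv$, the plan is not to construct such an involution on $m$-stable permutations by hand, but to realize $H_{m/n}(q,t)$ as a specialization of an object in which the $q\leftrightarrow t$ symmetry is structural. First I would enrich the generating function to a quasi-symmetric function
$$
\widetilde{H}_{m/n}(q,t;x)\;=\;\sum_{\omega} q^{\area(\omega)}\,t^{\dinv(\omega)}\,F_{D(\omega)}(x),
$$
where the sum is over $m$-stable affine permutations $\omega$, $F_{D(\omega)}$ is Gessel's fundamental quasi-symmetric function, and $D(\omega)\subseteq\{1,\dots,n-1\}$ is a descent-type set read off from the parking function $\An_{\omega}$ (equivalently $\PS_{\omega}$); the series $H_{m/n}(q,t)$ is then recovered by pairing with $h_1^n$, i.e. by extracting the coefficient of $x_1x_2\cdots x_n$. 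It suffices to prove $\widetilde{H}_{m/n}(q,t;x)=\widetilde{H}_{m/n}(t,q;x)$. This is precisely the content of the rational Shuffle conjecture: $\widetilde{H}_{m/n}$ should be symmetric and should equal the symmetric function obtained by applying the elliptic Hall algebra operator $Q_{m,n}$ (in the coprime case, $\nabla^{m/n}$ applied to $e_n$) to $1$ in the ring of symmetric functions, an object which carries a manifest $q,t$-symmetry.

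\textbf{Main steps.} \emph{Step 1:} verify that $\widetilde H_{m/n}$ is the correct combinatorial side. This uses the Theorem relating $\PS$ to the dimensions of the affine Springer cells $\Sigma_\omega\subset\CF_{m/n}$ (so that $\sum_i\PS_\omega(i)$, hence $\dinv(\omega)$, is intrinsic), the definition of $\area$ through the bijection $\An$, and Armstrong's matching \cite{Ar11} in the case $m=n+1$ to calibrate the statistics against those of \cite{HHLRU}. \emph{Step 2:} establish $\widetilde H_{m/n}=Q_{m,n}(1)$, for instance by showing both sides obey the same recursion under $(m,n)\mapsto(m-n,n)$, matching the $\mathrm{SL}_2(\Z)$-structure of the elliptic Hall algebra against a ``rotation'' of the combinatorics of $\CF_{m/n}$; or directly inside the finite-dimensional DAHA representation at $q^m=t^n$ of the last Theorem, in which the nonsymmetric Macdonald basis is labeled by $m$-stable $\omega$ and $(\area(\omega),\dinv(\omega))$ should realize its bidegree. \emph{Step 3:} deduce the $q\leftrightarrow t$ symmetry of $Q_{m,n}(1)$ geometrically, by identifying it with the bigraded Frobenius character of the stable cohomology of Hilbert schemes of points on the plane curve singularity $\{x^m=y^n\}$ — equivalently, a normalized version of the triply graded HOMFLY homology of the $(m,n)$-torus knot — which is $q\leftrightarrow t$ symmetric by mirror symmetry of the knot (self-duality of the relevant perverse sheaves on the compactified Jacobian); alternatively, run Step 3 through the Cherednik Fourier transform and the Macdonald $q\leftrightarrow t$ duality on the DAHA module.

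\textbf{Main obstacle.} The crux is Step 3 together with the half of Step 2 that matches $\area$ to a grading. Unlike $\dinv$, which the Theorem above endows with a clean geometric meaning (codimension of the affine Springer cells $\Sigma_\omega$), the statistic $\area$ is defined through the combinatorially unrelated map $\An$, and at present there is no single space or module on which $\area$ and $\dinv$ appear as two visibly interchangeable gradings. Constructing such an object, and matching both statistics to it on the nose, is exactly the point where all known proofs in the case $m=n+1$ pass from combinatorics to the representation theory of diagonal harmonics, with no elementary substitute expected; indeed even the symmetry of the rational Catalan number $C_{m/n}(q,t)=\sum_{\text{Dyck paths}}q^{\area}t^{\dinv}$ is currently known only for $m\equiv\pm1\pmod n$, so Steps 2--3 in general presuppose results well beyond the scope of this paper. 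A plausible route to make the argument unconditional would be a torus-equivariant localization on the Hilbert scheme above whose fixed points are indexed by $m$-stable permutations with equivariant weights $(\area,\dinv)$; producing that localization, with the two torus characters manifestly symmetric, is the essential gap.
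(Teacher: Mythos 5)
This statement is labeled a \emph{conjecture} in the paper, and the paper offers no proof of it. What the paper does establish is the much weaker specialization $H_{m/n}(q,1)=H_{m/n}(1,q)$, and even that only conditionally on the (also conjectural) bijectivity of $\PS$; the proof of the weak symmetry is a one-line chain of equalities passing the sum over $\omega\in\aSmn$ through the two maps $\An$ and $\PS$ to the set $\PF_{m/n}$, valid unconditionally only when $m=kn\pm 1$. The paper also records (following \cite{gn}) that the full symmetry would follow from the Rational Shuffle Conjecture, but does not attempt a proof.

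Your proposal correctly identifies this lineage — lifting $H_{m/n}$ to a quasi-symmetric enrichment, matching it with $Q_{m,n}\cdot 1$, and appealing to a geometric or DAHA-theoretic self-duality — and your diagnosis of where the difficulty lies (realizing $\area$ and $\dinv$ as two visibly interchangeable gradings on a single object, e.g.\ a torus-equivariant localization on a Hilbert scheme) is accurate. However, as you yourself acknowledge in your ``Main obstacle'' paragraph, Steps 2 and 3 are not carried out and presuppose results that do not exist in the literature (the Rational Shuffle Conjecture was only resolved later by Mellit, and even then the identification of the combinatorial $\area$ with a grading on the relevant module is not automatic). What you have written is a research program, not a proof, and it should not be presented as resolving the conjecture. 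If the goal were to match the paper's actual content, you would instead prove the weak symmetry
\begin{equation*}
H_{m/n}(q,1)=\sum_{\omega}q^{\frac{(m-1)(n-1)}{2}-\sum \An_\omega(i)}=\sum_{f\in\PF_{m/n}}q^{\frac{(m-1)(n-1)}{2}-\sum f(i)}=\sum_{\omega}q^{\frac{(m-1)(n-1)}{2}-\sum \PS_\omega(i)}=H_{m/n}(1,q)
\end{equation*}
using bijectivity of $\An$ (Theorem in Section 3) and of $\PS$ (Theorems in Section 4, valid for $m=kn\pm 1$), which is the strongest statement the paper actually proves in this direction.
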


To support this conjecture, let us remark that the ``weak symmetry''  $H_{m/n}(q,1)=H_{m/n}(1,q)$ would follow from the bijectivity of the map $\PS.$ Indeed,
$$
H_{m/n}(q,1)=\sum_{\omega}q^{\frac{(m-1)(n-1)}{2}-\sum \An_{\omega}(i)}=\sum_{f\in \PF_{m/n}}q^{\frac{(m-1)(n-1)}{2}-\sum f(i)}=\sum_{\omega}q^{\frac{(m-1)(n-1)}{2}-\sum \PS_{\omega}(i)}=H_{m/n}(1,q).
$$ 
The second equation follows from the bijectivity of the map $\An$, and the third one follows from the bijectivity of the map $\PS$. In particular, the ``weak symmetry'' holds for $m=kn\pm 1.$

Surprisingly enough, we found a version of the map $\PS$ for the finite symmetric group $S_n$. A permutation $\omega\in S_n$ is called $m$-stable, if $\omega(i+m)>\omega(i)$ for all $i\le n-m.$ It is easy to see that the number of $m$-stable permutations is given by a certain multinomial coefficient. We define $\PS_\w(\w(i))$ as the number of inversions of height at most $m$ in $\omega$, containing $i$ as the right end. 

\begin{theorem}
The restriction of the map $\PS$ to the finite symmetric group $\Sn$
is injective for all $m$ and $n$.
\end{theorem}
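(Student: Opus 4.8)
The plan is to show that an $m$-stable $\w\in\Sn$ can be reconstructed from the function $\PS_\w$, which immediately gives injectivity. The reconstruction will proceed value by value, locating the positions $i_k:=\wi(k)$ for $k=1,2,\dots,n$ in this order.

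First I would rephrase the hypothesis. Saying that $\w\in\Sn$ is $m$-stable, i.e.\ $\w(i)<\w(i+m)$ whenever $1\le i\le n-m$, is the same as saying that $\w$ is strictly increasing along each residue class of positions modulo $m$; in particular $\w$ is determined by the partition of $\{1,\dots,n\}$ into the value sets of the $m$ residue classes of positions. I would also record that, with $v=\w(i)$, the number $\PS_\w(v)$ is the number of $j\in[i-m,i-1]\cap[1,n]$ with $\w(j)>v$; since $\w(j)\ne v$ for $j\ne i$, this is the number of such $j$ whose value does not belong to $\{1,\dots,v\}$.

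The heart of the argument is the inductive step. Assume the positions $i_1,\dots,i_{k-1}$ of the values $1,\dots,k-1$ are already known, and set $S=\{i_1,\dots,i_{k-1}\}$ and $T=\{1,\dots,n\}\setminus S$. Because $\w$ increases along each residue class and $1,\dots,k-1$ are the smallest values, within every residue class the $S$-positions form an initial segment and the $T$-positions a final segment; hence $k$, being the least value placed in $T$, must occupy the \emph{first} $T$-position of its residue class. Call a position with this property a \emph{candidate}, and list the candidates as $q_{(1)}<q_{(2)}<\dots<q_{(t)}$; there is at most one per class, so $t\le m$, and $i_k$ is one of them. Now I would evaluate $\PS_\w(k)=|[i_k-m,i_k-1]\cap T|$ (for $j$ in this window $\w(j)>k$ is equivalent to $j\in T$, since $j\ne i_k$). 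The window $[i_k-m,i_k-1]$ is a block of $m$ consecutive integers, so it contains exactly one representative of each residue class modulo $m$: the representative in $i_k$'s own class is $i_k-m$, which lies outside $T$ precisely because $i_k$ is the first $T$-position of that class; the representative of another class $c'$ is the largest $c'$-position below $i_k$, and it lies in $T$ if and only if class $c'$ has a candidate smaller than $i_k$. Therefore $\PS_\w(k)$ equals the number of candidates below $i_k$, which forces $i_k=q_{(\PS_\w(k)+1)}$. Thus $i_k$ is pinned down by $\PS_\w$ and the data $i_1,\dots,i_{k-1}$; starting from $k=1$ (where $S=\emptyset$ and $q_{(j)}=j$, so $i_1=\PS_\w(1)+1$) the induction reconstructs all of $\w$.

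I expect the one genuinely delicate point to be the bookkeeping inside the inductive step: establishing the initial/final-segment structure of $S$ and $T$ in each residue class (immediate from monotonicity along classes) and, more importantly, identifying precisely which of the $m$ window positions lies in $T$, so that the a priori unwieldy count $\PS_\w(k)$ collapses to ``number of candidates below $i_k$''. This is exactly the place where $m$-stability is used. As a sanity check, when $n\le m$ every permutation of $\Sn$ is $m$-stable, every position of $T$ is a candidate at every stage, and the reconstruction specializes to the classical recovery of a permutation from its inversion table.
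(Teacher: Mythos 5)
Your proof is correct. You take the same high-level strategy as the paper's first proof—reconstruct $x_k=\wi(k)$ for $k=1,2,\dots,n$ in order—but your key step is organized differently. The paper introduces the auxiliary count $\varphi_k(y)=\sharp\bigl((y-m,y)\cap\{1,\dots,n\}\setminus\{x_1,\dots,x_{k-1}\}\bigr)$, proves it is non-decreasing on the not-yet-used positions, shows $\varphi_k(y)<\varphi_k(x_k)$ for any unused $y<x_k$ (using that $x_k-m$ must already be placed), and then recovers $x_k$ as the smallest unused $y$ with $\varphi_k(y)=\PS_\w(k)$. You instead exploit the residue-class structure explicitly: $m$-stability means $\w$ increases along each class mod $m$, so the used positions $S$ form an initial segment of each class; $x_k$ must therefore be a \emph{candidate} (the first unused position of some class), and for each other class the unique window representative lies in $T$ exactly when that class has a candidate below $x_k$. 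This collapses $\PS_\w(k)$ to the number of candidates below $x_k$ and yields the closed form $x_k=q_{(\PS_\w(k)+1)}$, which is sharper and more transparent than the monotonicity argument (and makes the ``$x_k-m\in S$'' observation from the paper's proof a visible consequence of the candidate definition rather than a separate remark). One cosmetic point: the window for $\PS_\w(k)$ is really $(i_k-m,i_k)\cap[1,n]$, i.e.\ $m-1$ consecutive integers, not $m$; your use of $[i_k-m,i_k-1]$ is harmless since you correctly note that $i_k-m$ never contributes, but the statement as written slightly overstates the window.
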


 For example, for $m=2$ the map $\PS$ provides a bijection between the set of $2$-stable permutations in $S_n$ and the set of length $n$ Dyck paths with free right end. We also discuss a relation of this finite version of our construction to the theory of Springer fibers.
 
The rest of the paper is organized as follows. In Section \ref{Section: Tools and Definitions} we introduce and review the main ingredients of our construction: rational slope parking functions, affine permutations, and Sommers regions. In Section \ref{Section: Main Constructions} we construct the maps $\An$ and $\PS$ from the set of $m$-stable affine permutations to the set of rational slope parking functions and prove that $\An$ is a bijection. We also discuss the statistics arising from our construction and introduce the combinatorial Hilbert polynomial. In Section \ref{Section: Pak-Stanley} we study the case $m=kn\pm 1$ and its relation to the theory of extended Shi arrangements and Pak-Stanley labeling. In Section \ref{Section: zeta} we discuss the specializations of the maps $\An$ and $\PS$ to minimal length
coset representatives and their relation to Haglund's bijection $\zeta.$ In Section \ref{Section: DAHA} we relate our construction to the theory of finite dimensional representations of Cherednik's DAHA and nonsymmetric Macdonald polynomials. In Section \ref{Section: finite} we discuss a version of the map $\PS$ for the finite symmetric group and prove its injectivity. In Section \ref{Section: Springer} we discuss how our constructions are related to the theory of Springer fibers. Finally, we consider some examples for $m\neq kn\pm 1$ in Section \ref{Section: Examples}. 

It is worth to mention that the combinatorial structure of the dilated fundamental alcove has been recently investigated in \cite{ThWi}, where the alcoves in it 
were labeled by certain sequences of numbers (but not parking functions).  We plan to investigate the connections of our work to \cite{ThWi} in the future.
 
\omitt{ 
Classical parking functions play an important role in the study of diagonal harmonics.
The Shuffle conjecture implies that the Hilbert polynomial of the space of diagonal harmonics can be obtained by counting parking functions with respect to the $\area$ and $\dinv$ statistics.
Conjecturally, the rational slope analogue of such a sum should be equal to the Hilbert polynomial of the associate graded of an irreducible representation of the rational Cherednik algebra, equipped with an appropriate filtration.
Another motivation to study rational slope generalizations comes from the Affine Springer theory.
Hikita showed that a certain affine Springer fiber can be decomposed into affine cells, enumerated by $m/n$ parking functions with dimensions of cells given by $\dinv$-like formulas.
We construct two
 bijections between the set of $m/n$ parking functions and the subset of the affine symmetric group $\aSn$ consisting of affine permutations with no inversions of height $m.$
We call such affine permutations {\it $m$-stable.}
The papers \cite{GM11} and \cite{GM12} show that the dimension of the corresponding cell in the affine Springer fiber is given by the area of the image of yet another map from the set of $m$-stable affine permutations to the set of $m/n$ parking functions.
We conjecture that this second map is a bijection and prove it for $m=kn\pm 1.$
We also relate our construction to two previously known combinatorial constructions: Haglund's bijection $\zeta$ exchanging pairs of statistics $(\area,\dinv)$ and $(\bounce, \area)$ on Dyck paths,
and the Pak-Stanley labeling of the regions of $k$-Shi hyperplane arrangements by $k$-parking functions.
Essentially, our approach can be viewed as a generalization and a unification of these two constructions.   
} 

\omitt
{Many of the concepts above have ``rational" counterparts corresponding to a coprime pair
$(m,n)$, for which the classical case is $(n+1,n)$.
In \cite{GM11} and \cite{GM12} the first and the second authors proved that the rational Dyck paths label the affine cells in a certain algebraic variety,
the compactified Jacobian of a plane curve singularity with one Puiseaux pair $(m,n)$.
This work was generalized by Hikita in \cite{H12} who proved that $m/n$ parking functions naturally label the cells in a certain Springer fibre in the affine flag variety for the affine symmetric group $\aSn$. 
As a consequence of this paper, we can  use our construction to prove new formulas for the
Poincar\'e
polynomials of certain affine Springer fibers relying on the
work of \cite{GM11} and \cite{GM12}.

The present paper is dedicated to the systematic study of the $m/n$ parking functions in terms of the affine symmetric group $\aSn$.
We consider the set $\aSmn$ of all affine permutations in $\aSn$ with no inversions
of height $m$ and two maps $\An$ and $\PS$ from $\aSmn$ to the set of $m/n$ parking
functions $\Pmn$.
When one composes $\PS$ with $\An^{-1}$,
one recovers the zeta map $\zeta$ of Haglund, which was used to give a combinatorial proof of the weak symmetry property of the $q,t$-Catalan numbers.}

\section*{Acknowledgements}

The authors would like to thank American Institute of Mathematics (AIM) for hospitality, and  D. Armstrong, F. Bergeron, S. Fishel, I. Pak, R. Stanley, V. Reiner, B. Rhoades, A. Varchenko, G. Warrington  and N. Williams for useful discussions and suggestions.
The work of E. G. was partially supported by the grants RFBR-13-01-00755, NSh-4850.2012.1.
The work of M.~V. was partially supported by the grant NSA MSP H98230-12-1-0232.

\section{Tools and definitions}\label{Section: Tools and Definitions}

We start with a brief review of the definitions and basic results involving parking functions, affine permutations, and hyperplane arrangements, which will play the key role in our constructions.

\subsection{Parking Functions}

\begin{definition}
A function $f:\{1,\dots, n\}\to \BZ_{\ge 0}$ is called an
{\em  $m/n$-parking function}
 if the Young diagram with row lengths equal to $f(1),\dots, f(n)$ put in the decreasing order,
bottom to top,
 fits under the diagonal in an $n\times m$ rectangle. The set of such functions is denoted by $\Pmn.$
\end{definition}

We  will often use the notation $f=\pw{f(1)f(2)\ldots f(n)}$ for parking functions.

\begin{example}
\label{example:pf diagram}
Consider the function $f:\{1,2,3,4\}\to \BZ_{\ge 0}$ given by $f(1)=2,$ $f(2)=0,$ $f(3)=4,$ and $f(4)=0$ (i.e. $f=\pw{2040}$). The corresponding Young diagram fits under the diagonal in a $4\times 7$ rectangle, but it does not fit under the diagonal in a $4\times 5$ rectangle. Therefore, $f\in PF_{7/4}$ but $f\notin PF_{5/4}$ (see Figure \ref{Figure: parking function}).

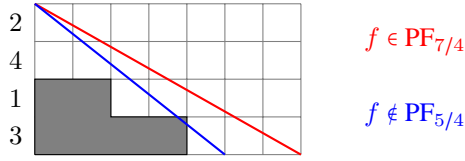
\begin{figure}
\begin{center}
\begin{tikzpicture}
\draw [step=0.5, gray] (0,0) grid (3.5,2);

\draw [red, thick] (0,2)--(3.5,0);

\filldraw [fill=gray] (0,0)--(0,1)--(1,1)--(1,0.5)--(2,0.5)--(2,0)--(0,0);

\draw [blue, thick] (0,2)--(2.5,0);

\draw (-0.25,1.75) node {$2$};
\draw (-0.25,1.25) node {$4$};
\draw (-0.25,0.75) node {$1$};
\draw (-0.25,0.25) node {$3$};

\draw [red] (5,1.5) node {$f\in\mbox{PF}_{7/4}$};

\draw [blue] (5,0.5) node {$f\notin\mbox{PF}_{5/4}$};

\end{tikzpicture}
\end{center}
\caption{The labeled diagram for the parking function $f=\pw{2040}.$}
\label{Figure: parking function}
\end{figure}
\end{example}

Equivalently, a function $f:\{1,\ldots,n\}\to \mathbb Z_{\ge 0}$ belongs to $\PF_{m/n}$ if and only if it satisfies one of the following two equivalent conditions:

\begin{equation*}\label{Definition: m/n parking}
\forall \ell\in \{0,\ldots,m-1\},\ \sharp\{k\in\{1,\dots, n\} \mid 
f(k)< \ell\}\ge \frac{\ell n}{m},
\end{equation*}
or
\begin{equation*}
\forall i\in \{0,\ldots,n-1\},\ \sharp\{k\in\{1,\dots, n\} \mid f(k)\le \frac{im}{n}\}\ge i+1.
\end{equation*}

Let $P:\Pmn\to\Ymn$ denote the natural map from the set of parking functions to the set $\Ymn$ of Young diagrams that fit under diagonal in an $n\times m$ rectangle. To recover a parking function $f\in\Pmn$ from the corresponding Young diagram $P(f)$ one needs some extra information. Lengths of the rows of $P(f)$ correspond to the values of $f,$ but one needs also to assign the preimages to them. That is, one should label the rows of $P(f)$ by integers $1,2,\ldots,n.$ Note that if $P(f)$ has two rows of the same length, then the order of the corresponding labels does not matter. One should choose one of the possible orders. We choose the decreasing order (read from bottom to top).  

\begin{definition}
Let $\lYmn$ denote the set of couples $(D,\tau)$ of a Young diagram $D\in\Ymn$ and a (finite) permutation $\tau\in\Sn,$ such that if $k$th and $(k+1)$th rows of $D$ have the same length, than $\tau(k+1)<\tau(k).$ We will refer to $\tau$ as the row-labeling of $D.$ 
\end{definition}
Note that $\tau \in \Sn$ is the permutation of maximal length such that $f \circ \tau$
is non-increasing. 

\begin{example}
In Example \ref{example:pf diagram}, one has $\tau = [3,1,4,2]$, so $f\circ\tau=\pw{2040} \circ [3,1,4,2] = \pw{4200}$.
\end{example}


We get the following lemma:

\begin{lemma}\label{Lemma:PF <-> labeled diagrams}
The set of $m/n$-parking functions $\Pmn$
is in bijection with
the set of labeled Young diagrams $\lYmn.$  
\end{lemma}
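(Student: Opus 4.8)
The plan is to exhibit a bijection between $\Pmn$ and $\lYmn$ by carefully unwinding the row-labeling convention, and to check that the two directions are mutually inverse. The candidate map $\Phi\colon \Pmn \to \lYmn$ sends a parking function $f$ to the pair $(P(f), \tau)$, where $P(f)$ is the Young diagram whose multiset of row lengths (read bottom to top in non-increasing order) is $\{f(1),\dots,f(n)\}$, and $\tau\in\Sn$ is the unique permutation of maximal length such that $f\circ\tau$ is non-increasing, as noted in the remark following the definition of $\lYmn$. In the other direction, $\Psi\colon \lYmn \to \Pmn$ sends $(D,\tau)$ to the function $f$ defined by $f(\tau(k)) = D_k$, where $D_k$ denotes the length of the $k$th row of $D$ (counted from the bottom); equivalently $f = (D_1,\dots,D_n)\circ\tau^{-1}$ read as a word.

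First I would verify that $\Phi$ is well defined: given $f\in\Pmn$, the diagram $P(f)$ lies in $\Ymn$ by the very definition of an $m/n$-parking function, and I must check the compatibility condition on $\tau$, namely that whenever rows $k$ and $k+1$ of $P(f)$ have equal length, we have $\tau(k+1)<\tau(k)$. This is exactly the content of "$\tau$ is the permutation of maximal length such that $f\circ\tau$ is non-increasing": among all ways to sort the values $f(1),\dots,f(n)$ into non-increasing order, ties must be broken by placing the larger index first in order to maximize length (each such choice on a block of equal values contributes the maximal possible number of inversions). I would spell this out: on a maximal run of indices $k,k{+}1,\dots,k{+}r$ with $f(\tau(k))=\dots=f(\tau(k{+}r))$, maximality of $\l(\tau)$ forces $\tau(k)>\tau(k{+}1)>\dots>\tau(k{+}r)$, which is precisely the required condition for $(P(f),\tau)\in\lYmn$. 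Next I would verify that $\Psi$ is well defined: if $(D,\tau)\in\lYmn$ then $f=(D_1,\dots,D_n)\circ\tau^{-1}$ has its sorted (non-increasing) sequence of values equal to $(D_1,\dots,D_n)$, whose Young diagram is $D\in\Ymn$, so $f\in\Pmn$ by the definition of parking function.

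Then I would check $\Psi\circ\Phi=\id_{\Pmn}$ and $\Phi\circ\Psi=\id_{\lYmn}$. For the first, start with $f$, form $(P(f),\tau)$; then $\Psi(P(f),\tau)$ has value at $\tau(k)$ equal to the $k$th row length of $P(f)$, which equals $(f\circ\tau)(k)$ by construction of $\tau$, so $\Psi(P(f),\tau)=f$. For the second, start with $(D,\tau)$, set $f=\Psi(D,\tau)$; then the sorted value sequence of $f$ is $(D_1,\dots,D_n)$ so $P(f)=D$, and $f\circ\tau=(D_1,\dots,D_n)$ is non-increasing; it remains to see that this particular $\tau$ is the one of maximal length associated to $f$ by $\Phi$, which follows because the condition "$\tau(k{+}1)<\tau(k)$ whenever $D_k=D_{k+1}$" in the definition of $\lYmn$ is exactly the tie-breaking rule that characterizes the maximal-length sorting permutation. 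Hence $\Phi$ and $\Psi$ are mutually inverse bijections.

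The only genuinely delicate point — and the one I would be most careful about — is the uniqueness and characterization of the maximal-length sorting permutation $\tau$ and its equivalence with the combinatorial constraint imposed in the definition of $\lYmn$. Everything else is bookkeeping. I would therefore isolate a short sublemma: \emph{for any $f\colon\{1,\dots,n\}\to\Z_{\ge0}$ there is a unique $\tau\in\Sn$ of maximal length with $f\circ\tau$ non-increasing, and $\tau$ is characterized by the property that $f\circ\tau$ is non-increasing and $\tau$ is decreasing on each maximal block of indices where $f\circ\tau$ is constant}, prove it by the elementary exchange/inversion-counting argument sketched above, and then the Lemma drops out immediately.
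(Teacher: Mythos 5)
Your proof is correct. The paper itself gives no proof of this lemma, treating it as immediate from the preceding definitions and the remark that $\tau$ is the maximal-length permutation making $f\circ\tau$ non-increasing; your argument simply spells out the two directions and the inversion-maximizing characterization of $\tau$ in full, which is exactly the intended reasoning.
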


\begin{remark}
Note that for $m=n+1$ the set $\Pmn$ is exactly the set of classical parking functions $\PF$, and for $m=kn+1$ it is the set of $k$-parking functions $\PF_k$ (e.g. \cite{Hd08}).
\end{remark}

From now on we will assume that $m$ and $n$ are coprime, so there are no lattice points in the diagonal of $n\times m$ rectangle.
By abuse of notation, we will call a non-decreasing parking function increasing. The number of increasing parking functions equals
to the generalized Catalan number $\sharp \Ymn = \frac{1}{n+m}\binom{n+m}{n}$. The number of all parking functions equals $m^{n-1}$.


\subsection{Affine Permutations}\label{Subsection: Affine Permutations}

\begin{definition}
The {\it affine symmetric group} $\aSn$ is generated by elements $s_1,\ldots,s_{n-1},s_0$ subject to the relations
\begin{enumerate}
\item[(a)] $s_i^2=1,$
\item[(b)] $s_is_j=s_js_i$ for $i-j\not\equiv \pm 1$ mod $n,$
\item[(c)] $s_is_js_i=s_js_is_j$ for $i-j\equiv \pm 1$ mod $n$ \, (if $n > 2$).
\end{enumerate} 
\end{definition}
Let 
$$
\x=\left( \begin{array}{l} x_1 \\ \vdots \\ x_n \end{array} \right),\ \ V:=\{\x\in \R^n \mid x_1+\ldots+x_n=0\}\subset\mathbb R^n$$
 and let $H_{ij}^k$ be the hyperplane
$\{ 
\x\in V \mid x_i-x_j=k\}\subset V.$
The hyperplane arrangement
\omitt{
$\aBrn=\{H_{ij}^m:0<j<i\le n,m\in\BZ\}$
} 
$\aBrn=\{H_{ij}^k:0<i<j\le n,k\in\BZ\}$
is called the {\it affine braid arrangement.} The connected components of the complement to the affine braid arrangement are called {\it alcoves.} The group $\aSn$ acts on $V$ with the generators $s_i$ acting by reflections in hyperplanes $H_{i,i+1}^0$ for $i>0,$ and $s_0$ acting by reflection in the hyperplane
$H_{1,n}^1.$
The action is free and transitive on the set of alcoves, so that the map $\omega\mapsto\omega(\Afund),$ where
\omitt{
$\Afund:=\{x_1<x_2<\ldots<x_n<x_1+1\}$
} 
$\Afund:=\{ 
\x\in V \mid
x_1>x_2>\ldots>x_n>x_1-1\}$
is the {\it fundamental alcove}, gives a bijection between the group $\aSn$ and the set of alcoves.

 Observe $H_{i, j}^k = H_{j,i}^{-k}$, so we may always take $i<j$.
It is convenient to extend our notation to allow subscripts in $\Z$ via
 $H_{i+tn, j+tn}^k = H_{i,j}^k$ and 
 $H_{i, j}^k = H_{i,j-n}^{k-1}$.
In this way, we can uniquely write each hyperplane in $\aBrn$
as 
 $H_{i, \l}^0$ with $1 \le i \le n$, $i < \l$, $\l \in \Z$. 
Then we can define the {\em height} of the hyperplane
$H_{i, \l}^0$ to be $\l -i$. Observe, in this manner, the reflecting hyperplane of $s_0$ is $H_{1,n}^1 = H_{1,0}^0 = H_{0,1}^0$ of height $1$. Note that with this notation,
the action of the group $\aSn$ on the hyperplanes $H_{i,j}^k$ is given by
$$
\w(H_{i,j}^k)=H_{\w(i),\w(j)}^k.
$$

There is another way to think about the affine symmetric group:

\begin{definition}
A bijection $\omega:\BZ\to \BZ$ is called an affine $S_n$-permutation, if $\omega(x+n)=\omega(x)+n$ for all $x,$
and $\sum_{i=1}^{n}\omega(i)=\frac{n(n+1)}{2}.$
\end{definition}

In this presentation the operation is composition and the generators $s_1,\ldots, s_{n-1},s_0$ are given by  
\begin{enumerate}
\item[(a)] $s_i(x)=x+1$ for $x\equiv i$ mod $n,$
\item[(b)] $s_i(x)=x-1$ for $x\equiv i+1$ mod $n,$
\item[(c)] $s_i(x)=x$ otherwise.
\end{enumerate}
It is convenient to use 
list or {\em window notation}
for $\omega\in \aSn$
as the list
$[\w(1), \w(2), \cdots, \w(n)]$.  
Since $\w(x+n)=\w(x)+n,$ this determines $\w$. The bijection between $\aSn$ and the set of alcoves can be made more explicit in the following way. 

\begin{lemma}\label{Lemma: centroids}
Every alcove $\A$ contains exactly one point $(x_1,\ldots,x_n)^T
\in \A$ in its interior such that the numbers $\frac{n+1}{2}-nx_1,\ldots,\frac{n+1}{2}-nx_n$ are all integers. Moreover, if $\x\in\omega(\Afund)$ is such a point, then in the window notation one has 
\begin{equation}\label{Equation: centroid}
\omega^{-1}=[\frac{n+1}{2}-nx_1,\ldots,\frac{n+1}{2}-nx_n].
\end{equation}
These points are called {centroids} of alcoves.
\end{lemma}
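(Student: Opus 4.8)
The plan is to reduce the whole statement to the single case of the fundamental alcove $\Afund$, using equivariance of the affine-linear change of coordinates $\mathbf y\colon V\to\R^n$, $\mathbf y(\x)=(y_1,\dots,y_n)$ with $y_i:=\tfrac{n+1}{2}-nx_i$. Since $\sum x_i=0$ one always has $\sum y_i=\tfrac{n(n+1)}{2}$, so $\mathbf y$ is an affine isomorphism of $V$ onto the hyperplane $\{\mathbf y\in\R^n:\sum y_i=\tfrac{n(n+1)}{2}\}$, and the condition in the lemma says exactly that $\mathbf y(\x)\in\Z^n$. Put $L:=\mathbf y^{-1}(\Z^n)$. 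Note that if $\x$ is interior to an alcove then $x_i-x_j\notin\Z$ for $i\neq j$, i.e. $y_i\not\equiv y_j\pmod n$; hence for such $\x\in L$ the tuple $\mathbf y(\x)$ is a complete residue system with sum $\tfrac{n(n+1)}{2}$, so it is the window of a genuine element of $\aSn$.

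The technical core is to check, generator by generator, that under $\mathbf y$ the left action of $s_k$ on $V$ becomes the operation on $n$-tuples which sends the window of an affine permutation $\sigma$ to the window of $\sigma s_k$ — namely, for $1\le k\le n-1$ it swaps the entries in positions $k$ and $k+1$, and for $k=0$ it sends $(y_1,y_2,\dots,y_{n-1},y_n)$ to $(y_n-n,\,y_2,\dots,y_{n-1},\,y_1+n)$. For $k\ge1$ this is immediate since $s_k$ transposes $x_k$ and $x_{k+1}$; for $k=0$ it follows because the reflection in $H_{1,n}^1$ sends $x_1\mapsto x_n+1$ and $x_n\mapsto x_1-1$. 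Two consequences: $\aSn$ preserves $L$, and composing generators along a reduced word gives $\mathbf y(\omega\cdot\x)=\mathbf y(\x)\cdot\omega^{-1}$ for every $\omega\in\aSn$ and $\x\in V$ (the inverse appears because the action on $V$ is on the left while the window operation composes on the right). I expect this generator check — especially the $s_0$ case and keeping the $\omega$ versus $\omega^{-1}$ bookkeeping straight — to be the only real obstacle.

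Then I would dispose of $\Afund$: under $\mathbf y$ its defining inequalities $x_1>\dots>x_n>x_1-1$ become $y_1<\dots<y_n<y_1+n$. If in addition all $y_i\in\Z$, strict monotonicity forces $y_n\ge y_1+(n-1)$ and $y_n<y_1+n$ forces $y_n\le y_1+(n-1)$, so the $y_i$ are $n$ consecutive integers; together with $\sum y_i=\tfrac{n(n+1)}{2}$ this forces $(y_1,\dots,y_n)=(1,2,\dots,n)$, the window of $\id$. Hence $\Afund$ contains a unique point $c_0\in L$, with $\mathbf y(c_0)=\id$ and $c_0=\tfrac1n(\tfrac{n+1}{2}-1,\dots,\tfrac{n+1}{2}-n)$, and $c_0$ is interior to $\Afund$.

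Finally, for a general alcove $\A=\omega(\Afund)$: the point $\omega(c_0)$ is interior to $\A$ and lies in $L$ (as $\aSn$ preserves $L$); and any interior point $\x'\in\A\cap L$ has $\omega^{-1}(\x')$ interior to $\Afund$ and in $L$, hence $\omega^{-1}(\x')=c_0$ and $\x'=\omega(c_0)$. This yields existence and uniqueness of the centroid $\x:=\omega(c_0)$, and the window formula follows from the equivariance relation: $\mathbf y(\x)=\mathbf y(\omega(c_0))=\mathbf y(c_0)\cdot\omega^{-1}=\id\cdot\omega^{-1}=\omega^{-1}$, i.e. $\omega^{-1}=[\tfrac{n+1}{2}-nx_1,\dots,\tfrac{n+1}{2}-nx_n]$.
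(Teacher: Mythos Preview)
Your proof is correct and follows essentially the same approach as the paper: verify the base case $\Afund$ by direct computation (the integer window conditions force $(1,2,\dots,n)$), then check generator by generator that the left $s_k$-action on $V$ matches the right $s_k$-action on windows, so the formula propagates to every alcove. The paper does precisely this, only without naming the coordinate change $\mathbf y$ explicitly; your phrasing in terms of the equivariance relation $\mathbf y(\omega\cdot\x)=\mathbf y(\x)\cdot\omega^{-1}$ is a clean way to package the same induction.
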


\begin{proof}
Window notation for the identity permutation is $\id =[1,2,\ldots,n].$
By \eqref{Equation: centroid}, the corresponding point is $\frac{1}{2n}(n-1,n-3,\ldots,1-n).$ Note that it belongs to the fundamental alcove $\Afund=\{\x\in V \mid
x_1>x_2>\ldots>x_n>x_1-1\}.$ Moreover, it is the unique point $\x\in\Afund$ such that the numbers $\frac{n+1}{2}-nx_i$ are all integers. Indeed, let $a_i=\frac{n+1}{2}-nx_i$ for all $1\le i\le n.$ Since $x_1>x_2>\ldots>x_n>x_1-1$ we get $a_1<a_2<\ldots<a_n<a_1+n.$ Moreover, since $x_1+\ldots+x_n=0,$ we have $a_1+\ldots+a_n=\frac{n(n+1)}{2}.$ There is a unique collection of integers satisfying these conditions: $a_1=1,a_2=2,\ldots,a_n=n.$

Since $\aSn$ acts freely and transitively on the set of alcoves, all we need to prove is that \eqref{Equation: centroid} is preserved under the action of the generators $s_0,\ldots,s_n.$ Indeed, for $1\le i\le n$ we have 
$$
(s_i\omega)^{-1}=\omega^{-1}s_i=[\omega^{-1}(s_i(1)),\ldots,\omega^{-1}(s_i(n))]
$$
$$
=[\omega^{-1}(1),\ldots,\omega^{-1}(i+1),\omega^{-1}(i),\ldots,\omega^{-1}(n)],
$$
and for $i=0$ we have
$$
(s_0\omega)^{-1}=[\omega^{-1}(n)-n,\omega^{-1}(2),\ldots,\omega^{-1}(n-1),\omega^{-1}(1)+n].
$$
On the other side, generators $s_1,\ldots,s_n\in\Sn$ simply permute the coordinates of points in $V,$ while $s_0$ acts by sending $(x_1,\ldots,x_n)$ to $(x_n+1,x_2,\ldots,x_{n-1},x_1-1).$ Therefore, Equation \ref{Equation: centroid} is preserved by the action of the group $\aSn.$
\end{proof}

The minimal length left coset representatives $\w \in \aSn/\Sn$,
also known as affine Grassmannian permutations, satisfy
$\w(1) < \w(2) < \cdots < \w(n)$, so that their window notation
is an {\em increasing} list of integers (summing to $\frac{ n(n+1)}{2}$
and with distinct remainders $\mod n$).
Their inverses $\w^{-1}$ are the minimal length right coset representatives
and satisfy that the centroid of the alcove $\w^{-1}(\Afund)$ are precisely
those whose coordinates are {\em decreasing}.
That is to say, $\w^{-1}(\Afund)$ is in the dominant chamber
$\{ 
\x\in V\mid x_1 > x_2 > \cdots > x_n \}$.

By a slight abuse of notation, we will refer to the set of minimal
length left (right) coset representatives as $\aSn/\Sn$ (respectively $\Sn\backslash\aSn$).

\subsection{Sommers region}\label{Subsection: Sommers region}

The notions of an inversion and the length of a permutation generalizes from the symmetric group $\Sn$ to the affine symmetric group $\aSn.$ However, the set $\{(i,j)\in\Z^2\mid i<j, \w(i)>\w(j)\}$ is infinite for all $\w\in \aSn$ except identity. That is why it makes more sense to consider inversions up to shifts by multiples of $n:$

\begin{definition}
Let $\w$ be an affine permutation. The set of its inversions is defined as
$$
\Inv(\w) : =
\{ (i,j) \in \Z \times \Z \mid 1\le i\le n,\ i < j,  \w(i) > \w(j) \}.
$$
The length of a permutation $\w$ is then defined as $\ell(\w) = \sharp \Inv(\w).$ We shall say the {\em height} of an inversion $(i,j)$ is $j-i$. We will also use the notation
$$
\Invall(\w) : =
\{ (i,j) \in \Z \times \Z \mid \ i < j,  \w(i) > \w(j) \}
$$
for unnormalized inversions.
\end{definition}

\begin{remark}
\label{remark-inversions-equivalent}
If $(i,j)\in\Inv(\w),$ then obviously $i+kn<j+kn$ and $\w(i+kn)>\w(j+kn)$ for any integer $k.$ Essentially, these couples of integers represent the same inversion of $\w.$ The condition $1\le i\le n$ allows us to count each inversion exactly once. Alternatively, one could also require $1\le j\le n,$\ $1\le \w(i)\le n,$ or $1\le \w(j)\le n.$
\end{remark}

\begin{example}
Consider $\w = [-3,2,3,8] \in \aSnn{4}/S_4$, whose inverse is $\w^{-1} =
[5,2,3,0]$.  The centroid of $\w^{-1}(\Afund)$ is $\frac 18 (11,1,-1,-11)^T$.
Note $\w$ is translation by the vector $\mu = (-1,0,0,1)^T$, as
$\w = [1 -1(4), 2+0(4), 3+0(4), 4+1(4)]$ and likewise $\w^{-1}$
is translation by $-\mu$.  One can see the centroid above is
the centroid of the fundamental alcove translated by $-\mu$.
  In terms of Coxeter generators,
$\w = s_1 s_2 s_3 s_2 s_1 s_0$ and
$\w^{-1} = s_0 s_1 s_2 s_3 s_2 s_1$.
Note
$\Inv(\w) = \{ (4,5), (4,6), (4,7), (4,9), (3,5), (2,5) \}$ and $\ell(\w) = 6$
which is also its Coxeter length.
The inversions are of height $1, 2, 3, 5, 2, 3$ respectively.
Additionally, $\Inv(\w^{-1}) =
\{ (1,2), (1,3), (1,4), (1,8), (2,4), (3,4) \}$.
\end{example}

Geometrically, $\w$ has an inversion of height $m$ if and only if
the alcove $\w^{-1}(\Afund)$ is separated from $\Afund$ by a (corresponding)
hyperplane of height $m$.
More precisely, that hyperplane is $H_{i, i+m}^0$ if the inversion is $(i,i+m)$.
The following definition will play the key role in our constructions:

\begin{definition}
An affine permutation $\omega\in \aSn$ is called 
{\em $m$-stable}
if for all $x$ the inequality $\omega(x+m)>\omega(x)$ holds,
i.e. $\w$ has no inversions of height $m$.
The set of all $m$-stable affine permutations is denoted by $\aSmn$.
\end{definition}

\begin{definition}
An affine permutation $\omega\in \aSn$ is called 
{\em $m$-restricted}
if $\omega^{-1}\in \aSmn$.
We will denote the set of $m$-restricted permutations by $\maSn$.
Note $\w \in \maSn$ if and only if for all $i < j$, $\w(i) - \w(j) \neq m$.
\end{definition}

Lemma \ref{Lemma: centroids} implies an important corollary for the set $\aSmn$:

\begin{lemma}\label{Lemma: Sommers region}
Let $m=kn+r,$ where $0<r<n.$ The set of alcoves $\{\omega(\Afund):\omega\in\maSn\}$ coincides with the set of alcoves that fit inside the region $D_n^m\subset V$ defined by the inequalities:
\begin{enumerate}
\item $x_i - x_{i+r} \ge -k$ for $1\le i\le n-r,$   
\item $x_{i+r-n}-x_i \le k+1$ for $n-r+1\le i\le n.$
\end{enumerate}
\end{lemma}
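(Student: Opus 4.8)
The plan is to translate the condition ``$\w \in \maSn$'' (equivalently $\wi \in \aSmn$, i.e. $\wi$ has no inversion of height $m$) directly into inequalities on the centroid of $\w(\Afund)$, using Lemma \ref{Lemma: centroids}. Write $m = kn+r$ with $0 < r < n$ (legitimate since $\gcd(m,n)=1$ forces $r \neq 0$). First I would unwind what ``no inversion of height $m$'' means for $\wi$: by Definition of $\Inv$ and Remark \ref{remark-inversions-equivalent}, $\wi$ has no inversion of height $m$ iff $\wi(x) < \wi(x+m)$ for all $x \in \Z$. Equivalently, via the substitution $j = \wi(x)$, $i = \wi(x+m)$, this says: for all $i,j$ with $\w(i) - \w(j) = m$ we have $i > j$ — wait, more cleanly, it says there is no pair $i<j$ with $\w(i) - \w(j) = m$, which is exactly the reformulation given in the Definition of $m$-restricted permutations. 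So the set $\maSn$ consists of all $\w$ such that $\w(i) - \w(j) \neq m$ whenever $i < j$; combined with $\w$ being a bijection on $\Z$ with $\w(x+n) = \w(x)+n$, and writing $m = kn+r$, the condition $\w(i) - \w(j) \ne m$ for $i<j$ becomes $\w(i) - \w(j+kn) \ne r$ for $i < j$, i.e. it is a condition comparing values of $\w$ at arguments differing by $r$ shifted by multiples of $n$.

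The second and main step is the geometric translation. By Lemma \ref{Lemma: centroids}, if $\x$ is the centroid of $\w(\Afund)$ then $\w(\Afund)$ is the image under $\w$ of $\Afund$, and $(\w)^{-1} = [\tfrac{n+1}{2}-nx_1, \dots, \tfrac{n+1}{2}-nx_n]$; equivalently, setting $a_i := \tfrac{n+1}{2} - n x_i$, the window of $\wi$ is $[a_1,\dots,a_n]$, so $\wi(i) = a_i$ and $x_i = \tfrac{1}{n}(\tfrac{n+1}{2} - a_i)$. Thus $x_i - x_j = \tfrac{1}{n}(a_j - a_i) = \tfrac1n(\wi(j) - \wi(i))$. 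Now $\wi \in \aSmn$ means $\wi$ has no inversions of height $m$, i.e. (geometrically, as noted just before the Definition of $m$-stable) the alcove $(\wi)^{-1}(\Afund) = \w(\Afund)$ is on the same side as $\Afund$ of every affine braid hyperplane of height $m$. Concretely the relevant hyperplanes are $H_{i,i+r}^{-k}$ for $1 \le i \le n-r$ (height $r$ after normalizing, coming from the inversion $(i, i+r)$ of the unnormalized difference $\wi(i+r) - \wi(i) = m$) and, using the wrap-around identification $H_{i,j}^k = H_{i,j-n}^{k-1}$, the hyperplanes $H_{i+r-n, i}^{k+1}$ for $n-r+1 \le i \le n$. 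On $\Afund$ we have $x_1 > \dots > x_n > x_1 - 1$, from which one reads off the sign of $x_i - x_{i+r} + k$ and of $x_{i+r-n} - x_i - (k+1)$ at (say) the centroid of $\Afund$; ``same side as $\Afund$'' then gives exactly inequalities (1) and (2). I would spell this out by checking the two families of hyperplanes separately, being careful that ``fits inside $D_n^m$'' means the closed alcove lies in the closed region — so non-strict inequalities $\ge$, $\le$ are correct, matching the fact that $\Afund$ itself has boundary on some of these hyperplanes when $k=0$ or $k=-1$ is forced, but generically the interior inequalities are strict.

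The third step is to verify the converse: an alcove lying in $D_n^m$ forces $\w \in \maSn$. This is automatic from the equivalence in step two provided I argue at the level of ``which hyperplanes separate the alcove from $\Afund$'': an alcove $\A$ fits in $D_n^m$ iff it is not separated from $\Afund$ by any of the listed height-$m$ hyperplanes, iff (since height-$m$ hyperplanes come precisely in the families above, by the normalization discussion $H_{i,\ell}^0$ with $\ell - i = m$, reduced mod $n$ in the second index) $\wi$ has no height-$m$ inversion, iff $\w \in \maSn$. The one genuinely delicate point — and the step I expect to be the main obstacle — is bookkeeping the index ranges and the wrap-around: showing that the full set of affine hyperplanes of height $m$ is exactly exhausted, without repetition, by $\{H_{i,i+r}^{-k} : 1 \le i \le n-r\} \cup \{H_{i+r-n,i}^{k+1} : n-r+1 \le i \le n\}$, and that the corresponding inversions $(i,i+m)$ of $\wi$ with $1\le i \le n$ (the normalization from the Definition of $\Inv$) match up correctly after the shift by $kn$. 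Once the index arithmetic $m = kn + r$ is tracked carefully through the hyperplane identifications $H_{i,j}^k = H_{i,j-n}^{k-1}$ and $H_{i+tn,j+tn}^k = H_{i,j}^k$, inequalities (1) and (2) drop out directly; I would present this bookkeeping as the core of the proof and treat the sign-checking on $\Afund$ as routine.
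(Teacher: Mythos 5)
Your proposal is correct and takes essentially the same route as the paper: the paper's proof simply verifies the hyperplane identifications $H_{i,i+r}^{-k}=H_{i,i+m}^0$ and $H_{i+r-n,i}^{k+1}=H_{i,i+m}^0$, so that $D_n^m$ is exactly the region cut out by the height-$m$ hyperplanes, and then invokes the observation (stated just before the definition of $m$-stable) that $\w$ has an inversion of height $m$ iff $\w^{-1}(\Afund)$ is separated from $\Afund$ by the corresponding height-$m$ hyperplane. Your plan to make that observation explicit via Lemma \ref{Lemma: centroids} is fine and lands at the same place; the only thing to tidy is the parenthetical ``height $r$ after normalizing,'' which should read height $m$.
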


\begin{proof}
Remark that $D_n^m$ is precisely the region cut out by the hyperplanes of height $m$,
as $H_{i, i+r}^{-k} = H_{i, i+r + kn}^0 = H_{i, i+m}^0$ and likewise
$H_{ i+r-n, i}^{k+1} = H_{ i+r -n + (k+1)n, i}^0 = H_{i+m, i}^0= H_{i, i+m}^0$.
This means that alcove $\w^{-1}(\Afund)$ is inside $D_{n}^{m}$ if and only if the permutation $\w$ has no inversions of height $m$.
\end{proof}

The region $D_n^m$ was considered by Sommers in \cite{So},
therefore we call it the {\it Sommers region}. It is known that $D_{n}^{m}$ is isometric to the $m$th dilation of the fundamental alcove. This  was proven for all types by Fan \cite[Lemma 2.2]{fan} and Sommers \cite[Theorem 5.7]{So}, based on an earlier unpublished observation of Lusztig. It is worth emphasizing that in type $A$ the construction of the isometry is very clear.

\begin{lemma}
\label{isometry}
Let $c=\frac{(m-1)(n+1)}{2}$
The affine permutation $\wm:=[m-c,2m-c,\ldots,nm-c]$ induces an isometry
between $D_{n}^{m}$ and the simplex $mD_{n}^{1} = m \Afund$ in the dominant region cut out by the hyperplane $x_1-x_n=m$:
$$
\wm\left(mD_{n}^{1}\right)=D_{n}^{m}.
$$
\end{lemma}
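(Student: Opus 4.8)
The strategy is to verify directly that the affine map $\wm$ sends the bounding hyperplanes of the dilated simplex $m\Afund$ (cut out by $x_1 - x_2 = 0,\ x_2 - x_3 = 0,\ \dots,\ x_{n-1}-x_n = 0$ and $x_1 - x_n = m$, with the correct sides) to the bounding hyperplanes of $D_n^m$ listed in Lemma~\ref{Lemma: Sommers region}, and then invoke the fact that any element of $\aSn$ acts on $V$ by an isometry. Since $\wm \in \aSn$ (one must first check this — the window $[m-c,\,2m-c,\,\dots,\,nm-c]$ has entries with distinct residues mod $n$ because $\gcd(m,n)=1$, satisfies $\wm(x+n)=\wm(x)+n$ by construction, and sums to $\frac{n(n+1)}{2}$ precisely because of the choice $c = \frac{(m-1)(n+1)}{2}$, which is the content of the first line of the lemma), it is automatically an isometry of $V$, so bijectivity and length-preservation on alcoves come for free. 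Thus the whole statement reduces to a hyperplane bookkeeping.

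First I would record how $\wm$ acts on $V$. Writing $\wm$ in the affine form $x \mapsto Wx + \beta$ where $W$ is the underlying linear permutation part and $\beta$ the translation part, I would identify both explicitly from the window notation; the key computation is the effect on the linear forms $x_i - x_j$, namely $(\wm^{-1})^*(x_i - x_j) = x_{a} - x_{b} + (\text{integer})$ where $a, b$ are determined by reducing the window indices mod $n$. Concretely, since $\wm$ is an isometry, it is cleanest to track the walls: the wall $x_i - x_{i+1} = 0$ of $m\Afund$ for $1 \le i \le n-1$ is the reflecting hyperplane of the simple reflection $s_i$, and $\wm s_i \wm^{-1}$ is the reflection in $\wm(H^0_{i,i+1}) = H^0_{\wm(i),\wm(i+1)}$; I would compute $\wm(i)$ and $\wm(i+1)$ modulo the $H^k_{i,j}$ normalization conventions spelled out in Section~\ref{Subsection: Affine Permutations} and check these are exactly the height-$m$ hyperplanes $H^0_{i,i+m}$ appearing in the proof of Lemma~\ref{Lemma: Sommers region}. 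Similarly the far wall $x_1 - x_n = m$ of $m\Afund$, which is $m$ times the affine wall $x_1 - x_n = 1$ fixed by $s_0$, must map to the remaining height-$m$ wall. Finally, to pin down that $\wm$ maps the \emph{inside} of $m\Afund$ to the inside of $D_n^m$ rather than to some reflected copy, I would evaluate $\wm$ on a single interior point — e.g.\ the image under $\wm$ of the centroid-type point $\frac{m}{2n}(n-1, n-3, \dots, 1-n)$ of $\Afund$, or just $\wm$ applied to the dominant vector $(1,0,\dots,0,-1)$ scaled appropriately — and check the defining inequalities (1) and (2) of $D_n^m$ hold strictly.

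The main obstacle is purely notational: the hyperplanes $H^k_{i,j}$ carry the identifications $H^k_{i,j} = H^{k-1}_{i,j-n}$ and the height is read off only after normalizing to the form $H^0_{i,\ell}$ with $1 \le i \le n$, $i < \ell$. So when I compute $\wm(H^0_{i,i+1})$ I will get something like $H^0_{\wm(i),\wm(i+1)}$ with $\wm(i),\wm(i+1)$ possibly far outside the window $\{1,\dots,n\}$ and possibly in the "wrong" order, and I must carefully renormalize — shifting both indices by the same multiple of $n$ and swapping $i \leftrightarrow j$ with sign change on $k$ — to see the height is exactly $m$, i.e.\ that it lands among $\{H^0_{i,i+m} : i \in \Z\}$, and moreover to match the specific representative indexing in conditions (1)–(2) with the two index ranges $1 \le i \le n-r$ and $n-r+1 \le i \le n$ corresponding to whether $\wm(i) < \wm(i+1)$ or not after reduction. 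I expect this to be a short but fiddly case split governed entirely by writing $m = kn + r$ and watching where $i$ sits relative to $n-r$. Once the $n$ walls of $m\Afund$ are matched bijectively with the $n$ walls of $D_n^m$ and one interior point is checked, the isometry claim $\wm(m\Afund) = D_n^m$ follows, because an isometry of $V$ carrying the walls of one simplex onto the walls of another, with interiors matched, carries the simplex onto the simplex.
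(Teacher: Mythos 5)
Your proposal is correct and follows essentially the same route as the paper: observe $\wm\in\aSn$, use the action $\w(H_{i,j}^k)=H_{\w(i),\w(j)}^k$ to track the $n$ bounding walls of $m\Afund$, and show they land on the $n$ bounding walls of $D_n^m$. The one place where the paper is cleaner than your plan is exactly the ``fiddly normalization'' you anticipate: rather than computing $\wm(H_{i,i+1}^0)=H_{mi-c,m(i+1)-c}^0$ and then renormalizing each image to the standard form $H_{a,a+m}^0$ with $1\le a\le n$ (which would force the case split on $i\lessgtr n-r$ you describe), the paper relabels the \emph{target} hyperplanes instead. Since $\gcd(m,n)=1$, the $n$ height-$m$ hyperplanes cutting out $D_n^m$ can be written as $H_{m-c,2m-c}^0,\,H_{2m-c,3m-c}^0,\,\ldots,\,H_{nm-c,(n+1)m-c}^0$, and then the match with $\wm(H_{i,i+1}^0)$ (and $\wm(H_{n,mn+1}^0)=H_{mn-c,m(n+1)-c}^0$ for the far wall) is immediate, with no normalization and no case analysis on $r=m\bmod n$. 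Your interior-point check is harmless but unnecessary: once the $n$ facet hyperplanes are matched by an isometry, the unique bounded component of the complement maps to the unique bounded component, so the simplex goes to the simplex automatically.
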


\begin{proof}
By the proof of Lemma \ref{Lemma: Sommers region}, the region $D_n^m$ is cut out by the hyperplanes $H_{i,i+m}^{0}$ for all integer $i$. Since $m$ and $n$ are coprime, one can equivalently say that it is cut out by the hyperplanes
$$
H_{m-c,2m-c}^{0}, H_{2m-c,3m-c}^{0}, \ldots, H_{nm-c,(n+1)m-c}^{0}.
$$
On the other hand, the hyperplane $x_1-x_n=m$ can be written as $H_{n,mn+1}^0$, so the simplex $mD_{n}^{1}$ is cut out by the hyperplanes $H_{1,2}^{0},H_{2,3}^{0},\ldots, H_{n-1,n}^{0},H_{n,mn+1}^0$.
Remark that $\wm(i)=mi-c$ for $1\le i\le n$, and 
$$
\wm(mn+1)=\wm(1)+mn=m-c+mn=m(n+1)-c.
$$
Therefore $\wm(H_{i,i+1}^{0})=H_{mi-c,m(i+1)-c}^0$
for $1\le i<n$, and
$$\wm(H_{n,mn+1}^{0})=H_{mn-c,m(n+1)-c}^{0} \, ,$$
hence $\wm(mD_{n}^{1})=D_{n}^{m}.$
\end{proof}

Observe that, since $m$ and $n$ are coprime,
 the image of the origin under this isometry will be
the unique vertex of $D_n^m$ with all integer entries (in other words, in the
root lattice). 

\begin{example}
For $n=3$ and $m=2$ we have $\w_2=[024]$. The dilated fundamental alcove is bounded by the hyperplanes $H_{2,3}^{0},H_{1,2}^{0}$ and $H_{1,3}^2$,
the Sommers region $D_{3}^{2}$ is bounded by the hyperplanes
 $H_{1,3}^0, H_{2,4}^{0}$ and $H_{3,5}^{0}$.
Note that
\omitt{
\w_2(H_{2,3}^{0})=H_{2,4}^{0}=H_{1,2}^{1},\
\w_2(H_{1,2}^{0})=H_{0,2}^{0}=H_{2,3}^1,\
\w_2(H_{1,3}^2)=H_{0,4}^{2}=H_{1,3}^{0}. 
}
$$ \w_2(H_{2,3}^{0})=H_{2,4}^{0},\ \w_2(H_{1,2}^{0})=H_{0,2}^{0}=H_{3,5}^{0},\
\w_2(H_{1,3}^2)=H_{0,4}^{2}=H_{1,3}^{0}.  $$
Similarly, for $m=4$ we have $\w_4=[-226]$. The dilated fundamental alcove is bounded by the hyperplanes $H_{2,3}^{0},H_{1,2}^{0}$ and $H_{1,3}^4$,
the Sommers region $D_{3}^{4}$ is bounded by the hyperplanes
$H_{1,5}^{0}, H_{2,6}^{0}$ and $H_{3,7}^{0}$.  
Note that $$
\w_4(H_{2,3}^{0})=H_{2,6}^{0},\ \w_4(H_{1,2}^{0})=H_{-2,2}^{0}=H_{1,5}^{0},\
\w_4(H_{1,3}^4)=H_{-2,6}^{4}=H_{3,7}^{0}.  $$
\omitt{
\w_4(H_{2,3}^{0})=H_{2,6}^{0}=H_{2,3}^{-1},\ \w_4(H_{1,2}^{0})=H_{-2,2}^{0}=H_{1,2}^{-1},\ \w_4(H_{1,3}^4)=H_{-2,6}^{4}=H_{1,3}^{2}.
}
All these hyperplanes are shown in Figure \ref{fig:sommers region 3 2} and  Figure \ref{fig:sommers region 3 4}.
\end{example}

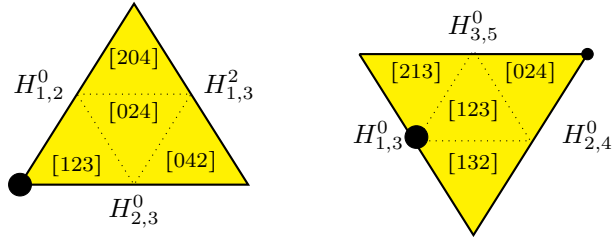
\begin{figure}   
\begin{center}
\begin{tikzpicture}[scale=0.5]

\filldraw [yellow, fill=yellow] (-0.5,-2.6)--(5.5,-2.6)--(2.5,2.2)--(-0.5,-2.6);
\filldraw [fill=black] (-0.5,-2.6) circle (0.3);

\draw [thick] (-0.5,-2.6)--(5.5,-2.6)--(2.5,2.2)--(-0.5,-2.6);
\omitt{
\draw [dotted] (0.5,-0.2)--(4.5,-0.2);
\draw [dotted] (-0.5,2.2)--(3,-3.4);
\draw [dotted] (2,-3.4)--(5.5,2.2);
} 

\draw[dotted] 
(1,-0.2)--(4,-0.2)--(2.5,-2.6)--(1,-0.2);
 
\draw (2.5,0.75) node {\footnotesize   $[204]$};
\draw (4,-2) node {\footnotesize  $[042]$};
\draw (2.5,-0.7) node {\footnotesize  $[024]$};
\draw (1,-2.1) node {\footnotesize  $[123]$};


\draw (2.5,-3.3) node {$H_{2,3}^{0}$};
\draw (0.0,0.0) node {$H_{1,2}^{0}$};
\draw (5,0.0) node {$H_{1,3}^{2}$};

\end{tikzpicture}
\ \ \ 
\qquad
\begin{tikzpicture}[scale=0.5]

\filldraw [yellow, fill=yellow] (2.5,2.2)--(8.5,2.2)--(5.5,-2.6)--(2.5,2.2);
\filldraw [fill=black] (4,0) circle (0.3);
\filldraw [fill=black] (8.5,2.2) circle (0.15);

\draw [thick] (2.5,2.2)--(8.5,2.2)--(5.5,-2.6)--(2.5,2.2);
\omitt{
\draw [dotted] (3.5,-0.2)--(7.5,-0.2);
\draw [dotted] (2.5,-2.6)--(6,3);
\draw [dotted] (8.5,-2.6)--(5,3);
} 

\draw[dotted] 
(4,-0.1)--(7,-0.1)--(5.5,2.3)--(4,-0.1);

\draw (5.5,0.7) node {\footnotesize $[123]$};

\draw (7,1.7) node {\footnotesize $[024]$};

\draw (4,1.7) node {\footnotesize $[213]$};

\draw (5.5,-0.7) node {\footnotesize $[132]$};

\draw (5.5,2.9) node {$H_{3,5}^{0}$};
\draw (8.5,0.0) node {$H_{2,4}^{0}$};
\draw (3.0,0.0) node {$H_{1,3}^{0}$};
 
\end{tikzpicture}
\caption{Dilated fundamental alcove (left) and Sommers regions (right) for $m=2$ ,
$\w_2=[024]$ }
\label{fig:sommers region 3 2}
\end{center}
\end{figure}
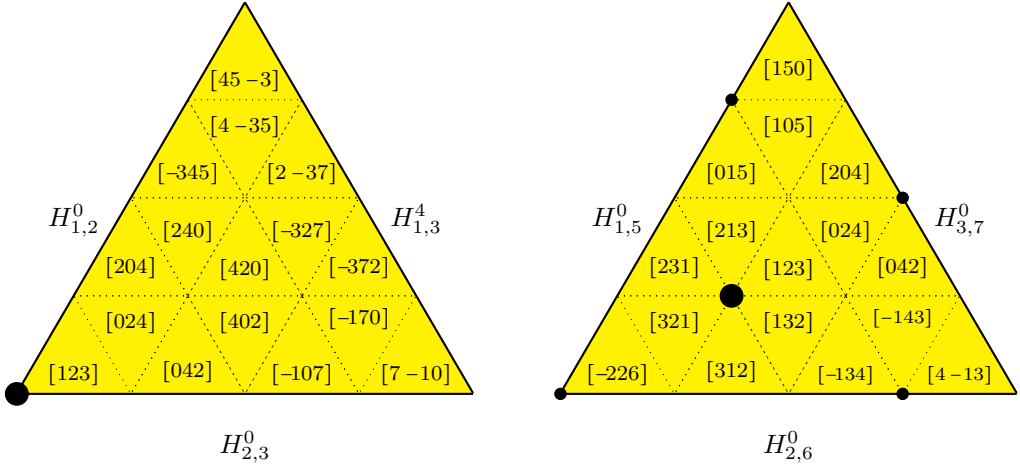
\begin{figure}
\begin{center}
\begin{tikzpicture}[scale=0.5]

\filldraw [yellow, fill=yellow] (5.5,7.8)--(11.5,-2.6)--(-0.5,-2.6)--(5.5,7.8);
\filldraw [fill=black] (-0.5,-2.6) circle (0.3);

\draw[thick]  (-0.5,-2.6)--(11.5,-2.6);
\omitt{
\draw[dotted] (0,0)--(11,0);
\draw[dotted] (0,2.6)--(11,2.6);
\draw[dotted] (0,5.2)--(11,5.2);
}

\draw[thick] (-.5,-2.58)--(5,6.92)--(5.5, 7.783);
\omitt{
\draw[dotted] (2,-3.46)--(7.5,6.06);
\draw[dotted] (4.5,-4.33)--(10,5.2);
\draw[dotted] (7,-5.2)--(12.5,4.33);

\draw[dotted] (-1.5,4.33)--(4,-5.2);
\draw[dotted] (1, 5.2)--(6.5,-4.33);
\draw[dotted]  (3.5,6.06)--(9,-3.46);
}
\draw[thick] (5.5,7.783 )--(6,6.92)--(11.5,-2.58);


\draw[dotted]
 (4,0)--(7,0)--(5.5,2.6)--(4,0);
\draw[dotted]
 (4,0)--(7,0)--(5.5,-2.6)--(4,0);
\draw[dotted]
 (4,0)--(5.5,2.6)--(2.5,2.6)--(4,0);
\draw[dotted]
 (7,0)--(5.5,2.6)--(8.5,2.6)--(7,0);
\draw[dotted]
 (7,0)--(10,0)--(8.5,2.6)--(7,0);
\draw[dotted]
 (1,0)--(4,0)--(2.5,2.6)--(1,0);
\draw[dotted]
 (4,5.2)--(5.5,2.6)--(2.5,2.6)--(4,5.2);
\draw[dotted]
 (7,5.2)--(5.5,2.6)--(8.5,2.6)--(7,5.2);
\draw[dotted]
 (4,0)--(5.5,-2.6)--(2.5,-2.6)--(4,0);
\draw[dotted]
 (7,0)--(5.5,-2.6)--(8.5,-2.6)--(7,0);
\draw[dotted]
 (4,5.2)--(7,5.2)--(5.5,2.6)--(4,5.2);
\draw[dotted]
 (1,0)--(4,0)--(2.5,-2.6)--(1,0);
\draw[dotted]
 (7,0)--(10,0)--(8.5,-2.6)--(7,0);

\draw (5.5,0.7) node {\footnotesize   $[420]$};
\draw (7,1.7) node {\footnotesize   $[-\!327]$};
\draw (4,1.7) node {\footnotesize   $[240]$};
\draw (5.5,-0.7) node {\footnotesize   $[402]$};
\draw (8.5,0.75) node {\footnotesize  $[-\!372]$};
\draw (2.5,0.75) node {\footnotesize   $[204]$};
\draw (4,3.25) node {\footnotesize   $[-\!345]$};
\draw (7,3.25) node {\footnotesize   $[2-\!37]$};
\draw (7,-2.1) node {\footnotesize   $[-\!107]$};
\draw (4,-2) node {\footnotesize  $[042]$};
\draw (5.5,5.7) node {\footnotesize  $[45-\!3]$};
\draw (5.5,4.5) node {\footnotesize $[4-\!35]$};
\draw (10,-2.1) node {\footnotesize $[7-\!10]$}; 
\draw (8.5,-0.6) node {\footnotesize   $[-\!170]$};
\draw (2.5,-0.7) node {\footnotesize  $[024]$};
\draw (1,-2.1) node {\footnotesize  $[123]$};

\omitt{
\draw(10.5,-3.1) node {$H_{2,3}^0$};
\draw(9.7,1.8) node {$H_{1,3}^4$};
\draw(4.3,7.1) node {$H_{1,2}^0$};
} 

\draw (10,2) node {$H_{{1},3}^4$};
\draw (1,2) node {$H_{{1},2}^0$};
\draw (5.5,-4.0) node {$H_{{2},3}^0$};

\end{tikzpicture}
\ \ \ 
\qquad
\begin{tikzpicture}[scale=0.5]

\filldraw [yellow, fill=yellow] (5.5,7.8)--(11.5,-2.6)--(-0.5,-2.6)--(5.5,7.8);

\omitt{
\filldraw [yellow, fill=yellow] (4,0)--(7,0)--(5.5,2.6)--(4,0);
\filldraw [yellow, fill=yellow] (4,0)--(7,0)--(5.5,-2.6)--(4,0);
\filldraw [yellow, fill=yellow] (4,0)--(5.5,2.6)--(2.5,2.6)--(4,0);
\filldraw [yellow, fill=yellow] (7,0)--(5.5,2.6)--(8.5,2.6)--(7,0);
\filldraw [yellow, fill=yellow] (7,0)--(10,0)--(8.5,2.6)--(7,0);
\filldraw [yellow, fill=yellow] (1,0)--(4,0)--(2.5,2.6)--(1,0);
\filldraw [yellow, fill=yellow] (4,5.2)--(5.5,2.6)--(2.5,2.6)--(4,5.2);
\filldraw [yellow, fill=yellow] (7,5.2)--(5.5,2.6)--(8.5,2.6)--(7,5.2);
\filldraw [yellow, fill=yellow] (4,0)--(5.5,-2.6)--(2.5,-2.6)--(4,0);
\filldraw [yellow, fill=yellow] (7,0)--(5.5,-2.6)--(8.5,-2.6)--(7,0);
\filldraw [yellow, fill=yellow] (4,5.2)--(7,5.2)--(5.5,2.6)--(4,5.2);
\filldraw [yellow, fill=yellow] (1,0)--(4,0)--(2.5,-2.6)--(1,0);
\filldraw [yellow, fill=yellow] (7,0)--(10,0)--(8.5,-2.6)--(7,0);
\filldraw [yellow, fill=yellow] (4,5.2)--(5.5,2.6)--(2.5,2.6)--(4,5.2);
\filldraw [yellow, fill=yellow] (4,5.2)--(7,5.2)--(5.5,7.8)--(4,5.2);
\filldraw [yellow, fill=yellow] (10,0)--(11.5,-2.6)--(8.5,-2.6)--(10,0);
\filldraw [yellow, fill=yellow] (1,0)--(2.5,-2.6)--(-0.5,-2.6)--(1,0);
} 

\filldraw [fill=black] (4,0) circle (0.3);
\filldraw [fill=black] (8.5,2.6) circle (0.15);
\filldraw [fill=black] (-.5,-2.6) circle (0.15);
\filldraw [fill=black] (8.5,-2.6) circle (0.15);
\filldraw [fill=black] (4,5.2) circle (0.15);

\draw[thick]  (-0.5,-2.6)--(11.5,-2.6);
\omitt{
\draw[dotted] (0,0)--(11,0);
\draw[dotted] (0,2.6)--(11,2.6);
\draw[dotted] (0,5.2)--(11,5.2);
}

\draw[thick] (-.5,-2.58)--(5,6.92)--(5.5, 7.783);
\omitt{
\draw[dotted] (2,-3.46)--(7.5,6.06);
\draw[dotted] (4.5,-4.33)--(10,5.2);
\draw[dotted] (7,-5.2)--(12.5,4.33);

\draw[dotted] (-1.5,4.33)--(4,-5.2);
\draw[dotted] (1, 5.2)--(6.5,-4.33);
\draw[dotted]  (3.5,6.06)--(9,-3.46);
} 
\draw[thick] (5.5,7.783 )--(6,6.92)--(11.5,-2.58);


\draw[dotted]
 (4,0)--(7,0)--(5.5,2.6)--(4,0);
\draw[dotted]
 (4,0)--(7,0)--(5.5,-2.6)--(4,0);
\draw[dotted]
 (4,0)--(5.5,2.6)--(2.5,2.6)--(4,0);
\draw[dotted]
 (7,0)--(5.5,2.6)--(8.5,2.6)--(7,0);
\draw[dotted]
 (7,0)--(10,0)--(8.5,2.6)--(7,0);
\draw[dotted]
 (1,0)--(4,0)--(2.5,2.6)--(1,0);
\draw[dotted]
 (4,5.2)--(5.5,2.6)--(2.5,2.6)--(4,5.2);
\draw[dotted]
 (7,5.2)--(5.5,2.6)--(8.5,2.6)--(7,5.2);
\draw[dotted]
 (4,0)--(5.5,-2.6)--(2.5,-2.6)--(4,0);
\draw[dotted]
 (7,0)--(5.5,-2.6)--(8.5,-2.6)--(7,0);
\draw[dotted]
 (4,5.2)--(7,5.2)--(5.5,2.6)--(4,5.2);
\draw[dotted]
 (1,0)--(4,0)--(2.5,-2.6)--(1,0);
\draw[dotted]
 (7,0)--(10,0)--(8.5,-2.6)--(7,0);

\draw (5.5,0.7) node {\footnotesize $[123]$};

\draw (7,1.7) node {\footnotesize $[024]$};

\draw (4,1.7) node {\footnotesize $[213]$};

\draw (5.5,-0.7) node {\footnotesize $[132]$};

\draw (8.5,0.75) node {\footnotesize $[042]$};
\draw (2.5,0.75) node {\footnotesize $[231]$};
\draw (4,3.25) node {\footnotesize $[015]$};
\draw (7,3.25) node {\footnotesize $[204]$};
\draw (7,-2.1) node {\scriptsize $[-\!134]$};
\draw (4,-2) node {\footnotesize $[312]$};

\draw (5.5,6) node {\footnotesize $[150]$};
\draw (5.5,4.5) node {\footnotesize $[105]$};
\draw (10,-2.1) node {\scriptsize $[4-\!13]$}; 
\draw (8.5,-0.6) node {\scriptsize $[-143]$};
\draw (2.5,-0.7) node {\footnotesize $[321]$};
\draw (1,-2.1) node {\footnotesize $[-\!226]$};

\omitt{ for d15 2f238c6 
\draw(10.5,-3.1) node {$H_{2,3}^{-1}$};
\draw(6.8,-3.5) node {$H_{1,3}^0$};
\draw(4.0,-3.5) node {$H_{1,2}^1$};
\draw(9.8,1.5) node {$H_{1,3}^2$};
\draw(4.3,7.1) node {$H_{1,2}^{-1}$};
\draw(1.0,1.9) node {$H_{2,3}^1$};
 }

\draw (10,2) node {$H_{{3},7}^0$};
\draw (1,2) node {$H_{{1},5}^0$};
\draw (5.5,-4.0) node {$H_{{2},6}^0$};
\end{tikzpicture}
\caption{Dilated fundamental alcoves (left) and Sommers regions (right) for  $m=4$,
$ \w_4=[-226]$}
\label{fig:sommers region 3 4}
\end{center}
\end{figure}

\section{Main Constructions.}\label{Section: Main Constructions}

\subsection{Bijection $\An:\aSmn\to \Pmn$}

We define the map $\An:\aSmn\to \Pmn$ by the following procedure. Given $\w\in \aSmn$, consider the set $\Delta_{\omega}:=\{i\in\BZ: \omega(i)>0\}\subset\BZ$ and let $M_{\w}$ be its minimal element. Note that the set $\Delta_{\omega}$ is invariant under addition of $m$ and $n.$ Indeed, if $i\in \Delta_{\w}$ then $\w(i+m)>\w(i)>0$ and $\w(i+n)=\w(i)+n>n>0.$ Therefore $i+m\in\Delta_\w$ and $i+n\in\Delta_\w.$

Consider the integer lattice $(\mathbb Z)^2.$ We prefer to think about it as of the set of square boxes, rather than the set of integer points. Consider the rectangle $R_{m,n}:=\{(x,y)\in (\mathbb Z)^2 \mid 0\le x<m, 0\le y<n\}.$ Let us label the boxes of the lattice according to the linear function 
$$
l(x,y):=(mn-m-n)+M_{\w}-nx-my.
$$ 
The function $l(x,y)$ is chosen in such a way that a box is labeled by $M_\w$ if and only if its NE corner touches the line containing the NW-SE diagonal of the rectangle $R_{m,n},$ so $l(x,y)\ge M_{\w}$ if and only if the box $(x,y)$ is below this line. The  Young diagram $D_{\w}$ defined by 
$$
D_{\w}:=\{(x,y)\in R_{m,n}\mid  l(x,y) \in \Delta_{\w}\}.
$$
If $(x,y)\in D_{\w},$ then $\omega(l(x,y))>0$, hence
$$
\omega(l(x-1,y))=\omega(l(x,y)+n)>0,
$$
and
$$
\omega(l(x,y-1))=\omega(l(x,y)+m)>\omega(l(x,y))>0.
$$
Therefore, if $x-1\ge 0,$ then $(x-1,y)\in D_{\w},$ and if $y-1\ge 0,$ then $(x,y-1)\in D_{\w}$. We conclude that $D_{\w}\subset R_{m,n}$ is indeed a Young diagram with the SW corner box $(0,0).$ Note also that $D_\w$ fits under the NW-SE diagonal of $R_{m,n}.$ Therefore, $D_w\in\Ymn.$

Observe that the boxes in the $i^{\mathrm{th}}$ row of the diagram correspond to
coordinates with $y = i-1$.

The row-labeling $\tau_{\w}$ is given by $\tau_{\w}(i)=\w(a_i),$ where $a_i$ is the label on the rightmost box of the $i$th row of $D_{\w}$ (if a row has length $0$ we take the label on the box $(-1,i-1),$ just outside the rectangle in the same row). Note that if $i$th and $(i+1)$th rows have the same length, then $a_{i+1}=a_i-m$ and 
$$
\tau_\w(i+1)=\w(a_{i+1})=\w(a_i-m)<\w(a_i)=\tau_\w(i).
$$  
Therefore, $(D_{\w},\tau_{\w})\in\lYmn$. We define $\An(\w)\in\PF_{m/n}$ to be the parking function corresponding to $(D_\w,\tau_\w).$

\begin{example}\label{Example: An}
Let $n=4$, $m=7$. Consider the affine permutation
$\w = [0,6,3,1] = s_1 s_0 s_2 s_3 s_2:$ 
$$
\begin{array}{ccccccccccccccc}
x &\ldots&-3&-2&-1&0&1&2&3&4&5&6&7&8&\ldots\\
\omega(x) &\ldots&-4&2&-1&-3&0&6&3&1&4&10&7&5&\ldots,\\
\end{array}
$$
The inversion set is 
$\Inv(\w) = \{ (2,3), (2,4), (2,5), (2,8), (3,4) \}$.
Note that there are no inversions of height $7,$ so $\omega$ is $7$-stable.
Equivalently, $\w^{-1} = [4,-2,3,5]$ is $7$-restricted. The set $\Dw=\{-2,2,3,4,\ldots\}$  is invariant under the addition of $4$ and $7,$
and $M_{\w}=-2.$ The diagram $D_{\w}$ is shown in Figure \ref{Figure: An}.
Note that the labels $3,4,5,$ and $-2$ on the rightmost boxes of the rows of $D_{\w}$
 are the $4$-generators of the set $\Delta_{\w},$ i.e. they are the smallest numbers in $\Delta_{\w}$ in the corresponding congruence classes $\mod 4.$ It follows then that the corresponding values $\w(3),\w(4),\w(5),$ and $\w(-2)$ are a permutation of $1,2,3,4.$
Indeed, read bottom  to top, $(\w(3),\w(4),\w(5),\w(-2))=(3,1,4,2).$ This defines the row-labeling $\tau_{\omega}:=[3,1,4,2].$ Note that the last (top) two rows of the diagram have the same length $0.$ Therefore, the difference between the corresponding labels is $5-(-2)=7.$ The $7$-stability condition then implies that $\tau(3)=\w(5)>\w(-2)=\tau(4),$ which is exactly the required monotonicity condition on the labeling. Using the bijection from Lemma \ref{Lemma:PF <-> labeled diagrams}, one obtains the parking function $\An_{\w}=\pw{2040}.$
\begin{figure}
\begin{center}
\begin{tikzpicture}
\draw [step=0.5, gray,thick] (0,0) grid (3.5,2);
\draw [step=0.5, gray,dashed] (-0.5,-0.5) grid (4,2.5);

\draw (-0.25,0.25) node {\footnotesize $19$};
\draw (-0.25,0.75) node {\footnotesize $12$};
\draw (-0.25,1.25) node {\footnotesize $5$};
\draw (-0.25,1.75) node {\footnotesize $-2$};
\draw (0.25,0.25) node {\footnotesize $15$};
\draw (0.25,0.75) node {\footnotesize $8$};
\draw (0.25,1.25) node {\footnotesize $1$};
\draw (0.25,1.75) node {\footnotesize $-6$};
\draw (0.75,0.25) node {\footnotesize $11$};
\draw (0.75,0.75) node {\footnotesize $4$};
\draw (0.75,1.25) node {\footnotesize $-3$};
\draw (1.25,0.25) node {\footnotesize $7$};
\draw (1.25,0.75) node {\footnotesize $0$}; 
\draw (1.25,1.25) node {\footnotesize $-7$};
\draw (1.75,0.25) node {\footnotesize $3$};
\draw (1.75,0.75) node {\footnotesize $-4$};
\draw (2.25,0.25) node {\footnotesize $-1$};
\draw (2.25,0.75) node {\footnotesize $-8$};
\draw (2.75,0.25) node {\footnotesize $-5$};
\draw (3.25,-0.25) node {\footnotesize $-2$};
\draw (3.25,0.25) node {\footnotesize $-9$};

\filldraw [fill=gray] (0,0)--(0,1)--(1,1)--(1,0.5)--(2,0.5)--(2,0)--(0,0);
  
\draw (-0.25,0.25) node {\footnotesize $19$};
\draw (-0.25,0.75) node {\footnotesize $12$};
\draw (-0.25,1.25) node {\footnotesize $5$};
\draw (-0.25,1.75) node {\footnotesize $-2$};
\draw (0.25,0.25) node {\footnotesize $15$};
\draw (0.25,0.75) node {\footnotesize $8$};
\draw (0.25,1.25) node {\footnotesize $1$};
\draw (0.25,1.75) node {\footnotesize $-6$};
\draw (0.75,0.25) node {\footnotesize $11$};
\draw (0.75,0.75) node {\footnotesize $4$};
\draw (0.75,1.25) node {\footnotesize $-3$};
\draw (1.25,0.25) node {\footnotesize $7$};
\draw (1.25,0.75) node {\footnotesize $0$}; 
\draw (1.25,1.25) node {\footnotesize $-7$};
\draw (1.75,0.25) node {\footnotesize $3$};
\draw (1.75,0.75) node {\footnotesize $-4$};
\draw (2.25,0.25) node {\footnotesize $-1$};
\draw (2.25,0.75) node {\footnotesize $-8$};
\draw (2.75,0.25) node {\footnotesize $-5$};
\draw (3.25,-0.25) node {\footnotesize $-2$};
\draw (3.25,0.25) node {\footnotesize $-9$};

\draw [blue] (-0.75,1.75) node {$2$};
\draw [blue] (-0.75,1.25) node {$4$};
\draw [blue] (-0.75,0.75) node {$1$};
\draw [blue] (-0.75,0.25) node {$3$};
\end{tikzpicture}
\caption{The labeled diagram corresponding to the permutation $\w = [0,6,3,1].$}
\label{Figure: An}
\end{center}
\end{figure}
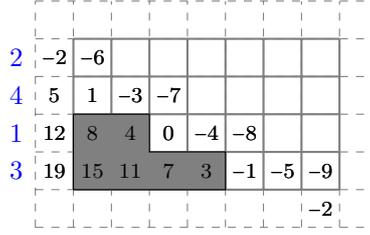

Equivalently, one can start directly from $\w^{-1} = [4,-2,3,5] \in \maSn$ 
and form the same labeled rectangle, noting $M_\w = \min\{ \wi(i) \mid 1 \le i \le n \}$.
Then $\An_\w(i) = 1 +x$ where $x$ is the $x$-coordinate of the box labeled
$\wi(i)$.
\end{example}

Alternatively, one can define the map $\An$ in a more compact, but less pictorial way:

\begin{definition}
Let $\omega\in\aSmn.$ We define the corresponding parking function $\An_{\omega}$ as follows. Let 
$$
M_{\omega}:=\min\{i\in\mathbb Z: \omega(i)>0\}.
$$
Given $\alpha\in\{1,\ldots,n\},$ there is a unique way to express $\omega^{-1}(\alpha)-M_{\w}$ as a linear combination $rm-kn$ with the condition $r\in\{0,\ldots,n-1\}.$ Note that one automatically gets $k\ge 0.$ Indeed, otherwise 
$$
\alpha =\omega(M_{\w}+rm-kn)\ge \omega(M_{\w})-kn>-kn\ge n,
$$
which contradicts the assumption $\alpha\in\{1,\ldots,n\}.$ We set $\An_{\omega}(\alpha):=k.$
\end{definition} 

\begin{lemma}
\label{an an}
The two above definitions of the map $\An$ are equivalent.
\end{lemma}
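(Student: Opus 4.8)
The plan is to show that the two descriptions assign the same value $\An_\omega(\alpha)$ to each $\alpha \in \{1,\dots,n\}$, by tracking where the label $\alpha$ (equivalently, where the integer $\omega^{-1}(\alpha)$) sits in the labeled rectangle $R_{m,n}$. First I would unwind the pictorial definition: the box $(x,y)$ carries the label $l(x,y) = (mn-m-n) + M_\omega - nx - my$, and by construction $D_\omega$ consists of exactly those boxes in $R_{m,n}$ whose label lies in $\Delta_\omega = \{i : \omega(i) > 0\}$. In the pictorial definition, $\tau_\omega(i) = \omega(a_i)$ where $a_i$ is the label of the rightmost box of row $i$; since those labels $a_i$ are precisely the $n$-generators of $\Delta_\omega$ (the minimal elements of $\Delta_\omega$ in each residue class mod $n$), the values $\omega(a_i)$ run over $\{1,\dots,n\}$. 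So for a given $\alpha$, the label $\omega^{-1}(\alpha)$ is the $n$-generator of $\Delta_\omega$ in its residue class, it appears as the rightmost box of some row, and $\An_\omega(\alpha)$ (pictorial version) is $1 + x_{\text{last box of that row}}$... wait, more precisely the remark at the end of Example~\ref{Example: An} says $\An_\omega(i) = 1 + x$ where $x$ is the $x$-coordinate of the box labeled $\omega^{-1}(i)$; I would take that reformulation as the thing to match against the compact definition.

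Next I would reconcile this with the coordinate bookkeeping of the compact definition. The compact definition writes $\omega^{-1}(\alpha) - M_\omega = rm - kn$ with $r \in \{0,\dots,n-1\}$, which is the unique such representation because $\gcd(m,n)=1$; it then sets $\An_\omega(\alpha) := k$. The key computation is simply to solve $l(x,y) = \omega^{-1}(\alpha)$ for $(x,y)$: this gives $nx + my = (mn-m-n) + M_\omega - \omega^{-1}(\alpha) = (mn - m - n) - (rm - kn) = n(m - 1 + k) - m(r+1) + \dots$ — I would carry out this routine rearrangement carefully to extract $x = k$ (hence $\An_\omega(\alpha) = k$ matches $1 + x$ only after a shift, so I need to be attentive to whether the reformulation uses $x$ or $x+1$, and reconcile with the row-length bookkeeping: the box labeled $\omega^{-1}(\alpha)$ is the rightmost box of its row, and the number of boxes in that row to the left of and including it is its $x$-coordinate plus one, while $\An_\omega(\alpha)$ in the parking-function sense is the row length $= $ that count). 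The cleanest route is: show $l(k, r) = \omega^{-1}(\alpha)$ by direct substitution, then argue $(k,r)$ is the coordinates of the rightmost box of row $r+1$ of $D_\omega$, so that row has length $k+1$... I must double-check against the convention that a length-$0$ row uses the box $(-1, i-1)$, which corresponds to $k = -1$, consistent with $\An_\omega(\alpha) = 0$ meaning the label is at $x = -1$. Actually re-examining: the compact definition gives $\An_\omega(\alpha) = k$ directly, and the pictorial row length should also be $k$, so the box labeled $\omega^{-1}(\alpha)$ has $x$-coordinate $k-1$ — I'd verify $l(k-1, ?) = \omega^{-1}(\alpha)$ requires adjusting $r$. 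I would settle this sign/offset bookkeeping by testing against Example~\ref{Example: An} ($n=4$, $m=7$, $\omega^{-1} = [4,-2,3,5]$, $M_\omega = -2$) where the answer $\pw{2040}$ is known.

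The two remaining things to nail down are: (1) that $r$, defined by the congruence $\omega^{-1}(\alpha) - M_\omega \equiv rm \pmod n$ with $r \in \{0,\dots,n-1\}$, is exactly the row index (minus one) of the box labeled $\omega^{-1}(\alpha)$ — this follows because along a column of $R_{m,n}$ the label changes by $m$ per unit $y$ and $r$ is forced to lie in $\{0,\dots,n-1\}$ which is exactly the range of the row index; and (2) that $k \ge 0$, which the compact definition already proves via $m$-stability of $\omega$ (the displayed inequality $\alpha \ge \omega(M_\omega) - kn > -kn \ge n$ contradiction), matching the pictorial fact that $D_\omega \subset R_{m,n}$ has nonnegative $x$-coordinates, plus the length-$0$ convention for $k$ that would be $-1$ in the box-coordinate picture. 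I expect the main obstacle to be purely the off-by-one and sign conventions — aligning "box $x$-coordinate," "row length," "the $(-1, i-1)$ convention for empty rows," and the parameter $k$ in $rm - kn$ — rather than anything conceptually deep; the substitution $l(x,y) = \omega^{-1}(\alpha) \iff nx + my = (mn-m-n) + M_\omega - \omega^{-1}(\alpha)$ does all the real work once the unique-representation fact ($\gcd(m,n)=1$ forces $r \in \{0,\dots,n-1\}$ to pin down $(r,k)$ hence $(x,y)$) is invoked. I would present the proof as: (a) recall both maps factor through locating $\omega^{-1}(\alpha)$ in the rectangle; (b) do the substitution to identify the box; (c) check the row-index equals $r$ and the $x$-coordinate yields $k$; (d) confirm the empty-row and nonnegativity edge cases coincide.
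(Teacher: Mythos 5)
Your plan is essentially the paper's proof: substitute $l(x,y)=\omega^{-1}(\alpha)$, use the unique representation $\omega^{-1}(\alpha)-M_\omega = rm-kn$ with $r\in\{0,\dots,n-1\}$ (guaranteed by $\gcd(m,n)=1$), and match $k$ to the $x$-coordinate plus one (equivalently the row length), with the empty-row convention handled by $x=-1$. The only concrete slip is in your side claim (1): the coefficient $r$ is not the row index minus one but rather $n-1-y$, since $l(x,y)-M_\omega=(mn-m-n)-nx-my=m(n-1-y)-n(x+1)$; the paper accordingly gets $r=n-i$ for row $i=y+1$. This does not affect the identification $k=x+1$ (the quantity you actually need), and it is exactly the kind of offset your plan says you would catch by checking against Example~\ref{Example: An}, so the argument would go through once that is corrected.
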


\begin{proof}
Let $(x,i-1)$ be the rightmost box in the $i$th row of the diagram $D_{\omega}.$ Let 
$$
\alpha:=\omega(l(x,i-1))=\omega(M_{\w}+(mn-m-n)-xn-(i-1)m)=\tau_{\omega}(i).
$$ 
We need to check that $\An_{\omega}(\alpha)$ is equal to the length of the $i$th row, which is $x+1.$ Indeed,
$$
\omega^{-1}(\alpha)=M_{\w}+(mn-m-n)-xn-(i-1)m,
$$ 
or
$$
\omega^{-1}(\alpha)-M_{\w}=(n-i)m-(x+1)n,
$$
with $n-i\in\{0,\ldots n-1\}.$ Therefore, $\An_{\omega}(\alpha)=x+1.$ 
\end{proof}

\begin{theorem}
The map $\An:\aSmn\to \Pmn$ is a bijection.
\end{theorem}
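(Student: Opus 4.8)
The plan is to exhibit an explicit two-sided inverse, using the compact description of $\An$ in terms of the residues $rm-kn$. Since $|\aSmn| = m^{n-1} = |\Pmn|$ by Proposition~\ref{intro sommers} and the remark following Lemma~\ref{Lemma:PF <-> labeled diagrams}, it would in fact suffice to prove injectivity or surjectivity alone; but constructing the inverse is cleanest and also gives the reconstruction formula used in later sections. First I would set up the inverse map $\Pmn \to \aSmn$. Given $f \in \Pmn$ with associated labeled diagram $(D,\tau) \in \lYmn$, I want to produce $\w$ with $\An_\w = f$. The key observation is that for each $\alpha \in \{1,\dots,n\}$ the value $\w^{-1}(\alpha)$ is forced: writing $i = \tau^{-1}(\alpha)$ (the row carrying label $\alpha$) and $x+1 = f(\alpha)$ (its row length), the proof of Lemma~\ref{an an} shows $\w^{-1}(\alpha) - M_\w = (n-i)m - (x+1)n$. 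So once we fix the normalization constant $M_\w$, the window $[\w^{-1}(1),\dots,\w^{-1}(n)]$ is completely determined, and hence so is $\w$.

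The main steps are then: (1) show the $n$ integers $\w^{-1}(\alpha) = M_\w + (n-i)m - (x+1)n$, as $\alpha$ ranges over $\{1,\dots,n\}$, are pairwise incongruent mod $n$ — this follows because the pairs $(i, \text{row length})$ are determined by a genuine Young diagram fitting under the diagonal, so the residues $-(x+1)n$ contribute nothing mod $n$ and it is the $(n-i)m$ terms, with $n-i$ ranging over $\{0,\dots,n-1\}$ and $\gcd(m,n)=1$, that are distinct mod $n$. Hence these values are a legitimate window for an affine permutation after fixing $M_\w$ so that $\sum_\alpha \w^{-1}(\alpha) = \binom{n+1}{2}$; coprimality guarantees this sum condition pins down $M_\w$ uniquely (the sum is linear in $M_\w$ with coefficient $n$, and one checks the required residue works out). (2) Verify $\min\{\w^{-1}(\alpha)\} = M_\w$, i.e. that the chosen constant really is the minimum of the constructed window — equivalently that $\w(M_\w) > 0 \ge \w(M_\w - 1)$; here is where the parking-function inequality (the Young diagram fits under the diagonal, so row $n$ has length $0$, giving the term $(n-i)m - (x+1)n$ its minimum value $0$ at $i=n$) is used to see the minimum is attained and equals $M_\w$. (3) Check $m$-stability of the resulting $\w$: since $\Delta_\w := \{i : \w(i) > 0\}$ must be closed under $+m$, and by construction $\Delta_\w$ is the set of labels $l(x,y)$ on boxes of $D$ together with everything below the diagonal strip, closure under $+m$ is exactly the statement that $D$ is column-convex (a box present implies the box below it is present), which holds for Young diagrams. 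Then (4) confirm $\An_\w = f$ by running the compact definition forward — immediate from step (1) — and (5) confirm that starting from $\w \in \aSmn$, building $(D_\w,\tau_\w)$ and running the inverse returns $\w$, which is again immediate because step (1)'s formula is precisely the content of Lemma~\ref{an an} read backwards.

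I expect the main obstacle to be step (2)–(3): proving that the ad hoc window we write down genuinely has $M_\w$ as its minimum and genuinely lands in $\aSmn$ rather than merely in $\aSn$. The subtlety is that not every choice of "Young-diagram data" produces an $m$-stable permutation a priori; one must use both the "fits under the diagonal" condition and the closure of $\Delta_\w$ under $+n$ and $+m$ simultaneously. Concretely, the cleanest route is probably to avoid checking $m$-stability directly and instead argue: the pictorial construction in the text already shows $\An$ maps $\aSmn$ into $\lYmn \cong \Pmn$; the formula of Lemma~\ref{an an} shows $\An$ is injective because $\w$ is recovered from $\An_\w$ via $\w^{-1}(\alpha) = M_\w + (n - \tau^{-1}(\alpha))m - \An_\w(\alpha)\, n$ once $M_\w$ is known, and $M_\w$ itself is recovered from the constraint $\sum \w^{-1}(\alpha) = \binom{n+1}{2}$ together with coprimality; injectivity plus $|\aSmn| = m^{n-1} = |\Pmn|$ then forces bijectivity. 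This sidesteps having to verify $m$-stability of the preimage by hand, trading it for the (already established) cardinality count.
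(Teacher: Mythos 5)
Your final route — prove injectivity via the recovery formula $\w^{-1}(\alpha) = M_\w + (n-\tau^{-1}(\alpha))m - \An_\w(\alpha)n$, pin down $M_\w$ by the normalization $\sum \w^{-1}(\alpha) = \frac{n(n+1)}{2}$, then invoke $|\aSmn| = m^{n-1} = |\Pmn|$ — is correct and is a genuinely different way to close than the paper takes. The paper also treats injectivity as immediate, but then proves surjectivity directly: it shows that the candidate preimage $\phi(f)$ is $m$-stable for every $f\in\Pmn$, by assuming a height-$m$ inversion $(l,l+m)$ with $\w(l)\in\{1,\dots,n\}$ and deriving a contradiction from the Young-diagram structure (splitting into the cases where $l$ labels the rightmost box of a non-bottom row vs.\ the bottom row). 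Your cardinality shortcut is shorter but outsources to Proposition~\ref{intro sommers} and the count of parking functions; the paper's explicit verification is self-contained and, as a by-product, confirms that the pictorial inverse $\phi$ really lands in $\aSmn$, which is arguably more informative.

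One caution about your primary plan before the pivot: the claim in your step~(3) that ``closure of $\Delta_\w$ under $+m$ is exactly $m$-stability'' is not right. $m$-stability of $\w$ (i.e.\ $\w(x+m)>\w(x)$ for all $x$) implies $\Delta_\w + m \subseteq \Delta_\w$, but the converse fails: closure of $\Delta_\w$ under $+m$ only constrains the \emph{signs} of $\w(x)$ and $\w(x+m)$, not their relative order when both are positive or both negative. Column-convexity of $D$ therefore gives you a strictly weaker condition than $m$-stability, and the paper's case analysis is precisely what fills this gap. You flagged steps (2)--(3) as the main obstacle and then correctly routed around them, so the final argument stands; but if you wanted to carry out the direct verification, you would need the paper's inversion-by-contradiction argument rather than the column-convexity observation.
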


\begin{proof} 
Injectivity of the map $\An:\aSmn\to \Pmn$ is immediate from the construction. Indeed, the diagram $D_{\omega}$ completely determines the set $\Delta_{\omega},$ while the row-labeling $\tau_{\omega}$ determines the values of $\omega$ on the $n$-generators of $\Delta_{\omega},$ which suffices to determine $\omega.$ This gives an injective map $\phi:\Pmn\to \aSn,$ such that $\phi\circ\An=id_{\aSmn}.$ To prove that $\An$ is also surjective, it suffices to show that $\phi(f)$ is $m$-stable for any $f\in\Pmn.$

Indeed, let $f\in\Pmn$ be a parking function, $(D_f,\tau_f)$ be the corresponding Young diagram with row labeling, and $\omega:=\phi(f).$ Suppose that $(l,l+m)$ is
 an inversion of height 
$m,$ i.e. $\omega(l)>\omega(l+m).$ By shifting $l$ by a multiple of $n$ if necessary, one can assume that $\omega(l)\in\{1,\ldots,n\},$ so that $l$ labels the rightmost box of one of the rows of $D_f.$ Suppose that $l$ labels the box $(x,y)$ with $y>0,$ i.e. it is not in the first row. Then $l+m=l(x,y-1),$ which is the label on the box just below the box $(x,y).$ Since $D_f$ is a Young diagram, $(x,y-1)\in D_f.$ Suppose that $(z,y-1)\in D_f$ is the rightmost box in the $y$th row. Then
$$
\omega(l+m)=\omega(l(z,y-1)+(z-x)n)=\tau_f(y)+(z-x)n.
$$
If $z>x,$ we get $\omega(l+m)>n>\omega(l).$ Contradiction. If $z=x,$ then
the $y$th and $(y+1)$th rows are of the same length and, by the condition on the row labeling $\omega(l+m)=\tau_f(y)>\tau_f(y+1)=\omega(l).$ Contradiction.

Suppose now that $l$ labels the rightmost box in the first row: $l=l(x,0)=M+mn-m-n-xn$. Then $l+m=M+(m-1-x)n$ and 
$$
\omega(l+m)=\omega(M)+(m-1-x)n>(m-1-x)n\ge n,
$$ 
because $M$ labels the rightmost box in the $n$th (top) row and, therefore, $\omega(M)=\tau_f(n)>0,$ and $m-1-x \ge 1,$ because the first row of the diagram $D_f$ has length $x+1,$ which
has to be less than $m.$ We conclude that $\omega(l+m)>n\ge \omega(l).$ Contradiction. Therefore, $\omega\in\aSmn$ and $\An:\aSmn\to \Pmn$ is a bijection. 
\end{proof}

We call $\An$ the Anderson map, since if we restrict the domain to minimal length right coset representatives (which correspond to partitions called $(m,n)$-cores), and then project to increasing parking functions by sorting, the map agrees with one constructed by Anderson \cite{An02}.

\begin{remark}
For $\wm$ as in Lemma \ref{isometry} 
we have $\An_{\wm} = \pw{00\cdots 0}$.
\end{remark}

\begin{example}
Consider the case $n=5, m=3$.
Let $\id = [1,2,3,4,5]$, $s_1 = [2,1,3,4,5]$, $s_1 s_2 = [2,3,1,4,5]$
which are all $3$-restricted.  The images of their inverses under the
Anderson map are then $f = \An_{\id} = \pw{01201}$,
$\An_{s_1}  = \pw{10201} = f \circ s_1$, and 
$\An_{s_2 s_1}  = \pw{12001} = f \circ s_1s_2$.
Indeed the $3$-restricted permutations in the shuffle $14 \shuffle 25 \shuffle
3$ correspond in a similar manner to the entire finite $\Snn{5}$ orbit of
$f$.
The precise statement is in the following proposition.
\end{example}
\begin{proposition}
\label{prop-anderson-symmetry}
Let $\w \in \aSmn$, $f = \An_\w \in \Pmn$, and write $u = \wi \in \maSn$.
 Let $H = \{h \in \Sn \mid f \circ h = f\}$. 
Let $\mathcal{V} = \{ v \in \Sn \mid uv \in \maSn\}$.
If $v \in \mathcal{V}$ then 
$$\An_{v^{-1}\w} = \An_{\w} \circ v.$$
In particular
$\mathcal{V}$ are a complete set of coset representatives for 
$\Sn/H$.

\end{proposition}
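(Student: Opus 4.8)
The plan is to analyze directly how the Anderson map transforms under right multiplication of $u=\wi$ by a finite permutation $v$, using the compact definition of $\An$ in terms of $M_\w$ and the congruence-class bookkeeping. The key point is that $\mathcal{V}$ consists exactly of those $v\in\Sn$ for which $uv$ has no inversions of height $m$, equivalently (by definition of $\maSn$) for which $\w v^{-1}\cdots$ — more precisely, $uv\in\maSn$ iff $v^{-1}\w\in\aSmn$, since $(v^{-1}\w)^{-1}=\wi v=uv$. So the statement $\An_{v^{-1}\w}=\An_\w\circ v$ is at least well-posed: both sides lie in $\Pmn$.

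First I would observe that $M_{v^{-1}\w}=M_\w$. Indeed $M_\w=\min\{i:\w(i)>0\}$ depends only on the set $\Delta_\w=\{i:\w(i)>0\}$; and $\Delta_{v^{-1}\w}=\{i:(v^{-1}\w)(i)>0\}$. Since $v\in\Sn$ permutes $\{1,\dots,n\}$ and commutes with the $\BZ$-shift $x\mapsto x+n$ only up to relabeling within each block, the set of $i$ with $(v^{-1}\w)(i)>0$ is the same as the set of $i$ with $\w(i)>0$ except that positivity of individual values is reshuffled within $\{1,\dots,n\}$ by $v^{-1}$ — but positivity is preserved because $v^{-1}$ maps $\{1,\dots,n\}$ to itself bijectively. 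Hence $\Delta_{v^{-1}\w}=\Delta_\w$, and in particular $M_{v^{-1}\w}=M_\w$. Next, for $\alpha\in\{1,\dots,n\}$, the compact definition gives $\An_\w(\alpha)=k$ where $\wi(\alpha)-M_\w=rm-kn$ with $r\in\{0,\dots,n-1\}$. Now $(v^{-1}\w)^{-1}=\wi v$, so $(v^{-1}\w)^{-1}(\alpha)=\wi(v(\alpha))$, and therefore the defining equation for $\An_{v^{-1}\w}(\alpha)$ reads $\wi(v(\alpha))-M_\w=r'm-k'n$ with $r'\in\{0,\dots,n-1\}$. Comparing with the defining equation for $\An_\w$ at the argument $v(\alpha)$ — namely $\wi(v(\alpha))-M_\w=rm-kn$ with the same constraint $r\in\{0,\dots,n-1\}$, which determines $(r,k)$ uniquely — we get $k'=\An_\w(v(\alpha))$, i.e. $\An_{v^{-1}\w}(\alpha)=\An_\w(v(\alpha))=(\An_\w\circ v)(\alpha)$. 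This is the desired identity.

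It remains to handle the final sentence, that $\mathcal{V}$ is a complete set of coset representatives for $\Sn/H$, where $H=\{h\in\Sn: f\circ h=f\}$. One containment direction: if $v,v'\in\mathcal{V}$ and $v^{-1}v'\in H$, then from the identity just proved, $\An_{(v')^{-1}\w}=f\circ v'=f\circ v\circ(v^{-1}v')=f\circ v=\An_{v^{-1}\w}$; since $\An$ is a bijection (the theorem proved above), $(v')^{-1}\w=v^{-1}\w$, so $v=v'$. Thus distinct elements of $\mathcal{V}$ lie in distinct cosets. For surjectivity onto $\Sn/H$, I would count: by the proposition above (applied with general $v$) together with the bijectivity of $\An$, the assignment $v\mapsto \An_{v^{-1}\w}=f\circ v$ sends $\mathcal{V}$ injectively into the $\Sn$-orbit of $f$, which has size $|\Sn|/|H|=|\Sn/H|$; conversely the finite $\Sn$-orbit of $f$ consists of parking functions $f\circ v$, and each such is $\An_{v^{-1}\w}$ for the corresponding coset representative, hence corresponds to an $m$-stable permutation $v^{-1}\w$, i.e. $uv=(v^{-1}\w)^{-1}\in\maSn$, so $v\in\mathcal{V}$ for at least one representative of each coset. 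Comparing the two injections shows $|\mathcal{V}|=|\Sn/H|$ and $\mathcal{V}$ is a transversal.

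The main obstacle I anticipate is not in the algebra — the congruence-class argument is short once one trusts the compact definition of $\An$ — but in cleanly justifying $\Delta_{v^{-1}\w}=\Delta_\w$ and $M_{v^{-1}\w}=M_\w$, and in verifying that the constraint $k\geq 0$ (built into the compact definition via the parking-function condition) is automatically inherited, so that $\An_{v^{-1}\w}$ really is the honest output of the Anderson map and not merely a formal solution of the linear relation. This is exactly where one uses that $v^{-1}\w\in\aSmn$, i.e. $v\in\mathcal{V}$: the surjectivity half of the proof of the theorem above guarantees that an element of $\aSmn$ has $\An$-image a genuine parking function, and we have already checked $v^{-1}\w\in\aSmn$. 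Once that is in place the rest is bookkeeping.
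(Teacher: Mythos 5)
Your proof of the identity $\An_{v^{-1}\w}=\An_\w\circ v$ is correct and takes a genuinely cleaner route than the paper's: you work directly from the compact definition of $\An$, and the observation $\Delta_{v^{-1}\w}=\Delta_\w$ (hence $M_{v^{-1}\w}=M_\w$), together with uniqueness of the representation $rm-kn$ with $r\in\{0,\dots,n-1\}$, gives the identity for all $v\in\mathcal{V}$ uniformly. The paper instead proves the $v=s_i$ case via the diagram labeling and then extends through reduced expressions, which directly covers only the length-additive case $\ell(uv)=\ell(u)+\ell(v)$ and needs a further reduction to a standard parabolic $H$; your argument avoids this reduction entirely.

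The coset-representative half is not fully correct as written. There is a left/right slip: from $v^{-1}v'\in H$ you infer $f\circ v\circ(v^{-1}v')=f\circ v$, but that step needs $v^{-1}v'\in\mathrm{Stab}(f\circ v)=v^{-1}Hv$, not $H$. The correct equivalence is $f\circ v=f\circ v'\iff v'v^{-1}\in H$, i.e.\ $v,v'$ lie in the same \emph{right} coset $Hv$. (The left-coset version in fact fails: for $(n,m)=(5,3)$, $\w=\id$, $f=\pw{01201}$, $H=\langle(1\,4),(2\,5)\rangle$, both $v=[2,3,1,4,5]$ and $v'=[2,5,1,4,3]$ are $3$-restricted, yet $v^{-1}v'=(2\,5)\in H$; the paper's statement is loose on this point and you have inherited it.) That is fixable, but the more serious issue is that your surjectivity step is circular. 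You claim each $f\circ v$ in the orbit ``is $\An_{v^{-1}\w}$ for the corresponding coset representative, hence corresponds to an $m$-stable permutation $v^{-1}\w$'' --- but the identity $\An_{v^{-1}\w}=f\circ v$ was established only for $v\in\mathcal{V}$, as you yourself flag, so it cannot be invoked to show that the coset meets $\mathcal{V}$. What is missing is a direct argument that $\sigma:=\An^{-1}(f\circ v)\in\aSmn$ lies in $\Sn\w$. One way to supply it: $P(f\circ v)=P(f)=D_\w$, and for $m$-stable $\sigma$ the diagram together with the normalization $\sum_{i=1}^n\sigma(i)=\frac{n(n+1)}{2}$ determines $\Delta_\sigma$ outright (not merely its translation class), hence $\sigma$ and $\w$ share the same set of $n$-generators, so $\sigma^{-1}(\{1,\dots,n\})=\wi(\{1,\dots,n\})$ and $\sigma^{-1}\in\wi\Sn$. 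With that in hand your counting closes; as written there is a genuine gap.
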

\begin{proof}
We first consider the case $v = s_i$.
Note $ u s_i = [u(1), \cdots, u(i+1), u(i), \cdots, u(n)]$,
so in particular as $u, us_i \in \maSn$, $| u(i) = u(i+1)| \neq m$;
and in fact this difference cannot be a multiple of $m$.
We observed at the end of Example \ref{Example: An}
that $\An_\w(k) = 1+x$ where $(x,y)$ are the coordinates of the
box labeled $u(k)$. Multiplication by $s_i \in \Sn$
does not change $M_\w$ nor the function that labels the boxes of the rectangle.
Hence
$\An_{s_i\w}(k) = 1+x$ where $(x,y)$ are the coordinates of the
box labeled $us_i(k)$.
In other words
$$
\An_{s_i \w}(k) =
\begin{cases}
		\An_\w(k) & k \neq i, i+1 \\
		\An_\w(i+1) & k = i  \\
		\An_\w(i) & k = i+1
\end{cases}
\qquad = \An_\w \circ s_i = f \circ s_i.
$$
Further, note that $\An_\w \circ s_i \neq \An_\w$, otherwise the
boxes labeled $u(i), u(i+1)$ would be in the same column and hence
differ by a multiple of $m$.
Conversely, given $1\le i \le n$ with $f \circ s_i \neq f$, we see
$u, u s_i \in \maSn$.

Hence we may assume $f$ is chosen so that $H$ is a standard parabolic
subgroup, in which case the set $\mathcal{V}$ will correspond to
minimal length coset representatives.
Indeed, repeating the above argument, we see that for any
$v \in \Sn$ with $uv \in \aSmn$ and $\ell(uv) = \ell(u) + \ell(v)$
that $\An_{v^{-1}w} = \An_\w \circ v$.
\end{proof}

\subsection{Map $\PS:\aSmn\to \Pmn$}

\begin{definition}
Let $\omega\in \aSmn$.
Then the map $\PS_{\omega}:\{1,\ldots,n\}\to \BZ$ is given by:
$$
\PS_{\omega}(\alpha):=\sharp\left\{\beta  \mid  \beta >\alpha,
0 < \omega^{-1}(\alpha) -  \omega^{-1}(\beta)<m \right\}
$$
$$
=\sharp\left\{i  \mid  \omega(i) >\alpha,
\omega^{-1}(\alpha)-m < i<\omega^{-1}(\alpha) \right\}.
$$
In other words, $\PS_{\w}(\alpha)$ is equal to the number of inversions $(i,j)\in\Inv(\w)$ of height less than $m$ and such that $\w(j)\equiv\alpha$ mod $n.$
\end{definition}

\begin{definition}
Let $\SP: \maSn \to \Pmn$ be defined by $\w \mapsto \PS_{\w^{-1}}$.
\\
Observe $\SP_u(i) = \sharp \{ j > i \mid 0 < u(i) - u(j) < m \}$.
\end{definition}

\begin{example}
\label{example-PS}
Using the same permutation as in Example \ref{Example: An}, one gets
$\PS_{\omega}(1)=3 = \sharp \{(2,4), (3,4), (-2,4) \},$
$\PS_{\omega}(2)=0,$ $\PS_{\omega}(3)=1 = \sharp \{(2,3)\},$ and
$\PS_{\omega}(4)=1 = \sharp \{(2,5)\},$ so  
$\PS_{\omega} = \pw{3011}$.

Likewise, $\wi = [4, -2, 3, 5]$ and $\SP_\wi(1) = 3 =\sharp \{(1,2), (1,3), (1,6) \}
\subseteq \Inv(\wi)$,
 $ 1 = \sharp \{(3,6)\},$ and
$ 1 = \sharp \{(4,6)\}.$
 
\end{example}

\begin{example}
Consider $(n,m) = (5,3)$ and $u = [0,3,6,2,4]$. 
Then $\Inv(u) = \{ (2,4), (3,4),   (3,5), (3,6) \}$ and so
  $\SP_u = \pw{01200}$.  Note $u(3) - u(4) = 4 > m$ so this inversion
does not contribute to $\SP_u(3).$ 
\end{example}

Let us prove that $\PS_{\omega}$ is indeed an $m/n$-parking function. We will need the following definition and lemmas.

\begin{definition}
A subset $K\subset \BZ$ is called 
{\em $\pm n$-invariant}
if for all $x\in K$ one has $x+n\in K$ and $x-n\in K$. 
\end{definition}

\begin{lemma}
\label{lem:circle}
Let $K$ be an $\pm n$-invariant set, and $\sharp\left(K\cap [1,n]\right)=k$.
Then there exists $i\in \BZ$ such that 
$$\sharp\left(K\cap [i-m+1,i]\right)\le \frac{km}{n}.$$
\end{lemma}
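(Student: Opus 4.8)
The plan is to use an averaging argument over a full period of length $n$. Since $K$ is $\pm n$-invariant, the quantity $\sharp\left(K\cap[i-m+1,i]\right)$ depends only on $i \bmod n$, so it suffices to consider $i$ ranging over a set of $n$ consecutive integers, say $i \in \{1,\ldots,n\}$. First I would compute the sum $\sum_{i=1}^{n}\sharp\left(K\cap[i-m+1,i]\right)$ by interchanging the order of summation: this equals $\sum_{x}\sharp\{i\in\{1,\ldots,n\} \mid x\in[i-m+1,i]\}$, where $x$ ranges over the relevant window. Each integer $x$ lies in exactly $m$ of the windows $[i-m+1,i]$ as $i$ runs over all of $\BZ$; restricting $i$ to one period $\{1,\ldots,n\}$ and using $\pm n$-invariance of $K$ to fold everything back, one finds that every residue class modulo $n$ that meets $K$ contributes exactly $m$ to the total. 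Hence the sum equals $km$ (there being $k$ such residue classes, by the hypothesis $\sharp(K\cap[1,n])=k$).

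Now the conclusion follows from the pigeonhole principle: the average of the $n$ numbers $\sharp\left(K\cap[i-m+1,i]\right)$, $i=1,\ldots,n$, is exactly $\frac{km}{n}$, so at least one of them is $\le\frac{km}{n}$. That value of $i$ is the one claimed.

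The only delicate point — and the step I would write out most carefully — is the bookkeeping in the double-counting: one must check that, after using $\pm n$-invariance to replace $K$ by $K\cap[1,n]$ (suitably shifted), each element of $K\cap[1,n]$ is counted with multiplicity exactly $m$ in $\sum_{i=1}^{n}\sharp(K\cap[i-m+1,i])$, with no off-by-one errors coming from the boundary of the window or from the interaction between the length-$m$ sliding window and the length-$n$ period. Concretely, for a fixed residue class $c$ with a representative in $K$, the integers $i\in\{1,\ldots,n\}$ for which some element of $K$ congruent to $c$ lies in $[i-m+1,i]$ are exactly $m$ consecutive values of $i$ taken cyclically modulo $n$ (here $m$ may exceed $n$, in which case "cyclically" means the class is hit with multiplicity $\lfloor m/n\rfloor$ or $\lceil m/n\rceil$ depending on $i$, still summing to $m$ over the period). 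Once this multiplicity count is nailed down, everything else is immediate. I do not expect any genuine obstacle beyond this careful accounting; the essential content is just that a sliding window of length $m$ over a periodic set of density $k/n$ has average occupancy $km/n$.
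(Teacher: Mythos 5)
Your argument is correct, and it rests on the same underlying pigeonhole principle as the paper's: both compute that the ``total occupancy'' of $K$ over $n$ suitably chosen length-$m$ windows is $km$ and then conclude some window contains at most $km/n$ points. The difference is purely in the bookkeeping. The paper takes a single interval $I$ of length $mn$, partitions it once into $m$ disjoint intervals of length $n$ (giving $|K\cap I|=km$ by $\pm n$-invariance) and once into $n$ disjoint intervals of length $m$, and applies pigeonhole to the second partition; because all intervals are disjoint, there is no multiplicity to track. You instead sum the sliding window $\sharp(K\cap[i-m+1,i])$ over one period $i=1,\ldots,n$ and double-count by residue class, which forces you to verify the fact — correct, but requiring the care you flag — that each residue class meeting $K$ contributes exactly $m$ to the sum regardless of how $m$ compares to $n$. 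So your route is a legitimate and natural variant, but the paper's disjoint-partition version avoids exactly the boundary accounting you identify as the delicate step, and is cleaner for that reason.
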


\begin{proof}
Consider an interval $I$ in $\BZ$ of length $mn$. On one hand, it is covered by $m$ intervals of length $n$, containing $k$ points of $K$ each, hence $\sharp(K\cap I)=km$. On the other hand, it is covered by $n$ intervals of length $m$, hence one of these intervals should contain at most $\frac{km}{n}$ points of $K$.
\end{proof}

\begin{lemma}
\label{lem:step}
Let $\omega\in \aSmn$, let $K$ be an $\pm n$-invariant set, and $\sharp\left(K\cap [1,n]\right)=k$.
There exists $l\in \BZ\setminus K$ such that the following conditions hold:
\begin{itemize}
\item[a)] If $j<l$ and $\omega(j)>\omega(l)$ then $j\in K$

\item[b)] $\sharp\{j\in K: l-m<j<l, \omega(j)>\omega(l)\}\le \frac{km}{n}.$
\end{itemize}
\end{lemma}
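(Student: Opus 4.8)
The plan is to produce the element $l$ explicitly by combining the ``circle'' counting from Lemma \ref{lem:circle} with an additional minimality condition that forces property (a). I will work with the set
$$
K' := K \cup \{\, j \in \BZ \mid \w(j) > \w(l) \text{ would fail}\,\}
$$
--- more precisely, I want to enlarge $K$ slightly so that the interval count still works, and then pick $l$ to be a suitable ``first available'' index. So first I would fix $k = \sharp(K \cap [1,n])$ and recall from Lemma \ref{lem:circle} that there is an integer $i$ with $\sharp(K \cap [i-m+1, i]) \le \tfrac{km}{n}$; this $i$ is the germ of my $l$, but it may lie in $K$ and it may violate (a).

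To fix (a), I would consider, among all $l \notin K$, those that are ``$\w$-small relative to their $K$-complement to the left'', meaning: every $j < l$ with $\w(j) > \w(l)$ already lies in $K$. Such $l$ certainly exist: take any sufficiently large $N$ and let $l$ be the index in $\BZ \setminus K$ at which $\w$ attains its minimum over some long enough window $[i-m+1, i]$ that is (by the lemma) $K$-sparse. Here the $m$-stability of $\w$ is the key structural input: because $\w$ has no inversions of height exactly $m$, the values $\w(j)$ for $j$ in a length-$m$ window cannot ``wrap around'' too wildly, so the set of indices $j<l$ with $\w(j) > \w(l)$ that are \emph{not} in $K$ is controlled by what happens inside a single length-$m$ window to the left of $l$. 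Concretely, I would argue: pick the window $[i-m+1,i]$ from Lemma \ref{lem:circle}; inside it choose $l$ to be the index \emph{not} in $K$ with the smallest value of $\w$. Then any $j$ with $i-m+1 \le j \le i$, $j \notin K$, and $\w(j) > \w(l)$ is allowed to exist but I only need to bound the count in (b), which is exactly $\sharp\{ j \in K : l - m < j < l, \w(j) > \w(l)\} \le \sharp(K \cap [l-m+1, l-1]) \le \sharp(K \cap [i-m+1,i]) \le \tfrac{km}{n}$ once I arrange $l = i$; so the cleanest route is to take $l := i$ when $i \notin K$, and otherwise shift.

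For property (a) I would then need: if $j < l$ and $\w(j) > \w(l)$ then $j \in K$. This is where $m$-stability enters decisively. Suppose $j < l$, $j \notin K$, and $\w(j) > \w(l)$. If $j \ge l - m + 1$, i.e. $j$ is in the same length-$m$ window, this contradicts the choice of $l$ as the $\w$-minimal non-$K$ index of that window --- \emph{provided} I have set up the window around $l$ correctly, so really I should define $l$ as the minimizer of $\w$ over $(\BZ \setminus K) \cap [l-m+1, l]$ self-referentially, which I would instead phrase as: let $l$ be the \emph{largest} index $\le i$ not in $K$ such that $\w(l) = \min\{\w(j) : j \in (\BZ\setminus K)\cap[i-m+1,i]\}$, and observe the window $[l-m+1,l]$ is then contained in $[i-m+1 - (m-1), i]$... this is getting delicate, so I expect to instead iterate: if (a) fails with witness $j < l-m+1$ outside $K$, replace $l$ by $j$ and repeat; the process terminates because $\w$ restricted to $\BZ \setminus K$ is bounded below on any interval of the form $(-\infty, i] \cap (j_0 + n\BZ)$-type cosets is not bounded, so I need the $m$-stability: $\w(j) > \w(l)$ and $j < l - m$ would, together with $m$-stability (no height-$m$ inversions, hence $\w(j + m) > \w(j)$ etc.), eventually force a contradiction with $\w(l)$ being ``small''.

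The main obstacle, as the above indicates, is making the descent/termination argument for property (a) rigorous: one must show that the process of repeatedly replacing $l$ by a violating $j$ to its left cannot go on forever, and that when it stops the count in (b) is still bounded by $km/n$. I expect the right bookkeeping is: among all $l \in \BZ \setminus K$ satisfying (a), the values $\w(l)$ form a set that is bounded below on each residue class mod $n$ (because $K$ is $\pm n$-invariant and $\w(l+n) = \w(l)+n$), so one can choose $l$ satisfying (a) with $\w(l)$ in a prescribed range; then a second application of the pigeonhole/averaging over a length-$mn$ interval --- exactly as in Lemma \ref{lem:circle} but now counting the ``bad'' indices $\{j : l-m<j<l, \w(j)>\w(l)\}$ rather than $K$ itself --- yields (b). So the proof structure is: (1) establish existence of $l$ with (a) by a minimality/descent argument using $m$-stability and $\pm n$-invariance of $K$; (2) among such $l$, run the circle-counting of Lemma \ref{lem:circle} on an auxiliary $\pm n$-invariant set built from $K$ together with the ``left-window excess'' indices, to extract one $l$ that additionally satisfies (b). I will need to double-check that (a) is preserved under the shift by multiples of $n$ used in step (2), which should follow from $\w(x+n) = \w(x)+n$ and $K$ being $\pm n$-invariant.
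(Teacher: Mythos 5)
Your proposal starts in the right place (using Lemma~\ref{lem:circle} to get a window $[i-m+1,i]$ with $\sharp(K\cap[i-m+1,i])\le km/n$), but it then wanders and does not arrive at a working argument for either part. Two concrete issues:

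First, you repeatedly reach for $l$ with \emph{minimal} $\w$-value over $(\BZ\setminus K)\cap[i-m+1,i]$. That is the wrong extremum: if $\w(l)$ is small, there are many $j<l$ with $\w(j)>\w(l)$ outside $K$, so (a) fails and your descent/replacement process is needed. The clean choice is the opposite one: take $l\le i$ with $\w(l)=\max\{\w(x):x\in(-\infty,i]\setminus K\}$ (this exists because $\w(x+n)=\w(x)+n$ makes $\w$ bounded above on $(-\infty,i]$). With this $l$, property (a) is immediate --- any $j<l$ with $j\notin K$ has $\w(j)<\w(l)$ by maximality --- no descent argument is needed, and your worries about termination and about (a) being preserved under shifts disappear. (Your descent, if it terminates, actually lands at exactly this $l$, but you do not establish termination or identify the fixed point, and you lose the link between $l$ and the sparse window you started from.)

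Second, and more seriously, you have no working argument for (b). Your ``second application of circle-counting to an auxiliary $\pm n$-invariant set built from $K$ together with the left-window excess'' is not viable as stated: the set $\{j:l-m<j<l,\ \w(j)>\w(l)\}$ is attached to a single fixed $l$ and is not $\pm n$-invariant, so Lemma~\ref{lem:circle} does not apply, and averaging over a length-$mn$ interval only controls an average, not the specific window $(l-m,l)$. The missing idea is the injection that the paper builds: for $j\in J_m(l,K):=\{j\in K: l-m<j<l,\ \w(j)>\w(l)\}$, there is a unique $\alpha(j)\ge 0$ with $i-m<j+\alpha(j)m\le i$, and these shifted indices are distinct; $m$-stability gives $\w(l)<\w(j)<\w(j+m)<\cdots<\w(j+\alpha(j)m)$, and then the \emph{maximality} of $\w(l)$ over $(-\infty,i]\setminus K$ forces $j+\alpha(j)m\in K$. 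This injects $J_m(l,K)$ into $K\cap[i-m+1,i]$, whose size is $\le km/n$. Without this step --- which is precisely where $m$-stability and the maximal choice of $l$ interact --- the bound in (b) does not follow.
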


\begin{proof}
By Lemma \ref{lem:circle} there exists $i\in \BZ$ such that 
$\sharp\left(K\cap [i-m+1,i]\right)\le \frac{km}{n}$. 
Since $\omega(x+n)=\omega(x)+n$, the set of values of $\omega$ on the half line $(-\infty,i]$
is bounded from above. Let us choose $l\le i$ such that:
\begin{equation}
\label{eq:omegamin}
\omega(l)=\max\{\omega(x):x\in (-\infty,i]\setminus K\},
\end{equation}
and prove that this $l$ satisfies (a) and (b).
If $j<l$ and $j\notin K$ then by \eqref{eq:omegamin} one has $\omega(j)<\omega(l)$, hence (a) holds.
To prove (b), define $$J_m(l,K):=\{j\in K: l-m<j<l, \omega(j)>\omega(l)\}.$$
Given $j\in J_m(l,K)$, there exists a unique $\alpha(j)\in \ZZ$ such that
$i-m< j+\alpha(j)m\le i$, and for different $j$ the numbers $j+\alpha(j)m$ are all different.
Since $\omega$ is $m$-stable, we have
$$\omega(l)<\omega(j)<\omega(j+m)<\ldots<\omega(j+\alpha(j)m).$$
By \eqref{eq:omegamin} we conclude that $j+\alpha(j)m\in K$.

Therefore we constructed an injective map from $J_m(l,K)$ to $K\cap [i-m+1,i]$, and
$$\sharp J_m(l,K)\le \sharp\left(K\cap [i-m+1,i]\right)\le \frac{km}{n}.$$
\end{proof}

\begin{theorem}
\label{theorem-image-PS}
For any $m$-stable affine permutation $\omega,$ the function $\PS_{\omega}$ is an $m/n$-parking function. Thus one gets a map $\PS:\aSmn\to\Pmn.$
\end{theorem}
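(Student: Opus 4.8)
The plan is to invoke the criterion recalled earlier in the excerpt: a function $f\colon\{1,\dots,n\}\to\BZ_{\ge 0}$ lies in $\Pmn$ if and only if $\sharp\{k\in\{1,\dots,n\}: f(k)<\ell\}\ge \ell n/m$ for every $\ell\in\{0,\dots,m-1\}$. Since $\PS_\omega$ is defined as a cardinality it automatically takes values in $\BZ_{\ge 0}$, and the inequality for $\ell=0$ is vacuous, so it suffices to verify it for $\ell\in\{1,\dots,m-1\}$. First I would record a periodicity observation: set
$$g(l):=\sharp\{i\in\BZ : l-m<i<l,\ \omega(i)>\omega(l)\}.$$
Because $\omega(x+n)=\omega(x)+n$ one has $g(l+n)=g(l)$, and by the second formula in the definition of $\PS$ (take $l=\omega^{-1}(\alpha)$, so $\omega(l)=\alpha$) we get $\PS_\omega(\alpha)=g(\omega^{-1}(\alpha))$; in particular $g(l)=\PS_\omega(\alpha)$ whenever $l\equiv\omega^{-1}(\alpha)\bmod n$.

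Next I would build a convenient window by iterating Lemma \ref{lem:step}. Put $K_0:=\emptyset$ and, inductively for $0\le s\le n-1$, having constructed a $\pm n$-invariant set $K_s$ with $\sharp(K_s\cap[1,n])=s$, apply Lemma \ref{lem:step} to obtain $l_s\in\BZ\setminus K_s$ satisfying (a) and (b), and set $K_{s+1}:=K_s\cup(l_s+n\BZ)$. Since $l_s\notin K_s$ and $K_s$ is $\pm n$-invariant, $l_s+n\BZ$ is disjoint from $K_s$, so $\sharp(K_{s+1}\cap[1,n])=s+1$ and $l_0,\dots,l_{n-1}$ lie in pairwise distinct residue classes modulo $n$. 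The key estimate is then $g(l_s)\le sm/n$: every $i$ counted in $g(l_s)$ has $i<l_s$ and $\omega(i)>\omega(l_s)$, hence $i\in K_s$ by (a), and therefore $g(l_s)=\sharp\{i\in K_s : l_s-m<i<l_s,\ \omega(i)>\omega(l_s)\}\le sm/n$ by (b).

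Finally, since $\omega$ is a bijection with $\omega(x+n)=\omega(x)+n$, the integers $\omega(l_0),\dots,\omega(l_{n-1})$ also lie in pairwise distinct residue classes, so choosing $\alpha_s\in\{1,\dots,n\}$ with $\alpha_s\equiv\omega(l_s)\bmod n$ yields $n$ distinct indices, and $\omega^{-1}(\alpha_s)\equiv l_s\bmod n$, whence $\PS_\omega(\alpha_s)=g(l_s)\le sm/n$. Now fix $\ell\in\{1,\dots,m-1\}$. Since $\gcd(m,n)=1$ the number $\ell n/m$ is not an integer, and $\ell n/m<n$, so the integers $s$ with $0\le s\le n-1$ and $sm/n<\ell$ are exactly $s=0,1,\dots,\lceil \ell n/m\rceil-1$, which is $\lceil \ell n/m\rceil$ values; for each of them $\PS_\omega(\alpha_s)<\ell$, so $\sharp\{\alpha:\PS_\omega(\alpha)<\ell\}\ge\lceil \ell n/m\rceil\ge \ell n/m$, as required. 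The substantive content is already packaged in Lemma \ref{lem:step}, whose proof rests on the covering/pigeonhole count of Lemma \ref{lem:circle}; the only remaining obstacle is organizational — correctly matching the values of $\PS_\omega$ with the $n$-periodic function $g$ along residue classes, and keeping track of the elementary fact that $\ell n/m\notin\BZ$ for $1\le\ell\le m-1$.
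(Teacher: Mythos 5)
Your proof is correct and follows essentially the same route as the paper: you build the chain $K_0 \subset K_1 \subset \cdots$ by iterating Lemma \ref{lem:step}, derive the bound $\PS_\omega(\alpha_s) \le sm/n$, and conclude using the parking-function criterion; the only cosmetic differences are an index shift in labeling the $l_s$, the explicit $n$-periodicity observation about $g$ (which the paper handles by shifting $l_{i+1}$ by a multiple of $n$), and your use of the $\ell$-indexed form of the criterion rather than the equivalent $i$-indexed form.
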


\begin{proof}
Let us construct a chain of $\pm n$-invariant subsets 
$$
\emptyset=K_0\subset K_1\subset \ldots K_n=\Z,
$$
with $\sharp\{K_i\cap [1,n]\}=i$ for all $i,$
by the following inductive procedure. Given $K_i$ for some $i,$
we use Lemma \ref{lem:step} to find
an integer
$l_{i+1}$ satisfying Lemma \ref{lem:step}\ (a,b). Since $K_i$ was $\pm n$-invariant and $l_{i+1}\notin K_i$,
the sets $K_i$ and $l_{i+1}+n\BZ$ do not intersect, hence we can set $K_{i+1}:=K_i\sqcup (l_{i+1}+n\BZ).$ By shifting the number $l_{i+1}$ by a multiple of $n$ if necessary,
we can assume that $\omega(l_{i+1})\in\{1,\ldots,n\}$ for all $i\in\{0,\ldots,n-1\}.$ Note that $\{\w(l_1),\w(l_2),\ldots,\w(l_n)\}=\{1,\ldots,n\}.$ 

Let us estimate $\PS_{\omega}(\omega(l_{i+1})).$ If $l_{i+1}-m< j<l_{i+1}$ and $\omega(j)>\omega(l_{i+1})$, then by Lemma \ref{lem:step}(a) 
we have $j\in K_i$ and by Lemma \ref{lem:step}(b) the number of such $j$ is at most $\frac{im}{n}$. Therefore, 
$$
\sharp \{\alpha: \PS_{\omega}(\alpha)\le\frac{im}{n}\}\ge i+1,
$$
because for any $k\in\{0,1,\ldots,i\}$ one has $\PS_\w(\omega(l_{k+1}))\le \frac{km}{n}\le \frac{im}{n}.$ Therefore, $\PS(\omega)$ is an $m/n$-parking function.
\end{proof}

\begin{conjecture}\label{Conjecture: bijectivity}
The map $\PS:\omega\mapsto \PS_{\omega}$ is a bijection between $\aSmn$ and $\Pmn.$
\end{conjecture}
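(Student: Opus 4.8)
By Proposition~\ref{intro sommers} the set $\aSmn$ has exactly $m^{n-1}$ elements, and by the text following Lemma~\ref{Lemma:PF <-> labeled diagrams} so does $\Pmn$; moreover Theorem~\ref{theorem-image-PS} already guarantees that $\PS$ maps $\aSmn$ into $\Pmn$. Hence $\PS$ is bijective if and only if it is injective, equivalently if and only if it is surjective. So the entire problem is to reconstruct an $m$-stable affine permutation $\omega$ from the combinatorial datum $\PS_\omega$ (or, dually, to show that every $f\in\Pmn$ arises). Note also that since the bijectivity of $\An$ is already established, it would be enough to produce a single map $\Pmn\to\aSmn$ which is a one-sided inverse of $\PS$.

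\textbf{Combinatorial approach: canonicalize the proof of Theorem~\ref{theorem-image-PS}.} That proof already builds, from $\omega$, a flag of $\pm n$-invariant sets $\emptyset=K_0\subset K_1\subset\cdots\subset K_n=\Z$ together with witnesses $l_1,\dots,l_n$ (normalized so that $\omega(l_1),\dots,\omega(l_n)$ is a permutation of $1,\dots,n$) satisfying $\PS_\omega(\omega(l_{i+1}))\le \tfrac{im}{n}$ and the minimality condition~(a),(b) of Lemma~\ref{lem:step}. The plan is to show that this flag, and hence the position word $(\omega^{-1}(1),\dots,\omega^{-1}(n))$ read inside one window, is \emph{forced} by $\PS_\omega$: order the values $\alpha\in\{1,\dots,n\}$ so that at each stage one appends the residue class whose $\PS_\omega$-value is as small as the parking-function inequalities allow, argue that the witness $l_{i+1}$ of Lemma~\ref{lem:step} is uniquely pinned down by $K_i$ after choosing, say, the largest admissible representative, and then recover the relative order of the $\omega^{-1}(\alpha)$ from the accumulated small-inversion counts --- an affine analogue of inverting the Lehmer code. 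Since $\omega\in\aSn$ is determined by its window, this yields injectivity. The delicate point is precisely the uniqueness of $l_{i+1}$: distinct admissible witnesses would a priori produce distinct candidate permutations, so one needs an extra monotonicity argument showing that all of them reconstruct the same $\omega$ (this is where the $m$-stability of $\omega$ should be re-injected).

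\textbf{Geometric approach via Hikita.} In parallel I would exploit \cite{H12}: Hikita pavs the affine Springer fiber $\CF_{m/n}$ by affine cells and constructs a bijection between these cells and $m/n$-parking functions with dimensions given by an explicit combinatorial formula. On the other hand the same cells are naturally indexed by $m$-stable affine permutations (via the leading data of the associated lattice flags), and the dimension formula is reproduced by $\sum_i \PS_\omega(i)$. If one shows that, under this identification of cells with $m$-stable permutations, Hikita's parking-function labeling of cells is exactly $\omega\mapsto\PS_\omega$, then $\PS$ is a bijection because Hikita's labeling is. Here all the work is in matching the two purely combinatorial recipes; nothing geometric beyond \cite{H12} is needed.

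\textbf{Main obstacle and supporting evidence.} The genuine difficulty is that for general coprime $(m,n)$ --- as opposed to $m=kn\pm1$, handled in Section~\ref{Section: Pak-Stanley} through the Pak--Stanley labeling of extended Shi arrangements --- the Sommers region $D_n^m$ is not cut out by an affine Shi-type arrangement, so the clean ``sweeping'' structure that makes the Pak--Stanley reconstruction unambiguous is unavailable, and exactly this rigidity is what would make either of the two approaches above go through cleanly. That the statement should nevertheless hold is supported by: the injectivity of the restriction of $\PS$ to the finite symmetric group $\Sn$ (Section~\ref{Section: finite}); the fact that bijectivity of $\PS$ implies the weak symmetry $H_{m/n}(q,1)=H_{m/n}(1,q)$, consistent with the conjectured full $q,t$-symmetry; and direct verification for small $(m,n)$.
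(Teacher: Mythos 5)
The statement you were asked to prove is an open \emph{conjecture} in the paper, not a theorem: the authors do not prove it, and they say so. They establish bijectivity of $\PS$ only for $m = kn\pm 1$ (Theorems~\ref{Theorem: PS bijectivity kn+1} and~\ref{Theorem: PS bijectivity kn-1}, via the Pak--Stanley labeling of $k$-Shi arrangements), prove injectivity of the restriction of $\PS$ to the finite symmetric group (Theorem~\ref{Theorem: bijectivity finite}), and record a \emph{conjectural} algorithm for inverting $\SP$ in the affine case whose termination is explicitly left unproven. Your submission is accordingly not a proof but a plan, and you yourself flag the gaps. The one genuinely solid step is the cardinality reduction: since $\sharp\aSmn = \sharp\Pmn = m^{n-1}$ and Theorem~\ref{theorem-image-PS} puts the image in $\Pmn$, bijectivity is equivalent to injectivity (or to surjectivity). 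That is correct and is implicitly used in the paper as well.

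Beyond that, neither of your two sketches closes the argument. Your first sketch---canonicalize the flag $(K_i, l_i)$ from the proof of Theorem~\ref{theorem-image-PS} and invert the accumulated small-inversion counts, in the spirit of inverting a Lehmer code---is essentially the route the paper takes in Section~\ref{Section: finite}: there the authors succeed for $\Sn$ because the auxiliary counting functions ($\varphi_k$ in the first proof, $g(\alpha,i)$ in the second) are provably monotone, which forces the reconstruction. The paper then gives the affine analogue as an algorithm for $\SP^{-1}$, but cannot show the output stabilizes and becomes $n$-periodic; the issue you identify (uniqueness and compatibility of the witness $l_{i+1}$) is exactly the open point, and your proposal contains no argument that resolves it. Your second sketch---identify $\PS$ with Hikita's labeling of cells of $\CF_{m/n}$---also stops at the crux: Theorem~\ref{Theorem: cells of affine Springer} shows only that the dimension of the cell $\Sigma_\omega$ equals $\sum_i \SP_\omega(i)$, i.e.\ that the paper's labeling and Hikita's produce parking functions of the same \emph{size} on each cell, not that they agree entry by entry. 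Showing that the two combinatorial recipes coincide is precisely the content of the conjecture, not a shortcut around it. So: the reduction is right, the strategies are sensible and consistent with the paper's own attempts, but both terminate at the same unresolved steps, and no proof is obtained.
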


In the special cases $m = kn \pm 1,$ we prove that $\PS$ is a bijection in the next Section.

It is convenient to extend the domains of the functions $\PS_{\omega}$ and $\SP_\w$
 to all integers by using exactly the same formula.
Note that in this case $\PS_{\w}(\alpha+n)=\PS_{\w}(\alpha).$
We have the following results, which should be considered as steps towards Conjecture \ref{Conjecture: bijectivity}:

\begin{proposition}
\label{thm-inc}
Let $\w \in \maSn$ and
let $1 \le i  \le n$, \,  $i < j$.
\begin{enumerate}
\item
\label{item-one}
$\w(i) < \w(i+1) \iff \SP_\w(i) \le \SP_\w(i+1)$
\item
\label{item-two}
$(i,j)\in \Inv(\w) \implies \SP_\w(i) > \SP_\w(j)$
\end{enumerate}
\end{proposition}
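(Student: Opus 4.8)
The plan is to reduce both statements to an arithmetic bookkeeping modulo $m$ and then manipulate explicit ``windows'' of values. The one structural input I would isolate first is this: since $\w\in\maSn$ means $\wi$ is $m$-stable, we have $\wi(x+km)>\wi(x)$ for all $x\in\Z$ and all $k\ge 1$, and consequently every inversion $(p,q)\in\Invall(\w)$ has $\w(p)-\w(q)$ equal to a positive integer that is \emph{not divisible by $m$} (otherwise $\w(p)=\w(q)+km$ with $k\ge 1$ would force $p=\wi(\w(q)+km)>\wi(\w(q))=q$, contradicting $p<q$). Next, substituting $v=\w(j)$ in the definition, I would rewrite
$$
\SP_\w(i)=\sharp\{\,v\in\Z \mid \w(i)-m<v<\w(i),\ \wi(v)>i\,\}.
$$
The set $W_i:=\{\w(i)-m+1,\dots,\w(i)-1\}$ consists of $m-1$ consecutive integers, hence meets every residue class mod $m$ except $\w(i)\bmod m$, each in exactly one element; so $\SP_\w(i)$ counts the residues $\rho\ne\w(i)\bmod m$ for which the unique $v\in W_i$ with $v\equiv\rho\pmod m$ satisfies $\wi(v)>i$.

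The engine for both parts is a single ``shift by a multiple of $m$'' injection between window sets. For (2), let $(i,j)\in\Inv(\w)$, so $i<j$ and $\w(j)<\w(i)$. Given $v\in W_j$ with $\wi(v)>j$: since $v<\w(j)<\w(i)$ and $\wi(v)>j>i$, the pair $(i,\wi(v))$ lies in $\Inv(\w)$, so $v\not\equiv\w(i)\pmod m$ by the arithmetic fact above, and there is a unique $v'\in W_i$ with $v'\equiv v\pmod m$. I would then verify $\wi(v')>i$: either $v\in W_i$ and $v'=v$, or $v$ lies below $W_i$ and $v'=v+km$ with $k\ge 1$, in which case $\wi(v')>\wi(v)>j>i$ by $m$-stability of $\wi$. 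Distinct elements of $W_j$ occupy distinct residue classes mod $m$, so $v\mapsto v'$ is injective, which already gives $\SP_\w(j)\le\SP_\w(i)$. Re-running the identical argument with $j$ replaced by $i+1$ (now $\wi(v)\ne i+1$ holds because $v<\w(i)<\w(i+1)$) proves the implication $\w(i)<\w(i+1)\Rightarrow\SP_\w(i)\le\SP_\w(i+1)$ of part (1).

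To get strict inequality in (2) I would exhibit an element of the target set for $i$ that is missed by the above map: as $(i,j)\in\Inv(\w)$ we have $\w(j)\not\equiv\w(i)\pmod m$, so there is a unique $v^\star\in W_i$ with $v^\star\equiv\w(j)\pmod m$, and $\wi(v^\star)>i$ by the same case split (either $v^\star=\w(j)$ with $\wi(v^\star)=j>i$, or $v^\star=\w(j)+km$ with $k\ge 1$). But every $v$ contributing to $\SP_\w(j)$ satisfies $v\not\equiv\w(j)\pmod m$ (it lies in $W_j$), so $v^\star$ is never in the image; hence $\SP_\w(i)\ge\SP_\w(j)+1$, proving (2). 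The remaining direction of (1) then follows at once: if $\w(i)>\w(i+1)$ then $(i,i+1)\in\Inv(\w)$ and (2) gives $\SP_\w(i)>\SP_\w(i+1)$, which combined with the forward implication and $\w(i)\ne\w(i+1)$ yields the stated equivalence. I expect the only real friction to be the bookkeeping in the shift step — confirming that adding a multiple of $m$ actually lands inside the next window and preserves the condition $\wi(\cdot)>i$ — which is precisely where both the $m$-stability of $\wi$ and the ``one residue per window element'' observation are used.
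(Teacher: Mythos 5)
Your proof is correct and follows essentially the same route as the paper's: both arguments hinge on the injection obtained by shifting a value $\w(j)$ forward by a nonnegative multiple of $m$ until it lands within distance $m$ of the larger value $\w(i)$ (well-defined and injective precisely because $\w$ is $m$-restricted, so no inversion has value-difference divisible by $m$), and both deduce strictness in part (2) by producing one extra contribution to $\SP_\w(i)$ coming from the residue class of $\w(j)$ modulo $m$. Your reparametrization of $\SP_\w(i)$ as a count over the window $W_i=\{\w(i)-m+1,\dots,\w(i)-1\}$ of values, viewed as a set of residue classes, is a transparent way to package the paper's normalization map $(i,j)\mapsto(i,J)$ and makes both the injection and the uncounted residue $v^\star\equiv\w(j)$ fully explicit, but it is the same underlying mechanism.
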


\begin{proof}
\omitt{
It is convenient to note $\SP_\w(i) = \sharp\{j \mid i < j, \, 0 < \w(i) - \w(j) < m  \}$.
}

We first show that if $(i,j) \in \Inv(\w)$, then $\w$ has a unique
inversion $(i,J)$ with
\begin{equation}
\label{eq-Jm}
 j \le J, \, \w(j) \equiv \w(J) \mod m, \text{ and } 0 < \w(i) - \w(J) < m.
\end{equation}
Since $\w \in \maSn$, in the list $\w(1), \w(2), \cdots $, we have that $\w(j)$ occurs
to the left of $\w(j) + rm$ for all $r \ge 1$. 
Hence, we can pick $r \ge 0$ such that $rm < \w(i) - \w(j) < (r+1)m$, i.e.
$0 < \w(i) - (\w(j) + rm)  < m$,
and set $J = \w^{-1}(\w(j) + rm)$.

Now suppose $\w(i) < \w(i+1)$.
Let $(i,j) \in \Inv(\w)$ with $0 < \w(i) - \w(j) < m$.
Then since $\w(i+1) > \w(i) > \w(j)$ we also have $(i+1,j) \in \Inv(\w)$.
Observe $i+1 < j$ as $(i,i+1) \nin \Inv(\w)$.
We pick $(i+1, J) \in \Inv(\w)$ as in \eqref{eq-Jm} above.
The map $(i,j) \mapsto (i+1, J)$ is clearly an injection, yielding
$\SP_\w(i) \le \SP_\w(i+1)$.

For ease of exposition, we recall Remark \ref{remark-inversions-equivalent}
which lets us equate an inversion $(i,j)$ with $(i+tn, j+tn)$.

Next if $i<j$ with $\w(i) > \w(j)$, suppose we have $(j,k) \in \Inv(\w)$
with $0 < \w(j) - \w(k) < m$.  Then $(i,k) \in \Inv(\w)$ too. We can pick
$K \ge k$ according to \eqref{eq-Jm} yielding $(i,K) \in \Inv(\w)$ with
$0 < \w(i) - \w(K) < m$ and $\w(k) \equiv \w(K) \mod m$.  Again the map
$(j,k) \mapsto (i,K)$ is an injection, yielding $\SP_\w(j) \le \SP_\w(i)$. 
Further there is an extra inversion
of the form $(i,J) \in Inv(\w)$, showing $\SP_\w(i) > \SP_\w(j)$.
The case $j=i+1$ gives the converse of \eqref{item-one}.
\end{proof}

Note that if $(n, n+1) \in \Inv(\w)$ then the above proposition implies
$\SP_\w(n) \le \SP_\w(1)$, as we have $\SP_\w(1) = \SP_\w(n+1)$ by our
convention.  As a consequence of this, we have the following corollary.
\begin{corollary}
Let $1 \le i, j \le n$.
$\SP_\w(i) = \SP_w(j) \implies |\w(i) - \w(j) | < n$
\end{corollary}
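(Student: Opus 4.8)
The plan is to reduce to the case $i<j$ and then argue by contradiction, applying Proposition~\ref{thm-inc}\eqref{item-two} twice together with the $n$-periodicity of $\SP_\w$. If $i=j$ there is nothing to prove, and since the conclusion is symmetric in $i$ and $j$ I may assume $1\le i<j\le n$.

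First I would observe that the hypothesis $\SP_\w(i)=\SP_\w(j)$ already forces $\w(i)<\w(j)$. Indeed, if instead $(i,j)\in\Inv(\w)$, then Proposition~\ref{thm-inc}\eqref{item-two} (which applies since $1\le i\le n$) would give $\SP_\w(i)>\SP_\w(j)$, a contradiction; as $\w$ is a bijection and $i\neq j$, this yields $\w(j)-\w(i)\ge 1$, so in particular $\w(i)<\w(j)$.

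The key step is to suppose toward a contradiction that $\w(j)-\w(i)\ge n$, i.e.\ $\w(j)\ge\w(i)+n=\w(i+n)$. Since $1\le i$ and $j\le n$ we have $j<i+n$, and since $\w$ is injective this forces $\w(j)>\w(i+n)$; hence $(j,i+n)\in\Inv(\w)$, an inversion whose left endpoint $j$ lies in $\{1,\dots,n\}$. Applying Proposition~\ref{thm-inc}\eqref{item-two} to this inversion and then invoking the periodicity convention $\SP_\w(\alpha+n)=\SP_\w(\alpha)$, I obtain $\SP_\w(j)>\SP_\w(i+n)=\SP_\w(i)$, contradicting $\SP_\w(i)=\SP_\w(j)$. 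Therefore $\w(j)-\w(i)<n$, and combined with the previous paragraph $0<\w(j)-\w(i)<n$, which is exactly $|\w(i)-\w(j)|<n$.

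I do not expect a genuine obstacle: this is essentially the remark immediately preceding the corollary, applied to the ``wrapped'' pair $(j,i+n)$ in place of $(n,n+1)$. The only points requiring care are (i) checking that the left index of the inversion used lies in $\{1,\dots,n\}$, so that Proposition~\ref{thm-inc}\eqref{item-two} applies verbatim (here that index is $j$), and (ii) correctly translating between the indices $i$ and $i+n$ via $\w(\alpha+n)=\w(\alpha)+n$ and $\SP_\w(\alpha+n)=\SP_\w(\alpha)$.
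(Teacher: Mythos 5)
Your proof is correct and takes essentially the same approach as the paper: reduce to $i<j$, deduce $\w(i)<\w(j)$ from Proposition~\ref{thm-inc}(2), then produce the inversion $(j,i+n)$ and apply the same proposition together with $n$-periodicity of $\SP_\w$ to get a contradiction. The only (minor) difference is that you explicitly rule out the boundary case $\w(j)=\w(i)+n$ via injectivity of $\w$, a point the paper's proof leaves implicit.
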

\begin{proof}
Without loss of generality $ i< j$. 
By Proposition \ref{thm-inc} item \eqref{item-two}, $\w(i) < \w(j)$.
If also $\w(i) + n < \w(j)$ then as $j < i+n$, $(j, i+n) \in \Inv(\w)$
so by the proposition
$\SP_\w(j) > \SP_\w(i+n) = \SP_\w(i)$ which is a contradiction.
\end{proof}

\begin{proposition}
Let $\w \in \maSn$.
\begin{enumerate}
\item
If\  $0 < \w(i) - \w(i+1) < m,$\ \  then\ \  
$
\begin{cases}
\SP_{\w s_i}(i) = \SP_\w (i+1), \\
\SP_{\w s_i}(i+1) = \SP_\w (i) -1, \\
\SP_{\w s_i}(j) = \SP_\w (j)\ \mbox{for}\  j \not\equiv i, i+1\mod n 
\end{cases}
$
\item If\  $m <  \w(i) - \w(i+1),$\ \  then\ \  
$
\begin{cases}
\SP_{\w s_i}(i) = \SP_\w (i+1), \\
\SP_{\w s_i}(i+1) = \SP_\w (i), \\
\SP_{\w s_i}(j) = \SP_\w (j)\ \mbox{for}\  j \not\equiv i, i+1\mod n 
\end{cases}
$
\end{enumerate}
\end{proposition}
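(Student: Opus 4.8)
The plan is to work throughout with the functions $\SP_u$ extended $n$-periodically to all of $\Z$ by the same formula, $\SP_u(k) = \sharp\{l \mid l > k,\ 0 < u(k) - u(l) < m\}$, and to use the factorization $\w s_i = \w \circ \sigma$, where $\sigma = s_i$ is regarded as the affine permutation of $\Z$ that interchanges $x$ and $x+1$ whenever $x \equiv i \pmod n$ and fixes every other integer. A preliminary step will be to observe that $\w s_i \in \maSn$ in both cases, so that $\SP_{\w s_i}$ is legitimately an object of the theory: by the characterization $u \in \maSn \iff u(p) - u(q) \ne m$ for all $p < q$, the only new value-difference produced by $\sigma$ is $\w(i+1) - \w(i) = -(\w(i) - \w(i+1))$, which is strictly negative under either hypothesis and hence cannot equal $m$.

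The heart of the argument is the change of variables $l \mapsto \sigma(l)$ inside the defining set of $\SP_{\w s_i}(k) = \sharp\{l > k \mid 0 < \w(\sigma(k)) - \w(\sigma(l)) < m\}$. The relevant combinatorial fact is that $\sigma$ is a product of the transpositions $\{x, x+1\}$ with $x \equiv i \pmod n$, so a half-line $\{l \mid l > k\}$ is $\sigma$-stable unless one such transposition straddles its left boundary --- which happens precisely when $k \equiv i \pmod n$, the offending pair then being $\{k, k+1\}$. When the half-line is $\sigma$-stable, the substitution simply replaces $\w(\sigma(l))$ by $\w(l)$ and reduces $\SP_{\w s_i}(k)$ to a count of the form $\sharp\{l > k \mid 0 < \w(\sigma(k)) - \w(l) < m\}$, which agrees with $\SP_\w(\sigma(k))$ up to correcting for the at most one integer lying strictly between $\sigma(k)$ and $k$; when it is not $\sigma$-stable, one extra term has to be extracted by hand. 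In every case the $n$-periodicity $\SP_\w(k') = \SP_\w(k'+n)$ then pushes the result back to the indices $i$ and $i+1$.

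I would then split by the residue of $k$ modulo $n$. For $k \not\equiv i, i+1$ one has $\sigma(k) = k$ and the half-line is stable, giving $\SP_{\w s_i}(k) = \SP_\w(k)$ --- the third assertion in both parts. For $k \equiv i+1$ one has $\sigma(k) = k-1$, the half-line is still stable, and $\SP_{\w s_i}(k) = \sharp\{l > k \mid 0 < \w(k-1) - \w(l) < m\}$; this differs from $\SP_\w(k-1)$ only in the element $l = k$, which is counted exactly when $0 < \w(k-1) - \w(k) < m$, that is, exactly when $0 < \w(i) - \w(i+1) < m$. This is precisely the dichotomy between case (1) and case (2), so after periodicity we obtain $\SP_{\w s_i}(i+1) = \SP_\w(i) - 1$ in case (1) and $\SP_{\w s_i}(i+1) = \SP_\w(i)$ in case (2). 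Finally, for $k \equiv i$ one has $\sigma(k) = k+1$ and the pair $\{k, k+1\}$ straddles the boundary; here I would peel off the term $l = k+1$ first, noting it is counted according to whether $0 < \w(k+1) - \w(k) < m$, and since $\w(k+1) - \w(k) = \w(i+1) - \w(i) < 0$ under either hypothesis it contributes nothing, while on the remaining stable half-line $\{l > k+1\}$ the clean substitution yields $\SP_\w(k+1)$. Periodicity then gives $\SP_{\w s_i}(i) = \SP_\w(i+1)$ in both parts.

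I do not expect a genuine obstacle: the content is bookkeeping. The only point requiring care is the treatment of the lone straddling term in the $k \equiv i$ case (and the symmetric set comparison in the $k \equiv i+1$ case), together with tracking $n$-periodicity so the reductions land on the stated indices; passing uniformly through $\Z$ is exactly what makes the statement for $s_0$ come for free. One should also record that $n \ge 2$ guarantees that, once the straddling pair is removed, the next transposed pair $\{i+n, i+1+n\}$ lies strictly above the boundary, so the remaining half-line is genuinely $\sigma$-stable.
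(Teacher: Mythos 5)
Your proof is correct, and it is essentially the paper's argument recast in a slightly different notation: where you perform the change of variables $l \mapsto \sigma(l)$ inside the defining count for $\SP$ and then track the one straddling pair, the paper equivalently observes $\Inv(\w s_i) = s_i(\Inv(\w)) \setminus \{(i,i+1)\}$ and reads off the three cases from how the inversion $(i,i+1)$ does or does not contribute according to whether $\w(i)-\w(i+1) < m$. Your version is a bit more explicit about the $n$-periodic extension to $\Z$ and the boundary bookkeeping, but the substance is the same.
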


\begin{proof}
Write $u = w s_i$.  Since $0 < \w(i) - \w(i+1)$, we have $\Inv(u) = 
s_i(\Inv(\w)) \setminus \{(i,i+1)\}$.
In other words, 
$(i,j) \in \Inv(u)$ iff $(i+1,j) \in \Inv(\w)$.
Since $u(i)-u(j) = w(i+1)-w(j)$ this yields $\SP_{u}(i) = \SP_\w (i+1)$
 Similarly,
for $k \neq i, i+1$, $(k,j) \in \Inv(u)$ iff $(k,j) \in \Inv(\w)$,
yielding $\SP_{u}(k) = \SP_\w (k)$.

Finally
$(i+1,j) \in \Inv(u)$ iff  $j \neq i+1$ and $(i,j) \in \Inv(\w)$.
Again
$u(i+1)-u(j) = w(i)-w(j)$.
Hence in the case $\w(i) - \w(i+1) < m$, so that $(i,i+1) \in \Inv(\w)$ contributes
to $\SP_\w(i)$, we see $\SP_\w(i) = \SP_u(i+1) +1$.
When $m < \w(i) - \w(i+1)$ this inversion does not contribute,
 so $\SP_\w(i) = \SP_u(i+1)$.
\end{proof}

As a corollary to this proposition, we see that $\SP$  is injective
on $\{\w \in \maSn \mid (i,j) \in \Inv(\w) \implies \w(i)-\w(j) < m \}$. Another interpretation of Theorem \ref{thm-inc} is that $\SP$ not only respects descents but also respects (weakly) increasing subsequences. 

\begin{example}
Let $(n,m) = (3,4)$.  Consider these three affine permutations 
$y = [1,5,0], \, \w  = y s_2 = [1,0,5], $ and $ \w s_1 = [0,1,5] 
\in \maSnn{3}{4}$.
The corresponding parking functions are
$\SP_y = \pw{120}, \SP_w = \SP_{ys_2} = \pw{102}$ (note $5-0 > 4$ so their
second and third values have swapped), and
$\SP_{\w s_1} = \pw{002}$ (note $1-0 < 4$).
\end{example}

\begin{remark}
The maps $\PS$ and $\SP$ preserve a kind of cyclic symmetry, as follows. (Compare this
to Proposition \ref{prop-anderson-symmetry} for the map $\An$.)
Let the shift operator $\pi : \Z \to \Z$
be defined by 
$$
 \pi(i) = i+1.
 $$
Clearly $\pi(i + tn) = \pi(i) + tn$, but $\pi \nin \aSn$ as
$\sum_{i=1}^n \pi(i) = \frac{n(n+3)}{2}$.
(In other contexts, $\pi$ lives in the {\em extended} affine symmetric group
$P \rtimes \Sn \supsetneqq Q \rtimes \Sn \simeq \aSn$. It corresponds to
the generator of $P/Q$ where $P$ and  $Q$ are the weight and root lattices of type $A$,
respectively.)
The conjugation map $\aSn \to \aSn$,
$\w \mapsto \pi \w \pi^{-1}
$
interacts nicely with the maps $\SP$ and $\PS$.

In window notation, conjugation by $\pi$  corresponds to sliding the ``window"
one unit to the left, but then renormalizing so the sum of the entries is
still $\frac {n(n+1)}{2}$, i.e. 
$\pi \w \pi^{-1} = [\w(0) +1, \w(1)+1, \cdots, \w(n-1) +1];$
equivalently $\pi \w \pi^{-1}(i) = \w(i+1) +1$.
It is clear that $\Invall(\pi \w \pi^{-1}) = \{(i+1,j+1) \mid (i,j) \in \Invall(\w) \}$,
and so conjugation by $\pi$ preserves heights of inversions.
In particular, it preserves both  sets $\maSn$ and $\aSmn$.
It is also clear from the definition of $\SP$ that
\begin{align*}
&\SP_{\pi u \pi^{-1}}(i+1) = \SP_u(i)&  & \text{for } u \in \maSn  \text{ \, and hence} \\
&\PS_{\pi \w \pi^{-1}} (i+1) = \PS_\w(i)& & \w \in \maSn.
\end{align*}
Consider Example
\ref{example-PS}, for which $u=[4,-2,3,5]$ and $SP_u = \pw{3011}$.
We get $\pi u \pi^{-1} = [2,5,-1,4]$ (for which $\Inv(\pi u \pi^{-1})
=\{(1,3), (2,3), (2,4), (2,7), (4,7)\}$)
 and $\SP_{\pi u \pi^{-1}}  = \pw{1301}$.
\end{remark}


\subsection{Two statistics}

Our work was partially motivated by some open questions posed by Armstrong in \cite{Ar11}.
He managed to describe $\area$ and $\dinv$ statistics on parking functions appearing in ``Shuffle Conjecture'' of \cite{HHLRU}
in terms of the Shi arrangement.

We present two natural generalizations of these statistics to the rational case. Both were introduced in a different form in \cite{H12},
but they can be best written in terms of maps $\An$ and $\PS$.

\begin{definition}
Let $\omega\in \aSmn$
 be an affine permutation labeling an alcove $\wi(\Afund)\in D_{n}^{m}$. We define:
\begin{equation}
\area(\omega):=\frac{(m-1)(n-1)}{2}-\sum \An_{\omega}(i),\ \dinv(\omega):=\frac{(m-1)(n-1)}{2}-\sum \PS_{\omega}(i).
\end{equation}
\end{definition}

\begin{proposition}
\label{area}
The statistics $\area(\omega)$ can be computed as follows. Recall that $\Dw:=\{i\in\BZ: \omega(i)>0\}$, then
$$
\area(\omega)=\sharp \left([\min \Dw,+\infty)\setminus \Dw\right).
$$
\end{proposition}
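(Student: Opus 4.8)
The plan is to evaluate $\sharp\bigl([\min\Dw,+\infty)\setminus\Dw\bigr)$ one residue class modulo $n$ at a time and compare the outcome with $\frac{(m-1)(n-1)}{2}-\sum_i\An_{\w}(i)$, using the compact description of the Anderson map together with the classical identity $\sum_{r=0}^{n-1}\lfloor rm/n\rfloor=\frac{(m-1)(n-1)}{2}$ for coprime $m,n$.

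First note the set in question is finite: since $\w(i+n)=\w(i)+n$, the values $\w(i)$ tend to $+\infty$, so $\Dw$ contains a whole ray $[N,+\infty)$. Set $M:=\min\Dw$. Because $\Dw$ is $+n$-invariant, in each residue class $c$ modulo $n$ the set $\Dw\cap(c+n\Z)$ is an upward ray $\{g_c,g_c+n,g_c+2n,\dots\}$ with least element $g_c$. Then $\w(g_c)>0$ while $\w(g_c-n)=\w(g_c)-n\le 0$, so $\alpha_c:=\w(g_c)\in\{1,\dots,n\}$ and $g_c=\w^{-1}(\alpha_c)$; as $c$ runs over the $n$ residue classes, $\alpha_c$ runs over $\{1,\dots,n\}$. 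The portion of $[M,+\infty)\setminus\Dw$ lying in the class $c$ is exactly $\{g_c-n,g_c-2n,\dots\}\cap[M,+\infty)$, of size $\lfloor(g_c-M)/n\rfloor$ (and $g_c\ge M$ since $M=\min\Dw$).

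Now I would invoke the compact definition of $\An$. Writing $\alpha=\alpha_c$, it gives $g_c-M=\w^{-1}(\alpha)-M=r_\alpha m-k_\alpha n$ with $r_\alpha\in\{0,\dots,n-1\}$ and $k_\alpha=\An_{\w}(\alpha)\ge 0$, so the class $c$ contributes $\lfloor(r_\alpha m-k_\alpha n)/n\rfloor=\lfloor r_\alpha m/n\rfloor-k_\alpha$ elements. The map $\alpha\mapsto r_\alpha$ is a bijection $\{1,\dots,n\}\to\{0,\dots,n-1\}$: from $g_c\equiv M+r_\alpha m\pmod n$ and $\gcd(m,n)=1$ the class $c$ determines $r_\alpha\in\{0,\dots,n-1\}$ uniquely, and distinct $\alpha$ give distinct classes. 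Summing over the classes,
$$
\sharp\bigl([M,+\infty)\setminus\Dw\bigr)=\sum_{r=0}^{n-1}\Bigl\lfloor\frac{rm}{n}\Bigr\rfloor-\sum_{\alpha=1}^{n}\An_{\w}(\alpha)=\frac{(m-1)(n-1)}{2}-\sum_{\alpha=1}^{n}\An_{\w}(\alpha)=\area(\w),
$$
the middle equality being the classical identity above (pair $r$ with $n-r$, using $\lfloor rm/n\rfloor+\lfloor(n-r)m/n\rfloor=m-1$ for $1\le r\le n-1$ when $\gcd(m,n)=1$).

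I do not expect a genuine obstacle here; the substance is the residue-class bookkeeping, and the only places that use coprimality or the hypothesis $\w\in\aSmn$ are the facts that $g_c\ge M$ (immediate) and that $\alpha\mapsto r_\alpha$ is a bijection. An alternative, more pictorial route would use that $\sum_i\An_{\w}(i)$ equals the number of boxes of $D_{\w}$ and that $R_{m,n}$ has $\frac{(m-1)(n-1)}{2}$ boxes under the diagonal, and then match $[M,+\infty)\setminus\Dw$ with the boxes under the diagonal but outside $D_{\w}$ via the labelling $l(x,y)$; the one point needing care there is to show every $j\in[M,+\infty)\setminus\Dw$ is realized as $l(x,y)$ for a box genuinely inside the rectangle, which again reduces to the inequality $r_\alpha m/n<m$.
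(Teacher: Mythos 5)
Your argument is correct, and it differs in approach from the paper's. The paper proves the proposition in two sentences by observing that the labeling $l(x,y)$ gives a bijection between the boxes of $R_{m,n}$ below the diagonal but outside $D_\w$ and the set $[\min\Delta_\w,+\infty)\setminus\Delta_\w$, so the count is $\frac{(m-1)(n-1)}{2}-|D_\w|=\frac{(m-1)(n-1)}{2}-\sum_i\An_\w(i)$; this is exactly the ``alternative, more pictorial route'' you sketch at the end (and your remark that surjectivity of the labeling onto $[\min\Delta_\w,+\infty)\setminus\Delta_\w$ reduces to $r_\alpha m/n<m$ is the right observation to make that route airtight). Your main proof instead works residue class by residue class using the compact definition of $\An$ and the classical identity $\sum_{r=0}^{n-1}\lfloor rm/n\rfloor=\frac{(m-1)(n-1)}{2}$. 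What this buys is a fully explicit, self-contained computation that never invokes the diagram picture or the bijection $P:\Pmn\to Y_{m,n}$; the cost is that it is longer and re-derives facts (e.g.\ that the $n$-generators of $\Delta_\w$ map bijectively to $\{1,\dots,n\}$ under $\w$) that the diagrammatic setup packages for free. Both proofs are valid; the paper simply leans on machinery it has already built.
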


\begin{proof}
Indeed, there are $\frac{(m-1)(n-1)}{2}$ boxes in the rectangle $R_{m,n}$ below the diagonal. The ones labeled by the elements of the set $\left([\min \Dw,+\infty)\setminus \Dw\right)$ are in 1-to-1 correspondence with the boxes outside of the diagram of $\An(\omega)$, so their number equals $\frac{(m-1)(n-1)}{2}-\sum \An_i(\omega)=\area(\omega)$.
\end{proof}

One can also check that the statistic $\area$ agrees with the statistics $\ish^{-1}$ of \cite{Ar11}.

\begin{example}
For the fundamental alcove,
$\PS_\id(i)=0$,
so $\dinv(\id)=\frac{(m-1)(n-1)}{2}.$  
On the other hand, $\Delta_{\id}=\{1,2,3,\ldots\},$ so by Proposition \ref{area}
$\area(\id)=0$. 
\end{example}

\begin{example}
Consider the permutation $\wm=[m-c,2m-c\ldots,nm-c]\in \maSn$. Here the constant $c$ is uniquely determined by the condition $\sum\limits_{i=1}^n \wm(i)=\frac{n(n+1)}{2}.$ In fact, $c=\frac{(n+1)(m-1)}{2}.$ Let us compute $\SP_{\wm}=\PS_{\wm^{-1}}.$

\omitt{  
Let $1\le k\le n,$ then
\omitt{
$$
\PS_{\wm^{-1}}(k)=\sharp\{i\ge k, j\ge 0\ :\ \wm(k)-m<\wm(i-nj)<\wm(k)\}=
$$
$$
\sharp\{i\ge k, j\ge 0\ :\ (k-1)m-c<mi-c-nj<km-c\}
$$
For each $j$, there is at most one $i$ satisfying this inequality, and there is one iff
$$nj<(n-k)m\ \Leftrightarrow j<\frac{(n-k)m}{n}.$$
Therefore $\PS_{\wm^{-1}}(k)=\left\lfloor\frac{(n-k)m}{n} \right\rfloor,$ and
$
\dinv(\wm^{-1})=\frac{(m-1)(n-1)}{2}-\sum \PS_{\omega}(i)=0.
$
} 
$$
\PS_{\wm^{-1}}(k)=\sharp\{l>k\mid \wm(k)-m<\wm(l)<\wm(k)\}.
$$
For every $l\in\Z$ there is a unique way to express it as $l=i+nj,$ where $i\in\{1,\ldots,n\}.$ Therefore,
$$
\PS_{\wm^{-1}}(k)=\sharp\{(i,j)\mid i \in\{1,\ldots, n\},\ i+nj>k,\ \wm(k)-m<\wm(i+nj)<\wm(k)\}
$$
$$
=\sharp\{(i,j)\mid i \in\{1,\ldots, n\},\ i+nj>k,\ km-c-m<im-c+nj<km-c\}
$$
$$
=\sharp\{(i,j)\mid i \in\{1,\ldots, n\},\ i+nj>k,\ m(k-1)<im+nj<km\}.
$$
The condition $m(k-1)<im+nj<km$ implies that $j\neq 0.$ On the other side, $i+nj>k$ is always satisfied if $j>0$ and never satisfied if $j<0.$ Therefore, $i+nj>k$ can be replaced by $j>0.$ Also, for each $j$, there is at most one $i$ satisfying the inequality $m(k-1)<im+nj<km.$ Furthermore, such $i\in\{1,\ldots,n\}$ exists if and only if $m+nj<km$ and $nm+nj>m(k-1),$ or, equivalently, $m(k-1-n)<nj<m(k-1).$ Note that $m(k-1-n)<0,$ so $nj>m(k-1-n)$ is satisfied automatically. We conclude
$$
\PS_{\wm^{-1}}(k)=\sharp\{j\in\Z_{>0}\mid \ j<\frac{m(k-1)}{n}\}=\lfloor \frac{m(k-1)}{n} \rfloor.
$$
} 

Since the entries in the window notation for $\wm$ are increasing,
i.e. $\wm \in \aSn/\Sn$, if $(k,t) \in \Inv(\wm)$ this forces $t = i +jn$ for some
$1 \le i < k$ and $j \ge 1$. Since it is an inversion, we have $km-c > im-c+jn$.
To contribute to $\SP_{\wm}(k)$ we must have
\begin{eqnarray*}
 0 < (km-c) - (im-c+jn) < m
&\iff& km> im+jn > (k-1)m
\\
&\iff& (k-1-i)m < nj < (k-i)m 
\\
&\iff& \frac{(k-1-i)m}{n} < j < \frac{(k-i)m}{n}
\end{eqnarray*}
Hence 
$\SP_{\wm}(k) =
\sharp \{ j, i \mid  j \ge 1, 1 \le i < k, \frac{(k-1-i)m}{n} < j < \frac{(k-i)m}{n} \}$ 
Since we run over all $1 \le i < k$ this is just
$= \sharp \{ j  \mid  j \ge 1,  j < \frac{(k-1)m}{n} \} = \lfloor \frac{m(k-1)}{n} \rfloor.$
By Proposition \ref{thm-inc}, $\SP_{\wm}$ is weakly increasing.
\omitt{
It follows that the parking function $\PS_{\wm^{-1}}$ is (weakly) increasing,
}
The corresponding diagram is the maximal diagram that fits under the diagonal in an $m\times n$ rectangle. The area of such a diagram is $\frac{(n-1)(m-1)}{2},$ therefore $\dinv(\wm^{-1})=\frac{(m-1)(n-1)}{2}-\sum \PS_{\wm^{-1}}(i)=0.$ One can also check that $\An(\wm^{-1})=0$, so $\area(\wm^{-1})=\frac{(m-1)(n-1)}{2}$.
\end{example}

\begin{definition}
We define the {\em combinatorial Hilbert series} as the bigraded generating function:
$$
H_{m/n}(q,t)=\sum_{\omega\in \aSmn}q^{\area(\omega)}t^{\dinv(\omega)}.
$$
\end{definition}

It is clear that $H_{m/n}(1,1)=m^{n-1}$, since there are $m^{n-1}$ permutations in $\aSmn$.

\begin{conjecture} (cf. \cite{H12})
The combinatorial Hilbert series is symmetric in $q$ and $t$:
 $$H_{m/n}(q,t)=H_{m/n}(t,q).$$
\end{conjecture}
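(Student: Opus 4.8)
The plan is to prove the conjecture by producing a bijection that exchanges the two statistics. Since $\area(\omega)=\frac{(m-1)(n-1)}{2}-\sum_i\An_\omega(i)$ depends only on the parking function $\An_\omega$ and $\dinv(\omega)=\frac{(m-1)(n-1)}{2}-\sum_i\PS_\omega(i)$ depends only on $\PS_\omega$, it suffices to find a bijection $\sigma\colon\aSmn\to\aSmn$ with $\sum_i\An_{\sigma(\omega)}(i)=\sum_i\PS_\omega(i)$ and $\sum_i\PS_{\sigma(\omega)}(i)=\sum_i\An_\omega(i)$. The natural candidate is assembled from the ``rational sweep map'' $\theta:=\PS\circ\An^{-1}\colon\Pmn\to\Pmn$, which is a bijection once Conjecture \ref{Conjecture: bijectivity} is known and which, as established earlier in the paper, restricts to Haglund's $\zeta$ in the case $m=n+1$. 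The first concrete step is to describe $\theta$ intrinsically on $\Pmn$, without reference to $\aSmn$, generalizing the Dyck-path identities $\area\circ\zeta=\dinv$ and $\bounce\circ\zeta=\area$; with such a description one would then look for an involution $\iota$ of $\Pmn$ so that the composite $\An^{-1}\circ\theta^{-1}\circ\iota\circ\theta\circ\An$ (or a variant) swaps $\area$ and $\dinv$, or, following Haglund's strategy, decompose $\theta$ into elementary local moves and track the joint distribution of the two statistics under each move.

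A second, geometric and representation-theoretic approach proceeds as follows. By Varagnolo--Vasserot \cite{vv} and Yun \cite{yun}, $\bigoplus_k\Hom^k(\CF_{m/n})$ carries the action of the double affine Hecke algebra, and by the dimension formula of the preceding theorem (Hikita) its cohomological grading is recorded by $\dinv$, i.e. $\sum_\omega t^{2\dim\Sigma_\omega}=\sum_f t^{2\sum_i f(i)}$. The plan is to (i) realize $\area$ as a second grading on the same module --- either a weight filtration, or the grading coming from the $\C^*$-action whose attracting-cell (Bialynicki--Birula) decomposition produces the cells $\Sigma_\omega$ --- using the bijection $\An$ and Proposition \ref{area}; (ii) upgrade $H_{m/n}(q,t)$ to the bigraded Hilbert series of the finite-dimensional module of Cherednik's rational DAHA of slope $m/n$, equivalently to the specialization at $q^m=t^n$ of the symmetric-function side of the rational Shuffle conjecture; and (iii) deduce the $q\leftrightarrow t$ symmetry from a symmetry already present on that object --- the $\C^*\times\C^*$-symmetry of the ambient surface geometry (Hilbert schemes of points on the plane curve $x^m=y^n$, or compactified Jacobians), or the modular $\mathrm{SL}_2(\Z)$-action on the DAHA. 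A softer variant is to match the $\Sn$-invariant part of $H_{m/n}$ --- the sum over non-decreasing parking functions, equivalently over $\Ymn$ --- with the rational $q,t$-Catalan polynomial $C_{m/n}(q,t)$, thereby reducing the problem to the higher $\Sn$-isotypic components.

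The hard part is genuine. Already the classical case $m=n+1$ of this conjecture is the $q,t$-symmetry of the bigraded Frobenius character of the diagonal coinvariants, for which no combinatorial or bijective proof is known --- it is a theorem only through Haiman's geometry of $\mathrm{Hilb}^n(\C^2)$. Thus the bijective route of the first paragraph would already be a breakthrough for $m=n+1$, and purely combinatorially one can realistically hope only for the slice $t=1$ (established earlier in the paper whenever $\PS$ is a bijection, in particular for $m=kn\pm1$) or the $\Sn$-invariant slice. For the representation-theoretic route the difficulty simply migrates to steps (i)--(iii): the affine Springer fiber $\CF_{m/n}$ carries no manifest $q\leftrightarrow t$ symmetry, so one must pass through the ambient surface or the Cherednik module, and establishing that two natural gradings there are interchanged by a duality is itself open beyond the Catalan case. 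In short, all the combinatorial ingredients assembled here --- the bijection $\An$, the conjectural bijection $\PS$, the Hikita dimension formula, and the DAHA action --- are consistent with the symmetry, but proving it appears to require input (geometric, or a yet-to-be-constructed rational $\zeta$-involution) beyond the scope of the present constructions.
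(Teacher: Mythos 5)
The statement you were asked to prove is, in fact, an open conjecture in the paper; the paper offers no proof and explicitly says as much, noting only that it is a special case of the Rational Shuffle Conjecture of Gorsky--Negut and that both the full symmetry and the specialization $H_{m/n}(q,q^{-1})=q^{-(m-1)(n-1)/2}(1+q+\cdots+q^{m-1})^{n-1}$ remain open for general $m,n$. You correctly recognized this and did not claim a proof, so there is no correctness issue to adjudicate. Your discussion is consistent with what the paper does establish: the only result in this direction is the \emph{weak} symmetry $H_{m/n}(q,1)=H_{m/n}(1,q)$, which follows from bijectivity of $\An$ and $\PS$ (hence holds unconditionally for $m=kn\pm1$ and conditionally on Conjecture~\ref{Conjecture: bijectivity} in general), and your observation that even the $m=n+1$ case is only known via Haiman's Hilbert-scheme geometry is accurate. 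The two programmes you sketch --- an intrinsic description of $\theta=\PS\circ\An^{-1}$ together with a statistics-swapping involution, and a bigraded DAHA/affine-Springer realization with a geometric duality exchanging gradings --- are reasonable and broadly in the spirit of the directions the authors gesture at (Sections~\ref{Section: zeta}, \ref{Section: DAHA}, \ref{Section: Springer}); neither is carried out in the paper. In short: you have correctly assessed the state of the problem rather than produced a proof, which is the right outcome here.
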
 
 This conjecture is a special case of ``Rational Shuffle Conjecture'' \cite[Conjecture 6.8]{gn}.
A more general conjecture also implies  this identity
$$
H_{m/n}(q,q^{-1})=q^{-\frac{(m-1)(n-1)}{2}}(1+q+\ldots+q^{m-1})^{n-1}.
$$
Both conjectures are open for general $m$ and $n$.
 
\begin{example}
For $n=5$ and $m=2$, we have (see Example \ref{example: 2 5} below for details):
$$
H_{2/5}(q,t)=5+4(q+t)+(q^2+qt+t^2),
$$
and the above properties hold:
\begin{enumerate}
\item $H_{2/5}(1,1)=16=2^4,$ 
\item $H_{2/5}(q,t)=H_{2/5}(t,q),$
\item $H_{2/5}(q,q^{-1})=q^{-2}+4q^{-1}+6+4q+q^2=q^{-2}(1+q)^4.$
\end{enumerate}

\end{example}

\section{The cases $m=kn\pm 1$ and the Extended Shi Arrangements.}\label{Section: Pak-Stanley}

\subsection{Extended Shi Arrangements and Pak-Stanley Labeling.}

Recall the set of $k$-parking functions $\PF_k : = \PF_{(kn+1)/n}.$
Recall the hyperplanes $H_{ij}^k=\{
\x\in V
\mid x_i-x_j=k\}$
and the affine braid arrangement $\aBrn=\{H_{ij}^k\mid 1\le i,j\le n, k\in \Z\}$.
The {\it extended Shi arrangement}, or $k$-Shi arrangement \cite{Shi86,St96}, is 
defined as a subarrangement of the affine braid arrangement: 
\begin{definition}
The hyperplane arrangement 
$$
\Shikn:=\left\{H_{ij}^\l:1\le i<j\le n,\ -k<\l\le k \right\}
$$ 
is called the
{\em $k$-Shi arrangement}.
The connected components of the complement to $\Shikn$ are called {\it $k$-Shi regions}. The set of $k$-Shi regions is denoted $\Regkn.$
\end{definition}

One can use the notations introduced in Section \ref{Subsection: Affine Permutations} to rewrite the definition of the $k$-Shi arrangement as follows:
$$
\Shikn=\left\{H_{ij}^\l:1\le i<j\le n,\ -k<\l\le k \right\}
$$ 
$$
=\left\{H_{ij}^\l:1\le i<j\le n,\ -k<\l< 0 \right\}\sqcup\left\{H_{ij}^\l:1\le i<j\le n,\ 0\le\l\le k \right\}
$$
$$
=\left\{H_{i,j-n\ell}^0:1\le i<j\le n,\ -k<\l< 0 \right\}\sqcup\left\{H_{j,i+\ell n}^0:1\le i<j\le n,\ 0\le\l\le k \right\}
$$
$$
=\left\{H_{ij}^0:1\le i\le n, i<j<i+kn, j\nequiv i \mod n\right\}.
$$
In other words, the $k$-Shi arrangement consists of all hyperplanes of height less than $kn$ in the affine braid arrangement.   
The hyperplane $H_{ij}^\l$ divides $V$ into two half-spaces.
Let $H_{ij}^{\l,\prec}$ denote the half-space that contains $\Afund$ and $ H_{ij}^{\l,\succ}$ denote the complementary half-space.
Note that $H_{ij}^\l$ separates $\w(\Afund)$ from $\Afund$ iff  $\w(\Afund) \subseteq
H_{ij}^{\l,\succ}$ iff $(i, j-\l n)$ or $(j, i+\l n) \in \Inv(\w^{-1})$
(when taking the convention $i,j \in \{1, \ldots, n\}$).

\begin{definition}
The Pak-Stanley labeling is the map $\lambda:\Regkn\to\PF_k$,
$R \mapsto \lambda_R$  defined by the formula
\omitt{  huh? 
$$
\lambda_R(a)=\sharp\{H_{ij}^m\in H_R:m>0, j=a\}+\sharp\{H_{ij}^m:m\le 0, i=a\}.
$$
} 
$$
\lambda_R(a)=\sharp\{
H_{ij}^\l\in \Shikn \mid R \subseteq H_{ij}^{\l,\succ}, \l>0, i=a\}+
\sharp\{
H_{ij}^\l\in \Shikn \mid R \subseteq H_{ij}^{\l,\succ}, \l \le 0, j=a\}.
$$
In other words, one labels the fundamental alcove $\Afund$ by the parking function
$f=\pw{0\ldots 0},$ and then as one crosses the hyperplane $H_{ij}^\l$ in the positive direction (i.e. getting further away from $\Afund$),
one adds $1$ to $f(j)$ if $\l\le 0$ and adds $1$ to $f(i)$ if $\l>0.$ 
\end{definition}

\begin{remark}
One can rewrite this definition   
as follows:
$$
\lambda_R(a)
=\sharp\{
H_{ij}^0\in \Shikn \mid R \subseteq H_{ij}^{0,\succ}, a=i<j\}
=\sharp\{
H_{a \, a+t}^0 \mid R \subseteq H_{a\, a+t}^{0,\succ}, 0 < t < kn, t \not\equiv 0\mod n\}.
$$
\end{remark}
We illustrate the Pak-Stanley labeling for $n=3,$ $k=1$ ($m=4$) in Figure \ref{Figure: Pak-Stanley}.

\begin{figure}
\begin{center}
\begin{tikzpicture}[scale=0.4]
\filldraw [fill=black] (4,0) circle (0.2);

\draw (0,0)--(11,0);
\draw (0,2.6)--(11,2.6);

\draw (2,-3.46)--(7.5,6.06);
\draw (4.5,-4.33)--(10,5.2);

\draw (1,5.2)--(6.5,-4.33);
\draw (3.5,6.06)--(9,-3.46);

\draw (8.5,6.35) node {\footnotesize $H_{\mathbf{1},2}^0$};
\draw (11,5.5) node {\footnotesize $H_{\mathbf{2},4}^0$};

\draw (12,-0.5) node {\footnotesize $H_{\mathbf{2},3}^0$};
\draw (12,2) node {\footnotesize $H_{\mathbf{3},5}^0$};

\draw (8,-4.7) node {\footnotesize $H_{\mathbf{1},3}^0$};
\draw (10.5,-3.7) node {\footnotesize $H_{\mathbf{3},4}^0$};

\draw (5.5,0.75) node {\small $\pw{000}$};

\draw (7,1.7) node {\small $\pw{001}$};

\draw (4,1.7) node {\small $\pw{100}$};

\draw (5.5,-0.75) node {\small $\pw{010}$};

\draw (9,1) node {\small $\pw{011}$};
\draw (2,1) node {\small $\pw{200}$};
\draw (3.5,3.5) node {\small $\pw{101}$};
\draw (7.5,3.5) node {\small $\pw{002}$};
\draw (7,-2) node {\small $\pw{020}$};
\draw (4,-2) node {\small $\pw{110}$};

\draw (10,3.2) node {\small $\pw{012}$};
\draw (5.5,4.5) node {\small $\pw{102}$};
\draw (5.5,-4.2) node {\small $\pw{120}$};
\draw (9,-1) node {\small $\pw{021}$};
\draw (2,-1) node {\small $\pw{210}$};
\draw (1,3.2) node {\small $\pw{201}$};
\end{tikzpicture}
\caption{Pak-Stanley labeling for $1$-Shi arrangement for $n=3.$}
\label{Figure: Pak-Stanley}
\end{center}
\end{figure}
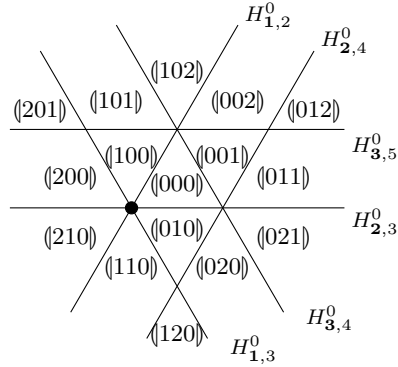


\begin{theorem}[\cite{St96}]
The map $\lambda:\Regkn\to\PF_k$ is a bijection. 
\end{theorem}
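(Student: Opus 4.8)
The plan is to reduce the statement to a cardinality count together with either injectivity or surjectivity of $\lambda$, and then to establish surjectivity by an explicit inverse construction. First I would match cardinalities. Since $\PF_k=\PF_{(kn+1)/n}$, the enumeration of $m/n$-parking functions recalled in Section~\ref{Section: Tools and Definitions} gives $\sharp\,\PF_k=(kn+1)^{n-1}$. For the regions, recall (Shi~\cite{Shi86}, in the form valid for all $k$; see also~\cite{fan,So}) that each region $R\in\Regkn$ contains a unique alcove $A_R$ nearest to $\Afund$, that $R\mapsto A_R$ is injective, and that its image is exactly the set of alcoves contained in the Sommers region $D_n^{kn+1}$; by Lemma~\ref{Lemma: Sommers region} these are the alcoves $\omega^{-1}(\Afund)$ with $\omega\in\aSmnn{n}{kn+1}$, so Proposition~\ref{intro sommers} gives $\sharp\,\Regkn=(kn+1)^{n-1}$ as well. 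Consequently $\lambda$ is a bijection as soon as it is surjective (equivalently, as soon as it is injective).

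For surjectivity I would exhibit, for each $f\in\PF_k$, a region $R(f)$ with $\lambda_{R(f)}=f$, following~\cite{St96} (and~\cite{AL99} in the case $k=1$). Let $\sigma\in\Sn$ be the permutation of maximal length that sorts $f$ into weakly decreasing order, so that $\sigma$ is decreasing on each block of equal values, in the spirit of the row-labeling convention of Lemma~\ref{Lemma:PF <-> labeled diagrams}. One then writes down a region $R(f)\subset V$ cut out by explicit conditions on the differences $x_{\sigma(p)}-x_{\sigma(q)}$ for $p<q$: the ``level'' between consecutive hyperplanes $x_{\sigma(p)}-x_{\sigma(q)}=\text{const}$ in which this difference is required to lie, together with the choice of strict versus non-strict bounding inequalities, is dictated by the values $f(\sigma(p)),\dots,f(\sigma(q))$ and by how they compare. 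There are then two things to verify: (i) that these conditions are mutually consistent, i.e.\ that $R(f)$ is a nonempty region of $\Shikn$; and (ii) that crossing precisely the prescribed hyperplanes when passing from $\Afund$ to $R(f)$ reproduces $f$ by the very definition of the Pak--Stanley labeling. Combined with the cardinality count, (i) and (ii) prove the theorem.

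The step I expect to be the real obstacle is (i), the non-emptiness of $R(f)$: the prescribed sign conditions form an a priori overdetermined system, since for each pair $p<q$ one simultaneously constrains $x_{\sigma(p)}-x_{\sigma(q)}$ and these constraints must be compatible across all triples $p<q<r$, and proving this consistency is exactly where the defining inequalities of a parking function --- $\sharp\{k:f(k)<\ell\}\ge \ell n/m$ for $0\le\ell<m$ --- are used. I would also note an alternative that avoids the explicit construction altogether: by the identification of $\lambda$ with the map $\PS$ under the minimal-alcove correspondence asserted in the introduction, the present theorem is the case $m=kn+1$ of the bijectivity of $\PS$ on $\aSmnn{n}{kn+1}$ established in this section --- at the cost of shifting the combinatorial difficulty into the analysis of $\PS$.
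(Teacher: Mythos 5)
The paper does not prove this theorem; it is quoted verbatim with a citation to Stanley~\cite{St96}, so there is no ``paper's proof'' to compare against. Your strategy --- match $\sharp\Regkn$ with $\sharp\PF_k=(kn+1)^{n-1}$ and then prove surjectivity by building an explicit $R(f)$ with $\lambda_{R(f)}=f$ --- is the standard line of argument and is sound in outline. The cardinality count via the minimal-alcove correspondence and the Sommers region is a legitimate derivation within the logical structure of this paper (Theorem~\ref{theorem:fv1} and Proposition~\ref{intro sommers} do not depend on the Pak--Stanley theorem), even though historically the count of $\Regkn$ predates that correspondence and was obtained by other means.

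Two points deserve flagging. First, the sketch of the surjectivity step is too vague to be checkable: you describe the ``levels'' and the strict-versus-nonstrict boundary choices for $x_{\sigma(p)}-x_{\sigma(q)}$ only impressionistically, and you explicitly defer the non-emptiness of $R(f)$ --- which is indeed the entire content of the argument --- to ``where the defining inequalities of a parking function are used.'' As it stands, this is a statement of what must be proved, not a proof. If you intend to lean on \cite{St96} for these details, say so; otherwise the key verification (i) needs to be carried out. Second, your proposed ``alternative'' is circular in the context of this paper: the paper proves that $\PS$ is a bijection for $m=kn+1$ (Theorem~\ref{Theorem: PS bijectivity kn+1}) precisely by combining the identity $\lambda\circ\alc=\PS$ with the present theorem. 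Deriving the present theorem from the bijectivity of $\PS$ would therefore require an independent proof of that bijectivity, which exists neither in this paper nor, for general $m$, anywhere (it is Conjecture~\ref{Conjecture: bijectivity}). So the alternative should be dropped, or replaced by an honest acknowledgment that it inverts the logical order of this paper.
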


\subsection{Relation Between Sommers Regions and Extended Shi Arrangements for $m=kn\pm 1$}

Consider the case $m=kn+1.$ One can show that each region of an extended Shi arrangement contains a unique minimal alcove (i.e. an alcove with the least number of hyperplanes $H_{ij}^k$ separating it from the fundamental alcove $\Afund$). 

\begin{theorem}[\cite{FV1}]
\label{theorem:fv1}
An alcove $\omega(\Afund)$ is the minimal alcove of a $k$-Shi region if and only if $\omega^{-1}(\Afund)\subset D_n^{kn+1}.$
\end{theorem}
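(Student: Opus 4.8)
The plan is to prove both directions by relating the two geometric notions involved — being the minimal alcove of a $k$-Shi region, and lying in the Sommers region $D_n^{kn+1}$ — to the combinatorics of inversions of $\w$, exactly as set up in Section \ref{Subsection: Sommers region}. Recall that by Lemma \ref{Lemma: Sommers region}, $\w^{-1}(\Afund)\subset D_n^{kn+1}$ if and only if $\w$ has no inversions of height $m=kn+1$, i.e. $\w(x+kn+1)>\w(x)$ for all $x$. On the other hand, the $k$-Shi arrangement $\Shikn$ consists precisely of the hyperplanes $H_{ij}^0$ of height $t$ with $0<t<kn$ and $t\not\equiv 0\bmod n$, and $H_{ij}^0$ separates $\w(\Afund)$ from $\Afund$ iff the corresponding pair lies in $\Inv(\w^{-1})$. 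So the statement translates into: the alcove $\w(\Afund)$ is the minimal alcove of its $k$-Shi region if and only if $\w^{-1}$ has no inversion of height $kn+1$.

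The key structural input is the description of when two alcoves lie in the same $k$-Shi region. First I would establish (or cite, from \cite{Shi86,FV1}) the standard fact that crossing a single wall of the $k$-Shi arrangement either stays in the region or moves to an adjacent region, and that the minimal alcove of a region is the unique one from which every step \emph{away} from $\Afund$ across a wall of $\Shikn$ increases the number of separating $\Shikn$-hyperplanes, while no step \emph{toward} $\Afund$ across such a wall is possible — equivalently, the minimal alcove is the unique alcove $A$ in its region such that for every hyperplane $H\in\Shikn$ with $A\subset H^{\succ}$, the adjacent alcove on the $\Afund$-side of $H$ is in a different region. The heart of the matter is then the combinatorial reformulation: an alcove fails to be minimal in its $k$-Shi region exactly when there is a ``non-Shi'' wall, i.e. a hyperplane $H_{ij}^{\ell}$ with $\ell$ a multiple of $n$ (height exactly $kn$, since heights $>kn$ would force still more separating Shi hyperplanes, by the defining inequalities of $\Shikn$ involving all $H_{ij}^\l$ with $0<\l\le k$) separating $A$ from $\Afund$ such that crossing it toward $\Afund$ does not cross any $\Shikn$-hyperplane. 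Translating this wall condition through $\w\mapsto\w^{-1}$ and the height bookkeeping $H_{i,j}^k=H_{i,j-n}^{k-1}$ of Section \ref{Subsection: Affine Permutations}, a separating wall of height exactly $kn$ for $\w(\Afund)$ corresponds to an inversion of height exactly $kn+1$ (or $kn-1$, handled symmetrically) for $\w^{-1}$; one checks that the ``crossing toward $\Afund$ stays Shi-free'' condition is automatic here because all hyperplanes of smaller height through the relevant pair are already on the correct side. Thus $\w(\Afund)$ is non-minimal in its region $\iff$ $\w^{-1}$ has an inversion of height $kn+1$ $\iff$ $\w^{-1}(\Afund)\not\subset D_n^{kn+1}$.

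For the converse packaging it is cleanest to argue by a counting/bijectivity shortcut once the implication ``minimal $\Rightarrow$ in $D_n^{kn+1}$'' is in hand: by Proposition \ref{intro sommers} the number of alcoves in $D_n^{kn+1}$ is $m^{n-1}=(kn+1)^{n-1}$, which by the theorem of \cite{St96} is exactly the number of $k$-Shi regions, hence exactly the number of minimal alcoves; since we will have shown every minimal alcove lies in $D_n^{kn+1}$, the two finite sets of equal cardinality coincide, giving the reverse inclusion for free. I expect the main obstacle to be the careful verification of the wall-crossing characterization of minimal alcoves — specifically, ruling out separating non-Shi hyperplanes of height strictly greater than $kn$ and confirming that a separating non-Shi wall of height exactly $kn$ can always be crossed back toward $\Afund$ without meeting a $\Shikn$-hyperplane; this is where the precise interplay between the defining inequalities $-k<\l\le k$ of $\Shikn$ and the height-$m$ condition must be handled, and it is essentially the content of the cited result \cite{FV1}, so if one is willing to invoke that characterization the remaining steps are the routine height translations above.
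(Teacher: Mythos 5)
The paper does not supply its own proof of this theorem; it cites \cite{FV1}, so your attempt can only be assessed on its own merits. Your general plan — establish ``minimal $\Rightarrow$ $\w^{-1}(\Afund)\subset D_n^{kn+1}$'' by a wall-crossing argument, then use the equality of cardinalities $(kn+1)^{n-1}$ on both sides (Proposition \ref{intro sommers} and Stanley's count of $k$-Shi regions, together with Shi's theorem that each region has a unique minimal alcove) to get the converse — is a sound strategy. However, the key combinatorial step contains several errors that make the argument collapse as written. First, there is a persistent $\w$ vs.\ $\w^{-1}$ confusion: you begin correctly, noting that $\w^{-1}(\Afund)\subset D_n^{kn+1}$ iff $\w$ has no inversion of height $kn+1$ (via Lemma \ref{Lemma: Sommers region} and the definitions of $\aSmn$, $\maSn$), but then assert as your ``translation'' that $\w(\Afund)$ is minimal iff $\w^{-1}$ has no inversion of height $kn+1$, and carry that version through. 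These are not equivalent: the heights of $\Inv(\w)$ and $\Inv(\w^{-1})$ differ in general (the natural bijection sends $(i,j)$ to an inversion of height $\w(i)-\w(j)$, not $j-i$). Second, the central claim ``a separating wall of height exactly $kn$ for $\w(\Afund)$ corresponds to an inversion of height exactly $kn+1$ for $\w^{-1}$'' is doubly wrong: there is no hyperplane of height $kn$ in $\aBrn$ at all (heights that are multiples of $n$ are degenerate), and the correspondence between separating hyperplanes and inversions is explicitly height-\emph{preserving} in the paper (the inversion $(i,i+m)$ corresponds to $H_{i,i+m}^0$, of height $m$, not $m+1$). The ``(or $kn-1$, handled symmetrically)'' aside also has no role here; $kn-1$ pertains to the separate bounded-region theorem.

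The elementary observation that actually makes your easy direction go through is different. One shows that $\w$ has an inversion of height $kn+1$ if and only if there is a right descent $j$ of $\w$ with $\w(j)-\w(j+1)>kn$: indeed $(j,j+kn+1)\in\Inv(\w)$ is, by quasi-periodicity $\w(j+kn+1)=\w(j+1)+kn$, the same as $\w(j)-\w(j+1)>kn$; and $\w(j)-\w(j+1)$ is never a multiple of $n$, so ``$>kn$'' is ``$\ge kn+1$''. Such a descent exhibits a wall $H_{\w(j+1),\w(j)}^0$ of $\w(\Afund)$ that separates it from $\Afund$ and has height $\w(j)-\w(j+1)>kn$; this is not in $\Shikn$, so $\w s_j(\Afund)$ lies in the same $k$-Shi region with $\ell(\w s_j)<\ell(\w)$, whence $\w(\Afund)$ is not minimal. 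This is the implication ``minimal $\Rightarrow$ $\w^{-1}(\Afund)\subset D_n^{kn+1}$'' you wanted, after which your counting shortcut (which is correct) finishes the proof. Notice that this argument works with descent walls of $\w(\Afund)$ and with inversions of $\w$; at no point does one need inversions of $\w^{-1}$ of a particular height, which is the object your proposal was chasing.
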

 
\begin{example}
We illustrate this theorem in Figure \ref{Figure: omega(A0) vs omega-1(A0)}, where the minimal alcoves of the $1$-Shi region are matched with the alcoves in the Sommers region $D_3^4.$ On the left we have the minimal alcoves $\omega(\Afund)$ labeled by the $m$-stable permutations $\omega\in \aSmn$ for $m=4, n=3$. On the right we have alcoves $\omega^{-1}(\Afund)$ that fit inside $D_3^4,$ labeled by the $m$-restricted permutations $\omega^{-1}\in\maSn.$ Note that $[-226]=[420]^{-1},$\ $[150]=[1-16]^{-1},$ and $[4-13]=[-253]^{-1}.$

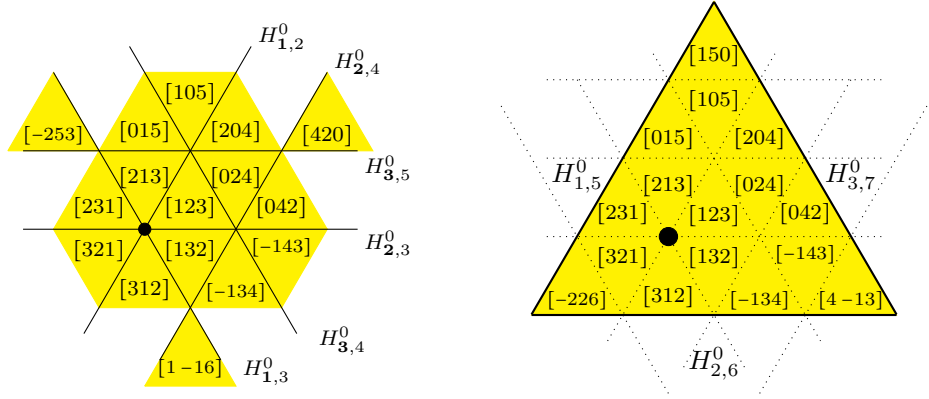
\begin{figure}
\begin{center}
\begin{tikzpicture}[scale=0.4]
\filldraw [yellow, fill=yellow] (4,0)--(7,0)--(5.5,2.6)--(4,0);
\filldraw [yellow, fill=yellow] (4,0)--(7,0)--(5.5,-2.6)--(4,0);
\filldraw [yellow, fill=yellow] (4,0)--(5.5,2.6)--(2.5,2.6)--(4,0);
\filldraw [yellow, fill=yellow] (7,0)--(5.5,2.6)--(8.5,2.6)--(7,0);
\filldraw [yellow, fill=yellow] (7,0)--(10,0)--(8.5,2.6)--(7,0);
\filldraw [yellow, fill=yellow] (1,0)--(4,0)--(2.5,2.6)--(1,0);
\filldraw [yellow, fill=yellow] (4,5.2)--(5.5,2.6)--(2.5,2.6)--(4,5.2);
\filldraw [yellow, fill=yellow] (7,5.2)--(5.5,2.6)--(8.5,2.6)--(7,5.2);
\filldraw [yellow, fill=yellow] (4,0)--(5.5,-2.6)--(2.5,-2.6)--(4,0);
\filldraw [yellow, fill=yellow] (7,0)--(5.5,-2.6)--(8.5,-2.6)--(7,0);
\filldraw [yellow, fill=yellow] (4,5.2)--(7,5.2)--(5.5,2.6)--(4,5.2);
\filldraw [yellow, fill=yellow] (8.5,2.6)--(11.5,2.6)--(10,5.2)--(8.5,2.6);
\filldraw [yellow, fill=yellow] (-0.5,2.6)--(2.5,2.6)--(1,5.2)--(-0.5,2.6);
\filldraw [yellow, fill=yellow] (1,0)--(4,0)--(2.5,-2.6)--(1,0);
\filldraw [yellow, fill=yellow] (7,0)--(10,0)--(8.5,-2.6)--(7,0);
\filldraw [yellow, fill=yellow] (4,-5.2)--(7,-5.2)--(5.5,-2.6)--(4,-5.2);
\filldraw [fill=black] (4,0) circle (0.2);

\draw (0,0)--(11,0);
\draw (0,2.6)--(11,2.6);

\draw (2,-3.46)--(7.5,6.06);
\draw (4.5,-4.33)--(10,5.2);

\draw (1,5.2)--(6.5,-4.33);
\draw (3.5,6.06)--(9,-3.46);

\draw (8.5,6.35) node {\footnotesize  $H_{\mathbf{1},2}^0$};
\draw (11,5.5) node {\footnotesize  $H_{\mathbf{2},4}^0$};

\draw (12,-0.5) node {\footnotesize $H_{\mathbf{2},3}^0$};
\draw (12,2) node {\footnotesize  $H_{\mathbf{3},5}^0$};

\draw (8,-4.7) node {\footnotesize  $H_{\mathbf{1},3}^0$};
\draw (10.5,-3.7) node {\footnotesize  $H_{\mathbf{3},4}^0$};

\draw (5.5,0.7) node {\footnotesize $[123]$};

\draw (7,1.7) node {\footnotesize $[024]$};

\draw (4,1.7) node {\footnotesize $[213]$};

\draw (5.5,-0.7) node {\footnotesize $[132]$};

\draw (8.5,0.7) node {\footnotesize $[042]$};
\draw (2.5,0.7) node {\footnotesize $[231]$};
\draw (4,3.25) node {\footnotesize $[015]$};
\draw (7,3.25) node {\footnotesize $[204]$};
\draw (7,-2.1) node {\scriptsize $[-134]$};
\draw (4,-2) node {\footnotesize $[312]$};

\draw (10,3.1) node {\footnotesize $[420]$};
\draw (5.5,4.5) node {\footnotesize $[105]$};
\draw (5.5,-4.6) node {\scriptsize $[1-\!16]$};
\draw (8.5,-0.6) node {\scriptsize $[-143]$};
\draw (2.5,-0.7) node {\footnotesize $[321]$};
\draw (1,3.1) node {\scriptsize $[-253]$};

\end{tikzpicture}
\ \ \ 
\qquad
\begin{tikzpicture}[scale=0.4]

\filldraw [yellow, fill=yellow] (4,0)--(7,0)--(5.5,2.6)--(4,0);
\filldraw [yellow, fill=yellow] (4,0)--(7,0)--(5.5,-2.6)--(4,0);
\filldraw [yellow, fill=yellow] (4,0)--(5.5,2.6)--(2.5,2.6)--(4,0);
\filldraw [yellow, fill=yellow] (7,0)--(5.5,2.6)--(8.5,2.6)--(7,0);
\filldraw [yellow, fill=yellow] (7,0)--(10,0)--(8.5,2.6)--(7,0);
\filldraw [yellow, fill=yellow] (1,0)--(4,0)--(2.5,2.6)--(1,0);
\filldraw [yellow, fill=yellow] (4,5.2)--(5.5,2.6)--(2.5,2.6)--(4,5.2);
\filldraw [yellow, fill=yellow] (7,5.2)--(5.5,2.6)--(8.5,2.6)--(7,5.2);
\filldraw [yellow, fill=yellow] (4,0)--(5.5,-2.6)--(2.5,-2.6)--(4,0);
\filldraw [yellow, fill=yellow] (7,0)--(5.5,-2.6)--(8.5,-2.6)--(7,0);
\filldraw [yellow, fill=yellow] (4,5.2)--(7,5.2)--(5.5,2.6)--(4,5.2);
\filldraw [yellow, fill=yellow] (1,0)--(4,0)--(2.5,-2.6)--(1,0);
\filldraw [yellow, fill=yellow] (7,0)--(10,0)--(8.5,-2.6)--(7,0);
\filldraw [yellow, fill=yellow] (4,5.2)--(5.5,2.6)--(2.5,2.6)--(4,5.2);
\filldraw [yellow, fill=yellow] (4,5.2)--(7,5.2)--(5.5,7.8)--(4,5.2);
\filldraw [yellow, fill=yellow] (10,0)--(11.5,-2.6)--(8.5,-2.6)--(10,0);
\filldraw [yellow, fill=yellow] (1,0)--(2.5,-2.6)--(-0.5,-2.6)--(1,0);

\filldraw [fill=black] (4,0) circle (0.3);

\draw[thick]  (-0.5,-2.6)--(11.5,-2.6);
\draw[dotted] (0,0)--(11,0);
\draw[dotted] (0,2.6)--(11,2.6);
\draw[dotted] (0,5.2)--(11,5.2);

\draw[thick] (-.5,-2.58)--(5,6.92)--(5.5, 7.783);
\draw[dotted] (2,-3.46)--(7.5,6.06);
\draw[dotted] (4.5,-4.33)--(10,5.2);
\draw[dotted] (7,-5.2)--(12.5,4.33);

\draw[dotted] (-1.5,4.33)--(4,-5.2);
\draw[dotted] (1, 5.2)--(6.5,-4.33);
\draw[dotted]  (3.5,6.06)--(9,-3.46);
\draw[thick] (5.5,7.783 )--(6,6.92)--(11.5,-2.58);

\draw (5.5,0.7) node {\footnotesize $[123]$};

\draw (7,1.7) node {\footnotesize $[024]$};

\draw (4,1.7) node {\footnotesize $[213]$};

\draw (5.5,-0.7) node {\footnotesize $[132]$};

\draw (8.5,0.75) node {\footnotesize $[042]$};
\draw (2.5,0.75) node {\footnotesize $[231]$};
\draw (4,3.25) node {\footnotesize $[015]$};
\draw (7,3.25) node {\footnotesize $[204]$};
\draw (7,-2.1) node {\scriptsize $[-134]$};
\draw (4,-2) node {\footnotesize $[312]$};

\draw (5.5,6) node {\footnotesize $[150]$};
\draw (5.5,4.5) node {\footnotesize $[105]$};
\draw (10,-2.1) node {\scriptsize $[4-\!13]$}; 
\draw (8.5,-0.6) node {\scriptsize $[-143]$};
\draw (2.5,-0.7) node {\footnotesize $[321]$};
\draw (1,-2.1) node {\scriptsize $[-226]$};

\draw (10,2) node {$H_{{3},7}^0$};
\draw (1,2) node {$H_{{1},5}^0$};
\draw (5.5,-4.2) node {$H_{{2},6}^0$};

\end{tikzpicture}
\caption{Minimal alcoves for $\Sh_3^{1}$ and Sommers region $D_{3}^{4}$.}
\label{Figure: omega(A0) vs omega-1(A0)}
\end{center}
\end{figure}   
\end{example}

Theorem \ref{theorem:fv1} and Lemma \ref{Lemma: Sommers region} imply a bijection $\alc:\aSn^{kn+1}\to \Regkn.$ 

\begin{theorem}\label{Theorem: PS bijectivity kn+1}
One has $\lambda\circ \alc=\PS$ in this case. 
In particular, $\PS$ is a bijection for $m=kn+1.$
\end{theorem}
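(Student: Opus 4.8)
The plan is to make the bijection $\alc$ explicit, reinterpret the Pak--Stanley label of $\alc(\w)$ as an inversion count of $\w^{-1}$, and then reduce the theorem to a purely combinatorial identity about $m$-stable permutations, which can be proved by induction along the alcove galleries of $D_n^m$.

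First I would pin down $\alc$. For $\w\in\aSmn$ with $m=kn+1$ we have $\w^{-1}\in\maSn$, so $\w^{-1}(\Afund)\subset D_n^m$ by Lemma~\ref{Lemma: Sommers region}, and hence $\w(\Afund)$ is the minimal alcove of a $k$-Shi region by Theorem~\ref{theorem:fv1}; that region is $\alc(\w)$. Because $\w(\Afund)$ is minimal in $\alc(\w)$, a hyperplane $H\in\Shikn$ satisfies $\alc(\w)\subseteq H^{\succ}$ iff $H$ separates $\w(\Afund)$ from $\Afund$. Now $\Shikn$ consists exactly of the affine braid hyperplanes of height $<kn=m-1$, and since no hyperplane has height divisible by $n$ these coincide with the hyperplanes of height $<m$; and by the geometric dictionary of Section~\ref{Subsection: Sommers region}, $H_{a,a+t}^{0}$ separates $\w(\Afund)$ from $\Afund$ iff $(a,a+t)\in\Inv(\w^{-1})$. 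Plugging this into the reformulation of $\lambda_R(a)$ as the number of separating hyperplanes $H_{a,a+t}^{0}$ with $0<t<kn$ (the Remark following the definition of the Pak--Stanley labelling), I obtain
\[
\lambda_{\alc(\w)}(a)=\sharp\{\,t:0<t<m,\ \w^{-1}(a)>\w^{-1}(a+t)\,\},
\]
where enlarging the range to $t=m-1$ costs nothing since $\w^{-1}(a+m-1)=\w^{-1}(a)+kn>\w^{-1}(a)$.

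It then remains to prove, with $v:=\w^{-1}\in\maSn$, that for all $a$
\[
\sharp\{\,t:0<t<m,\ v(a)>v(a+t)\,\}=\SP_v(a)=\sharp\{\,j>a:0<v(a)-v(j)<m\,\}.
\]
Both sides count inversions of $v$ rooted at $a$ --- the left with the \emph{position} gap bounded by $m$, the right with the \emph{value} gap bounded by $m$ --- and this equality is the crux of the proof. I would argue by induction on $\ell(v)$: the base case $v=\id$ makes both sides $0$, and for the inductive step I use that $D_n^m$ is a simplex, hence convex, so $\maSn$ is connected through moves of the form $v\mapsto vs_i$ with $vs_i\in\maSn$ and one of $v,vs_i$ shorter than the other. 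The right-hand side transforms under $v\mapsto vs_i$ according to the Propositions comparing $\SP_{vs_i}$ with $\SP_v$ (split by whether $0<v(i)-v(i+1)<m$ or $m<v(i)-v(i+1)$, with the evident variant for $s_0$); the left-hand side transforms by a direct computation, since $v\mapsto vs_i$ merely swaps the values of $v$ at the positions $\equiv i$ and $\equiv i+1\pmod n$ and this alters only finitely many terms of the count; a short case analysis matches the two changes term by term. Hence $\lambda_{\alc(\w)}=\PS_\w$ for every $\w\in\aSmn$.

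Finally, since $\lambda:\Regkn\to\PF_k$ is a bijection by \cite{St96} and $\alc:\aSmn\to\Regkn$ is a bijection, the identity $\PS=\lambda\circ\alc$ presents $\PS$ as a composite of bijections and so proves $\PS$ is bijective for $m=kn+1$, and that it is the Pak--Stanley labelling carried across $\alc$. I expect the combinatorial identity of the previous paragraph to be the main obstacle: the tempting ``fold the value $v(j)$ down by multiples of $m$ until it lands in the window $(v(a)-m,v(a))$'' map fails to be a bijection because it can merge two inversions sitting in different residue classes modulo $m$, which is why I would route the argument through the wall-crossing recursion (exploiting $m$-stability of $\w$ to control how both positions and values move) rather than through a single explicit bijection, though a direct bijective proof should also be possible.
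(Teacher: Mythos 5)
Your reduction is the same as the paper's: after unwinding the Pak--Stanley label through Theorem~\ref{theorem:fv1} and the Remark, both you and the paper arrive at the single combinatorial identity (with $v=\w^{-1}\in\maSn$, $m=kn+1$)
\[
\sharp\{t : 0<t<m,\ v(a)>v(a+t)\}\;=\;\sharp\{b>a : 0<v(a)-v(b)<m\},
\]
and everything else (bijectivity of $\lambda$ and of $\alc$) is formal. Where you diverge is in how this identity is proved. The paper proves it by a direct count, residue class by residue class: for each $r\in\{1,\dots,n-1\}$ the inversions $(a,a+h)$ with $h\equiv r\ (\mathrm{mod}\ n)$ form a ``tower'' $h=r,r+n,\dots$, and one checks that the position-bounded count selects the bottom $\min(k,\cdot)$ of this tower while the value-bounded count selects the top $\min(k,\cdot)$ --- same cardinality on both sides. (Note this is a counting argument, not a folding bijection on inversions, so it sidesteps the obstruction you correctly identify with the na\"ive mod-$m$ folding map.) You instead propose to show both sides satisfy the same wall-crossing recursion along galleries inside $D_n^m$ and induct on $\ell(v)$; this also works, and you correctly reduce the problem to a local check at each $s_i$, but the ``short case analysis'' is less short than advertised: the recursion for the left side genuinely splits into the same two cases $0<v(i)-v(i+1)<m$ versus $>m$ as the $\SP$-recursion, and the extra $-1$ in the first case (and its absence in the second) is accounted for exactly by inversions of \emph{height} $m$ of $v$ --- which an $m$-restricted $v$ may very well have, since $m$-restriction forbids value gaps equal to $m$, not position gaps. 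If you overlook those height-$m$ inversions you will get $L_{vs_i}(i+1)=L_v(i)-1$ unconditionally, which contradicts the $\SP$-recursion; so this is a real pitfall in the bookkeeping, not a triviality. With that term tracked correctly the induction closes, so your route is valid; the paper's residue-by-residue count is tighter and avoids developing a wall-crossing formula for the left-hand side from scratch.
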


\begin{proof}
As it was mentioned in Section \ref{Subsection: Sommers region}, an affine permutation $\omega$ has an inversion $(i,i+h)$ if and only if $\omega^{-1}(\Afund)$ is separated from $\Afund$ by the hyperplane $H_{i,i+h}^0$ or, equivalently, if and only if $\omega^{-1}(\Afund)\subset H_{i,i+h}^{0,\succ}.$ Given a region $R$, for any affine permutation $\omega$ such that $\omega^{-1}(\Afund)\subset R,$ the number of inversions of the form $(a,a+h)$ of height $h<kn$ is equal to $\lambda_R(a).$ 

If $\omega^{-1}\in \aSn^{kn+1}$ then the alcove $\omega^{-1}(\Afund)$ is the minimal alcove in the region $R$ and $\alc(\omega^{-1})=R.$ By definition, $\PS_{\omega^{-1}}(a)$ is equal to the number of inversions $(\alpha,\beta)$ of $\omega^{-1},$ such that $\beta-\alpha<kn+1$ and $\omega^{-1}(\beta)=a,$ which is the same as the number of inversions $(a,a+h)$ of $\omega,$ such that $\omega(a)-\omega(a+h)<kn+1.$ Note that $\omega(a)-\omega(a+h)$ cannot be equal to $kn,$ so $\PS_{\omega^{-1}}(a)$ is, in fact, equal to the number of inversions $(a,a+h),$ such that $\omega(a)-\omega(a+h)<kn.$ To match it with $\lambda_R(a),$ one has to prove the following equation for any $a\in\{1,\ldots,n\}$ and any $\w$ :
\begin{equation}
\label{eq: height vs width}
\sharp\left\{(a,a+h)\in\Inv(\omega)\mid h<kn\right\}=\sharp\left\{(a,a+h)\in\Inv(\omega)\mid \omega(a)-\omega(a+h)<kn\right\}.
\end{equation}
Given $r\in\{1,\dots,n-1\},$ define
$$
\gamma_{a,\w}(r):=\sharp\left\{(a,a+h)\in\Inv(\omega)\mid h<kn, h\equiv r \modd n\right\}
$$
and
$$
\gamma'_{a,\w}(r):=\sharp\left\{(a,a+h)\in\Inv(\omega)\mid \omega(a)-\omega(a+h)<kn, h\equiv r \modd n\right\}.
$$
 Let $h_{\max}^r$ be the maximal number such that $(a,h_{\max}^r)\in\Inv(\omega)$ and $h_{\max}^r\equiv r\ \modd n.$ It is not hard to see that   
 $$
 \gamma_{a,\w}(r)=\gamma'_{a,\w}(r)=\min\left(k,\left\lfloor\frac{h_{\max}^r}{n}\right\rfloor\right).
 $$ 
 Indeed, the total number of inversions $(a,a+h)$ such that $h\equiv r\ \modd n$ equals $\lfloor\frac{h_{\max}^r}{n}\rfloor$. If it is less than or equal to $k$ then 
 all of them satisfy both $h<kn$ and $\omega(a)-\omega(a+h)<kn.$ In turn, if it is greater than $k,$ then the inversions $(a,a+h)$ for $h=r,r+n,\ldots,r+(k-1)n$ satisfy the condition $h<kn,$ while the inversions $(a,a+h)$ for $h=h_{\max}^r,h_{\max}^r-n,\ldots,h_{\max}^r-(k-1)n$ satisfy the condition $\omega(a)-\omega(a+h)<kn.$    

Finally, the sum of identities  $\gamma_{a,\w}(r)=\gamma'_{a,\w}(r)$ for all $r$ is equivalent to \eqref{eq: height vs width}. 
\end{proof}

\begin{example}
When one applies the map $\PS$ to the affine permutations in the left part of Figure \ref{Figure: omega(A0) vs omega-1(A0)} one gets the Pak-Stanley labeling shown in Figure \ref{Figure: Pak-Stanley}.
\end{example}

The case $m=kn-1$ is treated similarly. The main difference is that instead of the set of all $k$-Shi regions $\Regkn$ one should consider the set of bounded $k$-Shi regions $\widehat{\Regkn}.$ One can show that every bounded $k$-Shi region contains exactly one maximal alcove. 

\begin{theorem}[\cite{FV2}]
\label{thm-max}
An alcove $\omega(\Afund)$ is the maximal alcove of a bounded $k$-Shi region if and only if $\omega^{-1}(\Afund)\subset D_n^{kn-1}.$
\end{theorem}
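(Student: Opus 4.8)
The plan is to parallel the treatment of the case $m=kn+1$ (Theorem~\ref{theorem:fv1}), with the ``ceilings'' of a $k$-Shi region playing the role of its ``floors'' and with one extra boundedness input. First I would rephrase the right-hand side. By Lemma~\ref{Lemma: Sommers region} and the definition of $\maSn$, the condition $\omega^{-1}(\Afund)\subset D_n^{kn-1}$ says exactly that $\omega$ is $(kn-1)$-stable, i.e.\ has no inversion of height $kn-1$; and since $kn-1\equiv-1\pmod n$ we have $\omega(i+kn-1)=\omega(i-1)+kn$, so this is equivalent to
$$
(\star)\qquad \omega(i+1)-\omega(i)<kn\quad\text{for all }i\in\Z,
$$
because a height-$(kn-1)$ inversion $(i{+}1,\,i{+}kn)$ occurs precisely when $\omega(i+1)\ge\omega(i)+kn=\omega(i+kn)$. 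So it suffices to prove: $\omega(\Afund)$ is the maximal alcove of a bounded $k$-Shi region if and only if $(\star)$ holds.

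The next step is a purely local computation. Call $\omega(\Afund)$ a \emph{ceiling alcove} if every wall of $\omega(\Afund)$ pointing away from $\Afund$ belongs to $\Shikn$. These away-walls are the hyperplanes $\omega(H_{i,i+1}^0)=H_{\omega(i),\omega(i+1)}^0$ indexed by the cyclic right ascents of $\omega$, i.e.\ those $i\in\{0,\dots,n-1\}$ with $\omega(i)<\omega(i+1)$ (recall $\omega(0)=\omega(n)-n$); crossing such a wall passes to the longer alcove $\omega s_i(\Afund)$. With the normalization of Section~\ref{Subsection: Affine Permutations}, the hyperplane $H_{\omega(i),\omega(i+1)}^0$ has height $\omega(i+1)-\omega(i)$, and (its height being automatically $\not\equiv0\bmod n$) it lies in $\Shikn$ exactly when $0<\omega(i+1)-\omega(i)<kn$. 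Hence $\omega(\Afund)$ is a ceiling alcove iff $\omega(i+1)-\omega(i)<kn$ at every right ascent $i$; since descents give $\omega(i+1)-\omega(i)<0<kn$ automatically and $n$-periodicity extends the range to all $i\in\Z$, this is precisely $(\star)$. Moreover the maximal alcove of a bounded region is always a ceiling alcove: crossing an away-wall not in $\Shikn$ would stay in the region and produce a longer alcove.

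It remains to show, conversely, that a ceiling alcove always lies in a bounded $k$-Shi region (it is then automatically its \emph{unique} maximal alcove: a bounded region, by standard facts about reflection arrangements, has a unique alcove of maximal distance from $\Afund$ and this is its only local maximum, while a ceiling alcove is plainly a local maximum). I expect this to be the main obstacle. One route is a recession-cone computation: the recession cone of the region $R\ni\omega(\Afund)$ is the intersection of the closed half-spaces through the origin corresponding to the side of each hyperplane of $\Shikn$ on which $R$ lies, and one checks that $(\star)$ --- read off from $\omega^{-1}$, i.e.\ from the inequalities defining $D_n^{kn-1}$ in Lemma~\ref{Lemma: Sommers region} --- pins every coordinate difference in that cone to $0$, forcing $R$ to be bounded. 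A second, more intrinsic route transports the question through the isometry $\wm$ (with $m=kn-1$) of Lemma~\ref{isometry} between the $(kn-1)$-dilated fundamental alcove and $D_n^{kn-1}$, and appeals to the identification of the alcoves of a dilated fundamental alcove with the maximal alcoves of the bounded $k$-Shi regions; this is the content of \cite{FV2}. As a consistency check, the number of ceiling alcoves equals $|\maSn|=(kn-1)^{n-1}$ for $m=kn-1$, which matches the classical count of bounded $k$-Shi regions.
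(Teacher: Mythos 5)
The paper does not prove this statement; it is quoted from \cite{FV2} and used as a black box, so there is no in-paper argument to compare against. Your reformulation and forward direction are correct: by Lemma~\ref{Lemma: Sommers region}, $\omega^{-1}(\Afund)\subset D_n^{kn-1}$ is exactly the condition that $\omega$ has no inversion of height $kn-1$, which you correctly rewrite (via $\omega(i+kn-1)=\omega(i-1)+kn$) as the ``ceiling'' condition $(\star)$, and a maximal alcove of a bounded region is a ceiling alcove for precisely the reason you give.

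The converse direction, however, contains two genuine gaps. First, the key claim that every ceiling alcove lies in a \emph{bounded} region is left unproven: your Route~2 explicitly appeals to the identification ``this is the content of \cite{FV2},'' which is circular, since that identification \emph{is} the theorem. Route~1 (recession cone) is the right kind of idea but is not carried out, and the assertion that $(\star)$ ``pins every coordinate difference to $0$'' is not immediate: $(\star)$ constrains only the cyclically consecutive differences $\omega(i+1)-\omega(i)$, while boundedness of $R$ requires control of which side $R$ lies on for every hyperplane $x_a-x_b=\ell$, $-k<\ell\le k$, and the passage from one to the other needs an actual argument. Second, you assert that a bounded region has a unique alcove of maximal distance ``by standard facts about reflection arrangements.'' This is not a generic fact about convex unions of alcoves in a Coxeter arrangement; it is a specific structural property of $k$-Shi arrangements (dual to Shi's and Athanasiadis's uniqueness of the minimal alcove) and is itself part of what \cite{FV2} establishes, so it cannot simply be invoked. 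Your closing ``consistency check'' --- that ceiling alcoves and bounded regions both number $(kn-1)^{n-1}$ --- could be promoted to a load-bearing step to obtain uniqueness, but only after one has first proved that every ceiling alcove lies in a bounded region and has independently cited the enumeration of bounded $k$-Shi regions.
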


As above, we use Lemma \ref{Lemma: Sommers region} and Theorem \ref{thm-max}
to obtain the bijection
$\balc:\aSn^{kn-1}\to \widehat{\Regkn}.$ We prove the following theorem:

\begin{theorem}\label{Theorem: PS bijectivity kn-1}
The image of the subset $\widehat{\Regkn}\subset \Regkn$ under the Pak-Stanley labeling is exactly $\PF_{(kn-1)/n}\subset \PF_{(kn+1)/n}.$
Furthermore, one gets $\lambda\circ \balc=\PS$ in this case.
In particular, $\PS$ is a bijection for $m=kn-1.$ 
\end{theorem}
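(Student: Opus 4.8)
The plan is to run the argument of Theorem~\ref{Theorem: PS bijectivity kn+1} essentially verbatim, with Theorem~\ref{theorem:fv1} and minimal alcoves replaced by Theorem~\ref{thm-max} and maximal alcoves of bounded regions, adding one bookkeeping step to handle the shifted width bound. Fix $k$, put $m=kn-1$, and let $\omega$ be $m$-restricted, so that $\omega^{-1}(\Afund)\subset D_n^m$ is the \emph{maximal} alcove of the bounded region $R:=\balc(\omega^{-1})\in\widehat{\Regkn}$; we must show $\lambda_R=\PS_{\omega^{-1}}$. First I would note, exactly as in the $kn+1$ case, that $\lambda_R$ depends only on $R$: a hyperplane $H^0_{a,a+h}$ with $0<h<kn$ separates a given alcove of $R$ from $\Afund$ iff it separates $R$ from $\Afund$, and for $\omega^{-1}(\Afund)$ this happens iff $(a,a+h)\in\Inv(\omega)$. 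Reading this off the maximal alcove of $R$ gives
$$
\lambda_R(a)=\sharp\{(a,a+h)\in\Inv(\omega)\mid h<kn\}.
$$
Translating $\PS_{\omega^{-1}}$ into a statement about inversions of $\omega$, precisely as in the proof of Theorem~\ref{Theorem: PS bijectivity kn+1}, gives
$$
\PS_{\omega^{-1}}(a)=\sharp\{(a,a+h)\in\Inv(\omega)\mid \omega(a)-\omega(a+h)<kn-1\}.
$$

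The one new ingredient is to rewrite the width bound. Since $\omega$ is $(kn-1)$-restricted, no inversion of $\omega$ has width equal to $kn-1=m$; since $m$ and $n$ are coprime, no inversion has width divisible by $n$, so in particular none has width $kn$. Hence the conditions $\omega(a)-\omega(a+h)<kn-1$ and $\omega(a)-\omega(a+h)<kn$ pick out the same inversions, and $\PS_{\omega^{-1}}(a)=\sharp\{(a,a+h)\in\Inv(\omega)\mid\omega(a)-\omega(a+h)<kn\}$. I would then invoke identity~\eqref{eq: height vs width}, which is established for an arbitrary affine permutation --- its proof only uses that, along a fixed residue class of inversion heights modulo $n$, the widths decrease, that the set of such heights is downward closed (if $(a,a+h)\in\Inv(\omega)$ and $0<h'<h$ with $h'\equiv h$, then $(a,a+h')\in\Inv(\omega)$), and that the next potential inversion up has width less than $n$ --- to conclude $\lambda_R(a)=\PS_{\omega^{-1}}(a)$ for all $a$, i.e. $\lambda\circ\balc=\PS$.

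The image statement and the bijectivity of $\PS$ then follow by counting. By Theorem~\ref{theorem-image-PS}, $\PS$ takes values in $\PF_{(kn-1)/n}$, so $\lambda\circ\balc=\PS$ already forces $\lambda(\widehat{\Regkn})\subseteq\PF_{(kn-1)/n}\subseteq\PF_{(kn+1)/n}$. Since $\lambda$ is a bijection $\Regkn\to\PF_{(kn+1)/n}$, its restriction to $\widehat{\Regkn}$ is injective, so $\sharp\lambda(\widehat{\Regkn})=\sharp\widehat{\Regkn}=\sharp\aSn^{kn-1}=(kn-1)^{n-1}$ by the bijectivity of $\balc$ and Proposition~\ref{intro sommers}; this equals $\sharp\PF_{(kn-1)/n}$. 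Therefore $\lambda(\widehat{\Regkn})=\PF_{(kn-1)/n}$, the restricted map is a bijection onto $\PF_{(kn-1)/n}$, and hence $\PS=\lambda\circ\balc$ is a bijection.

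The part needing care --- the only genuine deviation from the $kn+1$ argument --- is twofold: one must be willing to read $\lambda_R$ off the \emph{maximal} alcove of a bounded region, which is legitimate precisely because $\lambda_R$ is a region invariant (every alcove of $R$ is cut off from $\Afund$ by exactly the $k$-Shi hyperplanes separating $R$ from $\Afund$); and one must use $(kn-1)$-restrictedness to \emph{enlarge} the width bound from $kn-1$ to $kn$ before quoting \eqref{eq: height vs width}, whereas in the $kn+1$ case coprimality alone sufficed to shrink $kn+1$ to $kn$. Beyond that, the argument is formally identical.
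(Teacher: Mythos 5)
Your proposal is correct and follows essentially the same route as the paper: establish $\lambda\circ\balc=\PS$ by translating $\PS_{\omega^{-1}}(a)$ into a count of inversions of $\omega$ with width $<kn-1$, use $(kn-1)$-restrictedness to replace the bound by $<kn$, and invoke the identity \eqref{eq: height vs width} from the $kn+1$ proof; the image and bijectivity statements then follow by the counting argument you spell out. The only superfluous step is your observation that no inversion has width $kn$ — to pass from $<kn-1$ to $<kn$ you only need to exclude width $kn-1$, which restrictedness already gives — but this does no harm.
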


\begin{proof}
It is sufficient to prove the formula $\lambda\circ \balc=\PS.$ Indeed, this would imply that the restriction of the Pak-Stanley labeling to the subset $\widehat{\Regkn}\subset \Regkn$ is an injective (and, therefore, bijective) map from $\widehat{\Regkn}$ to $\PF_{(kn-1)/n}.$

If $\omega^{-1}\in \aSn^{kn-1}$ then the alcove $\omega^{-1}(\Afund)$ is the maximal alcove of a bounded region $R$ and $\balc(\omega^{-1})=R.$ Similarly to Theorem \ref{Theorem: PS bijectivity kn+1}, we get that $\PS_{\omega^{-1}}(a)$ is equal to the number of inversions $(a,a+h)$ of $\omega$ such that $\omega(a)-\omega(a+h)<kn-1.$ Since $\omega\in {}^{kn-1}\aSn,$ one has $\omega(a)-\omega(a+h)\neq kn-1$ for any $h>0.$ Therefore, $\PS_{\omega^{-1}}(a)$ is equal to the number of inversions $(a,a+h)$ in $\omega$ such that $\omega(a)-\omega(a+h)<kn.$ In the proof of Theorem \ref{Theorem: PS bijectivity kn+1} we have shown that this number is equal to $\lambda_R(a).$
\end{proof}







\section{Minimal Length Representatives and the Zeta Map.}\label{Section: zeta}

\begin{definition}
Let $\invsub$ be the set of subsets $\Delta\subset\mathbb Z_{\ge 0},$ such that $\Delta+m\subset\Delta,$ $\Delta+n\subset\Delta,$ and $\min(\Delta)=0.$ A number $a$ is called an
{\em  $n$-generator}
 of $\Delta$, if $a\in \Delta$ and $a-n\notin \Delta$.
Every $\Delta\in \invsub$ has exactly $n$ distinct $n$-generators.
\end{definition}

In \cite{GM11,GM12} such subsets were called {\it $0$-normalized semimodules over the semigroup generated by $m$ and $n$}. We will simply call them $m,n$-invariant subsets.

There is a natural map $R:\aSmn\to \invsub$ given by $\omega \mapsto \Delta_{\w}-\min(\Delta_{\w})$ (here, as before, $\Delta_{\omega}=\{i\in\BZ: \omega(i)>0\}$). Let $\minl_n^m$ be the set of $m$-stable minimal length right coset representatives of $\Sn\backslash\aSn.$ In other words, 
\begin{equation}
\label{min representative}
\minl_n^m:=\{\omega\in\aSmn\mid\omega^{-1}(1)<\ldots<\omega^{-1}(n)\}.
\end{equation}
One can 
check that the restriction $R|_{\minl_n^m}:\minl_n^m\to \invsub$ is a bijection. Indeed, the integers 
$\omega^{-1}(1),\ldots,\omega^{-1}(n)$ are the $n$-generators of $\Delta_{\w},$ and
since $\w\in \minl_n^m$ we have $\omega^{-1}(1)<\ldots<\omega^{-1}(n),$ so one can uniquely recover $\w$ from $\Delta_\w.$
Let $\hat{R}:=R|_{\minl_n^m}:\minl_n^m\to \invsub$ denote the restriction.

Recall that $\Ymn$ is the set of Young diagrams that fit under diagonal in an $n\times m$ rectangle and $P:\Pmn\to \Ymn$ is the natural map. In \cite{GM11,GM12} the first two named authors constructed two maps $D:\invsub\to\Ymn$ and $G:\invsub\to\Ymn,$ proved that $D$ is a bijection, and related the two maps to the theory of $q,t$-Catalan numbers in the following way. In the case $m=n+1$ one gets
$$
c_n(q,t)=\sum\limits_{\Delta\in \invsubn} q^{\delta-|D(\Delta)|}t^{\delta-|G(\Delta)|},
$$
where $\delta=\frac{n(n-1)}{2}$ and $c_n(q,t)$ is the Garsia-Haiman $q,t$-Catalan polynomial. It is known that these polynomials are symmetric $c_n(q,t)=c_n(t,q),$ although the proof is highly non-combinatorial and uses the machinery of Hilbert schemes, developed by Haiman. Finding a combinatorial proof of the symmetry of the $q,t$-Catalan polynomials remains an open problem.

The above consideration motivates the rational slope generalization of the $q,t$-Catalan numbers:
$$
c_{m,n}(q,t)=\sum\limits_{\Delta\in \invsub} q^{\delta-|D(\Delta)|}t^{\delta-|G(\Delta)|},
$$
where $\delta=\frac{(m-1)(n-1)}{2}$ is the total number of boxes below the diagonal in an $n\times m$ rectangle. The symmetry of these polynomials remains an open problem beyond the classical case $m=n+1$ and the cases $\min(m,n)\le 4$ (see \cite{GM12} for $\min(m,n)\le 3$ and \cite{LLL} for $\min(m,n)=4$). It was also shown in \cite{GM11} that the composition $G\circ D^{-1}:\Ymn\to \Ymn$ generalizes Haglund's zeta map exchanging the pairs of statistics $(\area,\dinv)$ and $(\bounce, \area)$ on Dyck paths. It was conjectured that the map $G,$ and therefore, the generalized Haglund's zeta, are also bijections. This would imply a weaker symmetry property $c_{m,n}(q,1)=c_{m,n}(1,q).$ In \cite{GM12} the bijectivity of $G$ was proved for $m=kn\pm 1.$ For more details on this work we refer the reader to \cite{GM11,GM12}. 

Let $\star$ denote the involution on $\aSn$: $\w^{\star}(x)=1-\w(1-x)$. 

\begin{lemma}
The map $\star$ preserves the set $\aSmn$ and the set $\minl_n^m$. The map $\overline{\star}:(i,j)\mapsto (1-j,1-i)$ provides a height preserving bijection from $\{(i,j)\mid i<j,\ \w(i)>\w(j)\}$ to $\{(i,j)\mid i<j,\ \w^\star(i)>\w^\star(j)\}.$
\end{lemma}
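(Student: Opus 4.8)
The plan is to recognize the involution $\star$ as conjugation by the reflection $\sigma\colon\Z\to\Z$, $\sigma(x)=1-x$: indeed $\w^\star(x)=1-\w(1-x)=\sigma\bigl(\w(\sigma(x))\bigr)$, so $\w^\star=\sigma\w\sigma$. First I would check $\w^\star\in\aSn$. The identity $\w^\star(x+n)=1-\w(1-x-n)=1-\bigl(\w(1-x)-n\bigr)=\w^\star(x)+n$ is immediate, and, using $\w(j-n)=\w(j)-n$, one computes $\sum_{i=1}^n\w^\star(i)=n-\sum_{j=1-n}^{0}\w(j)=n-\bigl(\tfrac{n(n+1)}{2}-n^2\bigr)=\tfrac{n(n+1)}{2}$. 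Since $\sigma^2=\id$ this shows $\star$ is an involution, and also $(\w^\star)^{-1}=\sigma\w^{-1}\sigma=(\w^{-1})^\star$, i.e. $\star$ commutes with taking inverses — a fact I will use for the $\minl_n^m$ statement.

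Next I would prove the inversion statement, which drives everything else. If $i<j$ then $1-j<1-i$, and the height is preserved because $(1-i)-(1-j)=j-i$; moreover $\w^\star(1-j)-\w^\star(1-i)=\bigl(1-\w(j)\bigr)-\bigl(1-\w(i)\bigr)=\w(i)-\w(j)$, so $\w(i)>\w(j)\iff\w^\star(1-j)>\w^\star(1-i)$. Hence $\overline{\star}$ maps $\{(i,j)\mid i<j,\ \w(i)>\w(j)\}$ into $\{(i,j)\mid i<j,\ \w^\star(i)>\w^\star(j)\}$, and since $\overline{\star}$ is an involution on pairs of integers this inclusion is a height-preserving bijection. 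Because an affine permutation is $m$-stable exactly when none of its inversions has height $m$, height-preservation gives $\w\in\aSmn\iff\w^\star\in\aSmn$, so $\star$ preserves $\aSmn$.

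Finally, for $\minl_n^m$ I would use that $\w\in\minl_n^m$ iff $\w\in\aSmn$ and $\w^{-1}(1)<\cdots<\w^{-1}(n)$; by the previous paragraph only the second condition needs attention. From $(\w^\star)^{-1}=(\w^{-1})^\star$ one gets $(\w^\star)^{-1}(k)=1-\w^{-1}(1-k)$, so the chain $(\w^\star)^{-1}(1)<\cdots<(\w^\star)^{-1}(n)$ is equivalent to $\w^{-1}(0)>\w^{-1}(-1)>\cdots>\w^{-1}(1-n)$, and applying $\w^{-1}(j-n)=\w^{-1}(j)-n$ to each term rewrites this as $\w^{-1}(1)<\w^{-1}(2)<\cdots<\w^{-1}(n)$, which holds by hypothesis; hence $\w^\star\in\minl_n^m$. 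The only genuinely error-prone step is this last piece of index bookkeeping — aligning $(\w^\star)^{-1}$ on the window $\{1,\dots,n\}$ with $\w^{-1}$ on $\{1-n,\dots,0\}$ and then shifting by $n$ — so I would write it out term by term. Apart from that there is no real obstacle: the entire lemma is a formal manipulation with the reflection $\sigma$.
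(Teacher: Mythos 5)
Your proof is correct and follows essentially the same approach as the paper's: the computations verifying that $\w^\star\in\aSn$, the height-preservation of $\overline{\star}$, and the monotonicity check for $\minl_n^m$ are all the same. Your explicit use of $\w^\star=\sigma\w\sigma$ and the identity $(\w^\star)^{-1}=(\w^{-1})^\star$ makes the $\minl_n^m$ step cleaner than the paper's terse treatment, but this is a presentational difference rather than a different argument.
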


\begin{proof}
Let us check that $\w^\star$ is an affine permutation:
$
\w^{\star}(x+n)=1-\w(1-x-n)=1-\w(1-x)+n,
$
$$\sum_{i=1}^{n}\w^{\star}(i)=n-\sum_{i=1-n}^{0}\w(i)=n-\sum_{i=1}^{n}(\w(i)-n)=n+n^2-\frac{n(n+1)}{2}=\frac{n(n+1)}{2}.
$$
If $\w(1)<\ldots<\w(n)$ then $\w(1-n)<\cdots< \w(0)$, so $\w^{\star}(1)<\ldots<\w^{\star}(n)$.
Let $(i,j)$ be such that $i<j$ and $\w(i)>\w(j).$ Then $1-j<1-i,$ and 
$$
\w^\star(1-j)=1-\w(1-(1-j))=1-\w(j)>1-\w(i)=\w^\star(1-i).
$$ 
Note also that $\overline{\star}$ squares to identity. Therefore, since $\star$ is an involution, $\overline{\star}$ is a bijection between $\{(i,j)\mid i<j,\ \w(i)>\w(j)\}$ and $\{(i,j)\mid i<j,\ \w^\star(i)>\w^\star(j)\}$.
Furthermore, since $\overline{\star}$ preserves height, $\star$ preserves the set $\aSmn.$
\end{proof}

The following Theorem shows that our constructions are direct generalizations of those of \cite{GM11, GM12}:

\begin{theorem}\label{Theorem: restriction to minimal length reps}
One has the following identities:
\begin{enumerate}
\item $P\circ \An\circ\hat{R}^{-1}=D,$
\item $P\circ \PS\circ \star \circ \hat{R}^{-1}=G.$
\end{enumerate} 
\end{theorem}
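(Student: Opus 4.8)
The plan is to unwind both identities into explicit recipes on $m,n$-invariant subsets and compare. Fix $\Delta\in\invsub$ and put $\w:=\hat{R}^{-1}(\Delta)\in\minl_n^m$ and $u:=\w^{-1}\in\maSn$. By definition of $\hat{R}$ we have $\Delta_\w=\Delta+M_\w$ with $M_\w=\min\Delta_\w$, and since $\w$ is a minimal length right coset representative the $n$-generators of $\Delta_\w$, listed in increasing order, are exactly $u(1)<\dots<u(n)$ with $u(1)=M_\w$; correspondingly the $n$-generators of $\Delta$ are $u(p)-M_\w$ for $p=1,\dots,n$.

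\textbf{Part (1).} By the pictorial description of $\An$, the Young diagram $P(\An(\w))=D_\w$ is the set of boxes $(x,y)\in R_{m,n}$ with $l(x,y)=(mn-m-n)+M_\w-nx-my\in\Delta_\w$, equivalently with $(mn-m-n)-nx-my\in\Delta$. I would then recall from \cite{GM11} that $D(\Delta)$ is defined by precisely this condition (after identifying the two papers' conventions for labeling the cells of the rectangle and which corner of $R_{m,n}$ lies on the diagonal), whence $D_\w=D(\Delta)$. This step is essentially definitional; the only care required is to align the additive normalization constant.

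\textbf{Part (2).} Here the involution $\star$ does the essential work. Since $(\w^\star)^{-1}=(\w^{-1})^\star=u^\star$ with $u^\star(x)=1-u(1-x)$, the definition of $\PS$ together with the substitution $p=1-\alpha$, $q=1-\beta$ gives, for every $p\in\Z$,
$$
\PS_{\w^\star}(1-p)=\sharp\{q\in\Z:q<p,\ 0<u(q)-u(p)<m\}=:g(p),
$$
and one checks directly that $g$ is $n$-periodic, so that $P(\PS(\w^\star))$ is the Young diagram whose multiset of row lengths is $\{g(1),\dots,g(n)\}$. Rewriting $g(p)=\sharp\{v\in\Z:u(p)<v<u(p)+m,\ \w(v)<p\}$ and using that $u(p)$ is the $p$-th $n$-generator of $\Delta_\w$, I would translate this count into the count of elements of $\Delta$ assigned to the $p$-th $n$-generator by the map $G$ of \cite{GM11,GM12}, and conclude $P(\PS(\w^\star))=G(\Delta)$.

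The main obstacle is this last translation: one must recall the intrinsic definition of $G(\Delta)$ (given in \cite{GM11,GM12} purely in terms of $\Delta$, with no reference to affine permutations) and exhibit, generator by generator, a bijection matching it with $g(p)$, in particular verifying that collecting the $g(p)$ produces the same partition as $G(\Delta)$. The role of $\star$ becomes transparent here: without it one would be looking at $\PS_\w(\alpha)=\SP_u(i)=\sharp\{j>i:0<u(i)-u(j)<m\}$, a count on the opposite side of each generator from the one used by $G$, and conjugation by the longest-element-type flip $\star$ is exactly the combinatorial reflection built into Haglund's $\zeta$ and into the composite $G\circ D^{-1}$.
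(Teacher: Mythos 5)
Your proposal is correct and tracks the paper's own argument closely. For part (1) you give the same one-line justification (the definitional match via the labeling function $l$, i.e.\ Lemma \ref{an an}). For part (2), you compute $\PS_{\w^\star}(1-p)=\sharp\{q<p:0<u(q)-u(p)<m\}$ directly from the definition of $\star$, and then rewrite this as $\sharp\{v:u(p)<v<u(p)+m,\ \w(v)<p\}$; the paper instead starts from the definition $G_\Delta(\alpha)=\sharp\bigl([u_\alpha,u_\alpha+m]\setminus\Delta\bigr)$, translates it by $M_\w$, uses the minimal-length property to replace $\w(x')\le 0$ by $\w(x')<\alpha$, and applies $\overline{\star}$ to land on the same count. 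The two computations are inverses of each other and meet exactly at the expression you call $g(p)$ (the paper's normalization $n+1-\alpha$ differs from your $1-p$ only by $n$-periodicity), so the ``main obstacle'' you flag is resolved by precisely the step the paper carries out, and your outline already contains all the needed ingredients.
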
 

\begin{proof}
The first statement follows from the definition of $\An$ and Lemma \ref{an an}. For the second statement, we need to recall the definition of the map $G$. 

Given an $m,n$-invariant subset $\Delta\in \invsub$, let $u_1<\ldots<u_n$ be its $n$-generators.
The map $G$ was defined in \cite{GM11,GM12} by the formula
$$
G_{\Delta}(\alpha)=\sharp \left([u_\alpha,u_\alpha+m]\setminus \Delta\right). 
$$

Given a minimal coset representative $\omega \in \minlnm$,
we can consider an $m,n$-invariant subset $R(\omega)=\Delta_{\omega}-\min(\Delta_{\w})\in \invsub$. Its $n$-generators are $u_\alpha=\omega^{-1}(\alpha)-\min(\Delta_{\w})$, and by \eqref{min representative} we have $u_1<\ldots<u_n$.
For every $x\in [u_\alpha,u_\alpha+m]\setminus R(\omega)$, define $x':=x+\min(\Delta_{\w})$, then all such $x'$ (and hence $x$) are defined by the inequalities
$$
\omega^{-1}(\alpha)<x'<\omega^{-1}(\alpha)+m,\ \omega(x')\le 0.
$$
Note that by \eqref{min representative} the inequality $\omega(x')<0$ can be replaced by $\omega(x')<\alpha$. Indeed, we have $\w^{-1}(1)<\w^{-1}(2)<\ldots<\w^{-1}(n),$ and, therefore, $\w(x')\notin \{1,\ldots,\alpha-1\}$ for $\omega^{-1}(\alpha)<x'<\omega^{-1}(\alpha)+m.$ Therefore the set $[u_\alpha,u_\alpha+m]\setminus R(\omega)$ is in bijection with the set 
$$
\{(i,j)\mid i<j<i+m,\ \w(i)>\w(j),\ \w(i)=\alpha\}.
$$
In turn, the map $\overline{\star}$ bijectively maps this set to the set
$$
\{(1-j,1-i)\mid (1-j)<1-i<(1-j)+m,\ \w^\star(1-j)>\w^\star(1-i),\ \w^\star(1-i)=1-\w(i)=1-\alpha\},
$$
or, after a shift by $n$ and a change of variables,
$$
\{(i,j)\mid i<j<i+m,\ \w^\star(i)>\w^\star(j),\ \w^\star(j)=n+1-\alpha\}.
$$
Therefore, according to the definition of the map $\PS,$ we get
$$
G_{R(\omega)}(\alpha)=\PS_{\omega^{\star}}(n+1-\alpha),
$$
and thus
$$
G(R(\w))=P(\PS(\w^\star)).
$$
\end{proof}

The involution $\star$ could have been avoided in Theorem \ref{Theorem: restriction to minimal length reps} by adjusting the definition of the map $\PS.$ However, in that case one would have to use $\star$ to match the map $\PS$ for $m=kn+1$ with the Pak-Stanley labeling (see Section \ref{Section: Pak-Stanley}).  

The composition $\PS\circ\An^{-1}:\Pmn\to\Pmn$
\omitt{ changed \An to \An^{-1}, right?}
should be thought of as a rational slope parking function generalization of the Haglund zeta map $\zeta$.
Note that its bijectivity remains conjectural beyond cases $m=kn\pm 1,$ which follows immediately from Theorems \ref{Theorem: PS bijectivity kn+1} and \ref{Theorem: PS bijectivity kn-1}.   


\section{Relation to DAHA representations}\label{Section: DAHA}

\subsection{Finite-dimensional representations of DAHA}

It turns out that the map $A$ is tightly related to the representation theory of double affine Hecke algebras (DAHA).
This theory is quite elaborate and far beyond the scope of this paper, so we refer the reader to Cherednik's book \cite{CheBook} for all details. Here we just list the necessary facts about finite-dimensional representations of DAHA.

Let $\Hh_n$ denote the DAHA of type $A_{n-1}$.
It contains
the finite Hecke algebra
generated by the elements $T_i$, $1 \le i < n$ 
as well as two  commutative subalgebras
\omitt{and $X_1,\ldots,X_n$ and $Y_1,\ldots,Y_n$,
} 
$X_i/X_j$, $1 \le i \neq j \le n$, and $Y_1^{\pm 1},\ldots,Y_n^{\pm 1}$
subject to
commutation relations between $X's$ and $Y's$  that depend on two parameters $q$ and $t$.
(Alternatively, one can take generators 
$T_i$, $0 \le i < n$, $\pi$ and
$X_i/X_j$, $1 \le i \neq j \le n$.)
 $\Hh_n$  admits a (degree $0$ Laurent) {\em polynomial representation}
$V=\C[X_i/X_j]_{1 \le i \neq j \le n}$, where $X_i/X_j$ act as multiplication operators,
and $Y_i$ act as certain difference operators.
We can also obtain $V$ by inducing a 1-dimensional representation of the subalgebra
generated by the $T_i$ and $Y_i^{\pm 1}$ up to $\Hh_n$.
The product $Y_1 Y_2 \cdots Y_n $ (or equivalently $\pi^n$) acts as a constant
on this representation.
This constant agrees with the scalar by which the product acts on the initial
1-dimensional representation. 
We usually take this constant to be $1$, or indeed impose the relation
$Y_1 Y_2 \cdots Y_n = 1$ in $\Hh_n$.
However, to match the combinatorics developed in this paper, it is convenient
to choose that scalar to be $q^{\frac{n+1}{2}}$.

There exists a basis of $V$ consisting of 
nonsymmetric Macdonald polynomials $E_{\sigma}(X_i)$ labeled by
minimal length right coset representatives $\sigma \in \Sn\backslash\aSn$
such that $Y_i$ are diagonal in this basis: $Y_i(E_{\sigma})=a_i(\sigma)E_{\sigma}.$

The weights $a_i(\sigma)$ are directly related to the combinatorial content of this paper and can be described as follows.
Corresponding to  the fundamental alcove
$\sigma = \id$ we have $E_{\id}=1$ and its weight equals to:
$$
a(\id)=(a_1(\id),\ldots,a_n(\id))=
q^{\frac{n+1}{2n}}(t^{\frac{1-n}{2}}, t^{\frac{3-n}{2}},\ldots,t^{\frac{n-1}{2}}).
$$
\omitt{
$$
a(\sigma_0)=(a_1(\sigma_0),\ldots,a_n(\sigma_0))=
(t^{\frac{1-n}{2}}, t^{\frac{3-n}{2}},\ldots,t^{\frac{n-1}{2}}).
$$
}
As we cross the walls (from $\sigma \Afund$ to $\sigma s_i \Afund$),
the weights are transformed as follows:
\begin{equation}
\label{reflections for weights}
s_i(a_1,\ldots,a_n)=\begin{cases}
(a_1,\ldots, a_{i+1},a_i,\ldots,a_n),& \text{if}\ i\neq 0\cr
(a_n/q,a_2,\ldots,a_{n-1},qa_{n}),& \text{if}\ i=0.
\end{cases}
\end{equation}
One can check that \eqref{reflections for weights} defines an action of the
affine symmetric group on the set of sequences of Laurent monomials in $q$ and $t$.

If the parameters $q$ and $t$ are connected by the relation $q^m=t^n$ for coprime $m$ and $n$, the polynomial representation $V$ becomes reducible, and admits a finite-dimensional quotient $L_{m/n}$ of dimension $m^{n-1}$. The basis of $L_{m/n}$
is again given by the nonsymmetric Macdonald polynomials $E_{\sigma}$, but now the
permutation $\sigma$ should have $\sigma \Afund$
belong to the (dominant) region bounded by the hyperplane $x_1-x_n=m$.  
In other words, we can cross a wall if and only if the ratio of the corresponding weights is not equal to $t^{\pm 1}$.
(See \cite[Theorem 6.5]{chered-fourier}  for a discussion on these finite-dimensional
quotients.
See \cite[(1.17)]{chered-diagonal} for the formula for the intertwiners that 
take $E_\sigma$ to $E_{\sigma s_i}$.
See \cite{chered-perfect} for the nonsymmetric Macdonald  evaluation formula
that describes the $E_\sigma$ in the radical of the polynomial representation.)

\omitt{  combine the 2 tikz pictures
\begin{figure}
\begin{center}
\begin{tikzpicture}[scale=0.8]

\filldraw [yellow, fill=yellow] (4,0)--(7,0)--(5.5,2.6)--(4,0);
\filldraw [yellow, fill=yellow] (4,0)--(7,0)--(5.5,-2.6)--(4,0);
\filldraw [yellow, fill=yellow] (4,0)--(5.5,2.6)--(2.5,2.6)--(4,0);
\filldraw [yellow, fill=yellow] (7,0)--(5.5,2.6)--(8.5,2.6)--(7,0);
\filldraw [yellow, fill=yellow] (7,0)--(10,0)--(8.5,2.6)--(7,0);
\filldraw [yellow, fill=yellow] (1,0)--(4,0)--(2.5,2.6)--(1,0);
\filldraw [yellow, fill=yellow] (4,5.2)--(5.5,2.6)--(2.5,2.6)--(4,5.2);
\filldraw [yellow, fill=yellow] (7,5.2)--(5.5,2.6)--(8.5,2.6)--(7,5.2);
\filldraw [yellow, fill=yellow] (4,0)--(5.5,-2.6)--(2.5,-2.6)--(4,0);
\filldraw [yellow, fill=yellow] (7,0)--(5.5,-2.6)--(8.5,-2.6)--(7,0);
\filldraw [yellow, fill=yellow] (4,5.2)--(7,5.2)--(5.5,2.6)--(4,5.2);
\filldraw [yellow, fill=yellow] (1,0)--(4,0)--(2.5,-2.6)--(1,0);
\filldraw [yellow, fill=yellow] (7,0)--(10,0)--(8.5,-2.6)--(7,0);
\filldraw [yellow, fill=yellow] (4,5.2)--(5.5,2.6)--(2.5,2.6)--(4,5.2);
\filldraw [yellow, fill=yellow] (4,5.2)--(7,5.2)--(5.5,7.8)--(4,5.2);
\filldraw [yellow, fill=yellow] (10,0)--(11.5,-2.6)--(8.5,-2.6)--(10,0);
\filldraw [yellow, fill=yellow] (1,0)--(2.5,-2.6)--(-0.5,-2.6)--(1,0);

\filldraw [fill=black] (-0.5,-2.6) circle (0.3);

\draw[thick]  (-0.5,-2.6)--(11.5,-2.6);
\draw[dotted] (0,0)--(11,0);
\draw[dotted] (0,2.6)--(11,2.6);
\draw[dotted] (0,5.2)--(11,5.2);

\draw[thick] (-.5,-2.58)--(5,6.92)--(5.5, 7.783);
\draw[dotted] (2,-3.46)--(7.5,6.06);
\draw[dotted] (4.5,-4.33)--(10,5.2);
\draw[dotted] (7,-5.2)--(12.5,4.33);

\draw[dotted] (-1.5,4.33)--(4,-5.2);
\draw[dotted] (1, 5.2)--(6.5,-4.33);
\draw[dotted]  (3.5,6.06)--(9,-3.46);
\draw[thick] (5.5,7.783 )--(6,6.92)--(11.5,-2.58);

\omitt{
\draw (5.5,0.7) node {\footnotesize $(tq,t^2,\frac{t^3}{q})$};

\draw (7,1.7) node {\footnotesize $(\frac{t^3}{q^2},t^2,tq^2)$};

\draw (4,1.7) node {\footnotesize $(t^2,tq,\frac{t^3}{q})$};

\draw (5.5,-0.7) node {\footnotesize $(tq,\frac{t^3}{q},t^2)$};

\draw (8.5,0.75) node {\footnotesize $(\frac{t^3}{q^2},tq^2,t^2)$};
\draw (2.5,0.75) node {\footnotesize $(t^2,\frac{t^3}{q},tq)$};
\draw (4,3.25) node {\footnotesize $(\frac{t^3}{q^2},tq,t^2q)$};
\draw (7,3.25) node {\footnotesize $(t^2,\frac{t^3}{q^2},tq^2)$};
\draw (7,-2.1) node {\scriptsize $(\frac{t^2}{q},\frac{t^3}{q},tq^2)$};
\draw (4,-2) node {\footnotesize $(\frac{t^3}{q},tq,t^2)$};

\draw (5.5,5.7) node {\footnotesize $(tq,t^2q,\frac{t^3}{q^2})$};
\draw (5.5,4.5) node {\footnotesize $(tq,\frac{t^3}{q^2},t^2q)$};
\draw (10,-2.1) node {\scriptsize $(tq^2,\frac{t^2}{q},\frac{t^3}{q})$}; 
\draw (8.5,-0.6) node {\scriptsize $(\frac{t^2}{q},tq^2,\frac{t^3}{q})$};
\draw (2.5,-0.7) node {\footnotesize $(\frac{t^3}{q},t^2,tq)$};
\draw (1,-2.1) node {\scriptsize $(t,t^2,t^3)$};
} 
\draw (5.5,0.7) node {\footnotesize $(t^{-1}q,1,\frac{t}{q})$};

\draw (7,1.7) node {\footnotesize $(\frac{t}{q^2},1,t^{-1}q^2)$};

\draw (4,1.7) node {\footnotesize $(1,t^{-1}q,\frac{t}{q})$};

\draw (5.5,-0.7) node {\footnotesize $(t^{-1}q,\frac{t}{q},1)$};

\draw (8.5,0.75) node {\footnotesize $(\frac{t}{q^2},t^{-1}q^2,1)$};
\draw (2.5,0.75) node {\footnotesize $(1,\frac{t}{q},t^{-1}q)$};
\draw (4,3.25) node {\footnotesize $(\frac{t}{q^2},t^{-1}q,q)$};
\draw (7,3.25) node {\footnotesize $(1,\frac{t}{q^2},t^{-1}q^2)$};
\draw (7,-2.1) node {\scriptsize $(\frac{1}{q},\frac{t}{q},t^{-1}q^2)$};
\draw (4,-2) node {\footnotesize $(\frac{t}{q},t^{-1}q,1)$};

\draw (5.5,5.7) node {\footnotesize $(t^{-1}q,q,\frac{t}{q^2})$};
\draw (5.5,4.5) node {\footnotesize $(t^{-1}q,\frac{t}{q^2},q)$};
\draw (10,-2.1) node {\scriptsize $(t^{-1}q^2,\frac{1}{q},\frac{t}{q})$}; 
\draw (8.5,-0.6) node {\scriptsize $(\frac{1}{q},t^{-1}q^2,\frac{t}{q})$};
\draw (2.5,-0.7) node {\footnotesize $(\frac{t}{q},1,t^{-1}q)$};
\draw (1,-2.1) node {\scriptsize $(t^{-1},1,t)$};

\end{tikzpicture}
\caption{DAHA weights for $L_{4/3}$}
\end{center}
\end{figure}
} 

\subsection{From DAHA weights to Sommers region}

For the finite-dimensional representation $L_{m/n}$ we have $q^m=t^n$, so $t=q^{m/n}$. This means that every monomial $q^{x}t^{y}$ can be written as $q^{\frac{nx+my}{n}}$, so we can rewrite the DAHA weights as
$$
a(\sigma)=(a_1,\ldots,a_n)=(q^{b_1(\sigma)/n},\ldots,q^{b_n(\sigma)/n}).
$$
It turns out that ``evaluated weights'' $b_i$ are tightly related to the labeling of the region $D_{n}^{m}$ by affine permutations.
Let $c=\frac{(m-1)(n+1)}{2}$.
Consider the affine permutation  $\wm =[m-c,2m-c,\ldots,nm-c]$. By Lemma \ref{isometry}, $\wm $ identifies
the dilated fundamental alcove with the simplex $D_{n}^{m}$.
Recall $\w \in \maSn \iff \wm^{-1}\omega (\Afund) \subseteq m D_n^1 = m \Afund$.

\begin{theorem}
Under this identification, for every $\w \in \maSn$
 one has:
$$
b(\wm^{-1}\omega)=\omega,
$$
by which we mean  for $\sigma = \wm^{-1}\omega$ that
$(b_1(\sigma),\ldots,b_n(\sigma)) = (\w(1), \ldots, \w(n))$.
\end{theorem}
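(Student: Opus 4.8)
The plan is to reduce the claim to the equivalent assertion $b(\sigma)=\wm\sigma$ in window notation, for every $\sigma\in\Sn\backslash\aSn$; more generally I will prove it for every $\sigma\in\aSn$, having first extended the weight $a(\sigma)$ to all of $\aSn$ by the $\aSn$-action of \eqref{reflections for weights}. Since the hypothesis $\w\in\maSn$ is, via Lemma \ref{isometry} together with Lemma \ref{Lemma: Sommers region}, exactly the condition $\sigma\Afund\subseteq m\Afund$ for $\sigma=\wm^{-1}\w$, and since the right-hand side $\wm\sigma$ is then precisely $\w$, this yields the theorem. The proof is by induction on the Coxeter length $\ell(\sigma)$, using that a single wall-crossing $\sigma'\mapsto\sigma=\sigma'\!s_i$ with $\ell(\sigma)=\ell(\sigma')+1$ transforms $a(\sigma')$ into $a(\sigma)$ by one application of the operator in \eqref{reflections for weights}; since that operator is an involution the direction of crossing is immaterial, and since \eqref{reflections for weights} defines an action the choice of reduced word is immaterial too.

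For the base case $\sigma=\id$ I would use $q^m=t^n$, i.e. $t=q^{m/n}$, to rewrite the initial weight as $a_j(\id)=q^{\frac{n+1}{2n}}t^{\frac{2j-n-1}{2}}=q^{[(n+1)+m(2j-n-1)]/(2n)}$, so that the evaluated weight is $b_j(\id)=mj-\frac{(m-1)(n+1)}{2}=mj-c=\wm(j)$; hence $b(\id)=\wm=\wm\cdot\id$.

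For the inductive step, writing $a_j=q^{b_j/n}$, the operator of \eqref{reflections for weights} acts on the tuple $(b_1,\dots,b_n)$ by transposing the $i$-th and $(i+1)$-st entries when $i\neq 0$, and by $(b_1,\dots,b_n)\mapsto(b_n-n,\,b_2,\dots,b_{n-1},\,b_1+n)$ when $i=0$. Assuming $b_j(\sigma')=\wm(\sigma'(j))$ for $1\le j\le n$ and putting $\sigma=\sigma'\!s_i$: if $i\neq 0$ then $s_i$ preserves $\{1,\dots,n\}$ and $b_j(\sigma)=b_{s_i(j)}(\sigma')=\wm(\sigma'(s_i(j)))=\wm(\sigma(j))$; if $i=0$ then, using $s_0(1)=0$, $s_0(n)=n+1$, $s_0(j)=j$ otherwise, together with the periodicities $\sigma'(x+n)=\sigma'(x)+n$ and $\wm(x+n)=\wm(x)+n$, one gets $b_1(\sigma)=b_n(\sigma')-n=\wm(\sigma'(n)-n)=\wm(\sigma'(0))=\wm(\sigma(1))$, likewise $b_n(\sigma)=b_1(\sigma')+n=\wm(\sigma'(n+1))=\wm(\sigma(n))$, and $b_j(\sigma)=b_j(\sigma')=\wm(\sigma(j))$ for $2\le j\le n-1$. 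In every case $b(\sigma)=\wm\sigma$, which closes the induction.

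The substance of the argument is almost entirely bookkeeping, and the one place where I expect to need genuine care is matching conventions: verifying that crossing a wall \emph{away from} $\Afund$ (from $\sigma'\Afund$ to $\sigma'\!s_i\Afund$) applies the operator of \eqref{reflections for weights} to $a(\sigma')$ on the correct side, and that on $b$-coordinates this is exactly right multiplication by $s_i$ on the window of $\wm\sigma'$. The half-integer $t$-exponents in $a(\id)$ and the periodic extension of window notation in the $i=0$ step are the spots where a careless sign would slip in, but I anticipate no conceptual obstacle beyond this.
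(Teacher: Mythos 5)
Your proof is correct and takes essentially the same route as the paper's: verify $b(\id)=\wm$ directly from the initial weight $a(\id)$ using $t=q^{m/n}$, then propagate along reduced words using \eqref{reflections for weights}. You simply make explicit what the paper leaves as "can be extended by rules \eqref{reflections for weights}," namely that on the evaluated exponents the weight recursion is exactly right multiplication by $s_i$ on the window of $\wm\sigma$ (including the $i=0$ case via the periodicity $\wm(x+n)=\wm(x)+n$).
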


\begin{proof}
In the weight picture we start from the fundamental alcove at $\sigma = \id$,
where we have weights 
\begin{gather*}
a(\id) =
q^{\frac{n+1}{2n}}(t^{\frac{1-n}{2}}, t^{\frac{3-n}{2}},\ldots,t^{\frac{n-1}{2}})
=(q^{(m-c)/n}, q^{(2m-c)/n},\ldots,q^{(nm-c)/n}),
\\
  b(\id) =
b(\wm^{-1} \wm) =
(m-c,2m-c,\ldots,nm-c)
=(\wm(1), \ldots, \wm(n)).
\end{gather*}
By Lemma \ref{isometry}, $\Afund \subseteq mD_n^1$  corresponds to the alcove
$\wm(\Afund) \subseteq D_{n}^{m}$, 
 which we  label by $\wm=[m-c,\ldots,nm-c] \in \maSn$.
Therefore the desired identity holds for
$\sigma = \id$ 
and can be extended to any
$\sigma$ with $\sigma \Afund \subseteq m \Afund = m D_n^1$
(equivalently  $\wm \sigma \in \maSn$)
by rules \eqref{reflections for weights}. 
\end{proof}

\omitt{ combining tikz pictures 
\begin{figure}
\begin{center}
\begin{tikzpicture}[scale=0.5]
\filldraw [yellow, fill=yellow] (4,0)--(7,0)--(5.5,2.6)--(4,0);
\filldraw [yellow, fill=yellow] (4,0)--(7,0)--(5.5,-2.6)--(4,0);
\filldraw [yellow, fill=yellow] (4,0)--(5.5,2.6)--(2.5,2.6)--(4,0);
\filldraw [yellow, fill=yellow] (7,0)--(5.5,2.6)--(8.5,2.6)--(7,0);
\filldraw [yellow, fill=yellow] (7,0)--(10,0)--(8.5,2.6)--(7,0);
\filldraw [yellow, fill=yellow] (1,0)--(4,0)--(2.5,2.6)--(1,0);
\filldraw [yellow, fill=yellow] (4,5.2)--(5.5,2.6)--(2.5,2.6)--(4,5.2);
\filldraw [yellow, fill=yellow] (7,5.2)--(5.5,2.6)--(8.5,2.6)--(7,5.2);
\filldraw [yellow, fill=yellow] (4,0)--(5.5,-2.6)--(2.5,-2.6)--(4,0);
\filldraw [yellow, fill=yellow] (7,0)--(5.5,-2.6)--(8.5,-2.6)--(7,0);
\filldraw [yellow, fill=yellow] (4,5.2)--(7,5.2)--(5.5,2.6)--(4,5.2);
\filldraw [yellow, fill=yellow] (1,0)--(4,0)--(2.5,-2.6)--(1,0);
\filldraw [yellow, fill=yellow] (7,0)--(10,0)--(8.5,-2.6)--(7,0);
\filldraw [yellow, fill=yellow] (4,5.2)--(5.5,2.6)--(2.5,2.6)--(4,5.2);
\filldraw [yellow, fill=yellow] (4,5.2)--(7,5.2)--(5.5,7.8)--(4,5.2);
\filldraw [yellow, fill=yellow] (10,0)--(11.5,-2.6)--(8.5,-2.6)--(10,0);
\filldraw [yellow, fill=yellow] (1,0)--(2.5,-2.6)--(-0.5,-2.6)--(1,0);

\filldraw [fill=black] (-0.5,-2.6) circle (0.3);

\draw[thick]  (-0.5,-2.6)--(11.5,-2.6);
\draw[dotted] (0,0)--(11,0);
\draw[dotted] (0,2.6)--(11,2.6);
\draw[dotted] (0,5.2)--(11,5.2);

\draw[thick] (-.5,-2.58)--(5,6.92)--(5.5, 7.783);
\draw[dotted] (2,-3.46)--(7.5,6.06);
\draw[dotted] (4.5,-4.33)--(10,5.2);
\draw[dotted] (7,-5.2)--(12.5,4.33);

\draw[dotted] (-1.5,4.33)--(4,-5.2);
\draw[dotted] (1, 5.2)--(6.5,-4.33);
\draw[dotted]  (3.5,6.06)--(9,-3.46);
\draw[thick] (5.5,7.783 )--(6,6.92)--(11.5,-2.58);

\omitt{
\draw (5.5,0.7) node {\tiny  $(7,8,9)$};
\draw (7,1.7) node {\tiny  $(6,8,10)$};
\draw (4,1.7) node {\tiny  $(8,7,9)$};
\draw (5.5,-0.7) node {\tiny  $(7,9,8)$};
\draw (8.5,0.75) node {\tiny $(6,10,8)$};
\draw (2.5,0.75) node {\tiny  $(8,9,7)$};
\draw (4,3.25) node {\tiny  $(6,7,11)$};
\draw (7,3.25) node {\tiny  $(8,6,10)$};
\draw (7,-2.1) node {\tiny  $(5,9,10)$};
\draw (4,-2) node {\tiny  $(9,7,8)$};
\draw (5.5,5.7) node {\tiny  $(7,11,6)$};
\draw (5.5,4.5) node {\tiny $(7,6,11)$};
\draw (10,-2.1) node {\tiny  $(10,5,9)$}; 
\draw (8.5,-0.6) node {\tiny  $(5,10,9)$};
\draw (2.5,-0.7) node {\tiny  $(9,8,7)$};
\draw (1,-2.1) node {\tiny $(4,8,12)$};
}   

\draw (5.5,0.7) node {\tiny  $(-1,0,1)$};
\draw (7,1.7) node {\tiny  $(-2,0,2)$};
\draw (4,1.7) node {\tiny  $(0,-1,1)$};
\draw (5.5,-0.7) node {\tiny  $(-1,1,0)$};
\draw (8.5,0.75) node {\tiny $(-2,2,0)$};
\draw (2.5,0.75) node {\tiny  $(0,1,-1)$};
\draw (4,3.25) node {\tiny  $(-2,-1,3)$};
\draw (7,3.25) node {\tiny  $(0,-2,2)$};
\draw (7,-2.1) node {\tiny  $(-3,1,2)$};
\draw (4,-2) node {\tiny  $(1,-1,0)$};
\draw (5.5,5.7) node {\tiny  $(-1,3,-2)$};
\draw (5.5,4.5) node {\tiny $(-1,-2,3)$};
\draw (10,-2.1) node {\tiny  $(2,-3,1)$}; 
\draw (8.5,-0.6) node {\tiny  $(-3,2,1)$};
\draw (2.5,-0.7) node {\tiny  $(1,0,-1)$};
\draw (1,-2.1) node {\tiny $(-4,0,4)$};

\end{tikzpicture}
\ \ \ 
\qquad
\begin{tikzpicture}[scale=0.5]

\filldraw [yellow, fill=yellow] (4,0)--(7,0)--(5.5,2.6)--(4,0);
\filldraw [yellow, fill=yellow] (4,0)--(7,0)--(5.5,-2.6)--(4,0);
\filldraw [yellow, fill=yellow] (4,0)--(5.5,2.6)--(2.5,2.6)--(4,0);
\filldraw [yellow, fill=yellow] (7,0)--(5.5,2.6)--(8.5,2.6)--(7,0);
\filldraw [yellow, fill=yellow] (7,0)--(10,0)--(8.5,2.6)--(7,0);
\filldraw [yellow, fill=yellow] (1,0)--(4,0)--(2.5,2.6)--(1,0);
\filldraw [yellow, fill=yellow] (4,5.2)--(5.5,2.6)--(2.5,2.6)--(4,5.2);
\filldraw [yellow, fill=yellow] (7,5.2)--(5.5,2.6)--(8.5,2.6)--(7,5.2);
\filldraw [yellow, fill=yellow] (4,0)--(5.5,-2.6)--(2.5,-2.6)--(4,0);
\filldraw [yellow, fill=yellow] (7,0)--(5.5,-2.6)--(8.5,-2.6)--(7,0);
\filldraw [yellow, fill=yellow] (4,5.2)--(7,5.2)--(5.5,2.6)--(4,5.2);
\filldraw [yellow, fill=yellow] (1,0)--(4,0)--(2.5,-2.6)--(1,0);
\filldraw [yellow, fill=yellow] (7,0)--(10,0)--(8.5,-2.6)--(7,0);
\filldraw [yellow, fill=yellow] (4,5.2)--(5.5,2.6)--(2.5,2.6)--(4,5.2);
\filldraw [yellow, fill=yellow] (4,5.2)--(7,5.2)--(5.5,7.8)--(4,5.2);
\filldraw [yellow, fill=yellow] (10,0)--(11.5,-2.6)--(8.5,-2.6)--(10,0);
\filldraw [yellow, fill=yellow] (1,0)--(2.5,-2.6)--(-0.5,-2.6)--(1,0);

\filldraw [fill=black] (4,0) circle (0.3);

\draw[thick]  (-0.5,-2.6)--(11.5,-2.6);
\draw[dotted] (0,0)--(11,0);
\draw[dotted] (0,2.6)--(11,2.6);
\draw[dotted] (0,5.2)--(11,5.2);

\draw[thick] (-.5,-2.58)--(5,6.92)--(5.5, 7.783);
\draw[dotted] (2,-3.46)--(7.5,6.06);
\draw[dotted] (4.5,-4.33)--(10,5.2);
\draw[dotted] (7,-5.2)--(12.5,4.33);

\draw[dotted] (-1.5,4.33)--(4,-5.2);
\draw[dotted] (1, 5.2)--(6.5,-4.33);
\draw[dotted]  (3.5,6.06)--(9,-3.46);
\draw[thick] (5.5,7.783 )--(6,6.92)--(11.5,-2.58);

\draw (5.5,0.7) node {\footnotesize $[123]$};

\draw (7,1.7) node {\footnotesize $[024]$};

\draw (4,1.7) node {\footnotesize $[213]$};

\draw (5.5,-0.7) node {\footnotesize $[132]$};

\draw (8.5,0.75) node {\footnotesize $[042]$};
\draw (2.5,0.75) node {\footnotesize $[231]$};
\draw (4,3.25) node {\footnotesize $[015]$};
\draw (7,3.25) node {\footnotesize $[204]$};
\draw (7,-2.1) node {\scriptsize $[-134]$};
\draw (4,-2) node {\footnotesize $[312]$};

\draw (5.5,6) node {\footnotesize $[150]$};
\draw (5.5,4.5) node {\footnotesize $[105]$};
\draw (10,-2.1) node {\scriptsize $[4-\!13]$}; 
\draw (8.5,-0.6) node {\scriptsize $[-143]$};
\draw (2.5,-0.7) node {\footnotesize $[321]$};
\draw (1,-2.1) node {\footnotesize $[-226]$};
\end{tikzpicture}
\caption{Evaluation of DAHA weights at $t=q^{4/3}$ (left);
Sommers region labeled by $\omega$ (right)}
\end{center}
\end{figure}
} 

\begin{figure}
\begin{center}

\begin{tikzpicture}[scale=0.7]
\filldraw [yellow, fill=yellow] (4,0)--(7,0)--(5.5,2.6)--(4,0);
\filldraw [yellow, fill=yellow] (4,0)--(7,0)--(5.5,-2.6)--(4,0);
\filldraw [yellow, fill=yellow] (4,0)--(5.5,2.6)--(2.5,2.6)--(4,0);
\filldraw [yellow, fill=yellow] (7,0)--(5.5,2.6)--(8.5,2.6)--(7,0);
\filldraw [yellow, fill=yellow] (7,0)--(10,0)--(8.5,2.6)--(7,0);
\filldraw [yellow, fill=yellow] (1,0)--(4,0)--(2.5,2.6)--(1,0);
\filldraw [yellow, fill=yellow] (4,5.2)--(5.5,2.6)--(2.5,2.6)--(4,5.2);
\filldraw [yellow, fill=yellow] (7,5.2)--(5.5,2.6)--(8.5,2.6)--(7,5.2);
\filldraw [yellow, fill=yellow] (4,0)--(5.5,-2.6)--(2.5,-2.6)--(4,0);
\filldraw [yellow, fill=yellow] (7,0)--(5.5,-2.6)--(8.5,-2.6)--(7,0);
\filldraw [yellow, fill=yellow] (4,5.2)--(7,5.2)--(5.5,2.6)--(4,5.2);
\filldraw [yellow, fill=yellow] (1,0)--(4,0)--(2.5,-2.6)--(1,0);
\filldraw [yellow, fill=yellow] (7,0)--(10,0)--(8.5,-2.6)--(7,0);
\filldraw [yellow, fill=yellow] (4,5.2)--(5.5,2.6)--(2.5,2.6)--(4,5.2);
\filldraw [yellow, fill=yellow] (4,5.2)--(7,5.2)--(5.5,7.8)--(4,5.2);
\filldraw [yellow, fill=yellow] (10,0)--(11.5,-2.6)--(8.5,-2.6)--(10,0);
\filldraw [yellow, fill=yellow] (1,0)--(2.5,-2.6)--(-0.5,-2.6)--(1,0);

\filldraw [fill=black] (-0.5,-2.6) circle (0.3);

\draw[thick]  (-0.5,-2.6)--(11.5,-2.6);
\draw[dotted] (0,0)--(11,0);
\draw[dotted] (0,2.6)--(11,2.6);
\draw[dotted] (0,5.2)--(11,5.2);

\draw[thick] (-.5,-2.58)--(5,6.92)--(5.5, 7.783);
\draw[dotted] (2,-3.46)--(7.5,6.06);
\draw[dotted] (4.5,-4.33)--(10,5.2);
\draw[dotted] (7,-5.2)--(12.5,4.33);

\draw[dotted] (-1.5,4.33)--(4,-5.2);
\draw[dotted] (1, 5.2)--(6.5,-4.33);
\draw[dotted]  (3.5,6.06)--(9,-3.46);
\draw[thick] (5.5,7.783 )--(6,6.92)--(11.5,-2.58);

\draw (5.5,0.7) node {\scriptsize $q^{\frac 23} (\frac qt,1,\frac{t}{q})$};

\draw (7,1.7) node {\scriptsize $q^{\frac 23}(\frac{t}{q^2},1,\frac{q^2}t)$};

\draw (4,1.7) node {\scriptsize $q^{\frac 23}(1,\frac qt,\frac{t}{q})$};

\draw (5.5,-0.7) node {\scriptsize $q^{\frac 23}(\frac qt,\frac{t}{q},1)$};

\draw (8.5,0.75) node {\scriptsize $q^{\frac 23}(\frac{t}{q^2},\frac {q^2}t,1)$};
\draw (2.5,0.75) node {\scriptsize $q^{\frac 23}(1,\frac{t}{q},\frac qt)$};
\draw (4,3.25) node {\scriptsize $q^{\frac 23}(\frac{t}{q^2},\frac qt,q)$};
\draw (7,3.25) node {\scriptsize $q^{\frac 23}(1,\frac{t}{q^2},\frac {q^2}t)$};
\draw (7,-2.1) node {\scriptsize $q^{\frac 23}(\frac{1}{q},\frac{t}{q},\frac {q^2}t)$};
\draw (4,-2) node {\scriptsize $q^{\frac 23}(\frac{t}{q},\frac qt,1)$};

\draw (5.5,5.7) node {\scriptsize $q^{\frac 23}(\frac qt,q,\frac{t}{q^2})$};
\draw (5.5,4.5) node {\scriptsize $q^{\frac 23}(\frac qt,\frac{t}{q^2},q)$};
\draw (10,-2.1) node {\scriptsize $q^{\frac 23}(\frac {q^2}t,\frac{1}{q},\frac{t}{q})$}; 
\draw (8.5,-0.6) node {\scriptsize $q^{\frac 23}(\frac{1}{q},\frac {q^2}t,\frac{t}{q})$};
\draw (2.5,-0.7) node {\scriptsize $q^{\frac 23}(\frac{t}{q},1,\frac qt)$};
\draw (1,-2.1) node {\scriptsize $q^{\frac 23}(\frac 1t,1,t)$};
\end{tikzpicture}

\omitt{
\qquad
\begin{tikzpicture}[scale=0.7]

\filldraw [yellow, fill=yellow] (4,0)--(7,0)--(5.5,2.6)--(4,0);
\filldraw [yellow, fill=yellow] (4,0)--(7,0)--(5.5,-2.6)--(4,0);
\filldraw [yellow, fill=yellow] (4,0)--(5.5,2.6)--(2.5,2.6)--(4,0);
\filldraw [yellow, fill=yellow] (7,0)--(5.5,2.6)--(8.5,2.6)--(7,0);
\filldraw [yellow, fill=yellow] (7,0)--(10,0)--(8.5,2.6)--(7,0);
\filldraw [yellow, fill=yellow] (1,0)--(4,0)--(2.5,2.6)--(1,0);
\filldraw [yellow, fill=yellow] (4,5.2)--(5.5,2.6)--(2.5,2.6)--(4,5.2);
\filldraw [yellow, fill=yellow] (7,5.2)--(5.5,2.6)--(8.5,2.6)--(7,5.2);
\filldraw [yellow, fill=yellow] (4,0)--(5.5,-2.6)--(2.5,-2.6)--(4,0);
\filldraw [yellow, fill=yellow] (7,0)--(5.5,-2.6)--(8.5,-2.6)--(7,0);
\filldraw [yellow, fill=yellow] (4,5.2)--(7,5.2)--(5.5,2.6)--(4,5.2);
\filldraw [yellow, fill=yellow] (1,0)--(4,0)--(2.5,-2.6)--(1,0);
\filldraw [yellow, fill=yellow] (7,0)--(10,0)--(8.5,-2.6)--(7,0);
\filldraw [yellow, fill=yellow] (4,5.2)--(5.5,2.6)--(2.5,2.6)--(4,5.2);
\filldraw [yellow, fill=yellow] (4,5.2)--(7,5.2)--(5.5,7.8)--(4,5.2);
\filldraw [yellow, fill=yellow] (10,0)--(11.5,-2.6)--(8.5,-2.6)--(10,0);
\filldraw [yellow, fill=yellow] (1,0)--(2.5,-2.6)--(-0.5,-2.6)--(1,0);

\filldraw [fill=black] (4,0) circle (0.3);

\draw[thick]  (-0.5,-2.6)--(11.5,-2.6);
\draw[dotted] (0,0)--(11,0);
\draw[dotted] (0,2.6)--(11,2.6);
\draw[dotted] (0,5.2)--(11,5.2);

\draw[thick] (-.5,-2.58)--(5,6.92)--(5.5, 7.783);
\draw[dotted] (2,-3.46)--(7.5,6.06);
\draw[dotted] (4.5,-4.33)--(10,5.2);
\draw[dotted] (7,-5.2)--(12.5,4.33);

\draw[dotted] (-1.5,4.33)--(4,-5.2);
\draw[dotted] (1, 5.2)--(6.5,-4.33);
\draw[dotted]  (3.5,6.06)--(9,-3.46);
\draw[thick] (5.5,7.783 )--(6,6.92)--(11.5,-2.58);

\draw (5.5,0.7) node {\footnotesize $[123]$};

\draw (7,1.7) node {\footnotesize $[024]$};

\draw (4,1.7) node {\footnotesize $[213]$};

\draw (5.5,-0.7) node {\footnotesize $[132]$};

\draw (8.5,0.75) node {\footnotesize $[042]$};
\draw (2.5,0.75) node {\footnotesize $[231]$};
\draw (4,3.25) node {\footnotesize $[015]$};
\draw (7,3.25) node {\footnotesize $[204]$};
\draw (7,-2.1) node {\scriptsize $[-134]$};
\draw (4,-2) node {\footnotesize $[312]$};

\draw (5.5,6) node {\footnotesize $[150]$};
\draw (5.5,4.5) node {\footnotesize $[105]$};
\draw (10,-2.1) node {\scriptsize $[4-\!13]$}; 
\draw (8.5,-0.6) node {\scriptsize $[-143]$};
\draw (2.5,-0.7) node {\footnotesize $[321]$};
\draw (1,-2.1) node {\footnotesize $[-226]$};
\end{tikzpicture}
} 

\omitt{
\caption{Evaluation of DAHA weights at $t=q^{4/3}$ (left);
Sommers region labeled by $\omega$ (right)}
\caption{DAHA weights for $L_{4/3}$}
\caption{Evaluation of DAHA weights at $t=q^{4/3}$ (left); Sommers region labeled by $\omega$ (right)}
} 
\caption{DAHA weights for $L_{4/3}$ above. When one evaluates at $t=q^{4/3}$, the weights
become $(q^\frac{u(1)}3, q^\frac{u(2)}3, q^\frac{u(3)}3)$ for the matching
alcove $\wm u \Afund$ which is labeled by $u \in \maSnn{3}{4}$  in
Figure \ref{fig:sommers region 3 4}.
Note for the fundamental alcove,
$a(\id) =
q^{\frac 23}( \frac 1t, 1, t)
= q^{\frac 23}(q^{-\frac 43}, q^0, q^{\frac 43})
= (q^{-\frac 23}, q^{\frac 23}, q^{\frac 63})$ and $u=[-2,2,6] = \wm \id$.
Compare this to
Figure \ref{fig:sommers region 3 4}, where the alcove
$\Afund$ in the left figure matches
the alcove labeled $[-2,2,6]$ in right figure.
}
\end{center}
\end{figure}


\subsection{From DAHA weights to parking functions}

Instead of direct evaluation of DAHA weights as powers of $q^{1/n}$, one can instead draw monomials $q^{x}t^{y}$ on the $(x,y)$-plane. This point of view was used in much wider generality in \cite{SuVa}, where the weights were interpreted in terms of periodic skew standard Young tableaux. Here we focus on finite-dimensional representations and relate this picture to parking function diagrams. 

Let $a=(a_1,\ldots,a_n)$ be a DAHA weight. We define a function $T_{a}:\ZZ^2\to \ZZ$ labeling the square lattice by the following rule.
For every $i$, let us present $a_i=(q^{\frac{n+1}{2n}} t^{\frac{-1-n}{2}}) q^{x_i}t^{y_i}$
and define $T_a(x_i,y_i)=i$.
Under this renormalization, $\{y_1, \ldots, y_n\} =   \{1, \ldots, n\}$.
Hence we obtain $n$ 
squares labeled $1,\ldots, n$ in the rows $1,\ldots n$ in some order.  We can extend this
labeling to the whole plane by the following two-periodic construction. First,
one can identify $q^m$ with $t^n$ and write $T_{a}(x+m,y-n)=T_{a}(x,y)$.

Secondly, recall that the $b_i$ that correspond to $a$
 can be naturally extended to an 
affine permutation using the quasi-periodic condition $b_{i+n}=b_i+n$. This means that one can define $a_i$ for all integer $i$ by the rule $a_{i+n}=qa_i$, and $T_{a}(x+1,y)=T_{a}(x,y)+n$.
Hence the fillings in the boxes of $T_a$ increase across rows automatically; that
is, $T_a$ is row-standard.  The more interesting question is when is $T_a$ column
standard, which in this context means fillings increase up columns.
\begin{lemma}
The weight $a$ appears in the finite-dimensional representation $L_{m/n}$
 if and only if $T_{a}$ is a standard Young tableau (SYT), that is,
$T_{a}(x+1,y)>T_{a}(x,y)$
and
$T_{a}(x,y+1)>T_{a}(x,y)$.
\end{lemma}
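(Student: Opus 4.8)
The plan is to reduce both directions to a single closed formula: I will show that $T_a$, as a $\Z$-valued function on $\Z^2$, is given by
$$
T_a(x,y) = v(nx+my-c),
$$
for a single affine permutation $v\in\aSn$ depending on $a$ (with $c=\frac{(m-1)(n+1)}{2}$ as above), and that $a$ appears in $L_{m/n}$ exactly when $v$ is $m$-stable. Once this formula is in place, each of the two SYT inequalities is an immediate one-line check, so the content is entirely in establishing the formula and the identification of "$a\in L_{m/n}$" with "$v\in\aSmn$".

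First I would pin down the closed form. Write $a=a(\sigma)$ for the minimal coset representative $\sigma$ carrying the weight $a$. Substituting the relation $t=q^{m/n}$ into $a_i=(q^{\frac{n+1}{2n}}t^{\frac{-1-n}{2}})q^{x_i}t^{y_i}$ and comparing with $a_i=q^{b_i/n}$ gives $b_i=n x_i+m y_i-c$. The identity $b(\sigma)=\wm\sigma$ proved above for weights of $L_{m/n}$ in fact holds for \emph{every} minimal coset representative $\sigma$, by the same argument: it holds at $\sigma=\id$, where $b(\id)=(\wm(1),\dots,\wm(n))$, and the wall-crossing rules \eqref{reflections for weights} act on the vector $b$ exactly as right multiplication by the $s_i$ on the window of $\wm\sigma$, so induction on $\ell(\sigma)$ propagates it everywhere. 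Combining $b_i=n x_i+m y_i-c$ with $b(\sigma)=\wm\sigma$ and $T_a(x_i,y_i)=i$ shows that $v:=(\wm\sigma)^{-1}$ satisfies $v(\wm\sigma(i))=i$, i.e. $T_a(x_i,y_i)=v(nx_i+my_i-c)$. Finally the two-periodic extension rules $T_a(x+1,y)=T_a(x,y)+n$ and $T_a(x+m,y-n)=T_a(x,y)$ are precisely the relations $v(k+n)=v(k)+n$ and $n(x+m)+m(y-n)=nx+my$, so the formula $T_a(x,y)=v(nx+my-c)$ holds on all of $\Z^2$.

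With the formula in hand I would conclude as follows. By the discussion preceding the lemma, $a$ appears in $L_{m/n}$ iff $\sigma\Afund\subseteq mD_n^1$, iff $\wm\sigma\in\maSn$, iff $v=(\wm\sigma)^{-1}\in\aSmn$, i.e. $v$ is $m$-stable. Now the row inequality is automatic: with $k=nx+my-c$ one has $T_a(x+1,y)-T_a(x,y)=v(k+n)-v(k)=n>0$. For the column inequality, $T_a(x,y+1)-T_a(x,y)=v(k+m)-v(k)$; since $\gcd(m,n)=1$, as $(x,y)$ ranges over $\Z^2$ the number $k=nx+my-c$ ranges over all of $\Z$, so "$T_a(x,y+1)>T_a(x,y)$ for all $x,y$" is literally the statement "$v(k+m)>v(k)$ for all $k$", i.e. $v$ is $m$-stable, i.e. $a$ appears in $L_{m/n}$. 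This settles both implications simultaneously.

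I expect the only genuine work to be in the middle step: fixing the normalization constant $c$ correctly in the change of variables $t=q^{m/n}$, and checking that $b(\sigma)=\wm\sigma$ really does persist for coset representatives $\sigma$ with $\sigma\Afund\not\subseteq mD_n^1$ — this is needed for the "if" direction, since there one cannot assume $a\in L_{m/n}$ at the outset and must still produce the affine permutation $v$ with $T_a(x,y)=v(nx+my-c)$. The SYT inequalities themselves are then trivial consequences of the quasi-periodicities $v(k+n)=v(k)+n$ and the $m$-stability condition.
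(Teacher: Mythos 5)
Your proof is correct and takes essentially the same approach as the paper's (very terse) proof, which simply asserts that the column-standardness condition is ``$b_i+m$ appears after $b_i$'' and hence is $m$-stability. You make the implicit content explicit via the formula $T_a(x,y)=v(nx+my-c)$ with $v=(\wm\sigma)^{-1}$, and you correctly flag and resolve a point the paper glosses over: the identity $b(\sigma)=\wm\sigma$ must be known for \emph{all} minimal coset representatives $\sigma$ (not just those with $\wm\sigma\in\maSn$) in order to run the ``if'' direction, and this does follow since the wall-crossing rules \eqref{reflections for weights} match right multiplication by $s_i$ formally, independent of the Sommers-region restriction.
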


\begin{proof}
Indeed, in terms of $b_i$ this means that $b_{i}+m$ appears after $b_i$,
which  is precisely equivalent to $m$-stability.
\end{proof}

\begin{corollary}
There is a natural bijection between the alcoves in the Sommers region and surjective maps $T:\ZZ^2\to \ZZ$ satisfying the following conditions:
\begin{equation}
\label{periodic syt}
T(x+1,y)=T(x,y)+n,\ T(x+m,y-n)=T(x,y),\ T(x,y+1)>T(x,y).
\end{equation}
\end{corollary}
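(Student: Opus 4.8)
The plan is to read the Corollary off from the preceding Lemma together with the identification, provided by the Theorem above and Lemma~\ref{isometry}, of the alcoves of $D_n^m$ with the weights occurring in $L_{m/n}$. I would first write the bijection down explicitly: to an alcove $u(\Afund)\subset D_n^m$, where $u\in\maSn$ as in Lemma~\ref{Lemma: Sommers region}, attach the map $T(x,y):=u^{-1}(nx+my-c)$ with $c=\tfrac{(m-1)(n+1)}{2}$. Each of the four required properties then translates into a single fact: $T(x+1,y)=T(x,y)+n$ is the quasi-periodicity $u^{-1}(N+n)=u^{-1}(N)+n$; $T(x+m,y-n)=T(x,y)$ is the arithmetic identity $n(x+m)+m(y-n)=nx+my$; $T(x,y+1)>T(x,y)$ is the $m$-stability of $u^{-1}$, i.e.\ $u\in\maSn$; and surjectivity holds because $u^{-1}$ is a bijection of $\Z$ while $nx+my$ runs over all of $\Z$ (here $\gcd(m,n)=1$ is used).

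To see this is a bijection onto the maps satisfying \eqref{periodic syt} I would exhibit the inverse. Given such a $T$, the relation $T(x+m,y-n)=T(x,y)$ forces $T(x,y)$ to depend only on $nx+my$, so there is $\tilde T:\Z\to\Z$ with $T(x,y)=\tilde T(nx+my)$; then $T(x+1,y)=T(x,y)+n$ gives $\tilde T(N+n)=\tilde T(N)+n$, so $\tilde T$ descends to a self-map of $\Z/n\Z$ that is increasing along each residue class, and surjectivity of $T$ forces this descent to be a bijection --- hence $\tilde T$ is a bijection of $\Z$ commuting with $+n$. After the unique shift $T\mapsto T+k$ that imposes the normalization $\sum_{i=1}^{n}x_i=0$ (where $(x_i,y_i)$ is the box carrying label $i$ in rows $1,\dots,n$) --- which is precisely the normalization encoded by the constant $q^{\frac{n+1}{2n}}t^{\frac{-1-n}{2}}$, equivalently by $Y_1\cdots Y_n$ acting as $q^{(n+1)/2}$ --- the map $N\mapsto\tilde T(N+c)$ is a genuine affine permutation $u^{-1}$ with $u\in\maSn$, and unwinding definitions shows $T=T_{a(\wm^{-1}u)}$. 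Thus the assignment above is exactly the restriction of $a\mapsto T_a$ to the weights of $L_{m/n}$; by the preceding Lemma these are the weights for which $T_a$ additionally satisfies $T(x,y+1)>T(x,y)$, and by the Theorem above they are labeled by the alcoves of $D_n^m$, which gives the Corollary.

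The step I expect to be the main obstacle is the normalization: conditions \eqref{periodic syt} by themselves are invariant under $T\mapsto T+k$ for every $k\in\Z$, so without pinning down the shift --- by $\sum_i x_i=0$, equivalently by requiring $N\mapsto\tilde T(N+c)$ to be an honest affine permutation and not merely a bijection commuting with $+n$ --- the two sides would differ by a free copy of $\Z$ and the count $m^{n-1}$ would not match. Everything else (the four direct verifications, and the reduction to the preceding Lemma and to the Theorem identifying the weights of $L_{m/n}$ with $D_n^m$) is routine bookkeeping already implicit in the surrounding text.
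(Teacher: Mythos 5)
Your proof is correct and follows essentially the route the paper intends: the Corollary is meant to be read off by combining the preceding Theorem (which identifies the weights of $L_{m/n}$ with the alcoves of $D_n^m$ via $b(\wm^{-1}\w)=\w$ for $\w\in\maSn$) with the preceding Lemma (which characterizes those weights as exactly the ones for which $T_a$ is a periodic SYT). You make this explicit with the formula $T(x,y)=u^{-1}(nx+my-c)$, $c=\tfrac{(m-1)(n+1)}{2}$, and your four verifications and your inverse construction are all sound; in particular, the argument that surjectivity of $T$ together with $T(x+1,y)=T(x,y)+n$ forces the induced map on $\Z/n\Z$ to be a bijection, and hence $\tilde T$ to be a bijection of $\Z$, is correct.

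The substantive point you add at the end is a genuine observation about an imprecision in the statement: the conditions in \eqref{periodic syt} together with surjectivity onto $\Z$ are preserved by $T\mapsto T+k$ for every $k\in\Z$, so taken literally the right-hand side of the asserted bijection is infinite. The paper elides the normalization that makes the map a bijection, and your proposed fix --- requiring $\sum_i x_i=0$ for the boxes labeled $1,\dots,n$ in rows $1,\dots,n$, which is exactly the normalization hidden in the constant $q^{\frac{n+1}{2n}}t^{\frac{-1-n}{2}}$ used to define $T_a$, equivalently in the convention $Y_1\cdots Y_n=q^{(n+1)/2}$ --- is the right one and recovers precisely the affine (rather than extended affine) permutations.
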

 
 
 
\begin{lemma}
There exists a unique up to shift $n\times m$ rectangle  such that all squares labeled by positive numbers are located above the NW-SE diagonal. The corresponding parking function diagram coincides with the Anderson-type labeling up to a central symmetry.
\end{lemma}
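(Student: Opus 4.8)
The plan is to make the bijection of the Corollary above explicit, to identify the set of positively-labeled boxes of the filling $T$ with a central reflection of the set appearing in the construction of $\An$, and then to invoke the fact — already built into that construction — that $D_v$ is \emph{the} placement of a Young diagram under the NW--SE diagonal of $\Rmn$ attached to an $m$-stable permutation $v$.

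\textbf{The filling attached to $v$.} First I would unwind the normalization $a_i=(q^{\frac{n+1}{2n}}t^{\frac{-1-n}{2}})q^{x_i}t^{y_i}$, the relation $t=q^{m/n}$ on $L_{m/n}$, and the identity $b(\wm^{-1}\omega)=\omega$, to obtain that the filling attached to a Sommers alcove — equivalently to $v\in\aSmn$, where $v=\omega^{-1}$ and $\omega=\wm\sigma\in\maSn$ labels that alcove — is $T_v(x,y)=v(nx+my-c)$ with $c=\frac{(m-1)(n+1)}{2}$. Conditions \eqref{periodic syt} for $T_v$ then read $v(i+n)=v(i)+n$, $v(i+mn)=v(i)+mn$ and $v(i+m)>v(i)$ (the last is $m$-stability), and $v\mapsto T_v$ is a bijection onto such fillings, hence is the bijection of the Corollary. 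The positively-labeled boxes form $S_v=\{(x,y)\mid nx+my-c\in\Delta_v\}$, where $\Delta_v=\{i\in\BZ:v(i)>0\}$; since $\Delta_v$ is $+m$- and $+n$-invariant, $S_v$ is closed under $(x,y)\mapsto(x+1,y)$ and under $(x,y)\mapsto(x,y+1)$ and is $(m,-n)$-periodic — and $(m,-n)$ is the only translation preserving $T_v$, which is where the ``up to shift'' comes from.

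\textbf{The rectangle.} Next I would recall from the construction of $\An_v$ the labeling $l_v(x,y)=(mn-m-n)+M_v-nx-my$ with $M_v=\min\Delta_v$, the identity $D_v=\{(x,y)\in\Rmn\mid l_v(x,y)\in\Delta_v\}$ (which is exactly the set of positively-labeled boxes of $\Rmn$), and the fact that these boxes all lie below the diagonal of $\Rmn$ — this being the placement that the construction of $\An$ singles out. Since $\gcd(m,n)=1$, there are integers $A,B$ with $nA+mB=mn+M_v+c$, unique modulo $(m,-n)$. The reflection $\rho$ taking the box $(x,y)$ to the box $(A-1-x,\,B-1-y)$ satisfies $nx+my-c=l_v(\rho(x,y))$, so it carries $\Rmn$ to an $n\times m$ rectangle $R$, carries $D_v$ onto $S_v\cap R$, and carries the region below the diagonal of $\Rmn$ onto the region above the diagonal of $R$; this gives the existence. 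For uniqueness I would argue that any $n\times m$ rectangle $R'$ with the stated property must have its diagonal on the line $\{nx+my=M_v+c\}$ — it cannot cut $S_v$, since the box of $R'$ lying on $\{nx+my-c=M_v\}$ belongs to $S_v$, and applying $\rho^{-1}$ forces its positive part to be a Young diagram anchored at the origin just as in the construction of $\An$ — and $n\times m$ rectangles with diagonal on a prescribed line form a single $(m,-n)\BZ$-orbit, so $R'$ is a shift of $R$.

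\textbf{The labeling, and the obstacle.} Finally I would match the labelings: under $\rho$ the $i$-th row of $D_v$ from the bottom maps to a row of $S_v\cap R$ from the top, its rightmost box maps to the leftmost box of the mirrored row, and the value $T_v$ records there is $v$ of that box's argument, which by $nx+my-c=l_v(\rho(x,y))$ equals $v(l_v(\cdot))=\tau_v(i)$ (cf.\ Lemma~\ref{an an}); hence the labeled diagram read off from $(R,T_v)$ is $\rho$ applied to the Anderson labeled diagram $(D_v,\tau_v)$ of $v$, i.e.\ it coincides with the Anderson-type labeling up to the central symmetry $\rho$. I expect the main obstacle to be the first step — keeping the DAHA normalization, the roles of $\wm$ and $c$, and the passage $\omega\leftrightarrow v=\omega^{-1}$ straight, so that $T_v(x,y)=v(nx+my-c)$ emerges with exactly the right constant; after that it is the bookkeeping of a single point reflection, the only other delicate point being the precise form of the uniqueness clause above.
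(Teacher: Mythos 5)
Your proposal is correct and is essentially the paper's argument made explicit: the paper's proof is a three-sentence sketch (identify the set of positively-labeled boxes, note there is a unique tangent line of the right slope, appeal to the definition of $\An$), whereas you unwind the DAHA normalization to get the clean formula $T_v(x,y)=v(nx+my-c)$ and then exhibit the central symmetry $\rho(x,y)=(A-1-x,B-1-y)$ (with $nA+mB=mn+M_v+c$) that carries $(\Rmn,D_v)$ onto $(R,S_v\cap R)$ and swaps ``below'' with ``above'' the diagonal — this is exactly the ``central symmetry'' the lemma refers to, and writing it down is a genuine improvement over the paper's prose.

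One small caveat, which you yourself flag: the uniqueness clause is the weak spot, and your argument there is slightly circular. The claim ``the box of $R'$ lying on $\{nx+my-c=M_v\}$ belongs to $S_v$'' presupposes that line meets $R'$, and the claim that ``applying $\rho^{-1}$ forces its positive part to be a Young diagram anchored at the origin'' presupposes $\rho^{-1}(R')=\Rmn$, which is what you are trying to show. Moreover, taken literally, ``all positively-labeled squares of $R'$ lie above the diagonal'' is a one-sided (half-plane) condition and is also satisfied by any translate whose diagonal has constant $\kappa<M_v+c$, so uniqueness genuinely needs the tangency requirement (the diagonal must \emph{touch} $S_v$, i.e.\ $\kappa=M_v+c$), not just containment. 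The paper's proof handles this by simply saying ``tangent,'' so this is a shared informality rather than an error of yours; but if you want to tighten the step, you should argue both directions: $\kappa>M_v+c$ puts a box of $S_v$ (one of the $(m,-n)$-orbit on the supporting line) below the diagonal of $R'$, and $\kappa<M_v+c$ leaves no box of $S_v$ touching the diagonal, so the tight choice is $\kappa=M_v+c$, which by your part (c) pins $R'$ down to a single $(m,-n)\BZ$-orbit.
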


\begin{proof}
If $(b_1,\ldots,b_n)$ corresponds to  the weight $a$,
then its corresponding $n$-invariant subset has n-generators  $b_i$,
and contains all fillings in squares to the right of labeled ones,
including the periodic shift by $(m, -n)$.
There exists a unique line with slope $m/n$ which is tangent to the resulting
infinite set of squares, and the tangency points define the $n\times m$
rectangle. Now the statement follows from the definition of the map $\An$.
\end{proof}

\begin{example}
Consider the weight $(a_1,a_2,a_3)=q^{\frac 23}(\frac{t}{q^2},\frac qt,q)
= (q^{0}, q^{\frac 13}, q^{\frac 53}) $ for $t=q^{4/3}$. 
We have $(b_1,b_2,b_3)=(0,1,5)$ and $\omega=[015]$.
The corresponding $(3,4)$-invariant subset is
$\Delta_{\omega}=\omega(\Z_{>0})=\{0,1,3,4,5,\ldots\}$, and the parking
function diagram  is shown in Figure \ref{weight to pf figure} on the right.

On the left side of Figure \ref{weight to pf figure} is a piece of $T_a$,
showing rows with $1\le y \le 4$ and columns with $-2\le x \le 4$.
Rewriting $a = q^{\frac 23} t^{-2}(q^{-2} t^3, q^1 t^1, q^1 t^2)$,
we see we put the filling 1 in square $(-2,3)$, 2 in square  $(1,1)$,
and 3 in square $(1,2)$. The periodicity conditions fill in the rest of
the squares of $T_a$.  The unique NW-SE line has been drawn, and the corresponding
rectangle it determines is rotated by 180 to obtain a Young diagram below the diagonal.

To get to the weight $a$ from the trivial weight, we need to apply
$\wm^{-1} \w = [-2,2,6]^{-1} \circ [0,1,5] = [-3,4,5]$.
Observe $[-3,4,5] = [-2(3)+3, 1(3) + 1, 1(3) +2]$. From this we could
also read off that the fillings 1,2,3 belong in squares
$(-2,3)$, $(1,1)$, $(1,2)$ respectively.
(We remind the reader of 
Figure \ref{fig:sommers region 3 4}, where the alcove labeled $[-3,4,5]$
in the left figure, matches the alcove labeled $[0,1,5]$ in right figure.)

Note, if we had instead normalized  in the more standard way
so that $Y_1 Y_2 Y_3 =1$ and the
fundamental alcove had weight $a' =(\frac 1t, 1, t)$, then we would have had
a shift by $2 = 3 \frac 23$ yielding
$(b_1',b_2',b_3')=(-2,-1,3)= (0-2,1-2,5-2)$ but still $\omega=[015]$ and
we would draw $T_{a'}$ as above.

\begin{figure}
\begin{center}
\begin{tikzpicture}

\filldraw [yellow,fill=yellow] (2,0)--(2,1)--(1,1)--(1,3)--(-1,3)--(-2,3)--(-2,4)--(5,4)--(5,0)--(2,0);

\draw[dashed] (-2,0)--(5,0);
\draw[dashed] (-2,1)--(5,1);
\draw[dashed] (-2,2)--(5,2);
\draw[dashed] (-2,3)--(5,3);
\draw[dashed] (-2,0)--(-2,4);
\draw[dashed] (-1,0)--(-1,4);
\draw[dashed] (0,0)--(0,4);
\draw[dashed] (1,0)--(1,4);
\draw[dashed] (2,0)--(2,4);
\draw[dashed] (3,0)--(3,4);
\draw[dashed] (4,0)--(4,4);

\draw[dashed](-2,3)--(2,0);

\draw[->,>=stealth] (0,0)--(5,0) node [right] {$q$};
\draw[->,>=stealth] (0,0)--(0,5) node [right] {$t$};

\draw(-1.5,3.5) node {$1$};
\draw(-0.5,3.5) node {$4$};
\draw(0.5,3.5) node {$7$};
\draw(1.5,3.5) node {$10$};
\draw(2.5,3.5) node {$13$};
\draw(3.5,3.5) node {$16$};
\draw(4.5,3.5) node {$19$};

\draw(-1.5,2.5) node {$-6$};
\draw(-0.5,2.5) node {$-3$};
\draw(0.5,2.5) node {$0$};
\draw(1.5,2.5) node {$3$};
\draw(2.5,2.5) node {$6$};
\draw(3.5,2.5) node {$9$};
\draw(4.5,2.5) node {$12$};

\draw(-1.5,1.5) node {$-7$};
\draw(-0.5,1.5) node {$-4$};
\draw(0.5,1.5) node {$-1$};
\draw(1.5,1.5) node {$2$};
\draw(2.5,1.5) node {$5$};
\draw(3.5,1.5) node {$8$};
\draw(4.5,1.5) node {$11$};

\draw(-1.5,0.5) node {$-11$};
\draw(-0.5,0.5) node {$-8$};
\draw(0.5,0.5) node {$-5$};
\draw(1.5,0.5) node {$-2$};
\draw(2.5,0.5) node {$1$};
\draw(3.5,0.5) node {$4$};
\draw(4.5,0.5) node {$7$};

\end{tikzpicture}
\ \ \ \
\begin{tikzpicture}

\filldraw [yellow,fill=yellow] (0,0)--(0,2)--(1,2)--(1,0);
\draw (0,0)--(0,3)--(4,3)--(4,0)--(0,0);
\draw [dashed] (0,1)--(4,1);
\draw [dashed] (0,2)--(4,2);
\draw [dashed] (1,0)--(1,3);
\draw [dashed] (2,0)--(2,3);
\draw [dashed] (3,0)--(3,3);

\end{tikzpicture}

\end{center}
\caption{Periodic SYT on $(x,y)$-plane (left); parking function diagram (right)}
\label{weight to pf figure}
\end{figure}
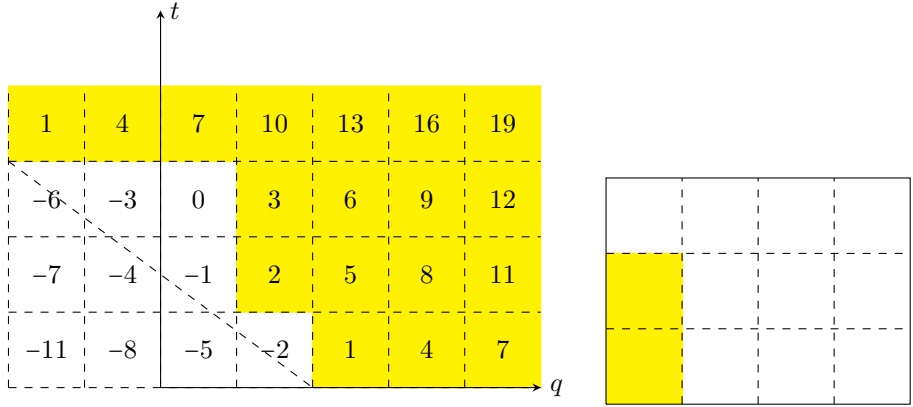
\end{example}

\section{Injectivity of $\PS$ for the finite symmetric group}\label{Section: finite}

In this Section we prove an analogue of Conjecture \ref{Conjecture: bijectivity} for the finite group $\Sn .$ 

\begin{definition}
Let $\Smn$ denote the intersection $\Sn \cap \aSmn.$ In other words, $\w \in \Smn$ if for all $1\le x\le n-m$ the inequality $\omega(x+m)>\omega(x)$ holds and 
$\{1, \ldots, n\} = \{\w(1),\ldots, \w(n)\}$.
We call such permutations {\em finite $m$-stable}.
\end{definition}


\begin{proposition}
The number of finite  $m$-stable permutations equals 
$$\sharp \Smn=\frac{n!}{\prod_{i=1}^{m}n_i!},\
\mbox{\rm where}\ n_i=\begin{cases}
\left\lfloor \frac{n-i}{m}\right\rfloor+1& \mbox{\rm if}\ i\le n,\\
0& \mbox{\rm if}\ i>n.\\
\end{cases}$$
\end{proposition}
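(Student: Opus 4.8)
The plan is to reduce the count to a straightforward enumeration of ordered set partitions. The crucial structural observation is that the condition defining $\Smn$ — namely $\omega(x+m) > \omega(x)$ for all $1 \le x \le n-m$ — decouples according to residue classes of positions modulo $m$. Write $\{1,\dots,n\}$ as the disjoint union of the progressions $C_i := \{\, i,\, i+m,\, i+2m,\, \dots \,\} \cap \{1,\dots,n\}$ for $1 \le i \le m$. Then a permutation $\omega \in \Sn$ is finite $m$-stable exactly when, for each $i$, the word obtained by reading off the values of $\omega$ along $C_i$ (in increasing order of position) is strictly increasing. A preliminary step is to check that $|C_i| = n_i$: this is the elementary count of terms of an arithmetic progression with first term $i$ and common difference $m$ lying in $[1,n]$, which is $\lfloor (n-i)/m\rfloor + 1$ when $i \le n$ and $0$ when $i > n$. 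It follows that $\sum_{i=1}^{m} n_i = n$, so that the multinomial coefficient on the right-hand side is well-defined.

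Next I would exhibit an explicit bijection $\Phi$ from $\Smn$ to the set of ordered partitions $(B_1,\dots,B_m)$ of $\{1,\dots,n\}$ with $|B_i| = n_i$. In one direction, set $\Phi(\omega) = (\omega(C_1),\dots,\omega(C_m))$; this is an ordered partition with the prescribed block sizes. In the other direction, given $(B_1,\dots,B_m)$, define $\omega$ by sending the $j$-th smallest element of $C_i$ to the $j$-th smallest element of $B_i$. Because the $B_i$ partition the value set, this $\omega$ is a bijection of $\{1,\dots,n\}$; because each block is inserted in increasing order along its progression, $\omega(x+m) > \omega(x)$ holds whenever $x$ and $x+m$ lie in a common $C_i$, that is, for all $1 \le x \le n-m$, so $\omega \in \Smn$. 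The two maps are mutually inverse: for $\omega \in \Smn$ the reconstructed permutation agrees with $\omega$ on each $C_i$ precisely because $m$-stability already forces $\omega$ to be increasing along $C_i$, and starting from an ordered partition the composition is visibly the identity.

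To finish, the number of ordered partitions of an $n$-element set into blocks of sizes $n_1,\dots,n_m$ is the multinomial coefficient $\binom{n}{n_1,\dots,n_m} = \dfrac{n!}{\prod_{i=1}^{m} n_i!}$, which is exactly the asserted value of $\#\Smn$. I do not expect a genuine obstacle in this argument; the only points demanding a little care are the bookkeeping that the $C_i$ do partition $\{1,\dots,n\}$ and that their sizes match the stated $n_i$, including the degenerate regime $m > n$ (where every $n_i$ is $0$ or $1$, the $m$-stability condition is vacuous, and the formula correctly collapses to $\#\Smn = n!$).
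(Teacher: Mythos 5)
Your proof is correct and follows essentially the same approach as the paper's: decompose positions by residue class modulo $m$, observe that $m$-stability is equivalent to $\omega$ being increasing on each residue class, and conclude via the bijection to ordered partitions with prescribed block sizes. The only difference is that you spell out the inverse map and the boundary case $m>n$ explicitly, which the paper leaves implicit.
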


\begin{proof}
The set $X=\{1,\ldots,n\}$ can be split into $m$ disjoint subsets 
$$X_i:=\{x\in X: x\equiv i \modd m\}$$       
of cardinality $n_i$. 
A permutation $\omega$ is finite  $m$-stable if and only if it increases on each $X_i$,
hence it is uniquely determined by an ordered partition
$$\{1,\ldots,n\}=\omega(X_1)\sqcup\ldots\sqcup \omega(X_m).$$
\end{proof}

\begin{example}
For $n=5, m=3$, $X_1 = \{1,4\}, X_2 = \{2,5\}, X_3 = \{3\}$. 
Observe $\w \in \Smn$ iff $\wi$ occurs in the shuffle
$14 \shuffle 25 \shuffle 3$,
which are precisely the $m$-restricted permutations in $\Snn{5}$.
\end{example}
\begin{definition}
Given a permutation $\omega\in \Smn$, let us define $\PS_{\omega}(\alpha)$ as
the number of inversions $(x,y)$ of $\omega$ such that $x<y<x+m,\
\omega(x)>\omega(y)=\alpha$ (the height of such inversion is less than $m$).
Define $$
\PS_{\omega}:=(\PS_{\omega}(1),\ldots,\PS_{\omega}(n)).
$$
\end{definition}
In other words, this is just the restriction $\PS \mid_{\Sn}$.
Hence by Theorem \ref{theorem-image-PS} the integer sequences in the image of $\PS$ are
$m/n$-parking functions.

Observe that if $\w \in\Sn$ then $\PS_\w(n) = 0$.

\begin{theorem}\label{Theorem: bijectivity finite}
The map $\PS$ from the set $\Smn$ to
$\Pmn$
is injective.
\end{theorem}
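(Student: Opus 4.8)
The plan is to argue by induction on $n$, the base case $n=1$ being vacuous.

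The inductive step rests on a reduction that peels off the value $v:=\omega(n)$. Given $\omega\in\Smn$, define $\omega'\colon\{1,\dots,n-1\}\to\{1,\dots,n-1\}$ by $\omega'(x)=\omega(x)$ if $\omega(x)<v$ and $\omega'(x)=\omega(x)-1$ if $\omega(x)>v$. Since the positions $1,\dots,n-1$ keep their residues modulo $m$ and deleting the single value $v$ preserves the relative order of the remaining values, $\omega'$ is still increasing along each residue class, i.e. $\omega'\in\Smnn{n-1}{m}$. A routine comparison of inversions of height $<m$ then shows that, for $\alpha\ne v$, the inversions $(x,y)$ of $\omega$ of height $<m$ with $\omega(y)=\alpha$ correspond bijectively (and height-preservingly) to the analogous inversions of $\omega'$; hence $\PS_{\omega'}$ is obtained from $\PS_\omega$ by deleting the $v$-th entry and sliding the entries of index $>v$ down by one. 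In particular $\PS_\omega$ together with the single number $v$ determines $\PS_{\omega'}$, so it suffices to show that $v=\omega(n)$ is itself determined by $\PS_\omega$: then $\PS_\omega=\PS_\sigma$ forces $\omega(n)=\sigma(n)$, hence $\PS_{\omega'}=\PS_{\sigma'}$, hence $\omega'=\sigma'$ by the inductive hypothesis, and therefore $\omega=\sigma$.

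To recover $\omega(n)$ I would use the ordered-partition description of $\Smn$ coming from the proposition that counts $\sharp\Smn$: an element $\omega\in\Smn$ is the data of an ordered set partition $B_1\sqcup\dots\sqcup B_m=\{1,\dots,n\}$ with $|B_i|=n_i$, where $\omega$ lists $B_i$ in increasing order along the residue class $X_i=\{x:1\le x\le n,\ x\equiv i \bmod m\}$. Position $n$ is the largest element of $X_{j^\ast}$ for the index $j^\ast\equiv n \bmod m$, which depends only on $n$ and $m$, so $\omega(n)=\max B_{j^\ast}$. Unwinding the definition of $\PS_\omega$ on this data, one obtains, for $\alpha\in B_j$ and with $r_i$ the number of elements of $B_i$ that are smaller than $\alpha$,
\[
\PS_\omega(\alpha)=\#\{\,i>j:\ r_i<r_j\le n_i\,\}+\#\{\,i<j:\ r_i\le r_j<n_i\,\}.
\]
The idea is then to rebuild the partition by inserting the values $n,n-1,\dots,1$ one at a time: once the blocks of all values exceeding $\alpha$ are known, for each block $i$ with free room the position of $\alpha$ inside $X_i$ is forced, so the displayed formula predicts a definite number $\Phi(i)$, and one must check that exactly one block $i$ realizes the observed value $\PS_\omega(\alpha)$. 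The sought value $\omega(n)$ then emerges as the last value inserted into $X_{j^\ast}$.

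I expect the main obstacle to be precisely this uniqueness claim: $\Phi$ is not monotone in $i$, since the rank $r_j$ at which $\alpha$ enters block $j$ itself varies with $j$, so one needs real control over the interplay between the fixed sizes $n_i$ and the running ranks $r_i$. Two routes seem worth pursuing for this step: (i) realize a change of block of $\alpha$ by a chain of adjacent transpositions and combine the proposition describing the effect of passing from $\SP_\w$ to $\SP_{\w s_i}$ with Proposition~\ref{thm-inc}, which already extracts the descent set of $\omega^{-1}$ from $\PS_\omega$; or (ii) exploit that the length-$m$ window just below position $n$ meets only the remaining $m-1$ block maxima, so that $\max B_{j^\ast}$ can be singled out among the block maxima directly by its $\PS$-value, using in addition that, by Theorem~\ref{theorem-image-PS}, every candidate vector must be a genuine $m/n$-parking function.
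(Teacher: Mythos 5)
Your induction-by-peeling reduction is sound: the passage from $\omega\in\Smn$ to $\omega'\in S_{n-1}^m$ by deleting the last position and the value $v=\omega(n)$ and closing up the gap is well-defined, stays inside the class of $m$-stable permutations, and the claimed correspondence between $m$-bounded inversions of $\omega$ ending at any $\alpha\neq v$ and those of $\omega'$ is a genuine height-preserving bijection (since the deleted position $n$ can only sit as the right endpoint of an inversion ending at value $v$, and $\omega(x)=v$ forces $x=n$). So $\PS_\omega$ together with $v$ determines $\PS_{\omega'}$, and the inductive hypothesis applies once $v$ is known.

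The gap is the one you flag yourself: you never establish that $v=\omega(n)$ is determined by $\PS_\omega$. This is not a small technical loose end; it is the entire content of the theorem in disguise, and neither of your two suggested ``routes'' is carried out. Note in particular that $\PS_\omega(\omega(n))$ alone cannot single out $\omega(n)$ among the $m$ block maxima (the values at positions $n-m+1,\dots,n$), and the first step of your proposed insertion scheme (placing the value $n$) yields $\PS_\omega(n)=0$ for every $\omega$, so it provides no information at all about which block contains $n$. Your own formula for $\PS_\omega(\alpha)$ in terms of the ranks $r_i$ is correct, but the non-monotonicity you worry about is real: there is no a priori reason the displayed count should take distinct values as the block of $\alpha$ ranges over the admissible choices.

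For contrast, the paper avoids this difficulty by reconstructing the permutation \emph{from the left end}. Its second proof recovers $\omega(1),\omega(2),\dots$ in order using Corollary~\ref{rec_fin}, whose proof hinges on Lemma~\ref{monot}: the count $g(\alpha,i)=\sharp\{j\in(i-m,i]\cap\{1,\dots,n\}:\omega(j)>\alpha\}$ is non-decreasing in $i$, a direct consequence of $m$-stability. The first proof does the same at the level of $\omega^{-1}$, starting from $\omega^{-1}(1)=\PS_\omega(1)+1$. In both cases the left end is privileged because the relevant window $(i-m,i]$ is initially small and monotonicity drives the greedy reconstruction forward. Your approach needs a mirror-image of this monotonicity anchored at position $n$, and you have not produced one. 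Until the recovery of $\omega(n)$ is actually proved (or the induction is restructured to peel from the left, in which case it essentially collapses into the paper's argument), the proof is not complete.
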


We provide two proofs of this Theorem, as they are somewhat different and might both be useful for the future  attempts to proof Conjecture \ref{Conjecture: bijectivity} in the affine case.

\begin{proof}[First proof]
Given a 
parking function  $\pw{\PS_{\omega}(1)\cdots\PS_{\omega}(n)}$ in the image,
we need to reconstruct $\omega$ or, equivalently, $\omega^{-1} \in \Sn$. We will first reconstruct the number $x_1=\w^{-1}(1),$ then $x_{2}=\w^{-1}(2),$ and so on, all the way up to $x_n=\w^{-1}(n).$
Note that $\wi = [x_1, x_2, \ldots, x_n]$ and $\PS_\w(i) = \SP_{\wi}(i) =
\sharp \{j \mid i < j \le n, 0 < x_i - x_j < m \}$. We have used that since
$\w \in \Sn$, for all $(i,j) \in \Inv(\w)$, $1 \le i < j \le n$.
Also since $\w \in \Sn$, for all $j \ge 1$ we have $\w(j) \ge 1$.
For the first step,
note that $x_1< m+1$, since otherwise $x_1-m$ and $x_1$ will form an inversion 
of $\w$ of height $m$, as $x_1 -m \ge 1$ so it occurs to the right of $x_1$ in $\wi$.
For every $1 \le y<x_1$, there is an inversion $(y,x_1)$ of height less than $m$
and there are no other inversions of the form $(-,x_1)$, hence $x_1=\PS_{\omega}(1)+1.$
On the next step we recover $x_2.$ Note that for every $y<x_2,$ there is an inversion $(y,x_2),$ unless $y=x_1.$ It follows that $x_2$ is either equal to $x_1+m$ or $x_2< m+1.$ It is not hard to see, that all these possible values of $x_2$ correspond to different values of $\PS_{\w}(2).$ Therefore, knowing $\PS_{\w}(2),$ one can recover $x_2.$ Let us show that one can proceed in that manner inductively all the way to $x_n.$

Suppose that one has already reconstructed $x_i=\omega^{-1}(i)$ for all $i<k$. Define the set 
$$
Y_{k-1}=\{x_1\ldots,x_{k-1}\}\sqcup\{l\in\mathbb Z:l<1\}=
\{y: 1\le y \le n, \omega(y)<k\}\sqcup\{l\in\mathbb Z:l<1\}.
$$ 
Let us use the notation $I(y):=(y-m,y]\cap \BZ$ for any $y\in\BZ_{\le n}.$ Consider the function $\varphi_k(y):=\sharp\left(I(y)\setminus Y_{k-1}\right)-1$ defined
on the domain
$y\in\BZ_{\le n}\setminus Y_{k-1}$. Given  $\PS_{\omega}(k)=\varphi_k(x_k),$ we need to reconstruct $x_k.$ Let us prove that the function $\varphi_k(y)$ is  non-decreasing.
Indeed, let $y>y'$ and $y,y'\in\BZ_{\le n}\setminus Y_{k-1}.$ Let $z\in I(y)=(y-m,y]\cap \mathbb Z$ and $z'\in I(y')=(y'-m,y']\cap \mathbb Z$ be such that $z-z'\equiv 0$ mod $m.$ It follows that if $z\in Y_{k-1},$ then also $z'\in Y_{k-1}.$
Otherwise, 
if $z'\nin Y_{k-1}$ then $1 \le z'$ and $\w(z') \ge k$. Further $y > y'$ implies
$z > z' \ge 1$ and $z\in Y_{k-1}$ gives $\w(z) < k \le \w(z')$.
Therefore, $(z',z)$ is an inversion of $\w$ of height divisible by $m,$ which implies that
$\w$ is not $m$-stable.  Contradiction.  

We conclude, that
$$
\varphi_k(y)=\sharp\left(I(y)\setminus Y_{k-1}\right)-1\ge \sharp\left(I(y')\setminus Y_{k-1}\right)-1=\varphi_k(y').
$$
Finally, we remark that $x_k\notin Y_{k-1}$ but $x_k-m\in Y_{k-1},$ since
otherwise $x_k-m\ge 1$ and $\omega(x_k-m)>k,$ and that produces an inversion of
height $m$.
Therefore, one check there is a strict inequality
$\varphi_k(y)<\varphi_k(x_k)$
 for any $y<x_k$ with $y \in \BZ_{\le n}\setminus Y_{k-1}.$
Thus, $x_k=\min\{y\in\BZ\setminus Y_{k-1}|\varphi_k(y)=\PS_{\omega}(k)\}$.
In particular, this set is non-empty.
 We illustrate this proof on an example in Figure \ref{Figure: bijection finite 1}.
\end{proof}

\begin{proof}[Second proof]
Define the function $g(\alpha,i)$ by the following formula:
$$
g(\alpha,i)=\sharp\{j\in (i-m,i]\cap\{1,\dots,n\}: \omega(j)> \alpha \}.
$$
By definition of $\PS_{\omega},$ one immediately gets
$$
g(\omega(i),i)=\PS_{\omega}(\omega(i)).
$$ 

\begin{lemma}\label{monot}
The function $g(\alpha,i)$ is non-decreasing in $i$ for any fixed $\alpha.$ 
\end{lemma}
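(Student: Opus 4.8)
The plan is to rewrite $g(\alpha,i)$ as the size of the intersection of a single set with a sliding window of length $m$, and then to use the $m$-stability of $\omega$ to compare $g(\alpha,i)$ with $g(\alpha,i+1)$.

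First I would fix $\alpha$ and set $A_\alpha:=\{\,j\in\{1,\dots,n\}\mid \omega(j)>\alpha\,\}$, so that $g(\alpha,i)=\sharp\big(A_\alpha\cap(i-m,i]\big)$ counts the elements of $A_\alpha$ in the window $(i-m,i]=\{i-m+1,\dots,i\}$. The key observation is that the $m$-stability of $\omega$ says precisely that $A_\alpha$ is closed under the shift $j\mapsto j+m$ inside $\{1,\dots,n\}$: if $j\in A_\alpha$ and $j+m\le n$, then $1\le j\le n-m$, so $\omega(j+m)>\omega(j)>\alpha$ and hence $j+m\in A_\alpha$. (Dually, the complement $\{j:\omega(j)\le\alpha\}$ is closed under $j\mapsto j-m$ within $\{1,\dots,n\}$.)

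Next I would compare consecutive values. The windows $(i-m,i]$ and $(i+1-m,i+1]$ differ by dropping the left endpoint $i-m+1$ and adjoining $i+1$, so $g(\alpha,i+1)-g(\alpha,i)=\mathbf{1}[i+1\in A_\alpha]-\mathbf{1}[i-m+1\in A_\alpha]$. If $i-m+1\notin A_\alpha$ this is $\ge 0$ and we are done. If $i-m+1\in A_\alpha$, then in particular $1\le i-m+1\le n$, and, assuming $i\le n-1$, also $(i-m+1)+m=i+1\le n$; the closure property then forces $i+1\in A_\alpha$, so the difference equals $1-1=0$. In either case $g(\alpha,i)\le g(\alpha,i+1)$, and transitivity gives that $g(\alpha,\cdot)$ is non-decreasing for $i\le n$ — which is the range that matters, since only the values $g(\omega(i),i)=\PS_\omega(\omega(i))$ with $1\le i\le n$ are used, and for $i>n$ the window eventually leaves $\{1,\dots,n\}$ so monotonicity cannot be expected to continue.

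I do not anticipate a real obstacle: after the reformulation via $A_\alpha$ this is a routine sliding-window estimate, and the only points needing care are at the ends of the interval — checking that $i-m+1\in A_\alpha$ indeed places $i-m+1$ in $\{1,\dots,n-m\}$ where $m$-stability applies, and being explicit that we only claim monotonicity up to $i=n$. With this lemma in hand, the second proof of Theorem \ref{Theorem: bijectivity finite} concludes exactly as the first: from $g(\omega(i),i)=\PS_\omega(\omega(i))$ and the monotonicity of $g(\alpha,\cdot)$, one reconstructs $\omega^{-1}(1),\omega^{-1}(2),\dots$ (hence $\omega$) one value at a time, each as the least admissible argument realizing the prescribed value of $g$.
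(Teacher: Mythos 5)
Your proof is correct and takes essentially the same route as the paper's: both compare consecutive values of $g(\alpha,\cdot)$ by the sliding-window observation that $(i-m,i]$ and $(i+1-m,i+1]$ differ only at the endpoints $i-m+1$ and $i+1$, and both invoke $m$-stability (your ``$A_\alpha$ closed under $+m$'' is the contrapositive of the paper's ``otherwise there is an inversion of height $m$''). Your version is marginally more careful about the range restriction to $i\le n$, which the paper leaves implicit.
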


\begin{proof}
Indeed, suppose that $g(\alpha,i)<g(\alpha,i-1).$ The interval $(i-m,i]$ is obtained from the interval $(i-1-m,i-1]$ by dropping $i-m$ and adding $i.$ Therefore, one should have $\omega(i-m)>\alpha$ and $\omega(i)\le \alpha$ to get $g(\alpha,i)<g(\alpha,i-1).$
But that
implies $\omega(i-m)>\omega(i),$
producing  an inversion of height $m.$
Contradiction. 
\end{proof}

We will need the following corollary:

\begin{corollary}\label{rec_fin}
For any $i\in\{1,\dots,n\},$  $\omega(i)$ is the minimal integer $\alpha$, such that $\alpha\neq \omega(j)$ for any $j<i,$ and $\PS_{\omega}(\alpha)=\sharp\{j\in (i-m,i)\cap\{1,\dots,n\}: \omega(j)>\alpha\}.$
\end{corollary}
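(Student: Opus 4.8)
Corollary \ref{rec_fin}: for any $i\in\{1,\dots,n\}$, $\omega(i)$ is the minimal integer $\alpha$ such that $\alpha\neq\omega(j)$ for all $j<i$ and $\PS_\omega(\alpha)=\sharp\{j\in(i-m,i)\cap\{1,\dots,n\}:\omega(j)>\alpha\}$.

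\medskip

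The plan is to deduce this from Lemma \ref{monot} together with the elementary observation that $\omega$, being a bijection of $\{1,\dots,n\}$, sends exactly the values not yet used up to position $i-1$ to positions $\ge i$. First I would fix $i$ and set $S_i:=\{\omega(j):j<i\}$, the set of ``used'' values, so that $\omega(i)$ is one of the elements of $\{1,\dots,n\}\setminus S_i$. The task is to single out $\omega(i)$ among these candidates as the least one satisfying the stated counting identity; note the counting identity uses the \emph{open} interval $(i-m,i)$, whereas $g$ is defined with the half-open interval $(i-m,i]$, so I must track the contribution of the endpoint $j=i$ carefully.

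The first step is the \textbf{consistency check at $\alpha=\omega(i)$}: since $\omega(i)\notin S_i$ trivially, and since $\omega(i)>\alpha$ fails for $j=i$ (as $\omega(i)=\alpha$), the count over $(i-m,i)$ equals the count over $(i-m,i]$, i.e. $g(\omega(i),i)$, which by definition of $\PS_\omega$ is exactly $\PS_\omega(\omega(i))$. So $\alpha=\omega(i)$ is indeed a solution of the system. The second, and main, step is \textbf{minimality}: I must show no smaller $\alpha$ works. Suppose $\alpha<\omega(i)$ with $\alpha\notin S_i$; then $\alpha=\omega(i')$ for some $i'>i$ (it is used, but not before position $i$). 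Now apply Lemma \ref{monot} with this fixed $\alpha$: $g(\alpha,\cdot)$ is non-decreasing, so $g(\alpha,i)\le g(\alpha,i')=\PS_\omega(\alpha)$. On the other hand the quantity appearing in the corollary, $\sharp\{j\in(i-m,i)\cap\{1,\dots,n\}:\omega(j)>\alpha\}$, relates to $g(\alpha,i)$ by possibly subtracting $1$ for the endpoint $j=i$: since $\omega(i)>\alpha$, the value $j=i$ \emph{is} counted in $g(\alpha,i)$ but \emph{not} in the open-interval count, hence the corollary's quantity equals $g(\alpha,i)-1<g(\alpha,i)\le\PS_\omega(\alpha)$. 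Therefore the counting identity fails strictly for every such $\alpha$, which establishes that $\omega(i)$ is the minimal admissible value.

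The one place requiring genuine care — and the step I expect to be the main obstacle — is the bookkeeping of the endpoint $j=i$ and the distinction between the open and half-open intervals, since getting the strict inequality $g(\alpha,i)-1<\PS_\omega(\alpha)$ (rather than a non-strict one) is exactly what forces uniqueness. I would also need to double-check the edge case $i=1$ (where $S_1=\emptyset$ and $(1-m,1)\cap\{1,\dots,n\}=\emptyset$, so $\omega(1)$ must be the least value with $\PS_\omega=0$, consistent with the first proof's computation $x_1=\PS_\omega(1)+1$ read off $\wi$) and the monotonicity input itself, whose proof in Lemma \ref{monot} already uses $m$-stability of $\omega$ — so no additional hypotheses are needed beyond $\omega\in\Smn$. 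This corollary then feeds the recursive reconstruction of $\omega$ position by position, giving the injectivity of $\PS$ on $\Smn$ claimed in Theorem \ref{Theorem: bijectivity finite}.
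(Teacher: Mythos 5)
Your proof is correct and takes essentially the same route as the paper's: both arguments (i) verify that $\alpha=\omega(i)$ satisfies the system because the endpoint $j=i$ contributes nothing when $\omega(i)=\alpha$, so the open- and half-open-interval counts agree with $g(\omega(i),i)=\PS_\omega(\omega(i))$, and then (ii) rule out any smaller $\alpha\notin S_i$ by combining Lemma \ref{monot} (applied via $\alpha=\omega(i')$, $i'>i$, to get $g(\alpha,i)\le g(\alpha,i')=\PS_\omega(\alpha)$) with the observation that the open-interval count at $i$ is $g(\alpha,i)-1$ because $\omega(i)>\alpha$ forces the endpoint $j=i$ to be counted in the half-open version. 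The only difference is presentational: you show directly that every $\alpha<\omega(i)$ with $\alpha\notin S_i$ fails the counting identity, whereas the paper takes the minimal admissible $\alpha$ and derives $g(\alpha,i)=\PS_\omega(\alpha)+1$ together with $g(\alpha,i)\le\PS_\omega(\alpha)$ for a contradiction; these are the same inequalities read in opposite order.
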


\begin{proof}
Fix $i$.
Let $\alpha$ satisfy the above conditions.
Notice such an $\alpha$ must exist since $\w(i)$ satisfies these conditions as
$$
\PS_{\omega}(\omega(i))=g(\omega(i),i)=\sharp\left\{j\in (i-m,i): \omega(j)>\omega(i)\right\}.
$$ 
By minimality, $\alpha \le \w(i)$.
If $\alpha \neq \w(i)$ then we must have $\alpha < \w(i)$,
yielding $i \in \{j\in (i-m,i]\cap\{1,\dots,n\}: \omega(j)> \alpha \}$.
However $i \nin \left\{j\in (i-m,i) \cap\{1,\dots,n\}:
\omega(j) > \alpha \right\}$ whose cardinality is $ \PS_\w(\alpha)$ by assumption.
Hence $g(\alpha,i)  = \PS_{\omega}(\alpha)+1.$
If it were the case that $\alpha = \w(k)$ for some $k>i$, then 
 since $g(\alpha,-)$ is non-decreasing,
we get $\PS_w(\alpha) = \PS_w(w(k)) = g(w(k),k) =
g(\alpha,k) \ge g(\alpha,i) > \PS_\w(\alpha)$.
Contradiction.
On the other hand, $\alpha$ was chosen so  $\alpha\neq \omega(j)$ for any
$j<i$. 
Hence it must be that $\alpha = \w(i).$
 \end{proof}

Now we can complete the proof of Theorem \ref{Theorem: bijectivity finite} and  reconstruct $\omega$ starting from $\omega(1),$ then $\omega(2),$ and so on, using Corollary \ref{rec_fin}. Indeed, if we already reconstructed $\omega(1),\omega(2),\dots,\omega(i-1),$ then we can compute $\sharp\{j\in (i-m,i)\cap\{1,\dots,n\}: \omega(j)>\alpha\}$ for all $\alpha\in \{1,\ldots,n\}.$ Then $\omega(i)$ is the smallest number $\alpha\in \{1,\ldots,n\}$ such that $\alpha\neq\omega(j)$ for $j<i,$ and $\PS_{\omega}(\alpha)=\sharp\{j\in (i-m,i)\cap\{1,\dots,n\}: \omega(j)>\alpha\}.$ We illustrate this proof on example on Figure \ref{Figure: bijection finite 2}. 

\end{proof}

\begin{figure}
\begin{tabular}{|c|c|c|}
\hline 
$k$ & 
	$x_i, \,  i<k$  & $\varphi_k(y)$\cr
\hline 
1 & - - - - - - - & 0 1 2 2 2 2 2 \cr
\hline 
2 & 1 - - - - - - & - 0 1 2 2 2 2  \cr
\hline 
3 & 1 2 - - - - - & - - 0 1 2 2 2 \cr
\hline 
4 & 1 2 - 3 - - - & - - 0 - 1 1 2 \cr
\hline 
5 & 1 2 4 3 - - - & - - - - 0 1 2 \cr
\hline 
6 & 1 2 4 3 - - 5 & - - - - 0 1 - \cr
\hline 
7 & 1 2 4 3 - 6 5 & - - - - 0 - - \cr
\hline 
  & 1 2 4 3 7 6 5 & - - - - - - - \cr
\hline
\end{tabular}
\caption{Suppose that $n=7,$ $m=3,$ and $\PS_{\omega}=\pw{0010210}$. Let us reconstruct
$\wi$ using the first proof of Theorem \ref{Theorem: bijectivity finite}. We record on every step the numbers that we have already reconstructed and  the values of the function $\varphi_k$ for all other numbers.}
\label{Figure: bijection finite 1}
\end{figure}



\begin{figure}
\begin{tabular}{|c|c|c|}
\hline 
$k$ & $\omega(i),\ i<k$  & $\PS_{\omega}(\alpha)-\sharp\{j\in\{k-m+1,\ldots, k-1\}: \omega(j)>\alpha\}$\cr
\hline 
1 & - - - - - - - & 0 0 1 0 2 1 0 \cr
\hline 
2 & 1 - - - - - - & - 0 1 0 2 1 0 \cr
\hline 
3 & 1 2 - - - - - & - - 1 0 2 1 0 \cr
\hline 
4 & 1 2 4 - - - - & - - 0 - 2 1 0 \cr
\hline 
5 & 1 2 4 3 - - - & - - - - 2 1 0 \cr
\hline 
6 & 1 2 4 3 7 - - & - - - - 1 0 - \cr
\hline 
7 & 1 2 4 3 7 6 - & - - - - 0 - - \cr
\hline 
  & 1 2 4 3 7 6 5 & - - - - - - - \cr
\hline
\end{tabular}
\caption{As in Figure \ref{Figure: bijection finite 1}, $n=7,$ $m=3,$ and $\PS_{\omega}=
\pw{0010210}$. This time we reconstruct $\w$ using the second proof of Theorem \ref{Theorem: bijectivity finite}. We record on every step the numbers that we have already reconstructed and the difference $\PS_{\omega}(\alpha)-\sharp\{j\in\{k-m+1,\ldots, k-1\}: \omega(j)>\alpha\}$ for all $\alpha\in \{1,\ldots,n\}\setminus\{\omega(1),\omega(2),\dots,\omega(k-1)\},$ so that on each step we choose
the position of the leftmost 0 in the second column.}
\label{Figure: bijection finite 2}
\end{figure}



We do not know how to describe the image  $\PS(\Smn)$ for general $m$.
As an example, let us consider the case $m=2$ for which we do have a complete 
description.
Let us recall that $S_{n}^{2}$ is the set of finite permutations $\omega$ of $n$ elements with no inversions of height $2$, that is, $\omega(i+2)>\omega(i)$ for all $x$. We define the map $\inv^{(2)}$ from the set $S_{n}^{2}$ to the set of sequences of 0's and 1's as
$$\inv^{(2)}_\w(\alpha):=\chi\left(\omega(\omega^{-1}(\alpha)-1)>\alpha\right)
= \begin{cases}
1 & \text{ if } \omega(\omega^{-1}(\alpha)-1)>\alpha \\
0 & \text{ else.}
\end{cases}$$

\begin{lemma}
\label{m2 finite}
The image of $\inv^{2}$ consists of all $n$-element sequences $f$ of $0$'s and $1$'s, such that for every $\alpha\in\{1,\ldots,n\}$ at least half of the subsequence
$(f_\alpha,\ldots f_n)$ are $0$'s.
The image of $\inv^{2}$ agrees with that of $\PS |_{\Smnn{n}{2}}$.
\end{lemma}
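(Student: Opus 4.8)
The plan is to first reduce both assertions of the Lemma to a single map. For $m=2$ the only inversions of height $<2$ in a permutation $\w\in\Smnn{n}{2}$ are the consecutive descents $(x,x+1)$ with $\w(x)>\w(x+1)$; setting $\alpha=\w(x+1)$ and $i=\w^{-1}(\alpha)$, such an inversion exists — and is then unique, since it forces $x=i-1$ — exactly when $i\ge 2$ and $\w(i-1)>\w(i)=\alpha$, that is, exactly when $\inv^{(2)}_\w(\alpha)=1$. Hence $\PS_\w(\alpha)=\inv^{(2)}_\w(\alpha)$ for all $\alpha$, so $\PS |_{\Smnn{n}{2}}=\inv^{(2)}$; this already settles the second sentence of the Lemma and shows that the image consists of $0/1$-sequences (cf.\ Theorem \ref{theorem-image-PS}). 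It then remains to identify this common image.

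For the inclusion ``image $\subseteq$ admissible sequences'', fix $\w\in\Smnn{n}{2}$, put $f=\inv^{(2)}_\w$, and for each $\beta$ with $f_\beta=1$ set $g(\beta):=\w(\w^{-1}(\beta)-1)$, using the embedding $\Sn\hookrightarrow\aSn$ (so that $\w(0)=\w(n)-n\le 0$). Writing $i=\w^{-1}(\beta)$, the hypothesis $f_\beta=1$ reads $g(\beta)=\w(i-1)>\w(i)=\beta$; in particular $i\ge 2$, so $g(\beta)\in\{1,\dots,n\}$, and $g$ is injective because $\w$ is. The crucial point is that $g$ takes values among the zeros of $f$: since $\w^{-1}(g(\beta))=i-1$, one has $f_{g(\beta)}=\chi(\w(i-2)>g(\beta))$, and $2$-stability gives $\w(i-2)<\w(i)=\beta<g(\beta)$ (when $i=2$ one has $\w(0)\le 0<g(\beta)$ directly), so $f_{g(\beta)}=0$. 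Thus for every $\alpha$ the map $g$ injects $\{\beta\ge\alpha : f_\beta=1\}$ into $\{\beta>\alpha : f_\beta=0\}$, whence $\sharp\{\beta\ge\alpha : f_\beta=1\}\le\sharp\{\beta\ge\alpha : f_\beta=0\}$, which is precisely the claimed admissibility of $f$.

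For the reverse inclusion I would argue by counting rather than by an explicit reconstruction. From the formula for $\sharp\Smnn{n}{2}$ in the Proposition above, $n_1=\lceil n/2\rceil$ and $n_2=\lfloor n/2\rfloor$, so $\sharp\Smnn{n}{2}=\binom{n}{\lfloor n/2\rfloor}$. Reversing a $0/1$-sequence turns the admissibility condition into ``every prefix has at least as many $0$'s as $1$'s'', i.e.\ into a length-$n$ ballot sequence (a prefix of a Dyck path), and these too are well known to number $\binom{n}{\lfloor n/2\rfloor}$ — consistently with the interleaving/shuffle description of $\Smnn{n}{2}$ and with the earlier remark that $\PS$ matches $2$-stable permutations with length-$n$ Dyck paths with free right end. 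Since $\inv^{(2)}=\PS |_{\Smnn{n}{2}}$ is injective by Theorem \ref{Theorem: bijectivity finite} and, by the previous paragraph, maps into the equinumerous set of admissible sequences, it is a bijection onto that set. (Alternatively, one can reconstruct $\w$ from an admissible $f$ directly via the recursion of Corollary \ref{rec_fin} and check that it never stalls, but the cardinality argument is shorter.)

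I do not anticipate a genuine obstacle; the only slightly delicate point is the behaviour at the boundary indices (when $\w^{-1}(\beta)$ or $\w^{-1}(g(\beta))$ equals $1$), which is exactly why it is convenient to pass through $\Sn\hookrightarrow\aSn$ and use $\w(0)=\w(n)-n\le 0$. The single external input is the standard enumeration of length-$n$ ballot sequences by the central binomial coefficient $\binom{n}{\lfloor n/2\rfloor}$.
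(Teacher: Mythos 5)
Your proof is correct and follows essentially the same approach as the paper's: identify $\inv^{(2)}$ with $\PS|_{\Smnn{n}{2}}$, establish the forward inclusion via the injective increasing correspondence sending each $1$ to a later $0$, and conclude by injectivity plus cardinality comparison. You make explicit the count $\sharp\Smnn{n}{2}=\binom{n}{\lfloor n/2\rfloor}$ and its match with the number of ballot-type $0/1$-sequences, a step the paper leaves implicit with ``comparison of the cardinalities of the two sets.''
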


\begin{proof}
Let $\omega$ be a permutation in $S_n^{2}$ and let $f=\inv^{(2)}(\omega)$. For every $\alpha$ such that $f_\alpha=1$ one can find $\beta=\omega(\omega^{-1}(\alpha)-1)>\alpha$ such that $f_\beta=0$ (otherwise $\omega$ would have an inversion of height $2$).
In other words, if we consider $\w^{-1} = [x_1,\ldots, x_n]$, $f(\alpha) = 1$ iff
$x_\alpha = i$ and $i-1$ occurs to the right, i.e. $i-1 = x_\beta$ with  $\beta > \alpha$.
Which occurs iff $\PS_\w(\alpha)=1$. 
And in this case $i-2$ cannot be to the right of $i-1$ as that would place it to
the right of $i$, i.e. $f(\beta) = \PS_\w(\beta)=0$.
 Note that the correspondence $\alpha\mapsto\beta=\omega(\omega^{-1}(\alpha)-1)$ from $1$'s to $0$'s in the sequence $f$ is injective and increasing. Therefore, for every $\alpha\in\{1,\ldots,n\}$ at least half of the subsequence $(f_\alpha,\ldots f_n)$ are $0$'s.

Since we know that $\inv^{(2)}$ is injective, the lemma now follows from the comparison of the cardinalities of the two sets.
\end{proof}

The sequences appearing in Lemma \ref{m2 finite} have a clear combinatorial meaning. Let us read the sequence $s$ backwards and replace $0$'s with a vector $(1,1)$ and $1$'s with a vector $(1,-1)$. We get a lattice path in $\mathbb{Z}^2$ which never goes below the horizontal axis. Such a path may be called a Dyck path with open right end, and Lemma \ref{m2 finite} establishes a bijection between the set of such paths of length $n$ and the set of
finite $2$-stable permutations.




\subsection{Algorithm to construct $\SP^{-1}$ in the affine case}

Here we present a conjectural algorithm that inverts $\SP$. 
While we have not yet shown the algorithm terminates, which in this
case means it eventually becomes $n$-periodic, 
we have checked it on several examples.

Given $f \in \Pmn$, extend $f$ to $\N$ by $f(i+tn) = f(i)$.  
Construct an injective function $U: \N \to \N$ as follows.
Informally, we will think of $U$ as the bottom row in the following table.
$$\begin{array}{r|ccc}
i & 1&2& \cdots \\
\hline
f(i) & f(1) & f(2) & \cdots \\
\hline
U(i)& U(1) &U(2) & \cdots 
\end{array}
$$
Since $U$ is manifestly injective, it will make sense to talk about $U^{-1}$.

We will insert the numbers $\alpha \in \N$ into the table as follows.
\begin{enumerate}
\item
Place $\alpha = 1$ under the leftmost $0$.  
In other words, let  $i = \min\{j \in \N | f(j) = 0\}$ and then set $U(i) = 1$.
As there always exists some $1 \le j \le n$ such that $f(j) = 0$, this is always possible.
\item
Assume $\{1, 2, \ldots, \alpha - 1 \}$ have already been placed. 
Place $\alpha$ in the leftmost empty position $i$ (i.e. $U(i) = \alpha$, 
with $i \nin \{U^{-1}(\beta) | 1 \le \beta < \alpha\}$  for
$i$ minimal) such that these two conditions hold.
\begin{itemize}
\item[($\mathtt{I}$)]
\label{item-m-restrict}
$\alpha$ is to the right of $\alpha - tm$ for $1 \le t < \alpha/m$, $t \in \N$.
More precisely, $i > U^{-1}(\alpha - tm)$.
\item[($\mathtt{II}$)]
\label{item-invertsSP}
If $U(i)  =\alpha$, then $f(i) = \sharp \{ \beta |
\beta \in (\alpha -m, \alpha), U^{-1}(\beta) > i \}$.
\end{itemize}
In other words, we build $U$ so that $f(i) =\sharp \{j | j > i, 0 < U(i)-U(j) < m \}$
counts the number of $m$-restricted inversions. 
Note that placing $\alpha$ is always possible, since a valid (non-minimal) position
 for $\alpha$ is under a $0$ of $f$ such that it and all spots to the right of it
are as yet unoccupied.
\end{enumerate}
\begin{conjecture}
For the $U$ constructed above, $\exists N$ such that for all $i \ge N$, $t \in \N$
\begin{enumerate}
\item
$U(i+tn) = U(i) + tn$,  so in particular
\item
\label{item-nogaps}
$U(N+j)$ for $1 \le j \le n$ have all been assigned values
\end{enumerate}
\end{conjecture}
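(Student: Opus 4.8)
\medskip
\noindent\textbf{Sketch of a possible approach.}
The plan is first to reinterpret the algorithm, then to prove that the ``backlog'' it creates stays bounded, and finally to run a finite--state argument. For the reinterpretation: at the end of step $\alpha$ the partial list $[U(1),U(2),\ldots]$ built so far should be thought of as an initial segment of the window of a potential element $U\in\maSn$. Condition $(\mathtt{I})$ is exactly the requirement that $U$ have no inversion of height $m$, i.e. $U(i)-U(j)\neq m$ for $i<j$ (the characterization of $\maSn$ from Section~\ref{Subsection: Sommers region}), and condition $(\mathtt{II})$ is exactly $\SP_U(i)=f(i)$, using $\SP_U(i)=\sharp\{j>i\mid 0<U(i)-U(j)<m\}$. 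Thus the conjecture says that the greedy list stabilizes to the window of a genuine $m$--restricted affine permutation $U$ with $\SP_U=f$ (up to the harmless global shift needed to normalize the sum over a period to $\tfrac{n(n+1)}2$), and that its transient part is finite. This is the affine analogue of the reconstruction used in the proofs of Theorem~\ref{Theorem: bijectivity finite}; the genuinely new point is eventual $n$--periodicity, which in the finite case is vacuous because the domain is $\{1,\ldots,n\}$.

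The key quantity to control is the \emph{backlog} $h(\alpha):=U^{-1}(\alpha)-\alpha$, i.e. the number of still--empty slots in $[1,U^{-1}(\alpha)]$ just after the values $1,\ldots,\alpha$ have been placed. The heart of the proof is to show that $h(\alpha)$ is bounded, uniformly in $\alpha$, by a constant $C=C(f,m,n)$ (one expects $C$ to be comparable to $\tfrac{(m-1)(n-1)}2-\sum_i f(i)$). The crude half is easy: since $f$ is a parking function it has a $0$ in every window of $n$ consecutive positions, and a slot under such a $0$ lying to the right of every currently occupied slot satisfies both $(\mathtt{I})$ and $(\mathtt{II})$, so $U^{-1}(\alpha)$ never exceeds the current frontier by more than $n$; this only gives $U^{-1}(\alpha)\le \alpha n+O(1)$ and must be sharpened to $U^{-1}(\alpha)=\alpha+O(1)$, i.e. one must show the frontier advances at \emph{average} rate exactly $1$. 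This is where the parking property of $f$ must be used in an essential way: I would argue that if the backlog grew without bound, then along a subsequence of steps one of the inequalities $\sharp\{k\le n\mid f(k)<\ell\}\ge \ell n/m$ (equivalently $\sharp\{k\le n\mid f(k)\le \tfrac{im}{n}\}\ge i+1$) defining $\Pmn$ would be violated. The same inequalities should also rule out a \emph{periodic} family of permanently skipped slots (slots $i,i+n,i+2n,\ldots$ never filled), which is the other way the conclusion could fail. I expect this boundedness of the backlog to be the main obstacle; the rest is comparatively soft.

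Granting $h(\alpha)\le C$, the remainder is a finite--state argument. Define the \emph{state} at step $\alpha$ to be the occupied/empty pattern of slots in a window of width $O(C+m+n)$ around the frontier $U^{-1}(\alpha)$, together with the positions (relative to the frontier) of the last $O(C)$ inserted values and the residue $\alpha\bmod n$; once the backlog is bounded this is all the data on which conditions $(\mathtt{I})$ and $(\mathtt{II})$ for step $\alpha+1$ depend, and $f(\alpha+1)$ is determined by $\alpha\bmod n$. Hence the state space is finite, the transition rule is deterministic, and the sequence of states is eventually periodic. Because $f$ is $n$--periodic, advancing the state by $n$ steps is a single map $\Phi$ of the finite state space; one then argues, using the leftmost--slot rule, that in the periodic regime the state is in fact $\Phi$--fixed (equivalently, the limiting periodic list is the window of an honest element of $\maSn$), so the period is exactly $n$. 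This yields $U(i+tn)=U(i)+tn$ for all $i\ge N$ and $t\in\N$ once $N$ is taken past the start of the periodic regime, proving part~(1). Part~(2) is then immediate: $n$--periodicity together with no holes in one period $[N+1,N+n]$ forces no holes anywhere above $N$, so $U$ restricted to $(N,\infty)$ is the window of an element $U\in\maSn$ with $\SP_U=f$ — which would, in particular, reprove the surjectivity half of Conjecture~\ref{Conjecture: bijectivity} for this $f$.
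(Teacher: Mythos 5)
The statement you are trying to prove is explicitly labeled a \emph{conjecture} in the paper, and the authors write immediately before it that they ``have not yet shown the algorithm terminates, which in this case means it eventually becomes $n$-periodic, we have checked it on several examples.'' So there is no proof in the paper to compare your attempt against.

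Evaluating your sketch on its own: the reinterpretation of conditions $(\mathtt{I})$ and $(\mathtt{II})$ is correct (one uses that $m$-stability of $U^{-1}$ automatically forbids value differences that are any positive multiple of $m$, not just $m$ itself, so the algorithm's restriction over all $t$ is equivalent to $U\in\maSn$, and $(\mathtt{II})$ is exactly $\SP_U(i)=f(i)$). The two-stage plan — bound the backlog $h(\alpha)=U^{-1}(\alpha)-\alpha$ uniformly, then run a deterministic finite-state/pigeonhole argument on a window of bounded width around the frontier — is a sensible line of attack. However, you yourself flag the two points on which the whole thing turns, and neither is actually proved. First, the uniform bound $h(\alpha)\le C$ is asserted, not derived: the crude bound from ``a $0$ of $f$ beyond the frontier is always a valid slot'' gives only $U^{-1}(\alpha)\le n\alpha$, i.e. $h(\alpha)=O(\alpha)$, and the passage ``I would argue that if the backlog grew without bound, then \ldots\ would be violated'' records an expectation, not a mechanism. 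No argument is given that the parking-function inequalities force the frontier to advance at average rate exactly one, nor that they rule out an entire residue class of permanently skipped slots. Second, even granting boundedness and hence eventual periodicity of the state, the step from ``eventually periodic with some period'' to ``eventually a fixed point of the $n$-step map $\Phi$'' is asserted ``using the leftmost-slot rule'' with no real argument; a priori the algorithm could settle into a cycle of length a proper multiple of $n$, and such a regime would \emph{not} produce the window of an element of $\maSn$. So the proposal is an outline, with the two central steps still open; since the paper also leaves the statement open, there is no argument in the paper you could have been expected to reproduce.
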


Given $U$ constructed from $f \in \Pmn$ as in the algorithm and satisfying
the conditions of the conjecture, we construct $\w \in \maSn$ as follows:
Pick $t$ so $1+tn \ge N$.
By the periodicity of $U$ and that $U$ has no ``gaps" 
after $N$,
$\{U(i+tn) \bmod n | 1\le i\le n \} = \{ 1, 2, \ldots, n\}$.
Hence $b := \sum_{i=1}^n U(i+tn) \equiv \frac{n(n+1)}{2} \bmod n$.
Let $k$ be such that $b - \frac{n(n+1)}{2} = kn$.
Now set
$$ \w(i) = U(i+tn) -k.$$
This forces $\sum_{i=1}^n \w(i) = \frac{n(n+1)}{2}$, and so we see $\w \in \aSn$.
By construction,
($\mathtt{I}$) and ($\mathtt{II}$)
imply $w \in \maSn$ and $\SP_\w = f$.

We illustrate the algorithm to construct $U$ and $\w$ on the following example.


\begin{example}
Let $n=5, m=3$. Let $f = \pw{11002} \in \PF_{3/5}.$

\begin{center}
\begin{figure}
$$
\begin{array}{r|rrrrrrrrrrrrrrrrrrrrr}
\hline 
i & 1&2&3&4&5& \, 6&7&8&9&10& \, 11&12&13&14&15& \, 16&17&18&19&20& \cdots
\\
\hline 
f(i) & 1&1&0&0&2& \, 1&1&0&0&2&   \, 1&1&0&0&2& \, 1&1&0&0&2& \cdots
\\
\hline 
U(i) & 
\\
&.&.&1
\\
&
2&.&1
\\
&
2&3&1
\\
&
2&3&1&4
\\
&
2&3&1&4&.& .&.&5
\\
&
2&3&1&4&.& 6&.&5
\\
&
2&3&1&4&7& 6&.&5&
\\
&
2&3&1&4&7& 6&.&5&8
\\
&
2&3&1&4&7& 6&9&5&8&. &.& .&10
\\
&
2&3&1&4&7& 6&9&5&8&. &11& .&10
\\
&
2&3&1&4&7& 6&9&5&8&12 &11& .&10
\\
&
2&3&1&4&7& 6&9&5&8&12 &11& .&10&13
\\
&
2&3&1&4&7& 6&9&5&8&12 &11& 14&10&13
\\
&
2&3&1&4&7& 6&9&5&8&12 &11& 14&10&13 & .& . & 15
\\
&
2&3&1&4&7& 6&9&5&8&12 &11& 14&10&13 &. & 16 & 15
\\
&
2&3&1&4&7& 6&9&5&8&12 &11& 14&10&13 & 17 & 16 & 15 & \cdots
\\
\hline
\end{array}
$$
\caption{Algorithm to construct $U$ from 
$f = \pw{11002} \in \PF_{3/5}.$}
\label{figure-algorithm}
\end{figure}
\end{center}

Refer to Figure \ref{figure-algorithm} for a demonstration of how $U$ is constructed.
Note that $U(7) \neq 8$ since that would place $8$ before $5$, violating
being $3$-restricted.

In the above we can in fact take $N=5$.
Observe $\{ U(6), U(7), U(8), U(9), U(10) \} = \{ 6,9,5,8, 12 \}$ yielding
$b = 40$ and $k = 5$.  Hence we set $\w = [1,4,0,3,7]$.
Now one can easily verify $\w \in \maSnn{5}{3}$ and $\SP_\w = \pw{11002}$.
\end{example}

In practice, we have found $U$ to be surjective as well; in other words
there are no ``gaps" even before $N$. 
Further, when $f = \SP_u$ for some finite permutation $u \in \Sn$, we can take $N=1$.





\section{Relation to Springer theory}\label{Section: Springer}

\subsection{Springer fibers for the symmetric group}

Let $V$ be a finite-dimensional vector space and let $N$ be a nilpotent transformation of $V$.
Let $\Fl(V)$ denote the space of complete flags in $V$. A classical object in the representation theory is 
the Springer fiber (\cite{Sp78,Sh80}) defined as
$$X_N:=\{{\bf F}=\{V=V_1\supset V_2\supset\ldots\supset V_{n}\}\in \Fl(V): N(V_i)\subset V_i\}.$$
It is known that $X_N$ admits an affine paving with combinatorics completely determined by the conjugacy class of $N$ (see e.g. \cite{Sh80} and references therein). 


We will be interested in a particular case of this construction. Let us fix a basis $(e_1,\ldots,e_n)$ in the space $V$, consider the operator of shift by $m$: 
$$
N(e_i):=\begin{cases}
e_{i+m},& i+m\le n\\
0,& \text{otherwise}\\
\end{cases}
$$
The following theorem describes the structure of the affine cells in the variety $X_{N}$.

\begin{theorem}
\label{Theorem: cells of finite Springer fiber}
The variety $X_N$ admits an affine paving, where the cells $\Sigma_{\omega}$ are
parametrized by the finite $m$-restricted permutations $\omega \in \Smn$.
The dimension of $\Sigma_{\omega}$ is given by the number of inversions in $\omega^{-1}$ of height less than $m$.
\end{theorem}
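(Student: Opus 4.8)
The plan is to realise $X_N$ as a variety carrying a $\C^{*}$-action whose Bialynicki--Birula decomposition is the asserted paving, and then to translate the geometry into the combinatorics of $\Smn$. The first step is to record the Jordan type of $N$: the basis $e_{1},\dots,e_{n}$ splits into the $m$ strings $e_{i}\to e_{i+m}\to e_{i+2m}\to\cdots$ for $1\le i\le m$, so $N$ has Jordan type $(n_{1},\dots,n_{m})$ with $n_{i}=\lfloor (n-i)/m\rfloor+1$ (exactly the numbers appearing in the Proposition counting $\sharp\Smn$). In particular $X_{N}$ is one of the type-$A$ Springer fibers whose affine pavings are discussed in \cite{Sh80}.

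Next I would introduce the one-parameter subgroup $\lambda(t)=\mathrm{diag}(t,t^{2},\dots,t^{n})\in GL(V)$; a direct check gives $\lambda(t)N\lambda(t)^{-1}=t^{m}N$, so $\lambda$ normalises the line $\C N$ and hence preserves the closed subvariety $X_{N}\subset\Fl(V)$. The $\lambda$-fixed points of $\Fl(V)$ are the coordinate flags $\mathbf F_{\omega}$, $\omega\in\Sn$, given by $V_{i}=\spann(e_{k}:\omega(k)\ge i)$, and one checks at once that $N$ preserves $\mathbf F_{\omega}$ if and only if $\omega(k+m)>\omega(k)$ for $1\le k\le n-m$, i.e. exactly when $\omega\in\Smn$. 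Since $\Fl(V)$ is smooth projective with isolated fixed points, Bialynicki--Birula produces an affine paving $\Fl(V)=\bigsqcup_{\omega\in\Sn}C_{\omega}$ by the attracting cells of the $\mathbf F_{\omega}$; as $X_{N}$ is $\lambda$-stable and closed, $X_{N}=\bigsqcup_{\omega}(X_{N}\cap C_{\omega})$, and $X_{N}\cap C_{\omega}\ne\emptyset$ precisely when $\mathbf F_{\omega}\in X_{N}$, that is, when $\omega\in\Smn$. Set $\Sigma_{\omega}:=X_{N}\cap C_{\omega}$.

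The core of the argument is that every $\Sigma_{\omega}$ is an affine space of the predicted dimension. In the usual Schubert-type coordinates on $C_{\omega}$ (flags $g\cdot\mathbf F_{\omega}$ with $g$ in a unipotent subgroup, with one coordinate per inversion), the condition $N(gV_{i})\subseteq gV_{i}$ becomes a polynomial system in these coordinates; by the explicit analysis of Springer-fiber intersections with Bruhat cells in type $A$ (see \cite{Sh80} and references therein), this system, in a suitable triangular order on the variables, solves a subset of the coordinates in terms of the others, so $\Sigma_{\omega}\cong\mathbb A^{d(\omega)}$ for some $d(\omega)$. It remains to compute $d(\omega)$: because $N$ carries $e_{k}$ to $e_{k+m}$, the Springer relations only couple a coordinate to coordinates at index-distance a multiple of $m$, and tracking which coordinates the relations leave free — a single reflection of $\overline{\star}$-type exchanges value-gaps for $\omega$ with position-heights for $\omega^{-1}$ — shows that the free coordinates are indexed exactly by the inversions $(i,j)$ of $\omega^{-1}$ with $j-i<m$. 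Hence $\dim\Sigma_{\omega}=\sharp\{\text{inversions of }\omega^{-1}\text{ of height}<m\}$, which is the claim.

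The main obstacle is this last step: showing that the Springer condition cuts each cell $C_{\omega}$ down to an affine space of exactly the right dimension rather than some thicker variety, and identifying the surviving coordinates with the height-$<m$ inversions of $\omega^{-1}$. I expect the most efficient route is to borrow the first half from the known type-$A$ Springer pavings referenced in the excerpt and to carry out the index bookkeeping directly, using the very explicit shift form of $N$. As a consistency check I would verify that $\sum_{\omega\in\Smn}q^{\dim\Sigma_{\omega}}$ equals the $q$-multinomial coefficient $\binom{n}{n_{1},\dots,n_{m}}_{q}$, which specialises at $q=1$ to $\sharp\Smn=n!/\prod_{i}n_{i}!$ and is, by the Hotta--Springer theory, the Poincar\'e polynomial of $X_{N}$; this also displays the statement as the finite analogue of the affine Springer-fiber dimension formula above (after inverting the permutation).
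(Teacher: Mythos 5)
Your overall strategy is the same as the paper's: pave the flag variety by Schubert cells, intersect with $X_N$, and do an explicit coordinate computation. The paper does not dress this up in Bialynicki--Birula language, but since the Schubert paving of $\Fl(V)$ \emph{is} the BB decomposition for a generic one-parameter subgroup, the extra machinery you invoke ($\lambda(t)=\mathrm{diag}(t,\ldots,t^n)$ with $\lambda(t)N\lambda(t)^{-1}=t^mN$) just repackages the same decomposition; it is a clean way to see that $X_N$ decomposes but does not by itself buy you affineness of the pieces. What your sketch leaves to ``the explicit analysis of Springer-fiber intersections with Bruhat cells'' is precisely what the paper does by hand: after normalizing the flag to
$v^\alpha=e_\alpha+\sum_{(\alpha,\beta)\in\Inv(\omega^{-1})}\lambda^\alpha_\beta e_\beta$,
the condition $N(V_i)\subseteq V_i$ produces one equation
$\lambda^{\alpha+m}_\beta=\lambda^\alpha_{\beta-m}+\phi(\lambda)$
for each inversion $(\alpha,\beta)$ of $\omega^{-1}$ of height $>m$, where $\phi$ depends only on coordinates of smaller height and has no linear terms. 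These equations are triangular (hence their linear parts are independent), so they can be solved successively, giving an affine cell whose free coordinates are the inversions of height $<m$ (no inversions of height exactly $m$ occur, by $m$-stability). This is the content that actually makes the count come out right, and you need to supply it rather than cite it.

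There is also a concrete slip in your dimension count. With your flag convention $V_i=\spann(e_k:\omega(k)\ge i)$ and the attracting direction $t\to 0$, the free Schubert coordinates $c_{kl}$ (in $v_k=e_k+\sum_{l>k}c_{kl}e_l$) are indexed by the inversions $(k,l)$ of $\omega$ \emph{itself}, not of $\omega^{-1}$; and the Springer relations eliminate exactly those with $l-k>m$, so $\dim\Sigma_\omega$ is the number of inversions of $\omega$ of height $<m$. This agrees with the theorem once you notice that the theorem's label is $m$-restricted while yours is $m$-stable, so the theorem's $\omega^{-1}$ is your $\omega$. As written, your claim ``inversions of $\omega^{-1}$ with $j-i<m$'' is wrong for your own convention: a quick check with $n=4$, $m=3$, $\omega=[2,4,1,3]$ (which is $3$-stable) gives three inversions of $\omega$ of height $<3$ but only two inversions of $\omega^{-1}=[3,1,4,2]$ of height $<3$. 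The remark about a ``reflection of $\overline{\star}$-type exchanging value-gaps for $\omega$ with position-heights for $\omega^{-1}$'' is in fact a symptom of the error: that exchange replaces the position-gap $l-k$ by the value-gap $\omega(k)-\omega(l)$, which is \emph{not} the quantity governing the relations here (the paper's notion of height is always the position-gap), and $\overline{\star}$ plays no role in the paper's proof of this theorem.
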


\begin{proof}
The cells are essentially given by the intersections of Schubert cells in $\Fl(V)$ with the subvariety $X_N$. For the sake of completeness, let us recall their construction.
Given a permutation $\omega\in S_n$, we can define a stratum $\Sigma_{\omega}$ in $\Fl(V)$ consisting of the following flags:
$$
{\bf F}=\{V_1\supset V_2\supset\ldots\supset V_n\},\ V_i=\spann\{ 
v^{\omega(i)},\ldots,v^{\omega(n)} \},
$$
where
$$
v^\alpha=e_\alpha+\sum_{\beta>\alpha}\lambda^\alpha_\beta e_\beta.
$$
Note that the position of $v^\alpha$ in the basis equals $\omega^{-1}(\alpha)$.
After a triangular change of variables, we can assume that $\lambda^\alpha_\beta=0$ for $\beta>\alpha$ with $\omega^{-1}(\beta)>\omega^{-1}(\alpha).$ Therefore one can write
\begin{equation}
\label{tilde basis}
v^\alpha=e_\alpha+\sum_{\beta>\alpha,\omega^{-1}(\beta)<\omega^{-1}(\alpha)}\lambda^\alpha_\beta e_\beta=
e_\alpha+\sum_{(\alpha,\beta)\in\Inv(\w^{-1})}\lambda^\alpha_\beta e_\beta.
\end{equation}
The parameters $\lambda^\alpha_\beta$ are uniquely defined by the flag ${\bf F}$. They serve as coordinates on the affine space $\Sigma_{\omega}$, whose dimension is equal to the
length of $\omega$, i.e. $=\sharp \Inv(\w) = \sharp \Inv(\wi)$ .
 
Let us  study the intersection $\Sigma_{\omega}^N:=\Sigma_{\omega}\cap X_N.$  Since $Nv^\alpha$ starts with $e_{\alpha+m}$, the vector $v^{\alpha+m}$ should go after $v^\alpha$ in the basis, so one needs $\omega ^{-1}(\alpha+m)>\omega^{-1}(\alpha)$. Therefore $\Sigma^{N}_{\omega}$ is non-empty if and only if $\omega^{-1}$ is $m$-stable.
A flag ${\bf F}$ is $N$-invariant, if $N(v^\alpha)$ belongs to $\spann \{
v^\beta: \omega^{-1}(\beta)>\omega^{-1}(\alpha) \}$ for all $\alpha$.  
If 
$$
\beta>\alpha+m\ \text{and}\ \omega^{-1}(\beta)>\omega^{-1}(\alpha),
$$ 
then the coefficient in $v^{\alpha+m}-N(v^\alpha)$ at $e_\beta$ can be eliminated by subtracting an appropriate multiple of $v^\beta$. Once all these coefficients are eliminated, 
 the remaining coefficients in $v^{\alpha+m}-N(v^\alpha),$ labeled by $\beta>\alpha+m$ such that $\omega^{-1}(\beta)<\omega^{-1}(\alpha)$ will
 vanish automatically.
 Therefore $\Sigma^N_{\omega}$ is cut out in $\Sigma_{\omega}$ by the equations:
\begin{equation}
\label{restrictions on lambda}
\lambda^{\alpha+m}_\beta=\lambda^\alpha_{\beta-m}+\phi(\lambda)\ \text{if}\ \beta>\alpha+m,\omega^{-1}(\beta)<\omega^{-1}(\alpha),
\end{equation}
where $\phi(\lambda)$ are certain explicit polynomials in $\lambda^\mu_\nu$ with $\nu-\mu<\beta-\alpha-m,$ with no linear terms. 

It is clear that such equations are labeled by the inversions $(\alpha,\beta)$ in $\omega^{-1}$ of height bigger than $m$. Note also that the linear parts of these equations are linearly independent. Therefore the number of free parameters on $\Sigma^N_{\omega}$ equals to the number of inversions of $\wi$  of height less than $m$. 
\end{proof}

\begin{example}
Consider a 2-stable permutation $\omega=[2, 1, 4, 3]=\omega^{-1}$. The basis \eqref{tilde basis} has the form:
$$
v^{\omega(1)}=e_2,\ v^{\omega(2)}=e_1+\lambda^1_2 e_2,\ v^{\omega(3)}=e_4,\ v^{\omega(4)}=e_3+\lambda^3_4 e_4.
$$

There are two free parameters, so $\dim\Sigma^{N}_{\omega}=2$. Note that although $\lambda^3_4\neq \lambda^1_2$,
$$N(v^{\omega(2)})=e_3+\lambda^1_2 e_4\in \spann \{ v^{\omega(3)},v^{\omega(4)} \}.$$
\end{example}

\begin{example}
Consider a $3$-stable permutation $\omega^{-1}=[1, 5, 3, 2, 6, 4, 7],$ so $\omega=[1, 4, 3, 6, 2, 5, 7]$ is $3$-restricted. 
The basis \eqref{tilde basis} has a form:
$$
v^{\omega(1)}=e_1,\ v^{\omega(2)}=e_4,\ v^{\omega(3)}=e_{3}+\lambda^3_4 e_4,\ v^{\omega(4)}=e_6,\  v^{\omega(5)}=e_2+\lambda^2_3 e_3+\lambda^2_4 e_4+\lambda^2_6 e_6, 
$$
$$
v^{\omega(6)}=e_5+\lambda^5_6 e_6,\ v^{\omega(7)}=e_7.
$$
Since $N(v^{\w(5)})-v^{\w(6)}=(\lambda^2_3-\lambda^5_6)e_6+\lambda^2_4 e_7\in V_5,$ and 
$$
V_5=\spann
\{ v^{\w(5)},v^{\w(6)},v^{\w(7)} \} =\spann \{ e_2+\lambda^2_3 e_3+\lambda^2_4 e_4+\lambda^2_6 e_6,e_5+\lambda^5_6 e_6,e_7 \},
$$
the coefficient of $e_6$ must vanish and so we get the relation $\lambda^2_3=\lambda^5_6.$ Therefore $\dim \Sigma_{\omega}^N=4.$ 
\end{example}

\subsection{Springer fibers for the affine symmetric group}

We recall the basic definitions of the type $A$ affine Springer fibres, and refer the reader e. g. to \cite{GKM,KL,LS91} for more details. 

Let us choose an indeterminate $\varepsilon$ and consider the field $K=\C((\varepsilon))$ of Laurent power series and the ring $\CO=\C[[\varepsilon]]$ of power series in $\varepsilon$. Let $V=\C^n((\varepsilon))$ be a $K$-vector space of dimension $n$.

\begin{definition}
The 
{\em affine Grassmannian}
 $\CG_n$ for the group $GL_n$ is the moduli space of  $\CO$-submodules
 $M\subset V$ such that the following three conditions are satisfied:
\begin{itemize}
\item [(a)] $M$ is $\CO$-invariant.
\item [(b)] There exists $N$ such that $\varepsilon^{-N}\C^n[[\varepsilon]]\supset M\supset \varepsilon^{N}\C^n[[\varepsilon]].$
\item [(c)] Let $N$ be an integer satisfying the above condition. Then the following normalization condition is satisfied:
$$
\dim_{\C} \varepsilon^{-N}\C^n[[\varepsilon]]/M=\dim_{\C} M/ \varepsilon^{N}\C^n[[\varepsilon]].
$$
\end{itemize}

The 
{\em affine complete flag variety}
 $\CF_n$ for the group $GL_n$ is the moduli space of collections $\{M_0\supset\ldots\supset M_n\}$, where each $M_i$ satisfies (a) and (b), 
$\dim_{\C} M_i/M_{i+1}=1,$\ $M_n=\varepsilon M_0,$ and $M_0\in\CG_n,$ i.e. $M_0$ also satisfies the normalization condition (c).
\end{definition}

\begin{definition}
Let $T$ be an endomorphism of $V$. It is called 
{\em nil-elliptic}
if $\lim_{k\to\infty}T^{k}=0$ and the characteristic polynomial of $T$ is irreducible over $K$. 

Given a nil-elliptic operator $T$, one can extend its action to $\CG_n$ and to $\CF_n$ and define the 
{\em affine Springer fibers}
 as the corresponding fixed point sets. 
\end{definition}

\begin{remark}
The condition $\lim_{k\to\infty}T^{k}=0$ means that for any $N\in\N$ there exists $k\in\N$ such that $$T^k(\C^n[[\varepsilon]])\subset \varepsilon^N\C^n[[\varepsilon]].$$
\end{remark}

In \cite{LS91} Lusztig and Smelt studied the structure of the affine Springer fibers for a particular choice of $T$. Given a $\C$-basis $\{e_1,\ldots, e_n\}$ in $\C^n$,
one can consider it as a $K$-basis of
 $V=\C^n((\varepsilon))$. Consider the operator $N$ defined
by the equations $N(e_i)=e_{i+1},\ N(e_n)=\varepsilon e_1.$ The following theorem is the main result of \cite{LS91}.

\begin{theorem}(\cite{LS91})
Consider the nil-elliptic operator $T:=N^m$, where $m$ is coprime to $n$. Then the corresponding affine Springer fiber $\CF_{m/n}\subset\CF_n$ 
admits an affine paving by $m^{n-1}$ affine cells. 
\end{theorem}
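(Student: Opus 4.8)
The plan is to mimic, in the affine setting, the Schubert-cell argument of Theorem \ref{Theorem: cells of finite Springer fiber}: intersect the Schubert cells of the affine flag variety $\CF_n$ with $\CF_{m/n}$, show that the nonempty intersections are affine cells naturally indexed by $\aSmn$, and invoke Proposition \ref{intro sommers} to count them. First I would replace $N$ by a convenient model. Since $N(e_i)=e_{i+1}$ for $i<n$ and $N(e_n)=\varepsilon e_1$ we have $N^n=\varepsilon\cdot\mathrm{id}$, so $V=\C^n((\varepsilon))$ may be identified with $\C((t))$ in such a way that $N$ becomes multiplication by $t$ (with $t^n=\varepsilon$) and $\CO=\C[[\varepsilon]]=\C[[t^n]]$. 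Under this identification $T=N^m$ is multiplication by $t^m$, and a $T$-fixed point of $\CF_n$ is a chain ${\bf M}=(M_0\supset M_1\supset\cdots\supset M_n=t^nM_0)$ of $\CO$-lattices with $\dim_\C M_i/M_{i+1}=1$, each $M_i$ moreover stable under multiplication by $t^m$; equivalently, each $M_i$ is a finitely generated $\C[[t^m,t^n]]$-module, with $M_0$ normalized by condition (c).

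Next I would stratify $\CF_{m/n}$ by attaching to ${\bf M}$ the chain of valuation sets $\Delta_i=\{\mathrm{val}(v)\mid v\in M_i\setminus\{0\}\}\subseteq\Z$. Each $\Delta_i$ is invariant under $+m$ and $+n$; the condition $\dim M_i/M_{i+1}=1$ forces $\Delta_0\supsetneq\Delta_1\supsetneq\cdots$ with a single element deleted at each step and $\Delta_n=\Delta_0+n$, while condition (c) pins $\Delta_0$ down exactly. Recording the element deleted at the $i$-th step and extending $n$-periodically produces precisely an $m$-stable affine permutation $\omega\in\aSmn$ with $\Delta_0=\{i\in\Z\mid\omega(i)>0\}$, the $+m$-invariance of the $\Delta_i$ being exactly the $m$-stability of $\omega$. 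This gives a decomposition $\CF_{m/n}=\bigsqcup_{\omega\in\aSmn}\Sigma_\omega$, and by Proposition \ref{intro sommers} there are $\#\aSmn=m^{n-1}$ strata.

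For a fixed $\omega$ I would then show $\Sigma_\omega$ is an affine space, by the argument behind \eqref{tilde basis}--\eqref{restrictions on lambda}. Choosing normalized module generators $v^\alpha=t^\alpha+\sum_{\beta>\alpha}\lambda^\alpha_\beta t^\beta$ and performing a triangular change of variables, one may assume $\lambda^\alpha_\beta=0$ unless $(\alpha,\beta)$ is an inversion of $\omega^{-1}$; the $t^m$-stability of ${\bf M}$ then imposes exactly one relation $\lambda^{\alpha+m}_\beta=\lambda^\alpha_{\beta-m}+\phi(\lambda)$, with $\phi$ having no linear term and depending only on coordinates with smaller $\beta-\alpha$, for each inversion of $\omega^{-1}$ of height greater than $m$. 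Their linear parts being independent, these relations eliminate the ``tall'' coordinates, so $\Sigma_\omega$ is an affine space whose dimension equals the number of inversions of $\omega^{-1}$ of height less than $m$, in agreement with the $\dinv$-type formula for $\dim\Sigma_\omega$ stated earlier. Summing over $\omega\in\aSmn$ recovers the count of $m^{n-1}$ cells.

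Finally, to upgrade this set-theoretic partition into a genuine affine paving I would apply the Bialynicki--Birula decomposition for a $\C^*$-action on $\CF_n$ commuting with $T$ --- for instance the one rescaling $e_i\mapsto s^ie_i$ and $\varepsilon\mapsto s^n\varepsilon$, for which $\rho(s)N\rho(s)^{-1}=sN$ and hence $\rho(s)T\rho(s)^{-1}=s^mT$, so that $\rho$ preserves $\CF_{m/n}$ --- whose fixed locus on the projective variety $\CF_{m/n}$ is the finite set of monomial chains (one per $\omega$) and whose attracting cells are exactly the $\Sigma_\omega$; this yields closure-finiteness and the affine-paving property, reproducing the Lusztig--Smelt conclusion. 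The main obstacle is carrying out the third step rigorously in this infinite-dimensional setting: justifying the triangular normalization, verifying that the $t^m$-stability equations cut out an honest finite-dimensional affine space (finitely many free parameters, all others uniquely determined, independent linear parts), and matching each $\Sigma_\omega$ with the attracting set of a single $\C^*$-fixed point. By contrast the identification of the strata with $\aSmn$ and the resulting count $m^{n-1}$ are immediate consequences of Proposition \ref{intro sommers}.
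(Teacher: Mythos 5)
Your proposal follows essentially the same route as the paper's proof of Theorem \ref{Theorem: cells of affine Springer}: identify $V$ with $\C((z))$ so that $T$ becomes multiplication by $z^m$, stratify the $T$-fixed flags by their valuation sets to index strata by affine permutations in the Sommers region, and exhibit each stratum as an affine space via the $z^m$-stability relations on normalized generators. The Bialynicki--Birula discussion you add to establish closure-finiteness is a reasonable supplement (the paper defers that part to \cite{LS91}), and the mismatch between your ``$m$-stable'' labeling and the paper's labeling of cells by $m$-restricted permutations in $\maSn$ is just the $\omega\leftrightarrow\omega^{-1}$ convention.
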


It turns out that the affine paving of this affine Springer fiber is tightly related to the combinatorics of the simplex $D_{n}^{m}$.
This was implicitly stated in \cite{GORS12,H12,LS91}, but we would like to make this correspondence precise and explicit.

\begin{theorem}
\label{Theorem: cells of affine Springer}
There is a natural bijection between the affine cells in $\CF_{m/n}$  and the affine permutations in $\maSn$. The dimension of the cell $\Sigma_{\omega}$ labeled by the
 affine permutation $\omega$ is equal to $\sum_{i=1}^{n}\SP_{\omega}(i)$.
\end{theorem}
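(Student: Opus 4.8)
The plan is to mimic the proof of Theorem \ref{Theorem: cells of finite Springer fiber} in the affine setting, using the Iwasawa/Bruhat-type stratification of $\CF_n$ by the affine symmetric group and intersecting each stratum with the affine Springer fiber $\CF_{m/n}$. First I would fix the $\C[[\varepsilon]]$-basis of $V = \C^n((\varepsilon))$ coming from $\{e_1,\dots,e_n\}$ together with powers of $\varepsilon$, and index this basis by $\Z$ via $\tilde e_i := \varepsilon^{q} e_r$ when $i = r + qn$ with $1\le r\le n$; then the operator $T = N^m$ simply acts by $T(\tilde e_i) = \tilde e_{i+m}$, exactly paralleling the finite shift-by-$m$ operator in Theorem \ref{Theorem: cells of finite Springer fiber}. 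A lattice chain $\{M_0\supset\cdots\supset M_n=\varepsilon M_0\}$ fixed by $T$ is then recorded by an affine permutation $\omega\in\aSn$: the vector whose leading term (in the $\Z$-ordering of the basis) is $\tilde e_\alpha$ enters the chain in position $\omega^{-1}(\alpha)$, and after a triangular change of coordinates each such vector can be written $v^\alpha = \tilde e_\alpha + \sum_{(\alpha,\beta)\in\Inv(\w^{-1})}\lambda^\alpha_\beta \tilde e_\beta$, with the sum over genuine inversions of $\w^{-1}$ (here one must check that $\pm n$-periodicity of $\omega$ makes the sum well-defined and the stratum finite-dimensional, using that only finitely many inversions of a fixed congruence class are relevant). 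The normalization condition (c) on $M_0$ is what pins down $\sum_{i=1}^n\omega(i) = \tfrac{n(n+1)}{2}$, so that $\omega$ is a genuine affine permutation rather than an element of the extended group.

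Next I would impose $T$-invariance. Since $T(v^\alpha)$ has leading term $\tilde e_{\alpha+m}$, nonemptiness of the stratum $\Sigma_\omega := \Sigma_\omega^{(\text{Bruhat})}\cap\CF_{m/n}$ forces $\omega^{-1}(\alpha+m) > \omega^{-1}(\alpha)$ for all $\alpha$, i.e. $\omega^{-1}$ is $m$-stable, i.e. $\omega\in\maSn$. Conversely, for $\omega\in\maSn$, the condition that $T(v^\alpha)$ lie in $\spann\{v^\beta : \omega^{-1}(\beta) > \omega^{-1}(\alpha)\}$ imposes, exactly as in \eqref{restrictions on lambda}, equations $\lambda^{\alpha+m}_\beta = \lambda^{\alpha}_{\beta-m} + (\text{lower-order polynomial})$ indexed by the inversions $(\alpha,\beta)\in\Inv(\omega^{-1})$ of height \emph{greater} than $m$; the linear parts are independent, so $\Sigma_\omega$ is an affine cell of dimension equal to the number of inversions of $\omega^{-1}$ of height \emph{less} than $m$. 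I would then invoke the combinatorial identity already proved in Theorem \ref{Theorem: PS bijectivity kn+1} (the $\gamma = \gamma'$ argument, which is stated there for general $\w$): the number of inversions $(\alpha,\beta)\in\Inv(\omega^{-1})$ with $\beta-\alpha < m$ equals $\sum_{i=1}^n \PS_{\omega^{-1}}(i)$ by the very definition of $\PS$ — wait, more carefully: $\sum_i \PS_{\omega^{-1}}(i)$ counts inversions of $\omega^{-1}$ of height $<m$, which is what we want, so $\dim\Sigma_\omega = \sum_{i=1}^n\PS_{\omega^{-1}}(i) = \sum_{i=1}^n\SP_{\omega}(i)$ after translating between $\PS$ and $\SP$ via the definition $\SP_u = \PS_{u^{-1}}$; I should double-check the index bookkeeping so that the cell labeled by $\omega$ (with $\omega\in\maSn$, so $\omega^{-1}\in\aSmn$) indeed has dimension $\sum_i\SP_\omega(i)$ as stated, possibly after renaming $\omega\leftrightarrow\omega^{-1}$ to match the statement's conventions.

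Finally I would argue that these strata genuinely form an affine paving: that the $\Sigma_\omega$ are locally closed, cover $\CF_{m/n}$, and that the closure of each is a union of others, so that in particular the Poincaré polynomial is $\sum_{\omega}t^{2\dim\Sigma_\omega}$. For this I would cite the cell structure of $\CF_n$ itself (the affine Bruhat decomposition) and Lusztig--Smelt \cite{LS91}, noting that the cell count $m^{n-1}$ matches $\sharp\maSn$ from Proposition \ref{intro sommers}, and that Hikita's paving \cite{H12} is the same one up to the identification of his combinatorial labels with our $m$-stable affine permutations. The main obstacle I anticipate is \emph{not} the linear-algebra computation — that is a routine transcription of the finite case — but rather two bookkeeping issues: (i) making the $\Z$-indexed basis and the periodicity genuinely precise so that ``$\Inv(\omega^{-1})$ of height $> m$'' gives a finite, transverse system of equations on an honest finite-dimensional affine space (the lattice-chain normalization is doing real work here), and (ii) verifying that our dimension formula coincides with Hikita's, i.e. reconciling his combinatorial dimension statistic with $\sum_i\SP_\omega(i)$ — this is really the content of ``reformulating his result in terms of $\PS$,'' and is where I would spend the most care, leaning on the $\gamma_{a,\w}(r) = \gamma'_{a,\w}(r)$ type argument from Section \ref{Section: Pak-Stanley} to convert between counting inversions by height and by value-difference.
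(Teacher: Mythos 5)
Your approach tracks the paper's own proof essentially step by step: the paper passes to $z=\varepsilon^{1/n}$ and identifies $V$ with $\C((z))$ so that the basis becomes $\{z^i\}_{i\in\Z}$, which is exactly your $\tilde e_i$; the flag is recorded by $\w\in\maSn$ via the orders of vanishing $\Ord(M_i)\setminus\Ord(M_{i+1})=\{\w(i)\}$; the normalization condition on $M_0\in\CG_n$ forces $\sum\w(i)=\frac{n(n+1)}{2}$; the generators $f^\alpha=z^\alpha+\sum_{(\alpha,\beta)\in\Invall(\wi)}\lambda^\alpha_\beta z^\beta$ play the role of your $v^\alpha$; and $T$-invariance yields relations $\lambda^\alpha_{\beta-m}\equiv\lambda^{\alpha+m}_\beta$ modulo lower-height terms, indexed by inversions of $\wi$ of height $>m$. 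One small correction: your initial instinct to invoke the $\gamma_{a,\w}(r)=\gamma'_{a,\w}(r)$ identity from the Pak--Stanley section is unnecessary, as you then notice — $\sum_i\SP_\w(i)=\sum_i\PS_{\wi}(i)$ is by definition the number of inversions of $\wi$ of height $<m$, so no conversion lemma is needed once you know the free parameters are exactly those inversions.

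The genuine gap is the assertion "the linear parts are independent." In the finite case this is automatic: each relation determines a new variable and the chain of relations is finite with a clear initial condition. In the affine case the coefficients satisfy $\lambda^{\alpha+n}_{\beta+n}=\lambda^\alpha_\beta$, so the chain of linear relations $\lambda^\alpha_{\beta-m}\leftrightarrow\lambda^{\alpha+m}_\beta\leftrightarrow\lambda^{\alpha+2m}_{\beta+m}\leftrightarrow\cdots$ can in principle close up into a cycle of length $n$, in which case the linear parts would be dependent and the dimension count would fail. The paper resolves this precisely: a cycle would require $(\alpha+qm,\beta+qm)\in\Invall(\wi)$ for all $0\le q\le n$, which by $\gcd(m,n)=1$ would force $\wi(\gamma)>\wi(\gamma+(\beta-\alpha))$ for every $\gamma\in\Z$ — impossible for an affine permutation. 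You flag roughly the right neighborhood of concern in your point (i) ("periodicity," "transverse system"), but you attribute the difficulty to the lattice-chain normalization rather than to the potential periodicity-induced cycle among the relations, and you never supply the coprimality argument that actually closes the gap. Your point (ii), reconciling with Hikita's formula, is not actually needed: the paper's proof is self-contained and only cites Hikita for an alternative derivation, so no dictionary between dimension statistics is required.
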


\begin{proof}
Let us introduce an auxiliary variable $z=\varepsilon^{1/n}$. We can identify the vector space $V=\C^n((\varepsilon))$ with the space
$\spann_\C\{1,z,\ldots,z^{n-1} \} ((z^n))\simeq \C((z))$ of Laurent power series in $z$ by sending the basis $\{e_1,\ldots,e_n\}$ to $\{1,z,\ldots,z^{n-1}\}.$
Note that under this identification, $\C^n[[\varepsilon]]$ is mapped to $\C[[z]]$. 
By construction, $N$ coincides with the multiplication operator by $z$ and hence $T=N^m$ coincides with the multiplication operator by $z^m$. Therefore $\CF_{m/n}$ consists of flags $\{M_0\supset\ldots\supset M_n\}$ of $\C[[z^n,z^m]]$-modules, such that $\dim_{\C} M_i/M_{i+1}=1$, $M_n=z^nM_0$ and $M_0\in\CG_n.$ Let us extend the notation $M_i$ to arbitrary $i\in\Z$ by setting $M_{i+kn}:=z^{kn}M_i.$ As a result, we get an infinite flag $\{\ldots\supset M_0\supset\ldots\supset M_n\supset\ldots\}$ of $\C[[z^n,z^m]]$-modules satisfying the same conditions as above and $M_{i+n}=z^nM_i.$ 

For $f(z)\in V=\C((z)),$ let $\Ord(f)$ denote the order of $f(z)$ in $z,$ i.e. the smallest degree of $z,$ such that the corresponding coefficient in $f(z)$ does not vanish. Given a subset $M\subset V$, define $$\Ord(M)=\{\Ord(f)\ :\ f\in M, f\neq 0\}.$$ We will need the following lemma, whose proof is standard and left to the reader:

\begin{lemma}
Let $L\subset M\subset \C((z))$ be two $\C[[z^m,z^n]]$-submodules in $z^{-N}\C[[z]]$ for some large $N\in\N.$ Then $\sharp\left(\Ord(M)\setminus\Ord(L)\right)=\dim_\C M/L.$
\end{lemma}


Given a flag $\{\ldots\supset M_0\supset\ldots\supset M_n\supset\ldots\}$ as above,
set $ \Ord(M_i)\setminus \Ord(M_{i+1}) = \{\w(i)\}.$
Note that one automatically gets $\Ord(M_i)=\{\w(i),\w(i+1),\ldots\},$ because $\bigcap\limits_i\Ord(M_i)=\emptyset.$ Recall the notation 
$$
\Inv(\w) : =
\{ (i,j) \in \N \times \N \mid 1\le i\le n,\ i < j,  \w(i) > \w(j) \}
$$
and 
$$
\Invall(\w) : =
\{ (i,j) \in \N \times \N \mid i < j,  \w(i) > \w(j) \}
$$
for the inversion sets of $\w.$ For each $i$ there exists a unique $f_i(z)\in M_i$  such that
\begin{equation}
\label{eqn:fi}
f_i=z^{\w(i)}+\sum_{(j,i)\in \Invall(\w)}\lambda^{\w(i)}_{\w(j)} z^{\w(j)}.
\end{equation}
Indeed, take any function $f\in M_i$ such that $\Ord(f)=\{\w(i)\}$ and use functions from $M_{i+1}$ to eliminate coefficients at $z^{\w(j)}$ for $j>i$ and $\w(j)>\w(i).$ The resulting function is unique up to a scalar, because otherwise $\dim_\C M_i/M_{i+1}$ would be at least $2.$ It follows that $f_{i+n}=z^nf_i.$ 

We claim that $\w$ is an affine permutation and, moreover,  $\omega\in\maSn$. Indeed, since $f_{i+n}=z^nf_i$ we get $\w(i+n)=\w(i)+n,$ and since $z^mf_i\in M_i$ we get that $\w(i)+m\in\{\w(i),\w(i+1),\ldots\},$ and, therefore, for any $j<i,$\ $\w(j)-\w(i)\neq m.$ Finally, we need to check the normalization condition $\sum\limits_{i=1}^n \w(i)=\frac{n(n+1)}{2},$ which follows form the normalization condition on $M_0\in\CG_n.$ Indeed, it is not hard to see that for all $L\in\CG_n$ the sum of elements of $\Ord(L)\setminus\Ord(t^nL)$ should be the same. In particular, for $L=\C[[z]]$ we have $\Ord(\C[[z]])\setminus \Ord(z^n\C[[z]])=\{0,1,\ldots,n-1\},$ and their
 sum is $\frac{n(n-1)}{2}.$ Therefore, since $\Ord(M_0)\setminus \Ord(z^nM_0)=\{\w(0),\ldots,\w(n-1)\},$ we get $\sum\limits_{i-0}^{n-1} \w(i)=\frac{n(n-1)}{2},$ which equivalent to the required condition. 

The above gives us 
a map $\nu:\CF_{m/n}\to \maSn.$ Let us prove that the fibers $\Sigma_{\w}:=\nu^{-1}(\w)$ of this map are affine cells and compute their dimensions. 
This is very similar to the computation in the finite case (see Theorem \ref{Theorem: cells of finite Springer fiber}). Let us set $f^{\w(i)}:=f_i.$ The expansions \eqref{eqn:fi} 
can be rewritten as 
$$
f^\alpha=z^\alpha+\sum_{(\alpha,\beta)\in \Invall(\w^{-1})}\lambda^{\alpha}_{\beta} z^\beta.
$$
Since $f_{i+n}=z^nf_i,$ one gets $\lambda^{\alpha+n}_{\beta+n}=\lambda^\alpha_\beta.$ Let us also extend the notation by setting $\lambda^{\alpha}_{\beta}=0$ whenever $(\alpha,\beta)\notin\Invall(\w^{-1})$, so that one can write 
$$
f^\alpha=z^\alpha+\sum_{\beta>\alpha} \lambda^\alpha_{\beta}z^{\beta}.
$$
Let us say that the coefficient $\lambda^{\alpha}_{\beta}$ is of height $\beta-\alpha.$ As before, let $\alpha=\w(i).$ The condition $z^m f^\alpha\in M_i$ implies the following relations on the coefficients. The function 
$$
z^mf^\alpha-f^{\alpha+m}=\sum_{\beta>\alpha} \lambda^\alpha_\beta z^{\beta+m}-\sum_{\beta>\alpha+m} \lambda^{\alpha+m}_{\beta}z^\beta
=\sum_{\beta>\alpha+m}(\lambda^\alpha_{\beta-m}-\lambda^{\alpha+m}_\beta)z^\beta
$$
should belong to $M_i$. Take $\beta>\alpha+m$ and let $j=\w^{-1}(\beta).$ If $j>i,$ then the term of degree $\beta$ can be eliminated by subtracting $f^\beta=f_j\in M_i$ with an appropriate coefficient.
If $j<i,$ then
$\wi(\beta-m) < \wi(\beta) < \wi(\alpha) < \wi(\alpha+m)$. Hence
$(\alpha,\beta-m)\in\Invall(\w^{-1})$ and $(\alpha+m,\beta)\in\Invall(\w^{-1}),$ so the coefficients $\lambda^\alpha_{\beta-m}$ and $\lambda^{\alpha+m}_\beta$ are both not forced to be zero, i.e. they are parameters on $\Sigma_\w$.
 The term $(\lambda^\alpha_{\beta-m}-\lambda^{\alpha+m}_\beta)z^\beta$ has to vanish automatically after we eliminated all lower order terms. 
As we eliminate terms of degree
$\gamma$ such that 
 $\alpha+m<\gamma<\beta,$ the coefficient at $z^\beta$ changes, but the added terms can only depend on coefficients of smaller height. More precisely, all additional terms are non-linear, and the total height of each monomial is always $\beta-\alpha-m.$ In the end, we get that $\lambda^\alpha_{\beta-m}-\lambda^{\alpha+m}_\beta$ should be equal to zero modulo the coefficients of smaller height.  

This means that for each $(\alpha,\beta)\in\Invall(\w^{-1})$ of height $\beta-\alpha>m$ there is an equation that allows one to express $\lambda^\alpha_{\beta-m}$ in terms of $\lambda^{\alpha+m}_\beta$ and higher order terms in coefficients with lower height. A priori, the linear parts of these equations can be dependent if 
for all $0\le q\le n$ one has $(\alpha+qm,\beta+qm)\in\Invall(\w^{-1}).$ However, since $m$ and $n$ are relatively prime, this would mean that $\omega^{-1}(\gamma)>\omega^{-1}(\gamma+\beta-\alpha)$ for all $\gamma\in \Z,$ which is impossible.
Therefore, one can resolve the relations on the coefficients with respect to $\lambda^\alpha_\beta$ such that $(\alpha,\beta)\in\Invall(\w^{-1})$ and $(\alpha,\beta+m)\in\Invall(\w^{-1}).$ So, the 
coordinates on $\Sigma_{\w}$ 
correspond to the inversions $(\alpha,\beta)\in\Inv(\w^{-1}),$ such that $(\alpha,\beta+m)\notin\Inv(\w^{-1})$.
Since $\lambda^\alpha_\beta=\lambda^{\alpha+n}_{\beta+n},$ one should count inversions in $\Inv(\w^{-1})$ only. 
It is not hard to see that such inversions are in bijection with inversions of height less
than $m.$ Indeed, the required map is $(\alpha,\beta)\mapsto (\alpha,\beta-km),$ where $k$ is the maximal integer such that $\beta-km>\alpha.$ 

Alternatively, one can also notice that the relations are in bijection with inversions of height greater
than $m.$ Indeed, the relation $\lambda^\alpha_\beta\equiv\lambda^{\alpha+m}_{\beta+m}$ (modulo lower height terms) corresponds to the inversion $(\alpha,\beta+m)\in\Invall(\w^{-1})$ of height greater than $m.$ Therefore, the dimension of $\Sigma_{\w}$
is the total number of inversions minus the number of inversion of height greater
than $m.$ Since there are no inversions of height $m,$ the dimension is equal to the number of inversions of height less than $m.$ 
Since $\sum_{i=1}^{n}\PS_{\omega^{-1}}(i)$ is exactly the total number of inversions of height less than $m,$ we conclude that 
$$
\dim(\Sigma_{\omega})=\sum_{i=1}^{n}\SP_{\omega}(i)=\sum_{i=1}^{n}\PS_{\omega^{-1}}(i)=\frac{(m-1)(n-1)}{2}-\dinv(\w^{-1}).
$$
For a more abstract proof see e.g. \cite{GKM} and \cite[Theorem 2.7, eq. 4.5]{H12}. 
\end{proof}

\begin{remark}
Similar reasoning shows that the Grassmannian version of the affine Springer fiber $\CG_{m/n}\subset\CG_n$ parametrizes appropriately normalized
 $\C[[z^n,z^m]]$-submodules
in $\C((z)).$ This affine Springer fiber was studied e.g. in \cite{GM11,GM12} under the name of Jacobi factor of the plane curve singularity $\{x^m=y^n\}$. The cells in it are parametrized by the subsets in $\ZZ$ which are invariant under addition of $m$ and $n$, and can be matched to the lattice points in $D_{n}^{m}$.
Note the lattice points in turn correspond to the minimal length left
coset representatives $\maSn \cap \aSn/\Sn$.
\end{remark}

\begin{corollary}
If the map $\PS$ is a bijection  then the Poincar\'e polynomial of $\CF_{m/n}$ is given by the following formula:
\begin{equation}
\label{Poinc}
\sum_{k=0}^{\infty}t^{k}\dim \Hom^{k}\left(\CF_{m/n}\right)=\sum_{f\in \PF_{m/n}}t^{2\sum_{i}f(i)}.
\end{equation}
\end{corollary}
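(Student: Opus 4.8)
The plan is to deduce this directly from the affine paving of $\CF_{m/n}$ constructed in Theorem \ref{Theorem: cells of affine Springer}. First I would invoke the standard fact that if a complex variety $X$ admits an affine paving, i.e. a filtration $\emptyset = X_0 \subset X_1 \subset \cdots \subset X_N = X$ by closed subvarieties with each $X_j \setminus X_{j-1}$ a disjoint union of cells isomorphic to affine spaces $\C^{d}$, then $X$ has no odd cohomology and $\dim \Hom^{2d}(X)$ equals the number of cells of complex dimension $d$; equivalently, the Poincar\'e polynomial of $X$ is $\sum_{\text{cells}} t^{2 d_{\text{cell}}}$. This is proved by induction on $N$ using the long exact cohomology sequences of the pairs $(X_j, X_{j-1})$, each of which degenerates because $\Hom^*(X_j, X_{j-1})$ is concentrated in the single even degree(s) $2d$ coming from the cells of $X_j \setminus X_{j-1}$.

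Applying this to $X = \CF_{m/n}$ with the paving of Theorem \ref{Theorem: cells of affine Springer}, whose cells $\Sigma_{\omega}$ are indexed by $\omega \in \maSn$ and satisfy $\dim \Sigma_{\omega} = \sum_{i=1}^{n} \SP_{\omega}(i)$, we obtain
$$
\sum_{k=0}^{\infty} t^{k} \dim \Hom^{k}\left(\CF_{m/n}\right) = \sum_{\omega \in \maSn} t^{2 \sum_{i=1}^{n} \SP_{\omega}(i)}.
$$
It then remains to re-index the right-hand side by parking functions. By definition $\SP_{\omega} = \PS_{\omega^{-1}}$, and $\omega \mapsto \omega^{-1}$ is a bijection between $\maSn$ and $\aSmn$; hence $\SP \colon \maSn \to \Pmn$ is a bijection if and only if $\PS \colon \aSmn \to \Pmn$ is one. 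So under the hypothesis that $\PS$ is bijective, $\omega \mapsto f := \SP_{\omega}$ is a bijection $\maSn \to \Pmn$, and for the matching terms $\sum_{i=1}^{n} \SP_{\omega}(i) = \sum_{i=1}^{n} f(i)$. Substituting yields
$$
\sum_{\omega \in \maSn} t^{2 \sum_{i} \SP_{\omega}(i)} = \sum_{f \in \Pmn} t^{2 \sum_{i} f(i)},
$$
which is precisely \eqref{Poinc}.

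The only step that is not pure bookkeeping is the first one, and it is entirely standard for affine pavings, so I do not expect it to be the real difficulty; the genuine content lies upstream, in Theorem \ref{Theorem: cells of affine Springer} (the cell structure and the dimension formula) and in the bijectivity hypothesis on $\PS$. In particular, by Theorems \ref{Theorem: PS bijectivity kn+1} and \ref{Theorem: PS bijectivity kn-1} that hypothesis is known to hold for $m = kn \pm 1$, so formula \eqref{Poinc} is unconditional in those cases, as asserted.
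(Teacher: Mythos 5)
Your proof is correct and follows essentially the same route as the paper: invoke the affine paving from Theorem \ref{Theorem: cells of affine Springer} to conclude that cohomology vanishes in odd degree and that the $2k$-th Betti number counts cells of complex dimension $k$, then re-index the resulting sum from $\omega\in\maSn$ to $f\in\Pmn$ using the bijectivity hypothesis on $\PS$ (equivalently, on $\SP$). The only difference is that you spell out the standard paving/Betti-number fact in slightly more detail.
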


\begin{proof}
Since the variety $\CF_{m/n}$ can be paved by the even-dimensional cells, it has no odd cohomology and $(2k)$-th Betti number is equal to 
the number of cells of complex dimension $k$. Therefore by Theorem \ref{Theorem: cells of affine Springer}:
$$
\sum_{k}t^{k}\dim \Hom^{k}\left(\CF_{m/n}\right)=\sum_{\w\in \maSn}t^{2\dim \Sigma_{\w}}=\sum_{\w\in \aSmn}t^{2\sum_i \PS_{\w}(i)}=
\sum_{f\in \PF_{m/n}}t^{2\sum_i f(i)}.
$$
\end{proof}

Equation $\eqref{Poinc}$ was conjectured in \cite[Sec. 10]{LS91} for all coprime $m$ and $n$.

\section{Some examples for $m \neq kn\pm 1.$}\label{Section: Examples}

In this section we discuss some examples for which $m \neq kn\pm 1$.

\begin{example}
\label{exl-pf-Anderson}
There are $81 = 3^4$
$3/5$-parking functions.
The $7= \frac{1}{5+3} \binom {5+3}{5} $ 
increasing parking functions
are $\pw{00000},\pw{00001}, \pw{00002}, \pw{00011}, \pw{00012}, \pw{00111},
\pw{00112}$.
Grouping them into the $\Snn{5}$-orbits $\{ f \circ \w \mid 
 \w \in \Snn{5} \}$ yields $81 = 1 + 5+5+10+20+10+30$.
There are 
7 vectors in $\Z^5 \cap V \cap D_5^3$. Their transposes are:
$$(0,0,0,0,0), (1,0,0,0,-1),  (0,1,0,0,-1), 
(1,0,0,-1,0), 
 (0,0,1,-1,0), (0,1,-1,0,0), (1,-1,0,1,-1)$$
The $30$ parking functions in the $\Snn{5}$-orbit
of $\pw{00112}$ correspond under the map $\An$
to the $30$ permutations in $\Snn{5} \cap \maSnn 53$
which are those in the support of the shuffle $14 \shuffle 25 \shuffle 3$
(that is to say, the intersection of the Sommers region with the orbit of
the identity permutation). 
\\
On the other hand, the parking function $\pw{00000} = \An(\wm)$
corresponds under $\An$ to the affine permutation
$\wm = [-3,0,3,6,9] \in \maSnn 53$.  Anything else in its right $\Snn{5}$-orbit
lies outside the Sommers region.
\end{example}

\omitt{  move example to earlier
\begin{example}
Consider again $(n,m) = (5,3)$ and $u = [0,3,6,2,4]$. 
Then $\Inv(u) = \{ (2,4), (3,4),$  $ (3,5), (3,6) \}$ and so
  $\SP_u = \pw{01200}$.  Note $u(3) - u(4) = 4 > m$ so this inversion
does not contribute to $\SP_u(3).$ 
\end{example}
} 

Despite the  fact many of the above theorems and constructions use $\aSmn$,
it is more 
uniform
to study the set $\{ u \Afund \mid u \in \maSn\}$ and $\SP$ than
$\{ \w \Afund \mid \w \in \aSmn\}$ and $\PS$.
One reason is that while the Sommers region can always be defined
for $\gcd(m,n)=1$, a hyperplane arrangement that is the correct
analogue of the Shi arrangement cannot. Consider the following example.

\begin{example}
\label{exl-noshi}.
In the case $(n,m) = (5,3)$ it is {\em impossible} to find a set of
hyperplanes that separate the alcoves 
$\{ \w \Afund \mid \w \in \aSmn\}$
and has $\frac{1}{5+3} \binom {5+3}{5} = 7$ dominant regions,
i.e. that there are exactly $7$ dominant regions with a unique $\w\Afund$
in each.
In other words, the notion of $\Regkn$ does not extend well when
$m \neq kn \pm 1$.

Indeed, 7 dominant regions corresponding to 3-restricted affine permutations
$\w \in \maSnn 53 \cap \aSnn{5}/\Snn{5}$
are shown in bold in Figure \ref{s 5 3 dominant}.
Each permutation drawn corresponds to the dominant alcove $\wi \Afund$.
Hence the hyperplanes crossed (by the pictured $\w$) correspond exactly to $\Inv(\w)$.
The hyperplanes $H_{4,6}^0,H_{5,6}^0$ and $H_{5,7}^0$
separate $[02346]$ from other  3-restricted permutations.
To separate $[-21457]$ from $[-11258]$, one must 
add either $H_{3,6}^0$ or $H_{5,8}^0$ to the arrangement,
but this would leave either
of the non-3-restricted permutations
 $[-22456]$ or $[01248]$ in a region with no 3-restricted permutations.
Therefore any extension of the classical braid arrangement for $S_5$ would either have a region with two 3-restricted permutations
or a region with none of them.

Note there are more hyperplanes ($H_{4,7}^0, H_{5,11}^0$) that $\wm = [-3,0,3,6,9]$
has crossed that we did not draw on the picture. 
\end{example}

\begin{figure}
\begin{tikzpicture}
\draw (10,0) node {$\mathbf{[12345]}$};
\draw (8,-1)..controls (9,1) and (11,1)..(12,-1);
\draw (10,1) node {$\mathbf{[02346]}$};
\draw (7,-1)--(13,5);
\draw (13,-1)--(7,5);
\draw [dashed] (4,-1)--(10,5);
\draw [dashed] (16,-1)--(10,5);
\draw [dashed] (8,7)--(10,5);   
\draw [dashed] (12,7)--(10,5);  
\draw (10,7) node {$\mathbf{[-30369]}$};  
\draw (8,1.5) node {$\mathbf{[-12356]}$};
\draw (6,3) node {$\scriptstyle [-22456]$}; 
\draw (12,1.5) node {$\mathbf{[01347]}$};
\draw (14,3) node {$\scriptstyle [01248]$}; 
\draw (10,3) node {$\scriptstyle [-11357]$}; 
\draw (11.5,4.5) node {$\mathbf{[-11258]}$};
\draw (8.5,4.5) node {$\mathbf{[-21457]}$};
\omitt{
\draw (12,-1.2) node {$H_{1,5}^1$};
\draw (13,-1.2) node {$H_{2,5}^1$};
\draw (16,-1.2) node {$H_{3,5}^1$};
\draw (7,-1.2) node {$H_{1,4}^1$};
\draw (4,-1.2) node {$H_{1,3}^1$};
} 
\draw (12,-1.3) node {$H_{5,6}^0$};
\draw (13,-1.3) node {$H_{5,7}^0$};
\draw (16,-1.3) node {$H_{5,8}^0$};
\draw (7,-1.3) node {$H_{4,6}^0$};
\draw (4,-1.3) node {$H_{3,6}^0$};
\end{tikzpicture}
\caption{7 permutations  $\w \in \maSnn{5}{3}$ labeling $\w^{-1} \Afund$
 in the dominant cone}
\label{s 5 3 dominant}
\end{figure}

\begin{example}
\label{example: 2 5}
We list all affine permutations in $\aSmnn{5}{2}$ together with their images under the maps $\An$ and $\PS$ in Figure \ref{table:t25}.
 Here $\w$ is a $2$-stable affine permutation (that is, $\w(i+2)>\w(i)$), and $\w^{-1}$ is 2-restricted .
Note that for $m=2$ one has $\An_{\w}(k)=\w^{-1}(k)-M_{\w}\ \modd\ 2$, where, as above, $M_{\w}=\min\{k : \omega(k)>0\}$.
 
The combinatorial Hilbert series has a form:
$$H_{2/5}(q,t)=5+4(q+t)+(q^2+qt+t^2).$$
In particular, it is symmetric in $q$ and $t$ and thus answers a question posed in \cite[Section 5.4]{Ar11}.

The special vertex of $D_{2/5}$ corresponding to the fundamental alcove by Lemma \ref{isometry}
is described by the affine permutation $\w_2=[-1,1,3,5,7]$. 
\end{example}

\begin{figure}[ht]
\begin{tabular}{|c|c|c|c|c|c|}
\hline
$\omega$ & $\omega^{-1}$ & $\An$ & $\area$ & $\PS$ & $\dinv$ \\ \hline
[-1,2,5,3,6]	& [0,2,4,6,3]	  & $\pw{0,0,0,0,1}$	& 1& 	$\pw{0,0,1,1,0	}$& 0\\ \hline
[0,2,3,4,6]	& [0,2,3,4,6]	  & $\pw{0,0,1,0,0}$	& 1&	$\pw{0,0,0,0,1	}$&1\\ \hline
[0,2,4,3,6]	& [0,2,4,3,6]	  & $\pw{0,0,0,1,0}$	& 1&	$\pw{0,0,1,0,1	}$&0\\ \hline
[0,3,1,4,7]	& [3,0,2,4,6]	  & $\pw{1,0,0,0,0}$	& 1&	$\pw{1,0,0,0,1	}$&0\\ \hline
[0,3,2,4,6]	& [0,3,2,4,6]	   & $\pw{0,1,0,0,0}$	& 1&	$\pw{0,1,0,0,1}$	&0\\ \hline
[2,0,3,6,4]	& [-1,1,3,5,7] & $\pw{0,0,0,0,0}$ &2&	$\pw{0,0,0,1,1}$&	0\\ \hline
[1,2,3,4,5]	& [1,2,3,4,5]	& $\pw{0,1,0,1,0}$	& 0&	$\pw{0,0,0,0,0	}$&2\\ \hline
[1,2,3,5,4]	& [1,2,3,5,4]	& $\pw{0,1,0,0,1}$	& 0& $\pw{0,0,0,1,0}$&	1\\ \hline
[1,2,4,3,5]	& [1,2,4,3,5]	& $\pw{0,1,1,0,0}$	& 0&	$\pw{0,0,1,0,0}$&	1\\ \hline
[1,3,2,4,5]	& [1,3,2,4,5]	& $\pw{0,0,1,1,0}$	& 0&	$\pw{0,1,0,0,0}$&	1\\ \hline
[1,3,2,5,4]	& [1,3,2,5,4]	& $\pw{0,0,1,0,1}$	& 0&	$\pw{0,1,0,1,0}$&	0\\ \hline
[1,4,2,5,3]	& [1,3,5,2,4]	& $\pw{0,0,0,1,1}$	& 0&	$\pw{0,1,1,0,0}$&	0\\ \hline
[2,1,3,4,5]	& [2,1,3,4,5]	& $\pw{1,0,0,1,0}$	& 0&	$\pw{1,0,0,0,0}$&	1\\ \hline
[2,1,3,5,4]	& [2,1,3,5,4]	& $\pw{1,0,0,0,1}$	& 0&	$\pw{1,0,0,1,0}$&	0\\ \hline
[2,1,4,3,5]	& [2,1,4,3,5]	& $\pw{1,0,1,0,0}$	& 0&	$\pw{1,0,1,0,0}$&	0\\ \hline
[3,1,4,2,5]	& [2,4,1,3,5]	& $\pw{1,1,0,0,0}$	& 0&	$\pw{1,1,0,0,0}$&	0\\ \hline
\end{tabular}
\caption{Affine permutations in $\aSmnn{5}{2}$, their inverses in $\maSnn{5}{2}$; maps $\An$ and $\PS$ to $\PF_{2/5}$;\newline 
$\area$ and $\dinv$ statistics}
\label{table:t25}
\end{figure}

\end{document}